\newtheorem{theorem}{Theorem}[section]
\newtheorem{lemma}[theorem]{Lemma}
\newtheorem{definition}[theorem]{Definition}
\newtheorem{corollary}[theorem]{Corollary}
\newtheorem{proposition}[theorem]{Proposition}
\newtheorem{lem-def}[theorem]{Lemma-Definition}
\def\P{\mathbb P}
\newcommand{\R}{\mathbb R}
\newcommand{\M}{\mathbb M}
\newcommand{\Z}{\mathbb Z}
\newcommand{\Q}{\mathbb Q}
\newcommand{\F}{\mathbb F}
\newcommand{\V}{\mathbb V}
\def\op{\operatorname}
\def\as#1{\renewcommand\arraystretch{#1}}
\def\cc{{\mathcal C}}
\def\diso{\lower.4ex\hbox{$\downarrow$}\raise.4ex\hbox{\mbox{\scriptsize
$\wr$}}}
\def\ep#1{\exp(\Pi i#1)}
\def\ep{\epsilon}
\def\ga{\gamma}
\def\gg{\mathcal{G}r}
\def\ggm{\mathcal{G}r(\mu)}
\def\ggmp{\mathcal{G}r(\mu')}
\def\gr{\operatorname{gr}_{\mu}K[x]}
\def\hh{{\mathcal H}}
\def\hm{H_\mu}
\def\hmp{H_{\mu'}}
\def\imp{\,\Longrightarrow\,}
\def\iso{\ \lower.3ex\hbox{\as{.08}$\begin{array}{c}\lra\\\mbox{\tiny $\sim\,$}\end{array}$}\ }
\def\kb{\overline{K}_v}
\def\kp{\op{KP}}
\def\kpm{\op{KP}(\mu)}
\def\kpp{\op{KP}(\mu)^{\op{pr}}}
\def\kps{\op{KP}(\mu)^{\op{str}}}
\def\la{\lambda}
\def\lg{l\raise.6ex\hbox to.2em{\hss.\hss}l}
\def\ll{\mathcal{L}}
\def\lra{\longrightarrow}
\def\m{{\mathfrak m}}
\def\md#1{\; \mbox{\rm(mod }{#1})}
\def\mmu{\mid_\mu}
\def\mx{\op{Max}}
\def\nph{N_{\mu,\phi}}
\def\oo{\mathcal{O}}
\def\orb{\hbox to  .3em{$\backslash$}\backslash}
\def\ord{\op{ord}}
\def\p{\mathfrak{p}}
\def\ppa{\mathcal{P}_{\alpha}}
\def\pset{\mathcal{P}}
\def\rb{\overline{R}}
\def\res{\operatorname{Res}}
\def\rr{\mathcal{R}}
\def\rrm{\mathcal{R}_\mu}
\def\smu{\sim_\mu}
\def\sii{\,\Longleftrightarrow\,}
\def\t{\theta}
\def\Vi{\V^{\op{ind}}}
\newcounter{cs}
\newcommand{\casos}{\begin{itemize}}
\newcommand{\fcasos}{\end{itemize}\setcounter{cs}{1}}
\newfont{\tit}{cmr12 scaled \magstep3}
\title[Residual ideals of MacLane valuations]{Residual ideals of MacLane valuations}
\author[Fern\'andez]{Julio Fern\'andez}
\address{Departament de Matem\`atica Aplicada IV, Escola Polit\`ecnica Superior d'Enginye\-ria de Vilanova i la Geltr\'u, Av. V\'\i ctor Balaguer s/n. E-08800 Vilanova i la Geltr\'u, Catalonia, Spain}
\email{julio@ma4.upc.edu, guardia@ma4.upc.edu}
\author[Gu\`ardia]{Jordi Gu\`ardia}
\author[Montes]{Jes\'us Montes}
\address{Departament de Ci\`encies Econ\`omiques i Empresarials,
Facultat de Ci\`encies Socials,
Universitat Abat Oliba CEU,
Bellesguard 30, E-08022 Barcelona, Catalonia, Spain\\
Departament de Matem\`atica Econ\`omica, Financera i Actuarial,
Facultat d'Economia i Empresa,
Universitat de Barcelona,
Av. Diagonal 690,
E-08034 Barcelona, Catalonia, Spain}
\email{montes3@uao.es, jesus.montes@ub.edu}
\author[Nart]{Enric Nart}
\address{Departament de Matem\`{a}tiques,
         Universitat Aut\`{o}noma de Barcelona,
         Edifici C, E-08193 Bellaterra, Barcelona, Catalonia, Spain}
\email{nart@mat.uab.cat}
\thanks{Partially supported by MTM2012-34611 and MTM2009-10359 from the Spanish MEC}
\date{}
\keywords{key polynomial, MacLane chain, MacLane invariants, Newton polygon, Okutsu invariants, residual ideal, residual polynomial, valuation}
\subjclass[2010]{Primary 13A18; Secondary 13J10, 12J10, 11S05}
\begin{document}

\begin{abstract}
Let $K$ be a field equipped with a discrete valuation $v$. In a pioneering work, MacLane determined all valuations on $K(x)$ extending $v$. His work was recently reviewed and generalized by M. Vaqui\'e, by using the graded algebra of a valuation. We extend Vaqui\'e's approach by studying residual ideals of the graded algebra as an abstract counterpart of certain residual polynomials which play a key role in the computational applications of the theory. As a consequence, we determine the structure of the graded algebra of the discrete valuations on $K(x)$ and we show how these valuations may be used to parameterize irreducible polynomials over local fields up to Okutsu equivalence.    
\end{abstract}

\maketitle
\section*{Introduction}
In a pioneering work, MacLane linked in 1936 the theory of discrete valuations on a field of rational functions in one variable with the study of irreducible polynomials over local fields. Several authors have proposed since then different approaches to either of these questions. In this paper, we show that MacLane's original approach, combined with some ideas of Montes and Vaqui\'e, provides a unified insight for the main developments of these topics. Before describing the contents of the paper in more detail, let us briefly recall some milestones in these developments.\medskip

\noindent{\bf MacLane's solution to a problem raised by Ore.}
In the 1920's, Ore developed a method to construct the prime ideals of a number field, dividing a given prime number $p$, in terms of a defining polynomial $f\in\Z[x]$ satisfying a certain \emph{$p$-regularity} condition \cite{ore1, ore2}. The idea was to detect a $p$-adic factorization of $f$ according to the different irreducible factors of certain residual polynomials over finite fields, attached to the sides of a Newton polygon of $f$. 
He raised then the question of the existence of a procedure to compute the prime ideals in the $p$-irregular case, based on the consideration of similar Newton polygons and residual polynomials of \emph{higher order}. 

MacLane solved this problem in 1936 in a more general context \cite{mcla,mclb}. For any discrete valuation $v$ on an arbitrary field $K$, he described all discrete valuations extending $v$ to the rational function field $K(x)$. 
Then, given an irreducible polynomial $f\in K[x]$, he characterized all extensions of $v$ to the field $L:=K[x]/(f)$ as limits of infinite families of valuations on $K(x)$ whose value at $f$ grows to infinity. Finally, he gave a criterion to decide when a valuation on $K(x)$ is sufficiently close to a valuation on $L$ to uniquely represent it.

There is a natural extension $\mu_0$ of $v$ to $K(x)$ satisfying $\mu_0(x)=0$. Starting from $\mu_0$, MacLane constructed \emph{inductive} valuations $\mu$ on $K(x)$ extending $v$, by the concatenation of augmentation steps
$$\mu_0\ \stackrel{(\phi_1,\la_1)}\lra\  \mu_1\ \stackrel{(\phi_2,\la_2)}\lra\ \cdots\ \stackrel{(\phi_{r-1},\la_{r-1})}\lra\ \mu_{r-1} \ \stackrel{(\phi_{r},\la_{r})}\lra\ \mu_{r}=\mu,$$ 
based on the choice of certain \emph{key polynomials} $\phi_i\in K[x]$ and arbitrary positive rational numbers $\la_i$. In the case $K=\Q$, Ore's $p$-regularity condition is satisfied when all valuations on $L$ extending the $p$-adic valuation are sufficiently close to valuations on $K(x)$ that may be obtained from $\mu_0$ by a single augmentation step. 

After MacLane's work, inductive valuations were rediscovered and extensively studied as \emph{residually transcendental extensions} of $v$ to $K(x)$ \cite{APZ, PP, PV}. In this approach, the valuations are first analyzed for algebraically closed fields, where they may be obtained as a simple augmentation of $\mu_0$ with respect to a key polynomial of degree one. The general case is then deduced by descent. 

Let $K_v$ be the completion of $K$ at $v$, and let $\oo_v\subset K_v$ be the valuation ring of $K_v$. We denote by $\P$ the set of all monic irreducible polynomials in $\oo_v[x]$. An $F\in\P$ is called a \emph{prime polynomial} with respect to $v$.  \medskip

\noindent{\bf Okutsu equivalence of prime polynomials.}
For $v$ a discrete valuation on a global field $K$ and $F$ a prime polynomial, Okutsu constructed in 1982 an explicit integral basis of the local field $K_F=K_v[x]/(F)$, in terms of a finite sequence of prime polynomials $\phi_1,\dots,\phi_r$ which are a kind of optimal approximations to $F$ with respect to their degree \cite{Ok}. 
Such a family $[\phi_1,\dots,\phi_r]$ is called an \emph{Okutsu frame} of $F$. The polynomials $\phi_i$ support certain numerical data, the so-called \emph{Okutsu invariants} of $F$, containing considerable information about $F$ and the field $K_F$. 

An equivalence relation $\approx$ on the set $\P$ is defined as follows: two prime polynomials $F,G\in\P$ of the same degree are said to be \emph{Okutsu equivalent} if 
$$
v(\res(G,F))/\deg G>v(\res(\phi_r,F))/\deg \phi_r.
$$ 
In this case, $F$ and $G$ have the same Okutsu invariants, and the fields $K_F$, $K_G$ have isomorphic maximal tamely ramified subextensions \cite{okutsu}.

In 1999, Montes carried out Ore's program in its original formulation \cite{Mo,HN}. Given a finite extension $L/K$ of number fields determined by an irreducible polynomial $f\in K[x]$, and given a prime ideal $\p$ of $K$, Montes constructed the prime ideals of $L$ lying over $\p$  by finding polynomials in $K[x]$ which are Okutsu equivalent to the irreducible factors of $f$ in $K_v[x]$, where $v=v_\p$ is the $\p$-adic valuation. The method computes as well Okutsu frames and the Okutsu invariants of each prime factor.
In this setting, the use of MacLane's valuations and Newton polygon operators is complemented with the introduction of residual polynomial operators $R_i\colon K[x]\lra \F_i[y]$, 
where $i\ge 0$ is the ``order" of a valuation, $\F_i$ is a certain finite field, and $x,y$ are indeterminates. These operators make the whole theory constructive and well-suited to computational applications. 
These ideas led to the design of several fast algorithms to perform arithmetic tasks in global fields \cite{Bauch,algorithm,newapp,bases,GNP,Ndiff}.\medskip

\noindent{\bf Contents of this paper.}
In 2007, Vaqui\'e reviewed and generalized MacLane's work. For an arbitrary field $K$, he determined all valuations on $K(x)$ extending an arbitrary valuation $v$ on $K$ \cite{Vaq}. The use of the graded algebra $\ggm$ of a valuation $\mu$, restricted to the polynomial ring $K[x]$, led Vaqui\'e to a more elegant presentation of the theory.  

In this paper, restricted to the discrete case, we have a double aim. On one hand, we extend Vaqui\'e's approach by including a treatment of the residual polynomial operators atached to a discrete valuation $\mu$ over an arbitrary field $K$. The residual polynomials are interpreted as generators of \emph{residual ideals} in the degree-zero subring $\Delta(\mu)$ of the graded algebra $\ggm$. The residual ideal of a polynomial $g\in K[x]$ is defined as $\rr_\mu(g)=H_\mu(g)\ggm\cap\Delta(\mu)$, where $\hm(g)$ is the natural image of $g$ in the piece of degree $\mu(g)$ of the algebra. In sections 1-5, we review the properties of MacLane's inductive valuations, while making apparent the key role of the residual ideals in the whole theory. As an application of this point of view, we determine the structure of $\ggm$ as a graded algebra (Theorem \ref{structure}).    

Our second aim is to show that this approach leads to a natural generalization of the results of Okutsu and Montes to arbitrary discrete valued fields. A prime polynomial $F\in\P$
induces a pseudo-valuation $\mu_{\infty,F}$ on $K[x]$ via the composition
$$
\mu_{\infty,F}\colon K[x]\lra K_F\stackrel v\lra \Q\cup\{\infty\},
$$
where we denote again by $v$ the unique extension of $v$ to $K_F$. According to MacLane's insight, approximating $F$ by polynomials in $K[x]$ is equivalent to approximating $\mu_{\infty,F}$ by valuations on $K(x)$. In section \ref{secOkutsu}, we introduce a canonical inductive valuation $\mu_F$ which is a threshold valuation in this approximation process, and  
we reproduce most of the fundamental results of \cite{okutsu,HN,Mo,Ok,PZ}
with much shorter proofs. An Okutsu frame of $F$ is seen to be just a family of key polynomials of an \emph{optimal} chain of inductive valuations linking $\mu_0$ with $\mu_F$ (Theorems \ref{MLOk}, \ref{OkML}), and the Okutsu invariants of $F$ are essentially the MacLane invariants of these valuations, introduced in section \ref{secInductive}. 

Finally, in section \ref{sectionLimit} we briefly recall MacLane's results on limits of inductive valuations. We analyze in detail the interval $[\mu_0,\mu_{\infty,F})$ of all valuations $\mu$ on $K(x)$ such that $\mu(g)\le\mu_{\infty,F}(g)$ for all $g\in K[x]$. In Theorem \ref{totally} we prove that this interval is totally ordered and give an explicit description of all the valuations therein.

The main result of the paper is Theorem \ref{MLspace}, where we establish a canonical bijection between the set $\,\P/\!\approx\,$ of Okutsu equivalence classes of prime polynomials and the \emph{MacLane space} $\M$ of $(K,v)$, defined as the set of all pairs $(\mu,\ll)$, where $\mu$ is an inductive valuation on $K(x)$ and $\ll$ is a \emph{strong} maximal ideal of $\Delta(\mu)$. The bijection sends the class of $F$ to the pair 
$(\mu_F,\rr_{\mu_F}(F))$. This result reveals that MacLane's original approach is best-suited for computational applications, because the elements in the set $\M$ may be described in terms of discrete parameters which are easily manipulated by a computer. As a consequence, all algorithmic developments based on the Montes algorithm \cite{algorithm,newapp,bases,GNP,Ndiff} admit a more elegant description and a natural extension to arbitrary discrete valued fields.
However, we postpone the discussion of these computational aspects to a forthcoming paper \cite{gen}.


\section{Augmentation of valuations}\label{secMacLane}
Let $K$ be a field equipped with a discrete valuation $v\colon K^*\lra \Z$, normalized so that $v(K^*)=\Z$. Let $\oo$ be the valuation ring of $K$, $\m$ the maximal ideal, $\pi\in\m$ a generator of $\m$ and $\F=\oo/\m$ the residue class field. 

Let $K_v$ be the completion of $K$ and denote again by $v\colon \kb\to \Q\cup\{\infty\}$ the canonical extension of $v$ to a fixed algebraic closure of $K_v$. Let $\oo_v$ be the valuation ring of $K_v$, $\m_v$ its maximal ideal and $\F_v=\oo_v/\m_v$ the residue class field. The canonical inclusion $K\subset K_v$ restricts to  inclusions $\oo\subset\oo_v$, $\m\subset\m_v$, which determine a canonical isomorphism $\F\simeq\F_v$.  
We shall consider this isomorphism as an identity, $\F=\F_v$, and we indicate simply with a bar, $\raise.8ex\hbox{---}\colon \oo_v[x]\longrightarrow \F[x]$,
the canonical homomorphism of reduction of polynomials modulo $\m_v$. 

Our aim is to describe all extensions of $v$ to discrete valuations on the field $K(x)$, where $x$ is an indeterminate. 

\begin{definition}
Let $\V$ be the set of discrete valuations, $\mu\colon K(x)^*\lra \Q$, such that $\mu_{\mid K}=v$ and $\mu(x)\ge0$.
For any $\mu\in\V$, denote

\begin{itemize}
\item $\Gamma(\mu)=\mu\left(K(x)^*\right)\subset \Q$, the cyclic group of finite values of $\mu$. The \emph{ramification index} of $\mu$ is the positive integer $e(\mu)$ such that $e(\mu)\Gamma(\mu)=\Z$.   
\item $\kappa(\mu)$, the residue class field of $\mu$.
\item $\kappa(\mu)^{\op{alg}}\subset \kappa(\mu)$, the algebraic closure of $\F$ inside $\kappa(\mu)$.
\end{itemize}


From now on, the elements of $\V$ will be simply called \emph{valuations}.
\end{definition}

Since we are only interested in (rank one) discrete valuations, we may assume that our valuations are $\Q$-valued. On the other hand, the assumption $\mu(x)\ge0$ is not essential; it   
gives a more compact form to the presentation of the results. For the determination of the discrete valuations with $\mu(x)<0$ one may simply replace $x$ by $1/x$ as a generator of the field $K(x)$ over $K$.

In the set $\V$ there is a natural partial ordering:
$$
\mu\le \mu' \quad\mbox{ if }\quad\mu(g) \le \mu'(g), \ \forall\,g\in K[x]. 
$$

We denote by $\mu_0\in \V$ the valuation which acts on polynomials as
$$
\mu_0\left(\sum\nolimits_{0\le s}a_sx^s\right)=\min_{0\le s}\left\{v(a_s)\right\}.
$$
Clearly, $\mu_0\le \mu$ for all $\mu\in\V$; in other words, $\mu_0$ is the minimum element in $\V$.

\subsection{Graded algebra of a valuation}
Let $\mu\in\V$ be a valuation. For any $\alpha\in\Gamma(\mu)$ we consider the following $\oo$-submodules in $K[x]$:
$$
\ppa=\ppa(\mu)=\{g\in K[x]\mid \mu(g)\ge \alpha\}\supset
\ppa^+=\ppa^+(\mu)=\{g\in K[x]\mid \mu(g)> \alpha\}.
$$    
Clearly, $\pset_0$ is a subring of $K[x]$, and $\ppa$, $\ppa^+$
are $\pset_0$-submodules of $K[x]$ for all $\alpha$.

The \emph{graded algebra of $\mu$} is the integral domain:
$$
\ggm:=\gr:=\bigoplus\nolimits_{\alpha\in\Gamma(\mu)}\ppa/\ppa^+.
$$

Let $\;\Delta(\mu)=\pset_0/\pset_0^+$ be the subring determined by the piece of degree zero of this algebra. Clearly, $\oo\subset\pset_0$ and $\m=\pset_0^+\cap \oo$; thus, there is a canonical homomorphism $\F\lra\Delta(\mu)$, equipping  $\Delta(\mu)$ (and $\ggm$) with a canonical structure of $\F$-algebra. 

Let $A\subset K(x)$ be the valuation ring of $\mu$ and $\m_A$ its maximal ideal. Since $\pset_0=K[x]\cap A$ and $\pset_0^+=K[x]\cap \m_A$, we have an embedding
$\Delta(\mu)\hookrightarrow\kappa(\mu)$. We shall see along the paper that this embedding identifies $\kappa(\mu)$ with the field of fractions of $\Delta(\mu)$.  

There is a natural map $\hm\colon K[x]\lra \ggm$, given by $\hm(0)=0$, and
$$\hm(g)= g+\pset_{\mu(g)}^+\in\pset_{\mu(g)}/\pset_{\mu(g)}^+,$$
for $g\ne0$. Note that $\hm(g)=0$ if and only if $g=0$. For all $g,h\in K[x]$ we have:
\begin{equation}\label{Hmu}
\as{1.2}
\begin{array}{l}
 \hm(gh)=\hm(g)\hm(h), \\
 \hm(g+h)=\hm(g)+\hm(h), \mbox{ if }\mu(g)=\mu(h)=\mu(g+h).
\end{array}
\end{equation}

If $\mu\le \mu'$, a canonical homomorphism of graded algebras $\ggm\to\gg(\mu')$ is determined by $g+\ppa^+(\mu)\mapsto g+\ppa^+(\mu')$ for all $g,\alpha$. The image of $\hm(g)$ is $\hmp(g)$ if $\mu(g)=\mu'(g)$, and zero otherwise. 

\begin{definition}\label{mu}\mbox{\null}

\noindent$\bullet$ We say that $g,h\in K[x]$ are \emph{$\mu$-equivalent}, and we write $g\smu h$, if $\hm(g)=\hm(h)$. Thus, $g\smu h$ if and only if $\mu(g-h)>\mu(g)=\mu(h)$ or $g=h=0$. \medskip

\noindent$\bullet$ We say that $g$ is \emph{$\mu$-divisible} by $h$, and we write $h\mmu g$, if $\hm(g)$ is divisible by $\hm(h)$ in $\ggm$. Thus, $h\mmu g$ if and only if $g\smu hf$ for some $f\in K[x]$.
\medskip

\noindent$\bullet$ We say that $\phi\in K[x]$ is $\mu$-irreducible if $\hm(\phi)\ggm$ is a non-zero prime ideal in $\ggm$.\medskip

\noindent$\bullet$ We say that $\phi\in K[x]$ is $\mu$-minimal if $\deg \phi>0$ and $\phi\nmid_\mu g$ for any non-zero $g\in K[x]$ with $\deg g<\deg \phi$.

\end{definition}

\begin{lemma}\label{minimal0}
Let $\phi\in K[x]$ be a polynomial of positive degree. For any $g\in K[x]$, let $g=\sum_{0\le s}g_s\phi^s$, $\deg g_s<\deg \phi$, be its canonical $\phi$-expansion.
The following conditions are equivalent:
\begin{enumerate}
\item $\phi$ is $\mu$-minimal
\item For any $g\in K[x]$, $\mu(g)=\min\{\mu(g_0),\mu(g-g_0)\}$.
\item For any $g\in K[x]$, $\mu(g)=\min_{0\le s}\{\mu(g_s\phi^s)\}$. 
\item For any nonzero $g\in K[x]$, $\phi\nmid_\mu g$ if and only  if $\mu(g)=\mu(g_0)$.
\end{enumerate}
\end{lemma}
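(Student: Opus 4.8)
The plan is to establish the cyclic chain $(1)\Rightarrow(2)\Rightarrow(3)\Rightarrow(4)\Rightarrow(1)$. Throughout, for $g\in K[x]$ I write $g=g_0+\phi g'$ with $g'=\sum_{s\ge1}g_s\phi^{s-1}$, so that $g-g_0=\phi g'$ and, when $g\ne g_0$, one has $\deg g'=\deg g-\deg\phi<\deg g$. The single recurring tool is that, by Definition \ref{mu}, the $\mu$-minimality of $\phi$ says exactly that $\hm(\phi)$ divides $\hm(h)$ in $\ggm$ for no nonzero $h\in K[x]$ of degree $<\deg\phi$.

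For $(1)\Rightarrow(2)$: the inequality $\mu(g)\ge\min\{\mu(g_0),\mu(\phi g')\}$ is the ultrametric inequality for $g=g_0+\phi g'$, so it remains to exclude $\mu(g)>\mu(g_0)$ and $\mu(g)>\mu(\phi g')$. If $\mu(g)>\mu(g_0)$ then $g_0\ne0$ and comparing values forces $\mu(g_0)=\mu(\phi g')$, hence $g_0\smu-\phi g'$ by the definition of $\smu$; but then $\hm(\phi)\mid\hm(g_0)$ in $\ggm$ with $0<\deg g_0<\deg\phi$, contradicting $\mu$-minimality. The case $\mu(g)>\mu(\phi g')$ is identical after exchanging the roles of $g_0$ and $\phi g'$. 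Thus $\mu(g)=\min\{\mu(g_0),\mu(g-g_0)\}$.

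For $(2)\Rightarrow(3)$: induct on $\deg g$; if $\deg g<\deg\phi$ then $g=g_0$ and there is nothing to prove, while otherwise (2) gives $\mu(g)=\min\{\mu(g_0),\,\mu(\phi)+\mu(g')\}$, and the inductive hypothesis applied to $g'$ (of smaller degree) yields $\mu(\phi)+\mu(g')=\min_{s\ge1}\mu(g_s\phi^s)$, whence $\mu(g)=\min_{s\ge0}\mu(g_s\phi^s)$. For $(3)\Rightarrow(4)$: fix $g\ne0$, put $m=\mu(g)$ and $S=\{s\ge0:\mu(g_s\phi^s)=m\}$; by (3) and \eqref{Hmu} the set $S$ is nonempty and $\hm(g)=\sum_{s\in S}\hm(g_s)\hm(\phi)^s$. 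Now (3) also gives $\mu(g)\le\mu(g_0)$, so if $\mu(g)\ne\mu(g_0)$ then $0\notin S$, $\hm(\phi)$ divides $\hm(g)$, and $\phi\mmu g$. Conversely, if $\mu(g)=\mu(g_0)$ (hence $g_0\ne0$) but $\phi\mmu g$, write $g\smu\phi f$; the canonical $\phi$-expansion of $\phi f$ has zero constant term, so $(g-\phi f)_0=g_0\ne0$ and in particular $g-\phi f\ne0$, and applying (3) to $g-\phi f$ gives $\mu(g-\phi f)\le\mu(g_0)=\mu(g)$, contradicting $g\smu\phi f$. Hence $\phi\nmid_\mu g\iff\mu(g)=\mu(g_0)$.

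Finally $(4)\Rightarrow(1)$ is immediate: a nonzero $g$ with $\deg g<\deg\phi$ equals its own $\phi$-adic constant term, so $\mu(g)=\mu(g_0)$ and (4) forces $\phi\nmid_\mu g$; since $\deg\phi>0$ by hypothesis, $\phi$ is $\mu$-minimal. I expect the only genuinely delicate points to be $(1)\Rightarrow(2)$ and the analogous cancellation step inside $(3)\Rightarrow(4)$: one must carefully handle the degenerate cases $g_0=0$ and $g=g_0$, and correctly detect when an equality of $\mu$-values produces the $\mu$-equivalence that violates minimality; everything else is routine ultrametric bookkeeping or a one-line induction.
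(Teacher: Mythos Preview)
Your proof is correct and follows essentially the same cyclic scheme $(1)\Rightarrow(2)\Rightarrow(3)\Rightarrow(4)\Rightarrow(1)$ as the paper, with the same key idea (an equality $g_0\smu-\phi g'$ forcing $\phi\mmu g_0$ against minimality). The only differences are cosmetic: you spell out the induction for $(2)\Rightarrow(3)$ where the paper says ``Clearly'', and in $(3)\Rightarrow(4)$ you phrase the forward direction via the decomposition $\hm(g)=\sum_{s\in S}\hm(g_s)\hm(\phi)^s$ rather than writing $g\smu\sum_{s>0}g_s\phi^s$; the converse step (applying (3) to $g-\phi f$) is identical to the paper's.
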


\begin{proof}  

Let $g-g_0=\phi q$. If $\mu(g)>\mu(g_0)$, or $\mu(g)>\mu(\phi q)$, then $g_0\smu-\phi q$, and $\phi\mmu g_0$. Hence, (1) implies (2). 

Clearly, (2) implies (3).
Let us show that (3) implies (4). For a non-zero polynomial $g$, (3) implies $\mu(g)\le\mu(g_0)$. If $\mu(g)<\mu(g_0)$, then $g\smu \sum_{0<s}g_s\phi^s$, and $\phi\mmu g$. Conversely, if $g\smu \phi q$ for some $q\in K[x]$, then $g_0$ is the $0$-th coefficient of the $\phi$-expansion of $g-\phi q$, and (3) implies that
$\mu(g)<\mu(g-\phi q)\le \mu(g_0)$.

Finally, (4) implies (1) because $g=g_0$ if $\deg g<\deg \phi$, and (4) implies $\phi\nmid_\mu g$. 
\end{proof}

The property of $\mu$-minimality is not stable under $\mu$-equivalence. For instance, if $g$ is $\mu$-minimal and $\mu(g)>0$, then $g+g^2\smu g$, but $g+g^2$ is not $\mu$-minimal, since $g+g^2\mmu g$ and $\deg (g+g^2)>\deg g$. Nevertheless, for $\mu$-equivalent polynomials of the same degree, $\mu$-minimality is obviously preserved.  
 
\subsection{Key polynomials and augmented valuations}
\begin{definition}
A key polynomial for the valuation $\mu$ is a monic polynomial $\phi\in K[x]$ which is $\mu$-minimal and $\mu$-irreducible. 

We denote by $\kpm$ the set of all key polynomials for $\mu$. 
\end{definition}

For instance, $\kp(\mu_0)$ is the set of all monic polynomials $g\in\oo[x]$ such that $\overline{g}$ is irreducible in $\F[x]$.

Since $\mu$-minimality is not stable under $\mu$-equivalence, the property of being a key polynomial is not stable under $\mu$-equivalence. However, for polynomials of the same degree this stability is clear.


\begin{lemma}\label{mid=sim}
Let $\phi$ be a key polynomial for $\mu$, and $g\in K[x]$ a monic polynomial such that $\phi\mmu g$ and $\deg g=\deg\phi$. Then, $\phi\smu g$ and $g$ is a key polynomial for $\mu$.
\end{lemma}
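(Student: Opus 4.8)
The plan is to first establish $\phi\smu g$ (equivalently $\hm(\phi)=\hm(g)$), and then to note that, among polynomials of the same degree, being a key polynomial is preserved under $\mu$-equivalence.

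First I would examine the canonical $\phi$-expansion of $g$. Since $g$ and $\phi$ are both monic of the same degree, $g-\phi$ has degree strictly smaller than $\deg\phi$, so the $\phi$-expansion of $g$ is simply $g=\phi+(g-\phi)$; that is, $g_0=g-\phi$, $g_1=1$, and $g_s=0$ for $s\ge2$. As $\phi$ is $\mu$-minimal, Lemma \ref{minimal0} applies. Part (3) gives $\mu(g)=\min\{\mu(g-\phi),\mu(\phi)\}$, so in particular $\mu(g)\le\mu(g-\phi)=\mu(g_0)$. On the other hand, the hypothesis $\phi\mmu g$ together with $g\ne0$ and part (4) forces $\mu(g)\ne\mu(g_0)$, hence $\mu(g)<\mu(g-\phi)$. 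Substituting this strict inequality back into $\mu(g)=\min\{\mu(g-\phi),\mu(\phi)\}$ yields $\mu(g)=\mu(\phi)<\mu(g-\phi)$, which by Definition \ref{mu} is exactly the assertion $\phi\smu g$.

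Once $\hm(g)=\hm(\phi)$ is known, the remaining claims are formal. Since $\hm(\phi)\ggm$ is a nonzero prime ideal of $\ggm$ and $\hm(g)\ggm=\hm(\phi)\ggm$, the polynomial $g$ is $\mu$-irreducible. For $\mu$-minimality, suppose $g\mmu h$ for some nonzero $h$ with $\deg h<\deg g=\deg\phi$; then $\hm(\phi)=\hm(g)$ divides $\hm(h)$ in $\ggm$, so $\phi\mmu h$ with $0\le\deg h<\deg\phi$, contradicting the $\mu$-minimality of $\phi$. Thus $g$ is monic, $\mu$-minimal and $\mu$-irreducible, i.e. a key polynomial for $\mu$.

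The only step requiring genuine care is the derivation of $\phi\smu g$ from the bare divisibility relation $\phi\mmu g$ and the degree constraint; this is exactly where the equivalent formulations of $\mu$-minimality in Lemma \ref{minimal0} do the work, applied to the expansion $g=\phi+(g-\phi)$. Everything else is a routine consequence of the identity $\hm(g)=\hm(\phi)$ in the graded algebra.
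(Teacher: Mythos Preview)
Your proof is correct and follows essentially the same approach as the paper's: both write the $\phi$-expansion $g=\phi+a$ with $a=g-\phi$, invoke Lemma~\ref{minimal0} (item~4, implicitly together with item~3) to obtain $\mu(g)<\mu(a)$ and hence $\phi\smu g$, and then observe that $\mu$-irreducibility and $\mu$-minimality transfer to $g$ because $\hm(g)=\hm(\phi)$ and $\deg g=\deg\phi$. Your version is slightly more explicit in separating the use of items~(3) and~(4) and in spelling out the $\mu$-minimality argument, but the substance is identical.
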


\begin{proof}
The $\phi$-expansion of $g$ is $g=a+\phi$, with $\deg a<\deg\phi$. By item 4 of Lemma \ref{minimal0}, we have $\mu(g)<\mu(a)$, so that $\phi\smu g$. Hence, $\hm(g)=\hm(\phi)$, and $g$ is $\mu$-irreducible. Since $\deg g=\deg \phi$, $g$ is $\mu$-minimal too.
\end{proof}

\begin{definition}
For $\phi\in\kpm$ and $g\in K[x]$, we denote by $\ord_{\mu,\phi}(g)$ the largest integer $s$ such that $\phi^s\mid_\mu g$. 
We convene that $\ord_{\mu,\phi}(0)=\infty$.
\end{definition}

Since $\phi$ is $\mu$-irreducible, for all $g,h\in K[x]$ we have  
\begin{equation}\label{multiplicative}
\ord_{\mu,\phi}(gh)=\ord_{\mu,\phi}(g)+\ord_{\mu,\phi}(h).
\end{equation}

The map $\ord_{\mu,\phi}$ induces a group homomorphism $K(x)^*\to \Z$, but it is not a valuation. For instance, if $n>\mu(\phi)$, then $\ord_{\mu,\phi}(\phi)=1=\ord_{\mu,\phi}(\phi+\pi^n)$, but $\ord_{\mu,\phi}(\pi^n)=0$. However, as a consequence of (\ref{Hmu}), it has the following property.

\begin{lemma}\label{omega}
 If $g,h\in K[x]$ satisfy $\mu(g)=\mu(h)=\mu(g+h)$, then $\ord_{\mu,\phi}(g+h)\ge \min\{\ord_{\mu,\phi}(g),\ord_{\mu,\phi}(h)\}$, and equality holds if $\ord_{\mu,\phi}(g)\ne\ord_{\mu,\phi}(h)$.\hfill{$\Box$}
\end{lemma}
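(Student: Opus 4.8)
Lemma \ref{omega} states that if $g,h\in K[x]$ satisfy $\mu(g)=\mu(h)=\mu(g+h)$, then $\ord_{\mu,\phi}(g+h)\ge\min\{\ord_{\mu,\phi}(g),\ord_{\mu,\phi}(h)\}$, with equality when the two orders differ.

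The proof should follow directly from the additive part of (\ref{Hmu}) applied in the graded domain $\ggm$, combined with the definition of $\ord_{\mu,\phi}$ as divisibility order of the initial form $\hm(\cdot)$ by the prime element $\hm(\phi)$. Let me think through the steps.

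First: set $m = \min\{\ord_{\mu,\phi}(g),\ord_{\mu,\phi}(h)\}$. By definition of the order, $\hm(\phi)^m$ divides both $\hm(g)$ and $\hm(h)$ in $\ggm$ — that is, $\hm(g) = \hm(\phi)^m \xi_g$ and $\hm(h) = \hm(\phi)^m \xi_h$ for suitable homogeneous elements $\xi_g, \xi_h \in \ggm$ (one needs that divisibility of $\hm(g)$ by $\hm(\phi)^m$, which a priori means in the whole graded algebra, can be realized with a homogeneous quotient — this holds because $\ggm$ is a graded domain, so the quotient of two homogeneous elements is itself homogeneous). Since $\mu(g) = \mu(h) = \mu(g+h)$, the second line of (\ref{Hmu}) gives $\hm(g+h) = \hm(g) + \hm(h) = \hm(\phi)^m(\xi_g + \xi_h)$. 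Hence $\hm(\phi)^m \mid_{} \hm(g+h)$, i.e. $\ord_{\mu,\phi}(g+h)\ge m$, which is the first assertion. One should also dispatch the degenerate cases where $g$, $h$, or $g+h$ is zero — but $\mu(g)=\mu(h)=\mu(g+h)$ being finite forces all three nonzero, so these do not arise.

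Second, for the equality statement, suppose without loss of generality $\ord_{\mu,\phi}(g) = m < \ord_{\mu,\phi}(h)$, so $\hm(\phi)^{m+1}$ divides $\hm(h)$ but not $\hm(g)$; write $\xi_h = \hm(\phi)\eta_h$. Then $\hm(g+h) = \hm(\phi)^m(\xi_g + \hm(\phi)\eta_h)$. If $\ord_{\mu,\phi}(g+h)$ were strictly greater than $m$, then $\hm(\phi)$ would divide $\xi_g + \hm(\phi)\eta_h$ in $\ggm$, hence $\hm(\phi)\mid\xi_g$, hence $\hm(\phi)^{m+1}\mid\hm(g)$, contradicting $\ord_{\mu,\phi}(g)=m$. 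Therefore $\ord_{\mu,\phi}(g+h) = m$, as claimed.

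The only delicate point — the one I would call the main obstacle — is the bookkeeping around homogeneity of quotients: the relation $\phi^s\mid_\mu g$ is defined via divisibility of $\hm(g)$ by $\hm(\phi)^s$ in the graded ring $\ggm$, and I want to manipulate cofactors as homogeneous elements and invoke (\ref{Hmu}) on a fixed graded piece. Since $\ggm$ is an integral domain (stated in the definition of the graded algebra) and $\hm(\phi)$, $\hm(g)$, $\hm(h)$ are all homogeneous, the quotients are forced to be homogeneous of the appropriate degree, so everything stays inside the graded formalism and the additivity relation (\ref{Hmu}) applies verbatim. Once that is observed, the argument is a two-line computation in $\ggm$, formally identical to the ultrametric inequality for a discrete valuation, with $\hm(\phi)$-adic order playing the role of the valuation and (\ref{Hmu}) supplying the needed additivity.
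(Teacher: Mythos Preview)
Your proof is correct and is precisely the argument the paper has in mind: the lemma is stated there with a $\Box$ immediately after, preceded by the remark ``as a consequence of (\ref{Hmu})'', so the authors leave exactly the details you have supplied. Your observation about homogeneity of quotients in the graded domain is a reasonable scruple, though strictly speaking the divisibility conclusion $\hm(\phi)^m\mid\hm(g+h)$ and the cancellation step in the equality case only require that $\ggm$ is an integral domain, which is stated in the paper.
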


\begin{definition}\label{muprima}
Take $\phi\in \kpm$ and $\la\in \Q_{>0}$. The augmented valuation of $\mu$ with respect to these data is the valuation $\mu'$ determined by the following action on $K[x]$:
\begin{itemize}
\item $\mu'(a)=\mu(a)$, if $\deg a<\deg \phi$.
\item $\mu'(\phi)=\mu(\phi)+\la$.
\item If $g=\sum_{0\le s}g_s\phi^s$ is the $\phi$-expansion of $g$, then $\mu'(g)=\min_{0 \le s}\{\mu'(g_s\phi^s)\}$. 
\end{itemize}

Or equivalently, $\mu'(g)=\min_{0\le s}\{\mu(g_s\phi^s)+s\la\}$. 
We denote $\mu'=[\mu;(\phi,\la)]$.
\end{definition}

\begin{proposition}\cite[Thms. 4.2, 5.1]{mcla}, \cite[Thm. 1.2, Prop. 1.3]{Vaq}\label{extension}
\begin{enumerate}
\item 
The natural extension of $\mu'$ to $K(x)$ is a valuation on this field and $\mu\le\mu'$.
\item
For a non-zero $g\in K[x]$, $\mu(g)=\mu'(g)$ if and only if $\phi\nmid_{\mu}g$. Hence, $\hm(\phi)\,\ggm=\op{Ker}(\ggm\to\gg(\mu'))$.
\item
The group $\Gamma(\mu')$ is the subgroup of $\Q$ generated by $\mu'(\phi)$ and the subset

$\Gamma_\phi(\mu):=\left\{\mu(g)\mid g\in K[x],\ g\ne0,\ \deg g<\deg\phi\right\}\subset \Gamma(\mu)$.\hfill{$\Box$}
\end{enumerate}
\end{proposition}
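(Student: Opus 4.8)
The three items carry very unequal weight: (2) and (3) drop out quickly from the definition of $\mu'$ once (1) is known, so the plan is to concentrate on the only nontrivial assertion in (1), namely that $\mu'$ is multiplicative.

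\emph{The cheap half of (1).} First I would record the two easy properties of the map $\mu'\colon K[x]\to\Q$ given by the $\phi$-expansion. The ultrametric inequality propagates from $\mu$: if $g=\sum g_s\phi^s$ and $h=\sum h_s\phi^s$ are the $\phi$-expansions then, by uniqueness of $\phi$-expansions, $\sum(g_s+h_s)\phi^s$ is the $\phi$-expansion of $g+h$, and $\mu((g_s+h_s)\phi^s)\ge\min\{\mu(g_s\phi^s),\mu(h_s\phi^s)\}$, so adding $s\la$ and taking minima gives $\mu'(g+h)\ge\min\{\mu'(g),\mu'(h)\}$. The relation $\mu\le\mu'$ on $K[x]$ is Lemma \ref{minimal0}(3) together with $\la>0$: $\mu(g)=\min_s\mu(g_s\phi^s)\le\min_s\{\mu(g_s\phi^s)+s\la\}=\mu'(g)$. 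Granting multiplicativity, $\mu'$ extends to $K(x)$ by $\mu'(p/q):=\mu'(p)-\mu'(q)$ in the standard formal way, its value group is cyclic by (3) so it is discrete, it restricts to $v$ on $K$ (constants have degree $<\deg\phi$ since $\deg\phi>0$), and $\mu'(x)\ge\mu(x)\ge0$; hence $\mu'\in\V$.

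\emph{Multiplicativity, the crux.} Write $g=\sum_i g_i\phi^i$, $h=\sum_j h_j\phi^j$, $\alpha=\mu'(g)$, $\beta=\mu'(h)$. I would compute the $\phi$-expansion of $gh$ by reducing the cross products, $g_ih_j=q_{ij}\phi+r_{ij}$ with $\deg r_{ij}<\deg\phi$, obtaining $gh=\sum_k c_k\phi^k$ with $c_k=\sum_{i+j=k}r_{ij}+\sum_{i+j=k-1}q_{ij}$ (automatically of degree $<\deg\phi$). The one point where $\mu$-\emph{irreducibility} of $\phi$ enters: for $g_i,h_j\ne0$ of degree $<\deg\phi$ we have $\phi\nmid_\mu g_i$, $\phi\nmid_\mu h_j$ by $\mu$-minimality, hence $\phi\nmid_\mu g_ih_j$ since $\hm(\phi)\ggm$ is prime; Lemma \ref{minimal0}(4) then gives $\mu(r_{ij})=\mu(g_ih_j)=\mu(g_i)+\mu(h_j)$, while Lemma \ref{minimal0}(3) gives $\mu(q_{ij}\phi)\ge\mu(g_ih_j)$. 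From these, $\mu(r_{ij})+(i+j)\mu'(\phi)=(\mu(g_i)+i\mu'(\phi))+(\mu(h_j)+j\mu'(\phi))\ge\alpha+\beta$, with equality precisely when $i$ realizes $\alpha$ in $g$ and $j$ realizes $\beta$ in $h$, whereas $\mu(q_{ij})+(i+j+1)\mu'(\phi)\ge\alpha+\beta+\la>\alpha+\beta$ (the surplus $\la$ coming from $\mu'(\phi)=\mu(\phi)+\la$). Substituting into $c_k$ yields $\mu(c_k)+k\mu'(\phi)\ge\alpha+\beta$ for every $k$, i.e.\ $\mu'(gh)\ge\alpha+\beta$. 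For the opposite inequality, take $i_0$, $j_0$ to be the \emph{largest} indices at which $\alpha$, $\beta$ are attained, and inspect $c_{k_0}$ for $k_0=i_0+j_0$: among all its summands the term $r_{i_0j_0}$ has $\mu$-value exactly $\alpha+\beta-k_0\mu'(\phi)$, and every other summand is strictly larger — the other $r_{ij}$ with $i+j=k_0$ because then $i>i_0$ or $j>j_0$, all the $q_{ij}$ because of the surplus $\la$ — so $\mu(c_{k_0})=\alpha+\beta-k_0\mu'(\phi)$ and therefore $\mu'(gh)\le\mu(c_{k_0})+k_0\mu'(\phi)=\alpha+\beta$. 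This reverse inequality (no cancellation) is the step I expect to be the real obstacle: everything else is formal, but it rests both on the identity $\mu(r_{ij})=\mu(g_i)+\mu(h_j)$, hence on $\mu$-irreducibility of $\phi$, and on the bookkeeping isolating a unique minimal summand in $c_{k_0}$.

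\emph{Items (2) and (3).} For (2): if $\phi\nmid_\mu g$ then $\mu(g)=\mu(g_0)$ by Lemma \ref{minimal0}(4), and since $\mu(g)\le\mu'(g)\le\mu(g_0)$ (the $s=0$ term of the defining minimum), $\mu'(g)=\mu(g)$; conversely if $\phi\mmu g$ then by Lemma \ref{minimal0}(3) the minimum $\min_s\mu(g_s\phi^s)=\mu(g)$ is attained only at indices $s\ge1$, and there adding $s\la>0$, together with $\mu(g_0)>\mu(g)$ at $s=0$, forces $\mu'(g)>\mu(g)$. The kernel statement follows because $\ggm\to\gg(\mu')$ is a graded homomorphism, so its kernel is the graded ideal spanned by the homogeneous elements $\hm(g)$ that die; by (2) these are exactly the $\hm(g)$ with $\phi\mmu g$, which span the ideal $\hm(\phi)\ggm$ (contained in the kernel because $\mu'(\phi)>\mu(\phi)$). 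For (3): $\Gamma(\mu')$ is generated by the values $\mu'(g)$, $0\ne g\in K[x]$; writing $g=\sum g_s\phi^s$ and letting $s_0$ attain the minimum, $\mu'(g)=\mu(g_{s_0})+s_0\mu'(\phi)$ with $\mu(g_{s_0})\in\Gamma_\phi(\mu)$, giving $\Gamma(\mu')\subseteq\langle\mu'(\phi),\Gamma_\phi(\mu)\rangle$; the reverse inclusion is immediate since $\mu'(\phi)\in\Gamma(\mu')$ and $\mu'(a)=\mu(a)\in\Gamma(\mu')$ for nonzero $a$ of degree $<\deg\phi$.
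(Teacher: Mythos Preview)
The paper does not give its own proof of this proposition: the statement ends with a \hfill$\Box$ and is attributed directly to \cite[Thms.~4.2, 5.1]{mcla} and \cite[Thm.~1.2, Prop.~1.3]{Vaq}. So there is nothing in the paper to compare against beyond those citations.

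Your argument is correct and is essentially MacLane's original proof. The ultrametric inequality and $\mu\le\mu'$ are indeed formal, and your treatment of multiplicativity is the classical one: reduce each cross term $g_ih_j$ modulo $\phi$, use $\mu$-irreducibility via Lemma~\ref{minimal0}(4) to get $\mu(r_{ij})=\mu(g_i)+\mu(h_j)$, and then isolate a unique summand of minimal value in $c_{k_0}$ by taking extremal attaining indices. The bookkeeping step you flagged as the real obstacle is fine: for $i+j=k_0=i_0+j_0$ with $(i,j)\ne(i_0,j_0)$ one of $i>i_0$, $j>j_0$ must hold, forcing strict inequality in the corresponding $r_{ij}$ term (or that term vanishes), while every $q_{ij}$ term carries the surplus $\la$. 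Your derivations of (2) and (3) from Lemma~\ref{minimal0} are likewise correct and match the standard arguments.
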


The group $\Gamma(\mu')$ does not necessarily contain $\Gamma(\mu)$. For instance, for the valuations
$$
\mu=[\mu_0;(x,1/2)],\qquad \mu'=[\mu;(x,1/2)]=[\mu_0;(x,1)],
$$
we have $\Gamma(\mu)=(1/2)\Z$, which is larger than $\Gamma(\mu')=\Z$.

\begin{lemma}\label{phi}
Let $\mu'=[\mu;(\phi,\la)]$ be an augmented valuation. Then, $\phi$ is a key polynomial for $\mu'$. 
\end{lemma}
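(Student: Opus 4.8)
The plan is to verify directly the two defining properties of a key polynomial for $\mu'$, namely that $\phi$ is $\mu'$-minimal and $\mu'$-irreducible; monicity of $\phi$ is inherited from its being a key polynomial for $\mu$. The minimality is essentially built into the construction: by Proposition \ref{extension}(1), $\mu'\in\V$ is a genuine valuation, so Lemma \ref{minimal0} applies to the pair $(\mu',\phi)$, and the third bullet of Definition \ref{muprima} asserts exactly that $\mu'(g)=\min_{0\le s}\{\mu'(g_s\phi^s)\}$ for the $\phi$-expansion $g=\sum_s g_s\phi^s$ of any $g$ --- this is condition (3) of Lemma \ref{minimal0}, which is equivalent to condition (1). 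Hence $\phi$ is $\mu'$-minimal, and in particular $\phi\nmid_{\mu'}1$, so $\hmp(\phi)\gg(\mu')$ is a proper ideal.

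For $\mu'$-irreducibility I must show that $\hmp(\phi)\gg(\mu')$, which is non-zero since $\phi\ne 0$, is prime. Since it is a homogeneous ideal and the non-zero homogeneous elements of $\gg(\mu')$ are precisely the $\hmp(g)$ with $g\ne 0$ (and $\hmp(g)\hmp(h)=\hmp(gh)$), it is enough to prove: if $g,h\in K[x]$ are non-zero with $\phi\nmid_{\mu'}g$ and $\phi\nmid_{\mu'}h$, then $\phi\nmid_{\mu'}gh$. Write the $\phi$-expansions $g=\sum_s g_s\phi^s$ and $h=\sum_s h_s\phi^s$. By $\mu'$-minimality and item 4 of Lemma \ref{minimal0} applied to $\mu'$, the hypotheses say $\mu'(g)=\mu'(g_0)$ and $\mu'(h)=\mu'(h_0)$; in particular $g_0,h_0\ne 0$, and since $\deg g_0,\deg h_0<\deg\phi$ the first bullet of Definition \ref{muprima} gives $\mu'(g_0)=\mu(g_0)$ and $\mu'(h_0)=\mu(h_0)$.

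Now I move to the valuation $\mu$. Being non-zero of degree less than $\deg\phi$, the polynomials $g_0$ and $h_0$ satisfy $\phi\nmid_\mu g_0$ and $\phi\nmid_\mu h_0$ trivially (their $\phi$-expansions are themselves), so the $\mu$-irreducibility of $\phi$ --- that is, primality of $\hm(\phi)\ggm$ --- forces $\phi\nmid_\mu g_0h_0$. Let $c$ be the constant term of the $\phi$-expansion of the non-zero polynomial $g_0h_0$; then $\deg c<\deg\phi$, and item 4 of Lemma \ref{minimal0} (for $\mu$) gives $\mu(g_0h_0)=\mu(c)$. Since $g\equiv g_0$ and $h\equiv h_0\pmod{\phi}$ in $K[x]$, also $gh\equiv g_0h_0\equiv c\pmod{\phi}$, so $c$ is the constant term of the $\phi$-expansion of $gh$ as well; and because $\mu'$ is a valuation,
$$
\mu'(gh)=\mu'(g)+\mu'(h)=\mu(g_0)+\mu(h_0)=\mu(g_0h_0)=\mu(c)=\mu'(c),
$$
which by item 4 of Lemma \ref{minimal0} (now for $\mu'$) is precisely the statement $\phi\nmid_{\mu'}gh$. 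This completes the verification.

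The substantive step, and the one I would single out as the heart of the argument, is this transfer of irreducibility from $\mu$ to $\mu'$: it works because forming the constant term of a $\phi$-expansion is compatible with multiplication modulo $\phi$, whereas on polynomials of degree less than $\deg\phi$ the valuations $\mu$ and $\mu'$ coincide, so the $\mu$-irreducibility of $\phi$ pins down $\mu'(gh)$ via the constant term of $gh$. Everything else is routine unwinding of $\phi$-expansions through Lemma \ref{minimal0}.
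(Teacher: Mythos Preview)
Your proof is correct and follows essentially the same approach as the paper's: both deduce $\mu'$-minimality immediately from Lemma \ref{minimal0}(3), and both prove $\mu'$-irreducibility by relating the $0$-th coefficient $c$ of the $\phi$-expansion of $gh$ to $g_0h_0$ via $\mu(g_0h_0)=\mu(c)$ (using the $\mu$-irreducibility of $\phi$ and Lemma \ref{minimal0}(4)), then transferring this equality to $\mu'$. The only difference is cosmetic: you argue the contrapositive ($\phi\nmid_{\mu'}g$ and $\phi\nmid_{\mu'}h$ imply $\phi\nmid_{\mu'}gh$), whereas the paper argues directly ($\phi\mid_{\mu'}gh$ implies $\phi\mid_{\mu'}g$ or $\phi\mid_{\mu'}h$).
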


\begin{proof}
By Lemma \ref{minimal0}, $\phi$ is $\mu'$-minimal; thus, $\hmp(\phi)$ is not a unit in $\gg(\mu')$. Suppose that $\phi\mid_{\mu'}gh$ for non-zero $g,h\in K[x]$. The $0$-th coefficient of the $\phi$-expansion of $gh$ is the remainder $c_0$ of the division of $g_0h_0$ by $\phi$. Since $\phi$ is $\mu$-irreducible, we have $\phi\nmid_\mu g_0h_0$ and Lemma \ref{minimal0} shows that $\mu(g_0h_0)=\mu(c_0)$. By Lemma \ref{minimal0}, from  $\phi\mid_{\mu'}gh$ we deduce $\mu'(gh)<\mu'(c_0)=\mu(c_0)=\mu(g_0h_0)=\mu'(g_0h_0)$. Hence, either $\mu'(g)<\mu'(g_0)$ or
$\mu'(h)<\mu'(h_0)$. By Lemma \ref{minimal0}, either $\phi\mid_{\mu'}g$, or $\phi\mid_{\mu'}h$. 
Thus, $\phi$ is $\mu'$-irreducible.
\end{proof}

\begin{lemma}\label{irredKv}
Every $\phi\in \kpm$ is irreducible in $K_v[x]$.
\end{lemma}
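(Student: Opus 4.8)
The plan is to argue by contradiction, using that $K$ is dense in its completion $K_v$ to trade a hypothetical factorization of $\phi$ over $K_v$ for a $\mu$-equivalent product of lower-degree polynomials over $K$, and then to derive a contradiction with $\mu$-irreducibility and $\mu$-minimality. So suppose $\phi=GH$ in $K_v[x]$ with $G,H$ nonconstant; since $\phi$ is monic and $K_v$ is a field we may take $G,H$ monic, and then $1\le \deg G,\deg H\le n-1$ where $n=\deg\phi$. The finitely many coefficients of $G$ and $H$ have $v$-values bounded below by some constant, so for each large $N$ (to be fixed later) density of $K$ in $K_v$ lets us choose monic $g,h\in K[x]$ with $\deg g=\deg G$, $\deg h=\deg H$, all of whose non-leading coefficients lie within $v$-distance $>N$ of the corresponding coefficients of $G,H$. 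Writing $\phi-gh=GH-gh=G(H-h)+(G-g)h$ and estimating coefficientwise, every coefficient of $\phi-gh$ has $v$-value $>N-C$ for a constant $C$ depending only on $G,H$; moreover $\phi-gh$ has degree $<n$, since $\phi$ and $gh$ are both monic of degree $n$.

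Next I would invoke the trivial bound: for $b=\sum_{i<n}b_ix^i\in K[x]$ one has $\mu(b)\ge\min_i\big(v(b_i)+i\,\mu(x)\big)\ge\min_i v(b_i)$, because $\mu(x)\ge 0$. Hence, fixing $N$ large enough that all coefficients of $\phi-gh$ have $v$-value exceeding $\mu(\phi)$, we obtain $\mu(\phi-gh)>\mu(\phi)$ (with the convention $\mu(0)=\infty$). From the ultrametric inequality this rules out both $\mu(gh)<\mu(\phi)$ and $\mu(gh)>\mu(\phi)$, so $\mu(gh)=\mu(\phi)$ and $\phi\smu gh$; in particular $\hm(\phi)=\hm(g)\hm(h)$ in $\ggm$.

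Finally, since $\phi$ is a key polynomial it is $\mu$-irreducible, so $\hm(\phi)\ggm$ is a nonzero prime ideal of $\ggm$, whence $\hm(\phi)$ divides $\hm(g)$ or $\hm(h)$; that is, $\phi\mmu g$ or $\phi\mmu h$. But $g$ and $h$ are nonzero of degree $<n=\deg\phi$, contradicting the $\mu$-minimality of $\phi$. This contradiction shows $\phi$ admits no nontrivial factorization in $K_v[x]$, i.e. $\phi$ is irreducible there. I expect the only delicate point to be the bookkeeping in the approximation step — checking that the error constant $C$ is genuinely independent of $N$ and that the leading terms of $\phi$ and $gh$ cancel so $\phi-gh$ has degree $<n$ — while the valuation-theoretic part falls out immediately from the definitions; notably, no continuity of $\mu$ nor any extension of $\mu$ to $K_v(x)$ is required.
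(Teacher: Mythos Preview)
Your proof is correct and follows essentially the same route as the paper: approximate a putative $K_v$-factorization $\phi=GH$ by $g,h\in K[x]$ of the same degrees, use density to make $\mu(\phi-gh)>\mu(\phi)$, then apply $\mu$-irreducibility and $\mu$-minimality for the contradiction. The only cosmetic difference is that you spell out the estimate $\mu(\phi-gh)\ge\min_i v((\phi-gh)_i)$ explicitly (equivalently, $\mu\ge\mu_0$), whereas the paper just writes ``by taking $n$ large enough, we get $\phi\smu g_nh_n$'' and leaves that bound implicit.
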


\begin{proof}
Suppose $\phi=gh$ for two monic polynomials $g,h\in K_v[x]$. Then, for any positive integer $n$, there exist polynomials $g_n,h_n\in K[x]$ such that $\phi\equiv g_nh_n
\md{\m^n}$ and $\deg g_n=\deg g$, $\deg h_n=\deg h$. By taking $n$ large enough, we get $\phi\smu g_nh_n$; by the $\mu$-irreducibility,
 $\phi$ divides one of the factors in $\ggm$, say  $\phi\mmu h_n$. By the $\mu$-minimality of $\phi$, this implies $\deg\phi\le \deg h_n$. Thus, necessarily $\phi=h$ and $g=1$.  
\end{proof}

Let $\phi$ be a key polynomial for $\mu$. Choose a root $\t \in\kb$ of $\phi$ and denote $K_\phi=K_v(\t)$ the finite extension of $K_v$ generated by $\t$. Also, let $\oo_\phi\subset K_\phi$ be the valuation ring of $K_\phi$, $\m_\phi$ the maximal ideal and $\F_\phi=\oo_\phi/\m_\phi$ the residue class field. 

We denote by $e(\phi)$ and $f(\phi)$ the ramification index and residual degree of $K_\phi/K_v$, respectively. Hence, $\deg \phi=e(\phi)f(\phi)$.

Let $\mu_{\infty,\phi}$ be the pseudo-valuation on $K[x]$ obtained as the composition:
$$
\mu_{\infty,\phi}\colon K[x]\lra K_v(\t)\stackrel{v}\lra \Q\cup\{\infty\},
$$the first mapping being determined by $x\mapsto \t$. By the uniqueness of the extension of $v$ to $\kb$, this pseudo-valuation does not depend on the choice of $\t$ as a root of $\phi$. 

We recall that a pseudo-valuation has the same properties than a valuation, except for the fact that the pre-image of $\infty$ is a prime ideal which is not necessarily zero. For $\mu_{\phi,\infty}$ this prime ideal is the ideal of $K[x]$ generated by $\phi$.

Consider now the map $\mu'\colon K[x]\to \Q\cup\{\infty\}$, where $\mu':=[\mu;(\phi,\infty)]$ is defined as in Definition \ref{muprima}, but taking $\la=\infty$. The arguments in the proof of Proposition \ref{extension} are equally valid in this case, and they show that $\mu'$ is a pseudo-valuation on $K[x]$ such that $\mu\le \mu'$, and for a non-zero $g\in K[x]$, $\mu(g)=\mu'(g)$ if and only if $\phi\nmid_{\mu}g$. 

Since $(\mu')^{-1}(\infty)=\phi\, K[x]$, the pseudo-valuations $\mu'$ and $\mu_{\infty,\phi}$ induce two valuations on the field $K[x]/(\phi)$. These valuations coincide because $\phi$ is irreducible in $K_v[x]$ and the field  $K[x]/(\phi)$ admits a unique valuation extending $v$ on $K$. This implies that $\mu'=\mu_{\infty,\phi}$. Hence, we obtain the following results by mimicking Proposition \ref{extension}.   

\begin{proposition}\label{theta}
If $\phi$ is a key polynomial for $\mu$, then 
\begin{enumerate}
\item $\mu\le \mu_{\infty,\phi}$, and for a non-zero $g\in K [x]$, $\mu(g)=\mu_{\infty,\phi}(g)$ if and only if $\phi\nmid_{\mu}g$. 
\item $v(K_\phi^*)=\Gamma_\phi(\mu):=\left\{\mu(g)\mid g\in K[x],\ g\ne0,\ \deg g<\deg\phi\right\}\subset \Gamma(\mu)$. 

In particular, $\Gamma_\phi(\mu)$ is a subgroup of $\Gamma(\mu)$.\hfill{$\Box$}
\end{enumerate}
\end{proposition}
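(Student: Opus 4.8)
The plan is to handle the two items separately, using the identification $\mu_{\infty,\phi}=\mu'$ with $\mu':=[\mu;(\phi,\infty)]$ already established before the statement. For item~(1) I expect nothing beyond transcribing Proposition~\ref{extension}(1)--(2) to the degenerate case $\la=\infty$: I would first check that the formula $g\mapsto\min_{0\le s}\{\mu(g_s\phi^s)+s\la\}$ --- which for $\la=\infty$ simply returns $\mu(g_0)$ when the $0$-th coefficient $g_0$ of the $\phi$-expansion of $g$ is non-zero, and $\infty$ otherwise --- defines a pseudo-valuation on $K[x]$ with $\mu\le\mu'$, exactly as in the proof of Proposition~\ref{extension}; the arguments there rest only on Lemma~\ref{minimal0} and on $\phi$ being a key polynomial for $\mu$, and they go through with the obvious bookkeeping of the symbol $+\infty$. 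The same arguments show that, for non-zero $g$, one has $\mu(g)=\mu'(g)$ exactly when the $s=0$ term alone realises the minimum, i.e. exactly when $\phi\nmid_\mu g$. Since $\mu'=\mu_{\infty,\phi}$, this is item~(1).

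For item~(2), put $d=\deg\phi$. By Lemma~\ref{irredKv}, $\phi$ is irreducible over $K_v$, hence over $K$, so $K_\phi=K_v(\t)=K_v[\t]$ has $K_v$-basis $1,\t,\dots,\t^{\,d-1}$ and contains the subfield $K(\t)=K[\t]=\bigoplus_{i=0}^{d-1}K\t^{\,i}$. Since $K$ is dense in $K_v$, this $d$-dimensional space $K(\t)$ is dense in $K_\phi$, and as $v$ is discrete on $K_\phi$ density forces $v(K_\phi^*)=v(K(\t)^*)$: given $a\in K_\phi^*$, choose $b\in K(\t)$ with $v(a-b)>v(a)$, so that $v(b)=v(a)$. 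Now each non-zero element of $K(\t)=K[\t]$ is $r(\t)$ for a unique $r\in K[x]$ with $r\ne0$ and $\deg r<d$, and the factorisation $\mu_{\infty,\phi}\colon K[x]\lra K_\phi\stackrel{v}\lra\Q\cup\{\infty\}$ gives $v(r(\t))=\mu_{\infty,\phi}(r)=\mu(r)$, the last equality because $\deg r<d$ (so that $\mu'=[\mu;(\phi,\infty)]$ agrees with $\mu$ there; equivalently, by item~(1), because $\mu$-minimality of $\phi$ yields $\phi\nmid_\mu r$). Therefore
$$
v(K_\phi^*)=v(K(\t)^*)=\{\mu(r)\mid r\in K[x],\ r\ne0,\ \deg r<d\}=\Gamma_\phi(\mu),
$$
and since $v(K_\phi^*)$ is a group contained in $\mu(K(x)^*)=\Gamma(\mu)$, so is $\Gamma_\phi(\mu)$.

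The $\la=\infty$ transcription of Proposition~\ref{extension} needed for item~(1) is purely routine; the one step that deserves care is the density argument in item~(2), and in particular the identity $v(K_\phi^*)=v(K(\t)^*)$. This is exactly where the irreducibility of $\phi$ over $K_v$ (so that $K_v[\t]$ is genuinely a field and $K(\t)$ is dense of the correct dimension, rather than sitting inside a product of fields) and the discreteness of $v$ both enter.
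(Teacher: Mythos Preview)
Your proof is correct and follows essentially the same approach as the paper: for item~(1), the paper likewise declares that the arguments of Proposition~\ref{extension} carry over verbatim to $\la=\infty$, after identifying $[\mu;(\phi,\infty)]$ with $\mu_{\infty,\phi}$; for item~(2), the paper uses the same density argument, phrased as approximating $\tilde g\in K_v[x]$ (with $\deg\tilde g<\deg\phi$) by $g\in K[x]$ of the same degree, which is equivalent to your formulation via density of $K(\t)$ in $K_\phi$.
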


In principle, a strict translation of Proposition \ref{extension} would only state that $v(K_\phi^*)$ is the subgroup of $\Gamma(\mu)$ genera\-ted by the subset $\Gamma_\phi(\mu)$. However, every element in $K_\phi^*$ can be expressed as $\tilde{g}(\t)$ for some non-zero $\tilde{g}\in K_v[x]$ with $\deg\tilde{g}<\deg\phi$; hence, if $g\in K[x]$ has $\deg g=\deg\tilde{g}$ and it is sufficiently close to $\tilde{g}$ in the $v$-adic topology, we have $v(\tilde{g}(\t))=v(g(\t))=\mu(g)\in\Gamma_\phi(\mu)$. This shows that $v(K_\phi^*)=\Gamma_\phi(\mu)$.

\begin{corollary}\label{inO}
$\kpm\subset \oo [x]$.
\end{corollary}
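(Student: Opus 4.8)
The plan is to show that every key polynomial $\phi\in\kpm$ lies in $\oo_v[x]$, and then to observe that $\phi\in K[x]$ together with $\phi\in\oo_v[x]$ forces $\phi\in\oo[x]$, since $\oo=K\cap\oo_v$ inside $K_v$ (the valuation $v$ on $K$ is the restriction of $v$ on $K_v$). So the real content is the integrality statement over $\oo_v$.

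First I would use Lemma \ref{irredKv}: $\phi$ is irreducible in $K_v[x]$, and since $\phi$ is monic, its roots generate the finite extension $K_\phi=K_v(\t)$. The coefficients of $\phi$ are, up to sign, the elementary symmetric functions of the conjugates of $\t$ over $K_v$. Hence it suffices to prove that $\t$ is integral over $\oo_v$, i.e.\ that $v(\t)\ge 0$; then all conjugates of $\t$ have non-negative valuation (they are Galois conjugates, and $v$ extends uniquely, hence is Galois-invariant on $\kb$), so all symmetric functions of them do too, and therefore $\phi\in\oo_v[x]$.

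The key step is thus to show $v(\t)\ge 0$, equivalently $\mu_{\infty,\phi}(x)\ge 0$. Here I would invoke Proposition \ref{theta}(1): $\mu\le\mu_{\infty,\phi}$, so $\mu_{\infty,\phi}(x)\ge\mu(x)$. Now $\mu\in\V$, so by definition $\mu(x)\ge 0$; hence $v(\t)=\mu_{\infty,\phi}(x)\ge 0$, which is exactly what we need. (One could equally argue directly: $\phi$ monic of positive degree is $\mu$-minimal, so by Lemma \ref{minimal0}(3) applied to the $x$-adic expansion — or more simply, any monic $\phi$ with $\mu(x)\ge 0$ cannot have a coefficient forcing negative valuation without violating $\mu$-minimality — but routing through $\mu_{\infty,\phi}$ is cleanest.)

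I do not anticipate a genuine obstacle here; the only thing to be careful about is the descent from $\oo_v[x]$ to $\oo[x]$, which uses that $\phi$ has coefficients in $K$ by hypothesis and that $K\cap\oo_v=\oo$. If instead one wanted to avoid completions entirely, the mild subtlety would be to argue the integrality of the symmetric functions intrinsically from the inductive structure of $\mu$; but given Lemma \ref{irredKv} and Proposition \ref{theta} are already available, the short argument above is the natural one.
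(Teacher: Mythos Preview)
Your proposal is correct and follows essentially the same argument as the paper: both use Proposition~\ref{theta} to get $v(\t)=\mu_{\infty,\phi}(x)\ge\mu(x)\ge 0$, deduce that $\t\in\oo_\phi$ so that the minimal polynomial $\phi$ over $K_v$ lies in $\oo_v[x]$, and then descend via $\oo_v\cap K=\oo$. The paper's proof simply compresses your steps into two sentences.
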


\begin{proof}
Suppose $\phi\in\kpm$ and let $\t\in\kb$ be a root of $\phi$.
Since $0\le \mu(x)\le \mu_{\infty,\phi}(x)=v(\t)$, the root $\t$ belongs to $\oo_\phi$ and its minimal polynomial $\phi$ over $K_v$ must have coefficients in $\oo_v\cap K=\oo$.  
\end{proof}

The next result is a kind of partial converse to Propositions \ref{extension} and \ref{theta}.

\begin{lemma}\label{minDegree}\cite[Thm. 1.15]{Vaq}
Let $\mu$ be a valuation and $\mu'$ a pseudo-valuation on $K[x]$ such that $\mu<\mu'$. Let $\phi\in K[x]$ be a monic polynomial with minimal degree satisfying $\mu(\phi)<\mu'(\phi)$. Then, $\phi$ is a key polynomial for $\mu$ and for any non-zero $g\in K[x]$, $\mu(g)=\mu'(g)$ is equivalent to $\phi\nmid_\mu g$. Moreover, for $\la=\mu'(\phi)-\mu(\phi)\in\Q_{>0}\cup\{\infty\}$, we have $\mu<[\mu;(\phi,\la)]\le\mu'$.\hfill{$\Box$}
\end{lemma}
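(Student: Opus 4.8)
The plan is to reduce the minimality hypothesis to a single usable fact and then build up the three assertions — that $\phi$ is a key polynomial, the value-comparison criterion, and the inequality $\mu<[\mu;(\phi,\la)]\le\mu'$ — in turn. First I would record the observation that does most of the work: since $\mu$ and $\mu'$ both restrict to $v$ on $K^*$, dividing by a leading coefficient shows that a nonzero $g$ with $\mu(g)<\mu'(g)$ exists if and only if a monic one of the same degree does. Hence minimality of $\deg\phi$ among monic polynomials on which $\mu$ and $\mu'$ disagree is the same as minimality among all such polynomials, so every nonzero $g\in K[x]$ with $\deg g<\deg\phi$ satisfies $\mu(g)=\mu'(g)$.

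Next I would prove that $\phi$ is $\mu$-minimal. If it were not, there would be a nonzero $g$ with $\deg g<\deg\phi$ and $g\smu\phi f$ for some $f$, that is, $\mu(g-\phi f)>\mu(g)=\mu(\phi f)$. Applying $\mu'$: the previous paragraph gives $\mu'(g)=\mu(g)$, while $\mu'(\phi)=\mu(\phi)+\la$ with $\la>0$ (allowing $\la=\infty$) forces both $\mu'(\phi f)$ and $\mu'(g-\phi f)$ to strictly exceed $\mu(g)=\mu'(g)$, contradicting $g=\phi f+(g-\phi f)$. Once $\phi$ is $\mu$-minimal, Lemma \ref{minimal0} becomes available, and I would use it to establish the equivalence $\mu(g)=\mu'(g)\iff\phi\nmid_\mu g$. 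Writing the $\phi$-expansion $g=\sum_s g_s\phi^s$ and using $\mu(g_s)=\mu'(g_s)$, the $s$-th term has $\mu'$-value $\mu(g_s\phi^s)+s\la$, so passing from $\mu$ to $\mu'$ raises the $s$-th term above its $\mu$-value by exactly $s\la$; hence $\mu'(g)=\mu(g)$ precisely when the $\mu$-minimum over the expansion is already attained at $s=0$, which by parts (3) and (4) of Lemma \ref{minimal0} is exactly the condition $\phi\nmid_\mu g$. From here $\mu$-irreducibility is formal: if $\phi\nmid_\mu g$ and $\phi\nmid_\mu h$ then $\mu(g)=\mu'(g)$ and $\mu(h)=\mu'(h)$, so $\mu(gh)=\mu'(gh)$ by multiplicativity of valuations and therefore $\phi\nmid_\mu gh$; thus $\hm(\phi)\ggm$ is a nonzero prime ideal ($\mu$-minimality of $\phi$ ensuring, as in the proof of Lemma \ref{phi}, that it is proper). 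So $\phi\in\kpm$ and $[\mu;(\phi,\la)]$ is defined — a valuation if $\la<\infty$, and the pseudo-valuation $\mu_{\infty,\phi}$ if $\la=\infty$.

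For the last assertion, set $\mu_1=[\mu;(\phi,\la)]$. Then $\mu\le\mu_1$ by Proposition \ref{extension} (or its analogue for $\la=\infty$ discussed after it), and the inequality is strict because $\mu_1(\phi)=\mu(\phi)+\la>\mu(\phi)$. For $\mu_1\le\mu'$, I would rewrite the defining formula as $\mu_1(g)=\min_s\mu'(g_s\phi^s)$, using $\mu(g_s)=\mu'(g_s)$ and $\mu(\phi)+\la=\mu'(\phi)$, and compare with $\mu'(g)=\mu'\big(\sum_s g_s\phi^s\big)\ge\min_s\mu'(g_s\phi^s)$.

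I expect the one genuinely delicate point to be the implication $\phi\mmu g\Rightarrow\mu(g)<\mu'(g)$ in the equivalence above: one must verify that the $\mu$-minimum over the $\phi$-expansion is realized only at indices $s\ge1$ — which is exactly where part (4) of Lemma \ref{minimal0}, giving $\mu(g)<\mu(g_0)$, is needed — so that shifting the $s$-th term up by $s\la$ strictly increases the overall value. The remaining steps are either direct contradiction arguments or mechanical comparisons of the defining formulas.
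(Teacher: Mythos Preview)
The paper does not supply its own proof of this lemma: the statement is quoted from Vaqui\'e \cite[Thm.~1.15]{Vaq} and closed with a $\Box$, so there is nothing to compare against directly. Your argument stands on its own and is correct. The reduction to ``$\mu(g)=\mu'(g)$ for all $g$ with $\deg g<\deg\phi$'' is the right first move; the contradiction for $\mu$-minimality is clean; and once $\phi$ is $\mu$-minimal, your use of Lemma~\ref{minimal0} to prove the equivalence and then deduce $\mu$-irreducibility by multiplicativity is the natural route. The final comparison $\mu_1(g)=\min_s\mu'(g_s\phi^s)\le\mu'(g)$ is exactly how one should close.

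One small point worth tightening in the write-up: when you argue the equivalence, you do not actually have $\mu'(g)=\min_s\mu'(g_s\phi^s)$ available, only the inequality $\ge$. Your sketch is still correct because in the case $\phi\nmid_\mu g$ the term $\mu'(g_0)=\mu(g_0)=\mu(g)$ is \emph{strictly} smaller than every $\mu'(g_s\phi^s)$ with $s\ge1$ (these exceed $\mu(g)$ by at least $\la$), so the pseudo-valuation $\mu'$ is forced to take the value $\mu'(g_0)$ on the sum. Making this explicit would remove any ambiguity.
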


\subsection{Residual ideals of polynomials}\label{subsecRideal}
Let $\mu$ be a valuation on $K(x)$. Denote $\Delta=\Delta(\mu)$, and let $I(\Delta)$ be the set of ideals in $\Delta$. Consider the \emph{residual ideal operator}:
$$
\rr=\rrm\colon K[x]\lra I(\Delta),\qquad g\mapsto \Delta\cap \hm(g)\ggm.
$$
The following basic properties of $\rr$ are immediate.
\begin{equation}\label{basic}
\begin{array}{rcl}
g\mmu h&\imp& \rr(g)\supset \rr(h),\\
g\smu h&\imp& \rr(g)=\rr(h),\\
\hm(g)\in\ggm^* &\sii& \rr(g)=\Delta,
\end{array}
\end{equation}
where $\ggm^*$ is the group of units of $\ggm$. In sections \ref{secGr} and \ref{secKP} we shall derive more properties of this operator $\rr$, which translates questions about $K[x]$ and $\mu$ into ideal-theoretic consi\-derations in the ring $\Delta$.  Let us now see that $\rr$ attaches a maximal ideal of $\Delta$ to any key polynomial for $\mu$.    

\begin{proposition}\label{sameideal}
If $\phi$ is a key polynomial for $\mu$, then 
\begin{enumerate}
\item $\rr(\phi)$ is the kernel of the onto ring homomorphism $\Delta\twoheadrightarrow \F_\phi$ determined by $g(x)+\pset^+_0\ \mapsto\ g(\t)+\m_\phi$. 
In particular, $\rr(\phi)$ is a maximal ideal of $\Delta$.
\item For any augmented valuation  $\mu'=[\mu;(\phi,\la)]$, $\rr(\phi)=\op{Ker}(\Delta\to \Delta(\mu'))$. Thus, the image of $\Delta\to\Delta(\mu')$ is a field, canonically isomorphic to $\F_\phi$. 
\end{enumerate}
\end{proposition}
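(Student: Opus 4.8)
The plan is to prove both items simultaneously by exhibiting the ring homomorphism from $\Delta$ and identifying its kernel with $\rr(\phi)$, and then observe that the two augmented constructions $[\mu;(\phi,\la)]$ and $[\mu;(\phi,\infty)]=\mu_{\infty,\phi}$ induce the same map on the degree-zero pieces. First I would set $d=\deg\phi$ and recall that, by $\mu$-minimality of $\phi$ (Lemma \ref{minimal0}, item (3)), every element of $\pset_0$ is represented by a polynomial $g$ of degree $<d$ together with a possible $\phi$-tail of value $\ge 0$; since such a tail has value $\ge\mu(\phi)+\la>0$ under $\mu'=[\mu;(\phi,\la)]$ and value $>0$ (in fact $\infty$) under $\mu_{\infty,\phi}$, it lies in $\pset_0^+(\mu')$ and in the kernel of evaluation at $\t$. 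So on $\Delta$ both candidate maps factor through ``take the degree-$<d$ part $g_0$ of a representative and send $\hm(g)\mapsto g_0(\t)+\m_\phi$''. I must check this is well defined: if $g,h\in\pset_0$ have $\hm(g)=\hm(h)$, then $\mu(g-h)>0$, and writing $g-h$ via its $\phi$-expansion, $\mu$-minimality forces $\mu((g-h)_0)>0$; but for $\deg a<d$ Proposition \ref{theta}(1) gives $\mu(a)=\mu_{\infty,\phi}(a)=v(a(\t))$, so $v((g-h)_0(\t))>0$, i.e. $(g_0-h_0)(\t)\in\m_\phi$. Hence the assignment $g(x)+\pset_0^+\mapsto g(\t)+\m_\phi$ is a well-defined ring homomorphism $\Delta\to\F_\phi$.

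Next I would verify surjectivity. Every element of $\F_\phi=\oo_\phi/\m_\phi$ is the reduction of some $\tilde g(\t)$ with $\tilde g\in\oo_v[x]$, $\deg\tilde g<d$ (since $\oo_\phi=\oo_v[\t]$ up to $\m_\phi$, as $\t$ is integral and generates $K_\phi$); approximating $\tilde g$ $v$-adically by $g\in\oo[x]$ of the same degree — exactly the approximation argument already used after Proposition \ref{theta} — gives $v(g(\t))=v(\tilde g(\t))\ge 0$, so $\hm(g)\in\Delta$ maps to the prescribed class. Thus $\Delta\twoheadrightarrow\F_\phi$, and in particular its kernel is a maximal ideal of $\Delta$.

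It remains to identify this kernel with $\rr(\phi)=\Delta\cap\hm(\phi)\ggm$. For $g\in\pset_0$, $g$ lies in the kernel iff $g_0(\t)\in\m_\phi$, iff (again by $\mu(a)=v(a(\t))$ for $\deg a<d$) $\mu(g_0)>0=\mu(g)$, iff $\mu(g)<\mu(g_0)$; by Lemma \ref{minimal0}(4) this last condition is exactly $\phi\mmu g$, i.e. $\hm(\phi)\mid\hm(g)$ in $\ggm$, i.e. $\hm(g)\in\hm(\phi)\ggm$. Hence $\k(\Delta\to\F_\phi)=\rr(\phi)$, proving (1). For (2): by Proposition \ref{extension}(2) the kernel of $\ggm\to\gg(\mu')$ is $\hm(\phi)\ggm$, so the kernel of the induced map $\Delta\to\Delta(\mu')$ is $\Delta\cap\hm(\phi)\ggm=\rr(\phi)$; since this is a maximal ideal, $\Delta/\rr(\phi)$ is a field, and the factorization of the evaluation map through $\Delta(\mu')$ (clear because the $\phi$-tails die in $\Delta(\mu')$) identifies the image of $\Delta\to\Delta(\mu')$ canonically with $\Delta/\rr(\phi)\cong\F_\phi$.

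The main obstacle I anticipate is the well-definedness and the kernel computation — both hinge on the single fact that for polynomials $a$ of degree $<\deg\phi$ one has $\mu(a)=v(a(\t))$, which is precisely Proposition \ref{theta}(1) applied to the condition $\phi\nmid_\mu a$ (automatic when $\deg a<\deg\phi$, by $\mu$-minimality); once that identification is in hand, everything reduces to the translation dictionary of Lemma \ref{minimal0}(4) between $\phi\mmu g$ and $\mu(g)<\mu(g_0)$, and the rest is bookkeeping. I would take care to state the approximation step (density of $K$ in $K_v$) explicitly, since surjectivity onto $\F_\phi$ genuinely uses it.
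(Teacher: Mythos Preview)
Your proof is correct and uses the same essential ingredients as the paper---Proposition \ref{theta} for item (1) and Proposition \ref{extension} for item (2). The paper's argument is slightly more streamlined: rather than reducing to degree-$<\!\deg\phi$ representatives via the $\phi$-expansion, it simply invokes $\mu\le\mu_{\infty,\phi}$ from Proposition \ref{theta} to get $v(g(\t))\ge\mu(g)$ for every $g\in\pset_0$, which makes the evaluation map $g+\pset_0^+\mapsto g(\t)+\m_\phi$ immediately well defined (your detour through $g_0$ is unnecessary since $g(\t)=g_0(\t)$ anyway, as $\phi(\t)=0$), and then identifies the kernel directly from the equivalence $\phi\nmid_\mu g\iff \mu(g)=\mu_{\infty,\phi}(g)$ rather than via Lemma \ref{minimal0}(4).
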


\begin{proof}
If $g\in\pset_0$, we have $v(g(\t))\ge \mu(g)\ge0$, and $g(\t)\in \oo_\phi$. Thus, we get a well-defined ring homomorphism $\pset_0\to \F_\phi$. This mapping is onto, because every element in $\F_\phi$ may be represented as $g(\t)+\m_\phi$ for some $g\in K[x]$, with $\deg g<\deg\phi=[K_\phi\colon K_v]$, satisfying $v(g(\t))\ge0$. Proposition \ref{theta} shows that $\mu(g)=v(g(\t))\ge0$, so that $g$ belongs to $\pset_0$. Finally, if $g\in\pset_0^+$, then $v(g(\t))\ge \mu(g)>0$; thus, the above homomorphism vanishes on $\pset_0^+$ and it determines the onto map $\Delta\twoheadrightarrow \F_\phi$. The kernel of this homomorphism is $\rr(\phi)$ 
by Proposition \ref{theta}. 

The second item is a consequence of Proposition \ref{extension} and the first item. 
\end{proof}


\section{Newton polygons}
The choice of a key polynomial $\phi$ for a valuation $\mu$ determines a \emph{Newton polygon operator}
$$
\nph\colon K[x]\lra 2^{\R^2},
$$
where $2^{\R^2}$ is the set of subsets of the euclidean plane $\R^2$. The Newton polygon of the zero polynomial is the empty set. If $g=\sum_{0\le s}a_s\phi^s$ is the canonical $\phi$-expansion of a non-zero polynomial $g\in K[x]$, then $\nph(g)$ is the lower convex hull of the cloud of points $(s,\mu(a_s\phi^s))$ for all $0\le s$. 

\begin{definition}\label{length}
The length of a Newton polygon $N$ is the abscissa of its right end point. It will be denoted by $\ell(N)$. 
\end{definition}

Since $\mu(g)=\min_{0\le s}\{\mu(a_s\phi^s)\}$, the rational number $\mu(g)$ is the ordinate of the point of intersection of the vertical axis with the line of slope zero which first touches the polygon from below. Figure \ref{figNmodel} shows the typical shape of $\nph(g)$.    

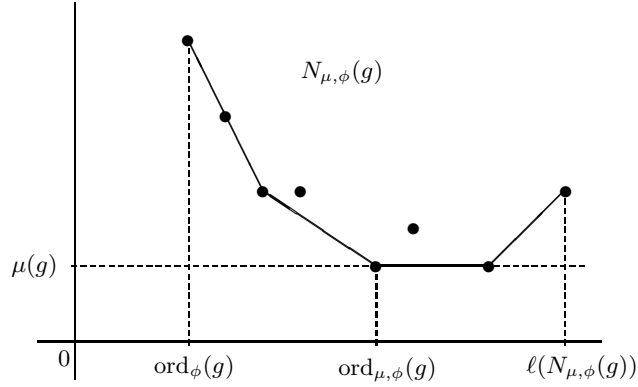
\begin{figure}
\caption{Newton polygon of $g\in K[x]$}\label{figNmodel}
\begin{center}
\setlength{\unitlength}{5mm}
\begin{picture}(14,10)
\put(10.8,2.8){$\bullet$}\put(8.8,3.8){$\bullet$}\put(7.8,2.8){$\bullet$}\put(5.8,4.8){$\bullet$}
\put(4.8,4.8){$\bullet$}\put(3.8,6.8){$\bullet$}\put(2.8,8.8){$\bullet$}
\put(-1,1){\line(1,0){15}}\put(0,0){\line(0,1){10}}
\put(8,3){\line(-3,2){3}}\put(5,5){\line(-1,2){2}}\put(8,3.03){\line(-3,2){3}}
\put(5,5.03){\line(-1,2){2}}\put(11,3){\line(-1,0){3}}\put(11,3.02){\line(-1,0){3}}
\put(12.85,4.8){$\bullet$}\put(11,3){\line(1,1){2}}\put(11,3.02){\line(1,1){2}}
\multiput(3,.9)(0,.25){32}{\vrule height2pt}
\multiput(8,.9)(0,.25){9}{\vrule height2pt}
\multiput(13,.9)(0,.25){16}{\vrule height2pt}
\put(7,.1){\begin{footnotesize}$\ord_{\mu,\phi}(g)$\end{footnotesize}}
\put(2.1,.15){\begin{footnotesize}$\ord_{\phi}(g)$\end{footnotesize}}
\put(12,.15){\begin{footnotesize}$\ell(\nph(g))$\end{footnotesize}}
\multiput(-.1,3)(.25,0){55}{\hbox to 2pt{\hrulefill }}
\put(6,8){\begin{footnotesize}$\nph(g)$\end{footnotesize}}
\put(-1.7,2.85){\begin{footnotesize}$\mu(g)$\end{footnotesize}}
\put(-.45,.35){\begin{footnotesize}$0$\end{footnotesize}}
\end{picture}
\end{center}
\end{figure}

If the Newton polygon $N=\nph(g)$ is not a single point, we formally write $N=S_1+\cdots +S_k$, where $S_i$ are the sides of $N$, ordered by their increasing slopes. The left and right end points of $N$ and the points joining two sides of different slopes are called \emph{vertices} of $N$.

Usually, we shall be interested only in the \emph{principal Newton polygon} $\nph^-(g)$ formed by the sides of negative slope. If there are no sides of negative slope, then $\nph^-(g)$ is the left end point of $\nph(g)$

\begin{lemma}\label{length2}
For every non-zero polynomial $g\in K[x]$, 
$\ell(\nph^-(g))=\ord_{\mu,\phi}(g)$. 
\end{lemma}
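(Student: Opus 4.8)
The plan is to unwind both sides of the claimed equality $\ell(\nph^-(g))=\ord_{\mu,\phi}(g)$ in terms of the $\phi$-expansion $g=\sum_{0\le s}a_s\phi^s$, and show they compute the same abscissa. First I would recall that by definition $\ord_{\mu,\phi}(g)$ is the largest $s$ with $\phi^s\mid_\mu g$, and by Lemma \ref{minimal0} applied to the key polynomial $\phi$ (which is $\mu$-minimal), we have $\mu(g)=\min_{0\le s}\{\mu(a_s\phi^s)\}$. The multiplicativity (\ref{multiplicative}) plus Lemma \ref{omega} should let me identify $\ord_{\mu,\phi}(g)$ as the smallest index $s$ at which the minimum $\min_{0\le s}\{\mu(a_s\phi^s)\}$ is attained: indeed, writing $g=\sum a_s\phi^s$ and noting $\phi\nmid_\mu a_s$ for each nonzero coefficient (since $\deg a_s<\deg\phi$ and $\phi$ is $\mu$-minimal), each term $a_s\phi^s$ satisfies $\ord_{\mu,\phi}(a_s\phi^s)=s$, so by Lemma \ref{omega} the order of the sum equals the least $s$ among the terms of minimal $\mu$-value.

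Next I would translate the right end point of $\nph^-(g)$. The Newton polygon $\nph(g)$ is the lower convex hull of the points $(s,\mu(a_s\phi^s))$, and $\nph^-(g)$ consists of its sides of negative slope (or, if there are none, the left endpoint). The right end point of $\nph^-(g)$ is precisely the vertex at which the polygon stops decreasing, i.e. the first abscissa $s_0$ where $\mu(a_{s_0}\phi^{s_0})=\mu(g)$, the minimal height; to the right of $s_0$ all slopes are $\ge 0$. Thus $\ell(\nph^-(g))=s_0=\min\{s : \mu(a_s\phi^s)=\mu(g)\}$. Comparing with the previous paragraph, this is exactly $\ord_{\mu,\phi}(g)$, which finishes the proof. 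A degenerate case to dispatch separately: if $\nph^-(g)$ is a single point (no negative-slope sides), then $\mu(a_0\phi^0)=\mu(g)$ already, so both sides equal $0$; and if $g$ is a monomial $a_{s}\phi^{s}$ with $a_s\neq 0$, the formula reads $s=s$.

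I expect the main obstacle to be the careful bookkeeping that identifies $\ord_{\mu,\phi}(g)$ with the leftmost minimizing index of $s\mapsto\mu(a_s\phi^s)$. One has to argue in two directions: that $\phi^{s_0}\mid_\mu g$ (so $\ord_{\mu,\phi}(g)\ge s_0$), which follows because $g\smu\sum_{s\ge s_0}a_s\phi^s=\phi^{s_0}(\sum_{s\ge s_0}a_s\phi^{s-s_0})$ after discarding the strictly larger-valued lower terms using the second line of (\ref{Hmu}); and that $\phi^{s_0+1}\nmid_\mu g$ (so $\ord_{\mu,\phi}(g)\le s_0$), which follows from Lemma \ref{omega} since in $\sum_{s\ge s_0}a_s\phi^s$ the term $a_{s_0}\phi^{s_0}$ is the unique one of minimal $\mu$-value with $\ord_{\mu,\phi}$ exactly $s_0$, forcing $\ord_{\mu,\phi}(g)=s_0$. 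The rest is just the geometric observation about where a lower convex hull stops having negative slopes, which is routine once the combinatorial statement is set up.
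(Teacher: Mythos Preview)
Your proposal is correct and follows essentially the same route as the paper's proof: both identify $\ell(\nph^-(g))$ with $s_0:=\min\{s:\mu(a_s\phi^s)=\mu(g)\}$, note that $\ord_{\mu,\phi}(a_s\phi^s)=s$ for each nonzero coefficient, pass to the $\mu$-equivalent polynomial obtained by keeping only the terms of minimal $\mu$-value, and then invoke Lemma~\ref{omega} to conclude $\ord_{\mu,\phi}(g)=s_0$. The only cosmetic difference is that the paper works directly with $h=\sum_{s\in I}a_s\phi^s$ where $I=\{s:\mu(a_s\phi^s)=\mu(g)\}$, whereas you first pass to $\sum_{s\ge s_0}a_s\phi^s$; to make your application of Lemma~\ref{omega} clean you should drop the remaining terms of strictly larger $\mu$-value as well (which is a second $\mu$-equivalence), but this is implicit in what you wrote and causes no real gap.
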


\begin{proof}
Let $g=\sum_{0\le s}a_s\phi^s$ be the $\phi$-expansion of $g$. If $a_s\ne0$, then $\phi\nmid_\mu a_s$, because $\deg a_s<\deg\phi$ and $\phi$ is $\mu$-minimal. Hence, $\ord_{\mu,\phi}(a_s\phi^s)=s$ for all $s$ such that  $a_s\ne0$.

Let $I=\{s\in\Z_{\ge0}\mid \mu(a_s\phi^s)=\mu(g)\}$ and consider $h=\sum_{s\in I}a_s\phi^s$. Clearly, $s_0:=\ell(\nph^-(g))=\min(I)$. Since $g\smu h$, we have 
$\ord_{\mu,\phi}(g)=\ord_{\mu,\phi}(h)=s_0$, by Lemma \ref{omega}.   
\end{proof}

From now on, we fix $\mu'=[\mu;(\phi,\la)]$, an augmented valuation of $\mu$ with respect to the key polynomial $\phi$ and a positive rational number $\la$. Let us first show how to read the value $\mu'(g)$ in the Newton polygon $N_{\mu,\phi}(g)$ (see Figure \ref{figComponent}).

\begin{lemma}\label{muprimaN}
For any non-zero $g\in K[x]$, the line of slope $-\la$ which first touches the polygon $N_{\mu,\phi}(g)$ from below cuts the vertical axis at the point $(0,\mu'(g))$. 
\end{lemma}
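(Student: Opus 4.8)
The plan is to recall the definitions of both objects in the statement and match them directly, avoiding any extra geometry. Write the $\phi$-expansion $g=\sum_{0\le s}a_s\phi^s$, so that $N:=N_{\mu,\phi}(g)$ is by definition the lower convex hull of the points $P_s=(s,\mu(a_s\phi^s))$ with $a_s\ne 0$. Fix the slope $-\la$ with $\la\in\Q_{>0}$. A line of slope $-\la$ has equation $y=-\la t+c$; it lies (weakly) below all the points $P_s$ precisely when $\mu(a_s\phi^s)\ge -\la s+c$ for every $s$ with $a_s\ne 0$, i.e.\ when $c\le \mu(a_s\phi^s)+\la s$ for all such $s$. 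The \emph{first} such line, as $c$ increases from $-\infty$, is the one with $c=c_0:=\min_{s:\,a_s\ne0}\{\mu(a_s\phi^s)+\la s\}$, and this line meets the vertical axis $t=0$ at the point $(0,c_0)$.

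It therefore suffices to observe that $c_0=\mu'(g)$. This is immediate from the ``equivalently'' clause of Definition \ref{muprima}: $\mu'(g)=\min_{0\le s}\{\mu(a_s\phi^s)+s\la\}$, where the minimum may be restricted to the indices $s$ with $a_s\ne 0$ (the terms with $a_s=0$ do not contribute, since we use the convention $\mu(0)=\infty$, and at least one $a_s$ is nonzero as $g\ne 0$). Hence the two quantities are the same rational number, and the line of slope $-\la$ first touching $N$ from below cuts the vertical axis at $(0,\mu'(g))$, as claimed.

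There is no real obstacle here: the only point requiring a word of care is that the minimum defining $\mu'(g)$ and the minimum determining the supporting line of slope $-\la$ are indexed over the \emph{same} finite set (the $s$ with $a_s\ne0$), and that the supporting line of a convex polygon with a prescribed slope is characterized exactly by minimizing the intercept $\mu(a_s\phi^s)+\la s$ over the vertices — both are standard facts about Newton polygons. One may optionally remark that the minimizing indices $s$ are exactly the abscissae of the points of $N$ lying on that line, which is the content illustrated in Figure \ref{figComponent} and will be reused below, but this is not needed for the statement as given.
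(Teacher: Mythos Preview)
Your proof is correct and takes essentially the same approach as the paper: both compute the $y$-intercept of the supporting line of slope $-\la$ as $\min_s\{\mu(a_s\phi^s)+s\la\}$ and identify this with $\mu'(g)$ via Definition~\ref{muprima}. The paper phrases it as observing that the line of slope $-\la$ through each point $P_s$ meets the vertical axis at height $\mu'(a_s\phi^s)$, then takes the minimum, but this is the same computation you carry out explicitly.
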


\begin{proof}
For any point $P=(s,\mu(a_s\phi^s))$, the value $\mu'(a_s\phi^s)=\mu(a_s\phi^s)+s\la$ is the ordinate of the point of intersection of the vertical axis with the line of slope $-\la$ passing through $P$ (Figure \ref{figNpunt}). The lemma follows from $\mu'(g)=\min_{0 \le s}\{\mu'(a_s\phi^s)\}$.
\end{proof}

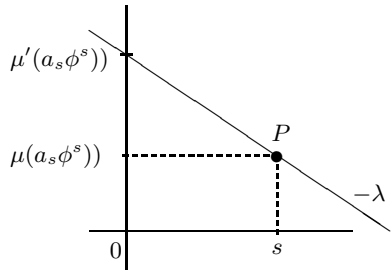
\begin{figure}
\caption{Newton polygon of a monomial $a_s\phi^s$}\label{figNpunt}
\begin{center}
\setlength{\unitlength}{5mm}
\begin{picture}(6,7.5)
\put(3.8,2.8){$\bullet$}
\put(-1,1){\line(1,0){7}}\put(0,0){\line(0,1){7}}
\put(-1,6.35){\line(3,-2){8}}
\multiput(4,.9)(0,.25){9}{\vrule height2pt}
\multiput(-.1,3)(.25,0){16}{\hbox to 2pt{\hrulefill }}
\put(-.15,5.7){\line(1,0){.3}}
\put(-3.1,2.85){\begin{footnotesize}$\mu(a_s\phi^s))$\end{footnotesize}}
\put(-3.1,5.4){\begin{footnotesize}$\mu'(a_s\phi^s))$\end{footnotesize}}
\put(-.45,.3){\begin{footnotesize}$0$\end{footnotesize}}
\put(3.85,.4){\begin{footnotesize}$s$\end{footnotesize}}
\put(3.85,3.4){\begin{footnotesize}$P$\end{footnotesize}}
\put(6,1.8){\begin{footnotesize}$-\la$\end{footnotesize}}
\end{picture}
\end{center}
\end{figure}

By Lemma \ref{phi}, $\phi$ is a key polynomial for $\mu'$ and it makes sense to consider the Newton polygon $N_{\mu',\phi}(g)$, which is related to $\nph(g)$ in an obvius way.

\begin{lemma}\label{affinity}
Let $\hh\colon \R^2\lra\R^2$ be the affine transformation $\hh(x,y)=(x,y+\la x)$. Then, $N_{\mu',\phi}(g)=\hh(\nph(g))$.\hfill{$\Box$} 
\end{lemma}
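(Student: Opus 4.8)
The plan is to unwind both Newton polygons to their defining point clouds and check that the affine map $\hh$ carries one cloud onto the other. Fix a non-zero $g\in K[x]$ with canonical $\phi$-expansion $g=\sum_{0\le s}a_s\phi^s$, $\deg a_s<\deg\phi$. By definition, $\nph(g)$ is the lower convex hull of the cloud $C=\{(s,\mu(a_s\phi^s))\mid a_s\ne0\}$, and similarly $N_{\mu',\phi}(g)$ is the lower convex hull of $C'=\{(s,\mu'(a_s\phi^s))\mid a_s\ne0\}$, where the $\phi$-expansion used is the same in both cases since $\phi$ is a key polynomial for both $\mu$ and $\mu'$ (Lemma \ref{phi}), and a $\phi$-expansion depends only on $\phi$, not on the valuation. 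So the indices $s$ with $a_s\ne 0$ are the same for both clouds.

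Next I would compute the vertical shift. For each $s$ with $a_s\ne0$, Definition \ref{muprima} gives $\mu'(a_s\phi^s)=\mu(a_s)+\mu'(\phi^s)=\mu(a_s)+s(\mu(\phi)+\la)=\mu(a_s\phi^s)+s\la$. Hence the point $(s,\mu(a_s\phi^s))\in C$ is sent by $\hh$ to $(s,\mu(a_s\phi^s)+s\la)=(s,\mu'(a_s\phi^s))\in C'$, and conversely; thus $\hh(C)=C'$.

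Finally, $\hh$ is an invertible affine transformation of $\R^2$ whose linear part is the shear $(x,y)\mapsto(x,y+\la x)$; such a map preserves convexity and, crucially, preserves the relation ``lies on or below'' for the graph of any function of $x$ — because it adds the same quantity $\la x$ to the $y$-coordinate of every point with a given abscissa $x$. Consequently $\hh$ takes the lower convex hull of a finite planar set to the lower convex hull of its image. Applying this to $C$ and using $\hh(C)=C'$ yields $N_{\mu',\phi}(g)=\hh(\nph(g))$. The statement for $g=0$ is the trivial identity $\hh(\emptyset)=\emptyset$.

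There is no real obstacle here; the only point requiring a word of care is that $\hh$ respects lower convex hulls, which is immediate since a vertical shear is monotone in the vertical direction fibrewise over the $x$-axis and hence commutes with the operation of taking the pointwise infimum that defines the lower hull. I would state this as a one-line observation rather than proving it in detail.
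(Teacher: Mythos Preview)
Your argument is correct and complete. The paper gives no proof at all for this lemma (it is marked with a bare $\Box$), so your unwinding of the two point clouds via $\mu'(a_s\phi^s)=\mu(a_s\phi^s)+s\la$ together with the observation that a vertical shear preserves lower convex hulls is precisely the routine verification the authors left to the reader.
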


The affinity $\hh$ acts as a translation on every vertical line, and the vertical axis is pointwise invariant. If $S$ is a side of $\nph(g)$ of slope $\rho$, then $\hh(S)$ is a side of $N_{\mu',\phi}(g)$ of slope $\rho+\la$; also, the end points of $S$ and $\hh(S)$ have the same abscissas.

\begin{definition}\label{sla}
Let $g\in K[x]$ be a non-zero polynomial and denote $N=\nph(g)$. We define the \emph{$\la$-component} of $g$ as the segment 
$$S_\la(g):=S_{\mu'}(g):=\{(x,y)\in N\mid y+\la x\mbox{ is minimal}\}=N\cap L_{-\la},
$$where $L_{-\la}$ is the line of slope $-\la$ which first touches the polygon $N$ from below. 
 
We denote by $s(g)=s_{\mu'}(g)\le s'(g)=s'_{\mu'}(g)$ the abscissas of the end points of $S_\la(g)$. We denote by $u(g)=u_{\mu'}(g)$ the integer such that $(s(g),u(g)/e(\mu))$ is the left end point of $S_\la(g)$ (see Figure \ref{figComponent}). 
\end{definition}


If $N$ has a side $S$ of slope $-\la$, then $S_\la(g)=S$; otherwise, $S_\la(g)$ is a vertex of $N$ and $s(g)=s'(g)$.  

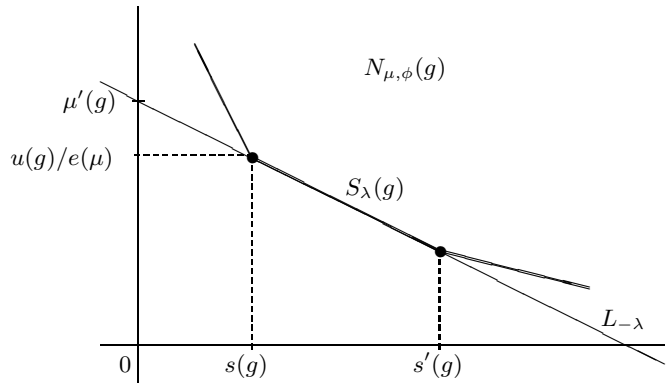
\begin{figure}
\caption{$\la$-component of a polynomial $g\in K[x]$.}\label{figComponent}
\begin{center}
\setlength{\unitlength}{5mm}
\begin{picture}(14,10)
\put(2.85,5.8){$\bullet$}\put(7.85,3.3){$\bullet$}
\put(-1,1){\line(1,0){15}}\put(0,0){\line(0,1){10}}
\put(-1,8){\line(2,-1){15}}
\put(3,6){\line(-1,2){1.5}}\put(3,6.04){\line(-1,2){1.5}}
\put(3,6){\line(2,-1){5}}\put(3,6.04){\line(2,-1){5}}
\put(8,3.5){\line(4,-1){4}}\put(8,3.54){\line(4,-1){4}}
\multiput(3,.9)(0,.25){21}{\vrule height2pt}
\multiput(8,.9)(0,.25){11}{\vrule height2pt}
\multiput(-.1,6.05)(.25,0){13}{\hbox to 2pt{\hrulefill }}
\put(7.3,.3){\begin{footnotesize}$s'(g)$\end{footnotesize}}
\put(2.3,.3){\begin{footnotesize}$s(g)$\end{footnotesize}}
\put(-3.4,5.8){\begin{footnotesize}$u(g)/e(\mu)$\end{footnotesize}}
\put(12.3,1.5){\begin{footnotesize}$L_{-\la}$\end{footnotesize}}
\put(-.5,.3){\begin{footnotesize}$0$\end{footnotesize}}
\put(6,8.2){\begin{footnotesize}$\nph(g)$\end{footnotesize}}
\put(5.5,5){\begin{footnotesize}$S_\la(g)$\end{footnotesize}}
\put(-.15,7.5){\line(1,0){.3}}
\put(-2,7.3){\begin{footnotesize}$\mu'(g)$\end{footnotesize}}
\end{picture}
\end{center}
\end{figure}

If $g=\sum_{0\le s}a_s\phi^s$ is the $\phi$-expansion of $g$, consider  $I=\{s\in\Z_{\ge0}\mid \mu'(a_s\phi^s)=\mu'(g)\}$. By Lemma \ref{muprimaN}, $I$ coincides with the set of all $s\in\Z_{\ge0}$ such that the point $(s,\mu(a_s\phi^s))$ lies on $S_\la(g)$; in particular, $s(g)=\min(I)$, $s'(g)=\max(I)$. 
According to MacLane's terminology, $s'(g)$ is the \emph{effective degree} of $g$ and $s'(g)-s(g)$ is the \emph{projection} of $g$ with respect to the augmented valuation $\mu'$ \cite[Secs. 3,4]{mclb}. 

\begin{lemma}\label{additivity}\mbox{\null}
Let $g,h\in K[x]$ be two non-zero polynomials.
\begin{enumerate}
\item If $g\sim_{\mu'}h$, then $S_\la(g)=S_\la(h)$.
\item $s(g)=\ord_{\mu',\phi}(g)$.
\item $s'(g)=\ord_{\mu'',\phi}(g)$, where $\mu''=[\mu;(\phi,\la-\epsilon)]$ for a sufficiently small $\epsilon\in\Q_{>0}$.
\end{enumerate}
\end{lemma}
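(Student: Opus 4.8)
The plan is to treat the three items in order, deriving each from the description of the $\la$-component in terms of the set $I=\{s\mid\mu'(a_s\phi^s)=\mu'(g)\}$ obtained just before the statement, together with Lemmas \ref{omega} and \ref{additivity}(already-proven predecessors). For item (1), if $g\sim_{\mu'}h$ then $\hmp(g)=\hmp(h)$, hence $\mu'(g-h)>\mu'(g)=\mu'(h)$. Passing to the affinely equivalent polygons $N_{\mu',\phi}$ (Lemma \ref{affinity}) turns the $\la$-component into the slope-zero component, i.e.\ into the part of the polygon on the horizontal line at height $\mu'(g)$; so it suffices to show that two polynomials with $\mu'(g)=\mu'(h)<\mu'(g-h)$ have the same horizontal component of $N_{\mu',\phi}$. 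Writing the $\phi$-expansions of $g$, $h$ and $g-h$ and comparing them coefficient by coefficient, the coefficients $a_s$ of $g$ and $b_s$ of $h$ with $\mu'(a_s\phi^s)=\mu'(g)$ must coincide up to $\mu'$-equivalence (the difference has strictly larger value), so the two sets $I_g$, $I_h$ agree, and hence $S_\la(g)=S_\la(h)$.

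For item (2), I would use Lemma \ref{length2} applied to the valuation $\mu'$ and its key polynomial $\phi$ (legitimate by Lemma \ref{phi}): that lemma gives $\ell(N_{\mu',\phi}^-(g))=\ord_{\mu',\phi}(g)$. Now apply the affinity $\hh(x,y)=(x,y+\la x)$ of Lemma \ref{affinity}: it carries $\nph(g)$ to $N_{\mu',\phi}(g)$, and it carries the $\la$-component $S_\la(g)=\nph(g)\cap L_{-\la}$ to the slope-zero component of $N_{\mu',\phi}(g)$, whose left abscissa is exactly $\ell(N_{\mu',\phi}^-(g))$ — the horizontal side (or vertex) realizing the minimum value $\mu'(g)$ is precisely where the sides of negative slope end. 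Since $\hh$ preserves abscissas, $s(g)=\min(I)$ equals that left abscissa, giving $s(g)=\ord_{\mu',\phi}(g)$.

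For item (3), the idea is that the \emph{right} endpoint $s'(g)$ of $S_\la(g)$ becomes a left endpoint if we tilt the line slightly: choosing $\ep\in\Q_{>0}$ small enough that $-\la+\ep$ separates the slope $-\la$ from the next larger slope occurring among the sides of $\nph(g)$ (finitely many sides, so such $\ep$ exists), the $(\la-\ep)$-component $S_{\la-\ep}(g)$ for $\mu''=[\mu;(\phi,\la-\ep)]$ is the vertex of $\nph(g)$ at abscissa $s'(g)$, and item (2) applied to $\mu''$ yields $s'(g)=s(g)\,\text{w.r.t. }\mu''=\ord_{\mu'',\phi}(g)$. I should check that for small $\ep$ one still has $\mu''(g)$ attained exactly at that vertex, which follows from the same finiteness-of-slopes argument.

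The main obstacle I anticipate is item (1): the affinity argument reduces it to a statement about horizontal components, but one still has to argue carefully that $\mu'$-equivalence of $g$ and $h$ forces equality of the coefficient sets indexing $S_\la$, which requires comparing $\phi$-expansions and invoking $\mu'$-minimality of $\phi$ (so that no cancellation across different powers of $\phi$ can occur); this is where Lemma \ref{omega} and item (3) of Lemma \ref{minimal0} for $\mu'$ will be needed. Items (2) and (3) are then essentially bookkeeping with the affinity $\hh$ and the finiteness of the set of slopes.
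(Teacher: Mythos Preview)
Your proposal is correct and follows essentially the same route as the paper: item (2) via the affinity $\hh$ and Lemma \ref{length2}, item (3) by a small perturbation reducing to item (2). The paper disposes of item (1) in one line (``a consequence of the remarks preceding the lemma''), and your more explicit argument fills this in correctly; note, however, that the obstacle you anticipate there is milder than you suggest: since $\phi$ is $\mu'$-minimal (Lemma \ref{phi}), item (3) of Lemma \ref{minimal0} applied to $\mu'$ gives $\mu'(g-h)=\min_s\mu'((a_s-b_s)\phi^s)>\mu'(g)$, so $\mu'(a_s\phi^s)=\mu'(g)$ iff $\mu'(b_s\phi^s)=\mu'(h)$, and hence $I_g=I_h$ directly --- no appeal to Lemma \ref{omega} is needed, nor do you need the stronger statement that corresponding coefficients are $\mu'$-equivalent.
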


\begin{proof}
The first item is a consequence of the remarks preceeding the lemma.

By Lemma \ref{affinity}, the image of $S_\la(g)$ under the affinity
$\hh$ is the side of slope zero of $N_{\mu',\phi}(g)$. Thus, $s(g)=\ord_{\mu',\phi}(g)$, by Lemma \ref{length2}. 

Finally, if $\epsilon\in\Q_{>0}$ is sufficiently small, the right end point of $S_\la(g)$ is equal to the left end point of $S_{\la-\epsilon}(g)$ and item 3 is a consequence of item 2.
\end{proof}

There is a natural addition of segments in the plane. We admit that a point in the plane is a segment whose left and right end points coincide. Given two segments $S,S'$, the addition $S+S'$ is the ordinary vector sum if at least one of the segments is a single point. Otherwise, $N=S+S'$ is the Newton polygon whose left end point is the vector sum of the two left end points of $S$ and $S'$, and whose sides are the join of $S$ and $S'$, considered with increasing slopes from left to right (see Figure \ref{figSum}).

\begin{figure}
\caption{Addition of two segments}\label{figSum}
\begin{center}
\setlength{\unitlength}{5mm}
\begin{picture}(10,9)
\put(-1,1){\line(1,0){11}}\put(0,0){\line(0,1){8.5}}
\put(0,1){\vector(1,3){1}}
\put(.8,3.8){$\bullet$}\put(2.8,2.8){$\bullet$}
\put(1,4){\line(2,-1){2}}\put(1,4.02){\line(2,-1){2}}
\put(0,1){\vector(1,1){4}}
\put(3.8,4.8){$\bullet$}\put(4.8,1.8){$\bullet$}
\put(4,5){\line(1,-3){1}}\put(4,5.02){\line(1,-3){1}}
\put(1,4){\vector(1,1){4}}\put(4,5){\vector(1,3){1}}
\put(4.8,7.8){$\bullet$}\put(5.8,4.8){$\bullet$}\put(7.8,3.8){$\bullet$}
\put(5,8){\line(1,-3){1}}\put(5,8.02){\line(1,-3){1}}
\put(6,5){\line(2,-1){2}}\put(6,5.02){\line(2,-1){2}}
\put(-.45,.3){\begin{footnotesize}$0$\end{footnotesize}}
\put(1.85,3.7){\begin{footnotesize}$S$\end{footnotesize}}
\put(4.7,3.3){\begin{footnotesize}$S'$\end{footnotesize}}
\put(6.5,5.3){\begin{footnotesize}$S+S'$\end{footnotesize}}
\end{picture}
\end{center}
\end{figure}
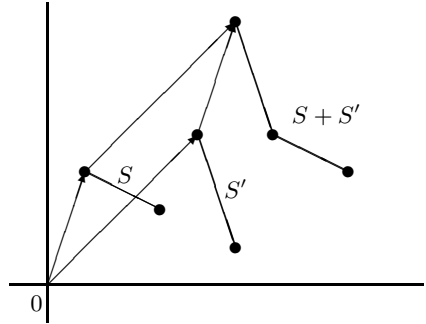

\begin{corollary}\label{sumSla}
For non-zero $g,h\in K[x]$, we have $S_\la(gh)=S_\la(g)+S_\la(h)$. 
\end{corollary}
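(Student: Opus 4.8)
The plan is to reduce the statement $S_\la(gh)=S_\la(g)+S_\la(h)$ to the already-established Newton polygon machinery, specifically by comparing it with the analogous multiplicativity for the order functions $\ord_{\mu',\phi}$ and then characterizing the whole segment $S_\la$ by means of two of its invariants: its left abscissa $s(g)$, its right abscissa $s'(g)$, and the height of one of its endpoints (equivalently, the value $\mu'(g)$). First I would recall that by Lemma~\ref{muprimaN} the vertical coordinate $\mu'(g)$ of the left endpoint of $\hh(S_\la(g))$ (the slope-zero side of $N_{\mu',\phi}(g)$) satisfies $\mu'(gh)=\mu'(g)+\mu'(h)$, since $\mu'$ is a valuation on $K[x]$. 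This pins down the ordinate of the left endpoint of $S_\la(gh)$ once we know its abscissa.

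Next I would nail down the two abscissas. By item~(2) of Lemma~\ref{additivity} we have $s(g)=\ord_{\mu',\phi}(g)$, and since $\phi$ is $\mu'$-irreducible, equation~(\ref{multiplicative}) gives $\ord_{\mu',\phi}(gh)=\ord_{\mu',\phi}(g)+\ord_{\mu',\phi}(h)$; hence $s(gh)=s(g)+s(h)$. Similarly, by item~(3) of Lemma~\ref{additivity}, $s'(g)=\ord_{\mu'',\phi}(g)$ for $\mu''=[\mu;(\phi,\la-\epsilon)]$ with $\epsilon$ sufficiently small; applying~(\ref{multiplicative}) for the key polynomial $\phi$ with respect to $\mu''$ (note $\phi$ is still $\mu''$-irreducible, being a key polynomial for each such augmentation by Lemma~\ref{phi}) gives $s'(gh)=s'(g)+s'(h)$, provided we choose a single $\epsilon$ small enough to work simultaneously for $g$, $h$, and $gh$ — there are only finitely many slopes appearing in the three Newton polygons, so such an $\epsilon$ exists.

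Now I would assemble the conclusion. The segment $S_\la(g)$ is by definition the intersection $N_{\mu,\phi}(g)\cap L_{-\la}$, so it lies on a line of slope $-\la$; it is therefore completely determined by its left abscissa $s(g)$, its right abscissa $s'(g)$, and the ordinate of its left endpoint, which is $u_{\mu'}(g)/e(\mu)$ and is recoverable from $\mu'(g)$ and $s(g)$ via $\mu'(g)=u_{\mu'}(g)/e(\mu)+\la\,s(g)$ (both endpoints lie on $L_{-\la}$, whose equation is $y+\la x=\mu'(g)$). Adding the three additivity relations just proved — for $s$, for $s'$, and for $\mu'$ — shows that the segment with left abscissa $s(g)+s(h)$, right abscissa $s'(g)+s'(h)$, and left-endpoint ordinate $\mu'(g)+\mu'(h)-\la(s(g)+s(h))$, i.e.\ $S_\la(gh)$, is exactly the Minkowski-type sum of $S_\la(g)$ and $S_\la(h)$ as defined just before the statement: both are segments of common slope $-\la$, so their sum is the segment of slope $-\la$ whose left endpoint is the vector sum of the two left endpoints and whose right abscissa is the sum of the two right abscissas.

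The only real obstacle is the bookkeeping around the parameter $\epsilon$ in item~(3) of Lemma~\ref{additivity}: one must make sure the same $\epsilon$ serves for $g$, $h$ and $gh$, which is routine since the relevant ``sufficiently small'' threshold depends only on the finite set of slopes of $N_{\mu,\phi}(g)$, $N_{\mu,\phi}(h)$ and $N_{\mu,\phi}(gh)$, and one may take the minimum of the three thresholds. Alternatively, and perhaps more cleanly, one can avoid $\epsilon$ altogether by invoking Lemma~\ref{affinity} to pass to $N_{\mu',\phi}$, observing that $S_\la(g)$ maps under $\hh$ to the slope-zero side of $N_{\mu',\phi}(g)$, and then deducing $S_\la(gh)=S_\la(g)+S_\la(h)$ from the behaviour of slope-zero sides under multiplication of $\phi$-expansions; but the three-invariant argument above is the most direct route and I would present that.
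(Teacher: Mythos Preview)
Your proof is correct and follows the same overall structure as the paper's: reduce $S_\la(gh)=S_\la(g)+S_\la(h)$ to the three additivity statements for $s$, $s'$, and the left-endpoint ordinate, and obtain the first two from Lemma~\ref{additivity} together with equation~(\ref{multiplicative}), exactly as the paper does.

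The one point of divergence is the third additivity. The paper proves $u(gh)=u(g)+u(h)$ by a direct $\phi$-expansion computation: after reducing to the case where the expansions of $g$ and $h$ start at $s(g)$ and $s(h)$, it identifies the $(s(g)+s(h))$-th coefficient of $gh$ as the remainder $c$ of $a_{s(g)}b_{s(h)}$ upon division by $\phi$, and then uses Lemma~\ref{minimal0} to conclude $\mu(c)=\mu(a_{s(g)}b_{s(h)})$. You instead deduce the ordinate additivity from the valuation identity $\mu'(gh)=\mu'(g)+\mu'(h)$ combined with the line equation $u(g)/e(\mu)=\mu'(g)-\la\,s(g)$. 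Your route is shorter and avoids the hands-on manipulation; the paper's route is more self-contained in that it does not appeal to $\mu'$ being a valuation (Proposition~\ref{extension}). Both are perfectly valid, and your observation about choosing a single $\epsilon$ in Lemma~\ref{additivity}(3) that works for $g$, $h$, and $gh$ simultaneously is a point the paper leaves implicit.
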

    
\begin{proof}
For sides of the same slope (or of length zero), this additivity is equivalent to
$$
s(gh)=s(g)+s(h),\quad s'(gh)=s'(g)+s'(h),\quad \mbox{and}\quad u(gh)=u(g)+u(h).
$$
The two first equalities are a consequence of equation (\ref{multiplicative}) and Lemma \ref{additivity}. In order to prove the third, let $g=\sum_{0\le s}a_s\phi^s$,  $h=\sum_{0\le t}b_t\phi^t$ be the $\phi$-expansions of $g,h$, respectively. Let $s_0=s(g)$, $t_0=s(h)$, and consider $G=\sum_{s_0\le s}a_s\phi^s$,  $H=\sum_{t_0\le t}b_t\phi^t$. Since $G\smu g$,  $H\smu h$,  $GH\smu gh$, we may suppose $g=G$, $h=H$. Now, the left end point of $S_\la(gh)$ has abscissa $s_0+t_0$ and the $(s_0+t_0)$-th term of the $\phi$-expansion of $gh$ is the remainder $c$ of the division $a_{s_0}b_{t_0}=\phi q+c$. Since $\phi\nmid_\mu a_{s_0}b_{t_0}$, Lemma \ref{minimal0} shows that $\mu(a_{s_0}b_{t_0})=\mu(c)$. Hence, $u(g)+u(h)=e(\mu)\left(\mu(a_{s_0}\phi^{s_0})+\mu(b_{t_0}\phi^{t_0})\right)=e(\mu)\mu(c\phi^{s_0+t_0})=u(gh)$.  
\end{proof}

The addition of segments may be extended to an addition law for Newton polygons. Given two  polygons $N=S_1+\cdots+S_k$, $N'=S'_1+\cdots +S'_{k'}$, the left end point of the sum $N+N'$ is  the vector sum of the left end points of $N$ and $N'$, whereas the sides of $N+N'$ are obtained by joining all sides in the multiset $\left\{S_1,\dots,S_k,S'_1,\dots,S'_{k'}\right\}$, ordered by increasing slopes \cite[Sec. 1]{HN}.    
As an immediate consequence of Corollary \ref{sumSla}, we get the \emph{Theorem of the product} for Newton polygons.

\begin{theorem}\label{product}
Let $\mu$ be a valuation and $\phi$ a key polynomial for $\mu$. Then,
$\nph^-(gh)=\nph^-(g)+\nph^-(h)$ for any non-zero $g,h\in K[x]$.
\hfill{$\Box$}
\end{theorem}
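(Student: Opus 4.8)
The plan is to deduce the Theorem of the product from the additivity of $\la$-components that was just established in Corollary \ref{sumSla}, by passing from a single slope to the whole principal polygon. The key observation is that the principal Newton polygon $\nph^-(g)$ is nothing but the collection of all $\la$-components $S_\la(g)$ as $\la$ ranges over $\Q_{>0}$, suitably reassembled: each side of negative slope $-\la$ of $\nph(g)$ equals $S_\la(g)$, and for a slope $-\la$ that does not occur as a side of $\nph(g)$, the component $S_\la(g)$ degenerates to the vertex of $\nph(g)$ separating the sides of slope less than $-\la$ from those of slope greater than $-\la$. So the principal polygon is reconstructed from the family $\{S_\la(g)\}_{\la>0}$ by the addition law for segments described before Corollary \ref{sumSla} and extended to polygons right after it.

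First I would fix non-zero $g,h\in K[x]$ and let $-\la_1<\cdots<-\la_k$ be the distinct negative slopes occurring among the sides of $\nph(g)$ and $\nph(h)$ combined. For each such $\la_j$ we have, by Corollary \ref{sumSla}, the identity $S_{\la_j}(gh)=S_{\la_j}(g)+S_{\la_j}(h)$. The side of slope $-\la_j$ of $\nph^-(g)$ is $S_{\la_j}(g)$ when that slope actually occurs in $\nph(g)$, and is a single point (a vertex of $\nph(g)$) otherwise; likewise for $h$, and the vector sum in the latter degenerate cases is exactly the operation "$+$" on segments when one summand is a point. Summing the identities $S_{\la_j}(gh)=S_{\la_j}(g)+S_{\la_j}(h)$ over $j=1,\dots,k$, using the associativity of the polygon addition law, gives a polygon whose sides (ordered by increasing slope) are precisely the join of the sides of $\nph^-(g)$ and $\nph^-(h)$, i.e. $\nph^-(g)+\nph^-(h)$; and whose sides are also precisely the sides of slope $-\la_1,\dots,-\la_k$ of $\nph(gh)$. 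So the only remaining points to check are that these are indeed all the negative-slope sides of $\nph(gh)$ — which follows because $\ord_{\mu,\phi}$ is additive (equation \ref{multiplicative}) so the leftmost point of $\nph^-(gh)$ sits at abscissa $\ord_{\mu,\phi}(g)+\ord_{\mu,\phi}(h)=\op{length}(\nph^-(g))+\op{length}(\nph^-(h))$ by Lemma \ref{length2}, forcing no negative slopes beyond the $-\la_j$ to appear — and that the left end point of $\nph^-(gh)$ is the vector sum of the left end points of $\nph^-(g)$ and $\nph^-(h)$, which is again the additivity of $u$ and of $\ord_{\mu,\phi}$ from Corollary \ref{sumSla} applied at the smallest relevant slope.

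I expect the main obstacle to be bookkeeping rather than mathematics: one must handle cleanly the three-way case distinction (a given slope $-\la_j$ is a genuine side of both $\nph(g)$ and $\nph(h)$, of exactly one of them, or of neither but lies between occurring slopes of one of them), and verify that in each case the segment-addition $S_{\la_j}(g)+S_{\la_j}(h)$ matches the contribution expected in $\nph^-(g)+\nph^-(h)$ — including correctly tracking the left end points so that the translations implicit in the polygon addition law line up. Once the correspondence "negative sides of $\nph(\cdot)$ $\leftrightarrow$ nondegenerate $\la$-components" is stated precisely, the theorem is an immediate consequence of Corollary \ref{sumSla} together with the fact that a convex polygon is determined by its left end point and its multiset of sides. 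A short alternative, avoiding the slope-by-slope argument, is to invoke Lemma \ref{additivity}(2): for every $\la>0$ one has $s_{\mu'}(gh)=\ord_{\mu',\phi}(gh)=\ord_{\mu',\phi}(g)+\ord_{\mu',\phi}(h)=s_{\mu'}(g)+s_{\mu'}(h)$, and this single family of identities over all $\la$, together with the corresponding ones for $s'$ and $u$, already pins down $\nph^-(gh)$ as $\nph^-(g)+\nph^-(h)$; I would mention this but carry out the first route since it is the one directly packaged by Corollary \ref{sumSla}.
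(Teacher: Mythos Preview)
Your approach matches the paper's: the theorem is recorded there as an immediate consequence of Corollary~\ref{sumSla}, with no further argument. Your proposal is essentially a write-up of why it is immediate, and the alternative route you sketch at the end (a convex polygon with only negative slopes is determined by the functions $\la\mapsto s_{\mu'}(g),\,s'_{\mu'}(g),\,u_{\mu'}(g)$, and these are additive) is the cleanest way to phrase it.

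Two small slips worth fixing. First, the abscissa $\ord_{\mu,\phi}(gh)$ is the \emph{right} end point of $\nph^-(gh)$ (its length, by Lemma~\ref{length2}), not the leftmost point; the left end point sits at $\ord_\phi(gh)$. Second, literally ``summing the identities $S_{\la_j}(gh)=S_{\la_j}(g)+S_{\la_j}(h)$ over $j$'' under the polygon addition law does not produce $\nph^-(gh)$: each $S_{\la_j}$ carries its own position, so the left end points accumulate incorrectly. The right formulation is that a principal polygon is uniquely determined by its family of $\la$-components for $\la>0$, and the $\la$-component of a polygon sum $N_1+N_2$ equals $S_\la(N_1)+S_\la(N_2)$ (the supporting-line property of Minkowski sums); combining this with Corollary~\ref{sumSla} gives $\nph^-(gh)=\nph^-(g)+\nph^-(h)$ directly, with no slope-by-slope bookkeeping needed.
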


The analogous statement for entire Newton polygons is false. For instance, consider $g,h\in K[x]$ such that $\deg g,\,\deg h<\deg\phi$ and $\deg gh\ge \deg \phi$; then, $N_{\mu,\phi}(g)$ and $N_{\mu,\phi}(h)$ are a single point, while $N_{\mu,\phi}(gh)$ has a side of length one.  

We now apply these Newton polygon techniques to obtain 
a characterization of the units in $\gg(\mu')$ and a criterion for $\mu'$-minimality in terms of $\phi$-expansions.

\begin{lemma}\label{units}
For any non-zero $g\in K[x]$, $\hmp(g)$ is a unit in $\ggmp$ if and only if $g\sim_{\mu'}a$ for some $a\in K[x]$ such that $\deg a<\deg\phi$. This condition holds if $\phi\nmid_{\mu}g$.
\end{lemma}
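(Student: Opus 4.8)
The plan is to characterize when $\hmp(g)$ is a unit in $\ggmp$ by reading off the $\la$-component $S_\la(g)$ of the Newton polygon $\nph(g)$. The key observation is that $\hmp(g)$ is a unit precisely when $g$ is $\mu'$-equivalent to a polynomial of degree less than $\deg\phi$, and by the description of augmented valuations in Definition \ref{muprima}, such polynomials are exactly those for which $\phi\nmid_{\mu'}g$, i.e. $\ord_{\mu',\phi}(g)=0$. First I would invoke Lemma \ref{additivity}(2), which gives $s(g)=\ord_{\mu',\phi}(g)$; thus $\phi\nmid_{\mu'}g$ is equivalent to $s(g)=0$, meaning the left end point of $S_\la(g)$ sits on the vertical axis. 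When this holds, write $g=\sum_{0\le s}a_s\phi^s$ and set $h=\sum_{s\in I}a_s\phi^s$ where $I=\{s\mid \mu'(a_s\phi^s)=\mu'(g)\}$; since $0\in I$ and $g\smu' h$, we have $\hmp(g)=\hmp(h)$, and $\hmp(h)=\hmp(a_0)$ forced by $\mu'(a_0)=\mu'(h)$ together with $\mu'(h-a_0)>\mu'(h)$ — here one uses that every term $a_s\phi^s$ with $s\in I$, $s>0$, is $\mu'$-divisible by $\phi$, so subtracting them off does not change the $\mu'$-class. Then $\hmp(a_0)$ is a unit because $a_0$ has degree $<\deg\phi$, so $\mu'(a_0)=\mu(a_0)$ and $\hmp(a_0)$ lies in the image of $\Delta(\mu)^*$ under the inclusion coming from $\hm(a_0)$ being a unit in $\ggm$ (by $\mu$-minimality of $\phi$, a polynomial of degree $<\deg\phi$ is not $\mu$-divisible by $\phi$, hence its $\hm$-class is a unit — this is exactly property (\ref{basic}) combined with Proposition \ref{extension}(2)).

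Conversely, if $\hmp(g)$ is a unit, then $\hmp(g)\hmp(f)=\hmp(1)$ for some $f$, so $gf\smu' 1$; applying $\ord_{\mu',\phi}$ and using its multiplicativity (\ref{multiplicative}) together with Lemma \ref{additivity}(2) (or directly the fact that $\ord_{\mu',\phi}(1)=0$), we get $\ord_{\mu',\phi}(g)=0$, i.e. $s(g)=0$, and we are back in the previous case, so $g\smu' a_0$ with $\deg a_0<\deg\phi$. For the final assertion, suppose $\phi\nmid_\mu g$. By Proposition \ref{extension}(2), $\mu(g)=\mu'(g)$, and by Lemma \ref{minimal0}(4) applied to the $\mu$-minimal polynomial $\phi$, this forces $\mu(g)=\mu(g_0)$ where $g_0$ is the $0$-th coefficient of the $\phi$-expansion; hence the point $(0,\mu(g_0))$ lies on $\nph^-(g)$ and in fact is its left end point, so $s(g)=0$ and the criterion applies. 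Alternatively and more cleanly: $\phi\nmid_\mu g$ gives, via Proposition \ref{extension}(2), that $\hm(g)$ maps to a nonzero element under $\ggm\to\gg(\mu')$; but the kernel is $\hm(\phi)\ggm$, so $\hm(g)\notin\hm(\phi)\ggm$, and one checks $\phi\nmid_{\mu'}g$, whence the unit condition holds by the first part.

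The main obstacle I anticipate is the passage $\hmp(h)=\hmp(a_0)$ — specifically, verifying carefully that the higher terms $a_s\phi^s$ ($s\in I$, $s>0$) are genuinely $\mu'$-divisible by $\phi$ and that their sum does not accidentally cancel with $a_0$ at the level of $\hmp$. This is where $\mu'$-minimality of $\phi$ (Lemma \ref{phi}) and the item-4 characterization in Lemma \ref{minimal0} do the real work: $\phi\mmu[\mu'] a_s\phi^s$ for $s\ge1$ is clear, and $\phi\nmid_{\mu'}a_0$ since $\deg a_0<\deg\phi$, so $\hmp(a_0)$ and $\hmp(h-a_0)$ cannot be $\mu'$-equivalent, forcing $\hmp(h)=\hmp(a_0)+\hmp(h-a_0)$ with the second summand zero only if $\mu'(h-a_0)>\mu'(h)$, which indeed holds by construction of $I$. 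Everything else is bookkeeping with Newton polygons and the already-established additivity results.
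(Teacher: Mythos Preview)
Your proposal has two genuine gaps.

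\textbf{First gap (the ``if'' direction).} You want to show that if $\deg a<\deg\phi$ then $\hmp(a)$ is a unit in $\ggmp$. Your argument is that $\hm(a)$ is already a unit in $\ggm$, because $\phi\nmid_\mu a$. But this inference is false: $\phi\nmid_\mu a$ only says $\hm(a)$ avoids the one prime ideal $\hm(\phi)\ggm$, not that it is a unit. For a concrete counterexample, take $\mu=\mu_0$ and $\phi$ any key polynomial of degree $\ge 2$; then $a=x$ has $\deg a<\deg\phi$, yet $H_{\mu_0}(x)=y_0$ generates a maximal ideal of $\Delta(\mu_0)$ and is certainly not a unit. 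Neither (\ref{basic}) nor Proposition \ref{extension}(2) gives what you claim. The paper instead produces an explicit inverse: since $\gcd(a,\phi)=1$ in $K[x]$, a B\'ezout identity $ah+\phi f=1$ yields $\mu'(ah-1)=\mu'(\phi f)>\mu(\phi f)\ge\min\{\mu(ah),0\}=\min\{\mu'(ah),0\}$, hence $ah\sim_{\mu'}1$.

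\textbf{Second gap (the ``only if'' direction and the final assertion).} You repeatedly use the claim that $s(g)=0$ (equivalently $\phi\nmid_{\mu'}g$) forces $g\sim_{\mu'}a_0$. This is false: if $s(g)=0$ but $s'(g)>0$, the $\la$-component $S_\la(g)$ is a genuine segment and $g\not\sim_{\mu'}a_0$. Take $g=a_0+\phi$ with $\mu'(a_0)=\mu'(\phi)$; then $s(g)=0$, $s'(g)=1$, and $\hmp(g)=\hmp(a_0)+\hmp(\phi)$ is not a unit (if it were, $s'(gh)=s'(g)+s'(h)\ge1$ would contradict $s'(1)=0$). Your ``obstacle'' paragraph does not resolve this: when $I$ contains indices $s>0$, the terms $a_s\phi^s$ have $\mu'$-value \emph{equal} to $\mu'(g)$, so $\mu'(h-a_0)>\mu'(h)$ need not hold. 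The paper's converse avoids this by using Corollary \ref{sumSla}: from $gh\sim_{\mu'}1$ one gets $S_\la(g)+S_\la(h)=\{(0,0)\}$, forcing $S_\la(g)$ to be the single point $(0,\mu(a_0))$, i.e.\ $s(g)=s'(g)=0$, whence $g\sim_{\mu'}a_0$. Your argument for the final assertion has the same defect: from $\phi\nmid_\mu g$ you only extract $s(g)=0$, whereas what is needed (and what the paper shows directly via $\mu'(\phi q)>\mu(\phi q)\ge\mu(g)=\mu'(g)$) is $g\sim_{\mu'}a$.
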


\begin{proof}
Suppose $g\sim_{\mu'}a$ for some $a\in K[x]$ such that $\deg a<\deg\phi$. Since $a$ is coprime to $\phi$, we have a B\'ezout identity $ah+\phi f=1$, with $h,f\in K[x]$ and $\deg h<\deg\phi$.
By Proposition \ref{extension}, $\mu(ah)=\mu'(ah)$; hence,
$$
\mu'(ah-1)=\mu'(\phi f)>\mu(\phi f)\ge \min\{\mu(ah),0\}=\min\{\mu'(ah),0\}.
$$
Therefore, $ah\sim_{\mu'}1$ and $H_{\mu'}(g)=H_{\mu'}(a)$ is a unit in $\gg(\mu')$.

Suppose $gh\sim_{\mu'}1$ for some $h\in K[x]$, and let $g=\sum_{0\le s}a_s\phi^s$ be the $\phi$-expansion of $g$. By Lemma \ref{additivity} and Corollary \ref{sumSla}, $S_\la(g)+S_\la(h)=S_\la(gh)=S_\la(1)=\{(0,0)\}$. Hence, $S_\la(g)$ is a single point of abscissa zero. This implies that $g\sim_{\mu'}a_0$.

Finally, suppose $\phi\nmid_\mu g$, and let $g=\phi q+a$ be the divison with remainder of $g$ by $\phi$. By Lemma \ref{minimal0} and Proposition \ref{extension},
$\mu'(g-a)=\mu'(\phi q)>\mu(\phi q)\ge\mu(g)=\mu'(g)$. 
Therefore, $g\sim_{\mu'}a$. 
\end{proof}

\begin{lemma}\label{minimal}
Let $g=\sum_{s=0}^\ell a_s\phi^s$ be the $\phi$-expansion of a non-zero polynomial $g\in K[x]$. 
The following conditions are equivalent:
\begin{enumerate}
\item $g$ is $\mu'$-minimal.
\item $\deg a_\ell=0$ and $\mu'(g)=\mu'\left(a_\ell\phi^\ell\right)$.
\item $\deg g=s'(g)\deg\phi$. 
\end{enumerate}
\end{lemma}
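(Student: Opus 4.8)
The plan is to treat (2)$\Leftrightarrow$(3) as a bookkeeping computation and to concentrate on (1)$\Leftrightarrow$(3). Put $n=\deg\phi$. In the $\phi$-expansion $g=\sum_{s=0}^{\ell}a_s\phi^s$ the monomials $a_s\phi^s$ have pairwise distinct degrees (since $\deg a_s<n$), so $\deg g=\ell n+\deg a_\ell$; also, by the third item of Definition \ref{muprima}, $\mu'(g)=\min_s\mu'(a_s\phi^s)$, and the remarks just before Lemma \ref{additivity} identify $s'(g)$ with the largest $s$ such that $\mu'(a_s\phi^s)=\mu'(g)$. Hence $s'(g)\le\ell$, with equality iff $\mu'(a_\ell\phi^\ell)=\mu'(g)$. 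Since $\deg g=\ell n+\deg a_\ell\ge\ell n\ge s'(g)\,n$, equality in (3) forces at once $\deg a_\ell=0$ and $s'(g)=\ell$, which is precisely (2); and (2) gives back (3). (One caveat: the equivalence is meant for $\deg g\ge n$, i.e. $\ell\ge1$. If $0<\deg g<n$ then $\hmp(g)$ is a unit by Lemma \ref{units}, so (1), (2), (3) all fail; and if $g\in K$ then (2), (3) hold while $\mu'$-minimality is ruled out by the positivity of degree in its definition. So I assume $\ell\ge1$.)

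For (3)$\Rightarrow$(1), by (2)$\Leftrightarrow$(3) we have $s'(g)=\ell$ and $\deg g=\ell n$. If $g\mid_{\mu'}h$ with $h\ne0$, write $h\sim_{\mu'}gq$; then Lemma \ref{additivity} and Corollary \ref{sumSla} give $S_\la(h)=S_\la(g)+S_\la(q)$, so, comparing abscissas of right end points, $s'(h)=s'(g)+s'(q)\ge\ell$. As $\deg h$ is at least $n$ times the degree of the $\phi$-expansion of $h$, which is at least $s'(h)$, we get $\deg h\ge\ell n=\deg g$; hence $g$ is $\mu'$-minimal.

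For (1)$\Rightarrow$(3) I would establish the contrapositive: if $\deg g>s'(g)\,n$, then $g\mid_{\mu'}h$ for some nonzero $h$ with $\deg h<\deg g$, so $g$ is not $\mu'$-minimal. When $s'(g)<\ell$ this is immediate: let $g^{\ast}$ be the sum of the terms $a_s\phi^s$ of the $\phi$-expansion of $g$ with $\mu'(a_s\phi^s)=\mu'(g)$. Then $g^{\ast}\ne0$, $g\sim_{\mu'}g^{\ast}$ (so $g\mid_{\mu'}g^{\ast}$), and the $\phi$-expansion of $g^{\ast}$ has degree $s'(g)$, hence $\deg g^{\ast}<(s'(g)+1)n\le\ell n\le\deg g$.

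The only remaining case, $s'(g)=\ell$ (which forces $\deg a_\ell=\deg g-\ell n\ge1$), is the one I expect to be the real difficulty: a single explicit multiplier such as an inverse of $a_\ell$ modulo $\phi$ does not work, because the ``carry'' from $a_{\ell-1}q$ lands in the $\phi^\ell$-slot of $gq$ and can have degree as large as $n-2$. Instead I would use a dimension count. On $V=\{q\in K[x]:\deg q<n\}$, a $K$-space of dimension $n$, let $\Phi(q)$ denote the coefficient of $\phi^\ell$ in the $\phi$-expansion of $gq$; then $\Phi\colon V\to\{p\in K[x]:\deg p<n\}$ is $K$-linear. The requirement $\deg\Phi(q)<\deg a_\ell$ amounts to the vanishing of $n-\deg a_\ell$ linear functionals of $q$, hence holds on a subspace of $V$ of dimension $\ge\deg a_\ell\ge1$; pick a nonzero $q$ in it. Let $h$ be the sum of the terms of the $\phi$-expansion of $gq$ realizing $\mu'(gq)$; then $h\ne0$, $h\sim_{\mu'}gq$ (so $g\mid_{\mu'}h$, since $\hmp(h)=\hmp(g)\hmp(q)$), and by Corollary \ref{sumSla} the $\phi$-expansion of $h$ has degree $s'(gq)=s'(g)+s'(q)=\ell$ with leading coefficient $\Phi(q)$; in particular $\Phi(q)\ne0$ and $\deg h=\ell n+\deg\Phi(q)<\ell n+\deg a_\ell=\deg g$. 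This contradicts the $\mu'$-minimality of $g$ and finishes the proof.
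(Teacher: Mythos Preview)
Your proof is correct, and for (2)$\Leftrightarrow$(3) and (3)$\Rightarrow$(1) it is essentially the paper's argument. The difference lies in the implication (1)$\Rightarrow$(2)/(3).

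You split the failure of (3) into the two subcases $s'(g)<\ell$ and $s'(g)=\ell$ with $\deg a_\ell>0$. The first subcase matches the paper's first step (pass to the $\mu'$-equivalent truncation $G=g^{\ast}$). For the second subcase you invoke a linear-algebra dimension count to locate a multiplier $q$ of degree $<\deg\phi$ whose product $gq$ has a $\phi^{\ell}$-coefficient of degree $<\deg a_\ell$. This is valid, but more elaborate than necessary. Your stated reason for avoiding the obvious multiplier---``a single explicit multiplier such as an inverse of $a_\ell$ modulo $\phi$ does not work, because the carry from $a_{\ell-1}q$ lands in the $\phi^\ell$-slot''---overlooks that one does not need to control the \emph{exact} $\phi$-expansion of $cg$, only its $\mu'$-equivalence class. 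The paper takes $c$ with $\deg c<\deg\phi$ and $ca_\ell\sim_{\mu'}1$ (Lemma~\ref{units}); for each $s<\ell$ one has $\phi\nmid_\mu ca_s$ (by $\mu$-irreducibility and $\mu$-minimality of $\phi$, since $\deg c,\deg a_s<\deg\phi$), so again by Lemma~\ref{units} there is $c_s$ of degree $<\deg\phi$ with $ca_s\sim_{\mu'}c_s$. Summing, $cg\sim_{\mu'}f:=\phi^\ell+\sum_{s<\ell}c_s\phi^s$, and $\mu'$-minimality of $g$ forces $\deg g\le\deg f=\ell\deg\phi$, hence $\deg a_\ell=0$. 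So the ``single explicit multiplier'' route \emph{does} work once the carries are absorbed into $\sim_{\mu'}$ via Lemma~\ref{units}; no dimension count is needed.

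Your caveat about constants is well observed: for $g\in K^*$ conditions (2) and (3) hold while (1) fails by the positive-degree clause in the definition of $\mu'$-minimality; the paper's proof tacitly ignores this degenerate case as well.
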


\begin{proof}
Clearly, conditions (2) and (3) are equivalent. 

Let us show that (1) implies (2). Suppose that $g$ is $\mu'$-minimal; write $g=G+H$, where $G$ is the sum of all monomials $a_s\phi^s$ with $\mu'(a_s\phi^s)=\mu'(g)$ and $H$ is the sum of all monomials with $\mu'(a_s\phi^s)>\mu'(g)$. Since $\mu'(g-G)=\mu'(H)>\mu'(g)$, we have $g\sim_{\mu'}G$. Since $g$ is $\mu'$-minimal, we get $\deg g\le \deg G$, and the leading monomial of $g$ must be one of the monomials of $G$.

On the other hand, by Lemma \ref{units}, $H_{\mu'}(a_\ell)$ is a unit in $\gg(\mu')$ and $ca_\ell\sim_{\mu'}1$ for some $c\in K[x]$ with $\deg c<\deg \phi$. For each $0\le s<\ell$, since $\phi\nmid_\mu ca_s$, Lemma \ref{units} shows that $ca_s\sim_{\mu'}c_s$ for some $c_s\in K[x]$ with $\deg c_s<\deg \phi$.
Hence, $cg\sim_{\mu'}f$, where $f:=\phi^\ell+\sum_{s=0}^{\ell-1}c_s\phi^s$. Since $g$ is $\mu'$-minimal, we get $\deg a_\ell+\ell\deg\phi=\deg g\le \deg f=\ell\deg\phi$; hence, $\deg a_\ell=0$.

Conversely, suppose that (3) holds. Consider $f\in K[x]$ such that $g\mid_{\mu'}f$; that is, $f\sim_{\mu'}gh$ for a certain $h\in K[x]$. By Lemma \ref{additivity} and Corollary \ref{sumSla}, 
$s'(f)=s'(gh)=s'(g)+s'(h)$, so that $\deg g=s'(g)\deg\phi\le s'(f)\deg\phi\le \deg f$.
\end{proof}

As a consequence of the criterion of Lemma \ref{minimal}, we may introduce an important numerical invariant of an augmented valuation.

\begin{lemma}\label{bound}
Let $g\in K[x]$ be a monic $\mu'$-minimal polynomial. Then, the positive rational number $C(\mu'):=\mu'(g)/\deg g$ does not depend on $g$.
\end{lemma}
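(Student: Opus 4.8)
The plan is to read the value of $C(\mu')$ directly off the characterization of $\mu'$-minimal polynomials provided by Lemma \ref{minimal}. Let $g\in K[x]$ be a monic $\mu'$-minimal polynomial, and write its $\phi$-expansion $g=\sum_{s=0}^{\ell}a_s\phi^s$, with $\deg a_s<\deg\phi$ for all $s$ and $a_\ell\ne 0$. By item (2) of Lemma \ref{minimal}, $\deg a_\ell=0$, so $a_\ell$ is a nonzero constant. Since $\phi$ is monic, the monomial $a_\ell\phi^\ell$ has degree $\ell\deg\phi$, which is strictly larger than $\deg(a_s\phi^s)$ for every $s<\ell$; hence there is no cancellation of the leading term and $\deg g=\ell\deg\phi$, while monicity of $g$ forces $a_\ell=1$.

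Again by item (2) of Lemma \ref{minimal}, $\mu'(g)=\mu'(a_\ell\phi^\ell)=\mu'(\phi^\ell)=\ell\,\mu'(\phi)$, the last equality because $\mu'$ is a valuation. Dividing, we obtain
$$
C(\mu')=\frac{\mu'(g)}{\deg g}=\frac{\ell\,\mu'(\phi)}{\ell\deg\phi}=\frac{\mu'(\phi)}{\deg\phi},
$$
which is visibly independent of the chosen polynomial $g$. To see that it is a positive rational number, note that $\phi\in\oo[x]$ is monic by Corollary \ref{inO}, so $\mu(\phi)\ge\mu_0(\phi)=0$; therefore $\mu'(\phi)=\mu(\phi)+\la\ge\la>0$, and since $\deg\phi>0$ this gives $C(\mu')>0$.

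Since the argument reduces to a one-line computation once Lemma \ref{minimal} is available, there is no serious obstacle here; the only point requiring a little care is the degree bookkeeping in the first paragraph, namely the verification that the leading term of a monic $\mu'$-minimal $g$ is genuinely $\phi^\ell$ and is not disturbed by the lower-order terms $a_s\phi^s$ with $s<\ell$, which is immediate from $\deg a_s<\deg\phi$ together with the monicity of $\phi$.
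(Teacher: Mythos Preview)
Your proof is correct and follows essentially the same approach as the paper: both use Lemma \ref{minimal} to identify the leading term of the $\phi$-expansion of $g$ as $\phi^\ell$, and then compute $\mu'(g)/\deg g=\mu'(\phi)/\deg\phi$. You add a brief justification of positivity via Corollary \ref{inO}, which the paper omits.
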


\begin{proof}Lemma \ref{minimal} shows that the $\phi$-expansion of $g$ is of the form:
$$g=\phi^\ell+\sum\nolimits_{0\le s<\ell} a_s\phi^s,\quad \mbox{with }\ \mu'(g)=\mu'(\phi^\ell)=\ell\mu'(\phi).
$$
Since $\deg g=\ell\deg\phi$, we get $\mu'(g)/\deg g=\mu'(\phi)/\deg \phi$. This proves the lemma.
\end{proof}

This holds for the minimal valuation $\mu_0$ too. In fact, a monic polynomial $g$ is $\mu_0$-minimal if and only if it has coefficients in $\oo$; hence  
$C(\mu_0):=\mu_0(g)/\deg g=0$ is independent of $g$.

\section{MacLane's inductive valuations}\label{secInductive}
\subsection{MacLane chains of valuations}\label{subsecChains}
\begin{definition}
A valuation $\mu\in\V$ is called \emph{inductive} if $\mu$ is attained after a finite number of augmentation steps starting with $\mu_0$.
\begin{equation}\label{depth}
\mu_0\ \stackrel{(\phi_1,\la_1)}\lra\  \mu_1\ \stackrel{(\phi_2,\la_2)}\lra\ \cdots
\ \stackrel{(\phi_{r-1},\la_{r-1})}\lra\ \mu_{r-1} 
\ \stackrel{(\phi_{r},\la_{r})}\lra\ \mu_{r}=\mu.
\end{equation}
We denote by $\Vi\subset \V$ the subset of all inductive valuations. 

A chain of augmented valuations as in (\ref{depth}) is called a \emph{MacLane chain of length $r$} of $\mu$ if $\phi_{i+1}\not\sim_{\mu_i}\phi_i$ for all $1\le i<r$.  

We say that (\ref{depth}) is an \emph{optimal MacLane chain} of $\mu$ if  $\deg \phi_1<\cdots<\deg\phi_r$.  
\end{definition}

By Lemma \ref{minimal}, in every chain of augmented valuations we have
$$
\deg \phi_1\mid \deg\phi_2\mid\cdots\mid\deg\phi_{r-1}\mid\deg\phi_r.
$$
The condition $\phi_{i+1}\not\sim_{\mu_{i}}\phi_{i}$ characterizing a MacLane chain is equivalent to  $\phi_{i+1}\nmid_{\mu_{i}}\phi_{i}$. In fact, if $\phi_{i+1}\mid_{\mu_{i}}\phi_{i}$, then  $\deg\phi_{i+1}=\deg\phi_i$ because $\deg\phi_i\mid \deg\phi_{i+1}$ and $\deg\phi_i\ge\deg\phi_{i+1}$ by the $\mu_i$-minimality of $\phi_{i+1}$; hence, $\phi_{i+1}\sim_{\mu_{i}}\phi_{i}$ by Lemma \ref{mid=sim}. 

This shows that an optimal MacLane chain is in particular a MacLane chain. 

In every chain of augmented valuations, the constants $C(\mu_i)$ introduced in Lemma \ref{bound} grow strictly with $i$:
$$
0=C(\mu_0)<C(\mu_1)<\cdots<C(\mu_r)=C(\mu).
$$
In fact, by Lemma \ref{phi}, $\phi_{i+1}\in\kp(\mu_i)\cap\kp(\mu_{i+1})$ for any $0\le i<r$. Hence,
$$
C(\mu_{i+1})=\dfrac{\mu_{i+1}(\phi_{i+1})}{\deg\phi_{i+1}}=\dfrac{\mu_{i}(\phi_{i+1})+\la_{i+1}}{\deg\phi_{i+1}}=C(\mu_{i})+\dfrac{\la_{i+1}}{\deg\phi_{i+1}}>C(\mu_{i}).
$$ 

\begin{lemma}\label{groups}
In a MacLane chain, the group $\Gamma(\mu_{i})$ is the subgroup of $\Q$ generated by $\Gamma(\mu_{i-1})$ and $\la_{i}$, for all $1\le i\le r$. In particular,
$$
\Z=\Gamma(\mu_0)\subset \Gamma(\mu_1)\subset\cdots\subset\Gamma(\mu_{r-1})\subset \Gamma(\mu_r)=\Gamma(\mu).
$$
Moreover, if $e(\mu_{i-1})\la_{i}=h_{i}/e_{i}$, with $h_{i},\,e_{i}$ positive coprime integers, then $e(\mu_{i})=e_{i}e(\mu_{i-1})$ and $e(\phi_i)=e(\mu_{i-1})=e_1\cdots e_{i-1}$.
\end{lemma}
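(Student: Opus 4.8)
The plan is to reduce the whole statement to a single claim: \emph{$\Gamma_{\phi_i}(\mu_{i-1})=\Gamma(\mu_{i-1})$ for every $1\le i\le r$}, which by Proposition~\ref{theta} is equivalent to $e(\phi_i)=e(\mu_{i-1})$. Granting this, Proposition~\ref{extension}(3) gives $\Gamma(\mu_i)=\langle\mu_i(\phi_i),\Gamma_{\phi_i}(\mu_{i-1})\rangle=\langle\mu_i(\phi_i),\Gamma(\mu_{i-1})\rangle$; and since $\mu_i(\phi_i)=\mu_{i-1}(\phi_i)+\lambda_i$ with $\mu_{i-1}(\phi_i)\in\Gamma(\mu_{i-1})$ (being the value of a nonzero polynomial), the group on the right is just $\langle\lambda_i,\Gamma(\mu_{i-1})\rangle$. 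This is the first assertion, and the ascending chain of groups is then immediate. For the ramification data: $\Gamma(\mu_{i-1})=\tfrac{1}{e(\mu_{i-1})}\Z$ by definition of $e(\mu_{i-1})$, so writing $e(\mu_{i-1})\lambda_i=h_i/e_i$ in lowest terms and scaling by $e(\mu_{i-1})$ yields $e(\mu_{i-1})\Gamma(\mu_i)=\langle1,h_i/e_i\rangle=\tfrac{1}{e_i}\Z$; hence $e(\mu_i)=e_i\,e(\mu_{i-1})$, and inductively (with $e(\mu_0)=1$) $e(\mu_i)=e_1\cdots e_i$, while $e(\phi_i)=e(\mu_{i-1})=e_1\cdots e_{i-1}$ by the claim and Proposition~\ref{theta}.

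So the core of the proof is the inclusion $\Gamma(\mu_{i-1})\subseteq\Gamma_{\phi_i}(\mu_{i-1})$ (the reverse one is part of Proposition~\ref{theta}). Because $\phi_i$ is a key polynomial for $\mu_{i-1}$, hence $\mu_{i-1}$-minimal, Lemma~\ref{minimal0}(3) shows that $\mu_{i-1}$-values are read off $\phi_i$-expansions: any nonzero $g=\sum_s g_s\phi_i^s$ satisfies $\mu_{i-1}(g)=\mu_{i-1}(g_{s_0})+s_0\,\mu_{i-1}(\phi_i)$ for some $s_0$, with $\deg g_{s_0}<\deg\phi_i$, so that $\Gamma(\mu_{i-1})=\langle\Gamma_{\phi_i}(\mu_{i-1}),\mu_{i-1}(\phi_i)\rangle$. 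Thus it is enough to prove $\mu_{i-1}(\phi_i)\in\Gamma_{\phi_i}(\mu_{i-1})$.

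For $i=1$ this is trivial because $\mu_0(\phi_1)=0$. For $i\ge2$, consider the $\phi_{i-1}$-expansion $\phi_i=\phi_{i-1}^{m_i}+\sum_{s<m_i}a_s\phi_{i-1}^s$, where $m_i=\deg\phi_i/\deg\phi_{i-1}\ge1$; since $\phi_i$ is $\mu_{i-1}$-minimal, Lemma~\ref{minimal} forces $\mu_{i-1}(\phi_i)=\mu_{i-1}(\phi_{i-1}^{m_i})=m_i\,\mu_{i-1}(\phi_{i-1})$. If $m_i\ge2$, then $\deg\phi_{i-1}<\deg\phi_i$, so $\mu_{i-1}(\phi_{i-1})\in\Gamma_{\phi_i}(\mu_{i-1})$ and hence $m_i\,\mu_{i-1}(\phi_{i-1})=\mu_{i-1}(\phi_i)\in\Gamma_{\phi_i}(\mu_{i-1})$. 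If $m_i=1$, then $\phi_i=\phi_{i-1}+a_0$ with $\deg a_0<\deg\phi_{i-1}=\deg\phi_i$; as $\phi_{i-1}$ is a key polynomial for $\mu_{i-1}$ by Lemma~\ref{phi}, hence $\mu_{i-1}$-minimal, Lemma~\ref{minimal0}(3) gives $\mu_{i-1}(\phi_i)=\min\{\mu_{i-1}(a_0),\mu_{i-1}(\phi_{i-1})\}$, which combined with $\mu_{i-1}(\phi_i)=\mu_{i-1}(\phi_{i-1})$ forces $\mu_{i-1}(a_0)\ge\mu_{i-1}(\phi_{i-1})$; a strict inequality would mean $\phi_i\sim_{\mu_{i-1}}\phi_{i-1}$, contradicting the definition of a MacLane chain, so $\mu_{i-1}(a_0)=\mu_{i-1}(\phi_i)$ and again $\mu_{i-1}(\phi_i)\in\Gamma_{\phi_i}(\mu_{i-1})$.

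The delicate point, and the one place the MacLane condition is used, is the equal-degree case $\deg\phi_{i-1}=\deg\phi_i$: it is precisely $\phi_i\not\sim_{\mu_{i-1}}\phi_{i-1}$ that lets us exhibit $\mu_{i-1}(\phi_i)$ as the value of the low-degree tail $a_0$. Without such a hypothesis the value $\mu(\phi)$ of a key polynomial $\phi$ for $\mu$ need not lie in $\Gamma_\phi(\mu)$, as the valuation $[\mu_0;(x,1/2)]$ already illustrates. All the remaining steps are bookkeeping with Propositions~\ref{extension} and~\ref{theta} and the $\phi$-expansion lemmas, so I do not anticipate any other obstacle.
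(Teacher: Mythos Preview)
Your proof is correct and follows essentially the same route as the paper's: both reduce everything to the identity $\Gamma_{\phi_i}(\mu_{i-1})=\Gamma(\mu_{i-1})$, then use a $\phi$-expansion argument to isolate a single value that must lie in $\Gamma_{\phi_i}$, and finally invoke the MacLane condition $\phi_i\not\sim_{\mu_{i-1}}\phi_{i-1}$ in the equal-degree case. The only cosmetic differences are that the paper expands in $\phi_{i-1}$ and aims at $\mu_{i-1}(\phi_{i-1})\in\Gamma_{\phi_i}$ (using Lemma~\ref{minimal0}(4) directly), whereas you expand in $\phi_i$ and aim at $\mu_{i-1}(\phi_i)\in\Gamma_{\phi_i}$, passing through Lemma~\ref{minimal} to reach the same point; also, your local symbol $m_i=\deg\phi_i/\deg\phi_{i-1}$ clashes with the paper's convention $m_i=\deg\phi_i$, so you should rename it.
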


\begin{proof}
By Propositions \ref{extension} and \ref{theta}, $\Gamma(\mu_{i})$ is generated by $\mu_{i-1}(\phi_{i})+\la_{i}$ and the subgroup 
$$\Gamma_{\phi_i}:=\{\mu_{i-1}(g)\mid g\in K[x],\ g\ne0,\ \deg g<\deg\phi_{i}\}\subset \Gamma(\mu_{i-1}).$$ Thus, it suffices to show that $\Gamma_{\phi_i}=\Gamma(\mu_{i-1})$. Let $h=\sum_{0\le s}a_s(\phi_{i-1})^s$ be the $\phi_{i-1}$-expansion of an arbitrary non-zero polynomial. Since $\phi_{i-1}\in\kp(\mu_{i-1})$, we have $$\mu_{i-1}(h)=\mu_{i-1}(a_s(\phi_{i-1})^s)=\mu_{i-1}(a_s)+s\mu_{i-1}(\phi_{i-1}),$$ for a certain $s\ge0$. Since $\deg a_s<\deg \phi_{i-1}\le\deg\phi_{i}$, the value $\mu_{i-1}(a_s)$ belongs to $\Gamma_{\phi_i}$. Hence, it suffices to check that $\mu_{i-1}(\phi_{i-1})\in \Gamma_{\phi_i}$. If $\deg\phi_{i-1}<\deg\phi_{i}$, this is obvious. Suppose $\deg\phi_{i-1}=\deg\phi_{i}$, so that $\phi_{i-1}=\phi_{i}+a$, with $\deg a<\deg\phi_{i}$. In a MacLane chain, $\phi_{i}\nmid_{\mu_{i-1}}\phi_{i-1}$, so that $\mu_{i-1}(\phi_{i-1})=\mu_{i-1}(a)\in \Gamma_{\phi_i}$ by Lemma \ref{minimal0}. 

Clearly, $e_i\Gamma(\mu_i)=\Gamma(\mu_{i-1})$; thus, $e(\mu_{i})=e_{i}e(\mu_{i-1})=e_ie_{i-1}\cdots e_1$, since $e(\mu_0)=1$. Finally, by Proposition \ref{theta}, $v(K_{\phi_i}^*)=\Gamma_{\phi_i}=\Gamma(\mu_{i-1})$, so that $e(\phi_i)=e(\mu_{i-1})$.  
\end{proof}

\begin{corollary}\label{prescribed}
Let $1\le i\le r$. For any $(s,\beta)\in\Z_{\ge0}\times\Gamma(\mu_{i-1})$, there exists $a\in K[x]$  such that $\deg a<\deg\phi_i$ and $N_{\mu_{i-1},\phi_i}(a\phi_i^s)=\{(s,\beta)\}$.
\end{corollary}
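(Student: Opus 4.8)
The plan is to reduce the statement to the identity $\Gamma_{\phi_i}(\mu_{i-1})=\Gamma(\mu_{i-1})$, which is exactly what is proved inside Lemma~\ref{groups}.

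First I would unwind the left-hand side. If $a\in K[x]$ is non-zero with $\deg a<\deg\phi_i$, then $a\phi_i^s$ already is its own canonical $\phi_i$-expansion (all other coefficients vanish), so the cloud of points attached to $a\phi_i^s$ reduces to the single point $(s,\mu_{i-1}(a\phi_i^s))$, whence $N_{\mu_{i-1},\phi_i}(a\phi_i^s)=\{(s,\mu_{i-1}(a\phi_i^s))\}$. Since $\mu_{i-1}$ is a valuation, $\mu_{i-1}(a\phi_i^s)=\mu_{i-1}(a)+s\,\mu_{i-1}(\phi_i)$. Hence the assertion amounts to finding a non-zero $a\in K[x]$ with $\deg a<\deg\phi_i$ and $\mu_{i-1}(a)=\beta-s\,\mu_{i-1}(\phi_i)$.

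The target value $\beta-s\,\mu_{i-1}(\phi_i)$ lies in $\Gamma(\mu_{i-1})$: indeed $\beta\in\Gamma(\mu_{i-1})$ by hypothesis, and $\mu_{i-1}(\phi_i)\in\Gamma(\mu_{i-1})$ because $\phi_i$ is a non-zero polynomial. Now the proof of Lemma~\ref{groups} establishes precisely that the set $\Gamma_{\phi_i}(\mu_{i-1})=\{\mu_{i-1}(g)\mid g\in K[x],\ g\ne0,\ \deg g<\deg\phi_i\}$ equals the whole group $\Gamma(\mu_{i-1})$ (for $i>1$ this uses $\phi_{i-1}\in\kp(\mu_{i-1})$ in a MacLane chain; for $i=1$ it is immediate since $v(K^*)=\Z$ and constants have degree $0<\deg\phi_1$). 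Therefore a polynomial $a$ with the required degree bound and value exists, and the corollary follows.

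There is no real obstacle here: the result is essentially a repackaging of the surjectivity of $\mu_{i-1}$ onto $\Gamma(\mu_{i-1})$ restricted to polynomials of degree below $\deg\phi_i$, which is contained in Lemma~\ref{groups}, together with the trivial remark that a monomial $a\phi_i^s$ with $\deg a<\deg\phi_i$ has a one-point Newton polygon. The only minor care needed is to record that the set $\Gamma_{\phi_i}(\mu_{i-1})$ appearing in that proof is exactly the range of $\mu_{i-1}$ on such polynomials, which is what makes the prescribed ordinate $\beta$ attainable.
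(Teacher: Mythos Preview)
Your proof is correct and follows essentially the same approach as the paper: reduce to finding $a$ with $\deg a<\deg\phi_i$ and $\mu_{i-1}(a)=\beta-s\,\mu_{i-1}(\phi_i)$, then invoke the identity $\Gamma_{\phi_i}(\mu_{i-1})=\Gamma(\mu_{i-1})$ established in the proof of Lemma~\ref{groups}. You have merely spelled out in more detail why $a\phi_i^s$ is its own $\phi_i$-expansion and handled the $i=1$ case explicitly, which the paper leaves implicit.
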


\begin {proof}
We get $\beta=\mu_{i-1}(a\phi_i^s)$, by choosing $a$ with
$\mu_{i-1}(a)=\beta-s\mu_{i-1}(\phi_i)\in\Gamma(\mu_{i-1})$. This is possible because $\Gamma(\mu_{i-1})=\Gamma_{\phi_i}$, as shown in the proof of Lemma \ref{groups}.  
\end{proof}

Let us emphasize a stability property of the values of $\mu_i$ along a MacLane chain.
 
\begin{lemma}\label{stable}
For $1\le i\le r$ and $g\in K[x]$, suppose that $\phi_{i}\nmid_{\mu_{i-1}}g$. Then, $\mu_{i-1}(g)=\mu_i(g)=\cdots=\mu(g)$.
\end{lemma}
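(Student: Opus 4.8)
The plan is to induct on the difference $j - (i-1)$, reducing everything to a single augmentation step and applying part (2) of Proposition \ref{extension}. Concretely, the statement $\mu_{i-1}(g) = \mu_i(g) = \cdots = \mu(g)$ will follow once I prove that $\phi_i \nmid_{\mu_{i-1}} g$ implies $\mu_{i-1}(g) = \mu_i(g)$ and, crucially, $\phi_{i+1} \nmid_{\mu_i} g$, so that the hypothesis propagates to the next link of the chain.

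First I would handle the base case: if $\phi_i \nmid_{\mu_{i-1}} g$, then by part (2) of Proposition \ref{extension} we get $\mu_{i-1}(g) = \mu_i(g)$ directly. Moreover, this same statement tells us that the canonical homomorphism $\gg(\mu_{i-1}) \to \gg(\mu_i)$ sends $H_{\mu_{i-1}}(g)$ to $H_{\mu_i}(g) \ne 0$, and its kernel is the prime ideal generated by $H_{\mu_{i-1}}(\phi_i)$. Next I would show $\phi_{i+1} \nmid_{\mu_i} g$. Here I use that in a MacLane chain $\phi_{i+1} \nmid_{\mu_i} \phi_i$, equivalently $H_{\mu_i}(\phi_i) \notin H_{\mu_i}(\phi_{i+1})\,\gg(\mu_i)$; since $\phi_{i+1}$ is a key polynomial for $\mu_i$, the ideal $H_{\mu_i}(\phi_{i+1})\,\gg(\mu_i)$ is prime, so $\phi_i \nmid_{\mu_i} g$ combined with $\phi_{i+1} \nmid_{\mu_i} \phi_i$ should let us conclude $\phi_{i+1} \nmid_{\mu_i} g$. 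The cleanest route: write $g = \sum_s a_s \phi_i^s$ (the $\phi_i$-expansion); since $\phi_i \nmid_{\mu_i} g$, item 4 of Lemma \ref{minimal0} gives $\mu_i(g) = \mu_i(a_0)$, so $g \sim_{\mu_i} a_0$ up to lower-order terms, hence $H_{\mu_i}(g)$ and $H_{\mu_i}(a_0)$ generate the same residual ideal; but $\deg a_0 < \deg \phi_i \le \deg \phi_{i+1}$, and by $\mu_i$-minimality of $\phi_{i+1}$ no nonzero polynomial of degree less than $\deg\phi_{i+1}$ is $\mu_i$-divisible by $\phi_{i+1}$, so $\phi_{i+1} \nmid_{\mu_i} a_0 \sim_{\mu_i} g$ minus its higher terms — one must be slightly careful because $g \not\sim_{\mu_i} a_0$ when $\mu_i(g) = \mu_i(a_0)$ holds but with ties. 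Actually item 4 of Lemma \ref{minimal0} says precisely $\phi_i \nmid_\mu g \iff \mu(g) = \mu(g_0)$, and by Lemma \ref{omega} applied to the decomposition $g = g_0 + (g - g_0)$ we can transfer $\mu$-divisibility by $\phi_{i+1}$ to the piece $g_0$.

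Then the induction step is immediate: having $\phi_{i+1} \nmid_{\mu_i} g$, the same argument with $(i-1, i)$ replaced by $(i, i+1)$ yields $\mu_i(g) = \mu_{i+1}(g)$ and $\phi_{i+2} \nmid_{\mu_{i+1}} g$, and iterating to $r$ gives the full chain of equalities up to $\mu_r = \mu$.

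The main obstacle I anticipate is the step showing $\phi_{i+1} \nmid_{\mu_i} g$ from $\phi_i \nmid_{\mu_i} g$: one needs to legitimately reduce the $\mu_i$-divisibility question for $g$ to one for its constant term $g_0$ in the $\phi_i$-expansion, which requires invoking $\mu_i$-minimality of $\phi_i$ (Lemma \ref{minimal0}) together with Lemma \ref{omega} to control what happens to $\ord_{\mu_i,\phi_{i+1}}$ under the sum $g = g_0 + \phi_i q$. Everything else is a formal unwinding of Proposition \ref{extension}(2) and the definition of a MacLane chain.
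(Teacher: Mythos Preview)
Your approach is the same as the paper's: use Proposition~\ref{extension}(2) for the equality $\mu_{i-1}(g)=\mu_i(g)$, then propagate the non-divisibility hypothesis up the chain. The difference is only in how the key step $\phi_{i+1}\nmid_{\mu_i}g$ is obtained. The paper simply invokes Lemma~\ref{units}: from $\phi_i\nmid_{\mu_{i-1}}g$ one gets $g\sim_{\mu_i}a$ for some $a$ with $\deg a<\deg\phi_i\le\deg\phi_{i+1}$, and the $\mu_i$-minimality of $\phi_{i+1}$ finishes the job.

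Your hesitation about whether $g\sim_{\mu_i}a_0$ holds is unfounded, and your fallback to Lemma~\ref{omega} is unnecessary. With $g=a_0+\phi_i q$ and $\phi_i\nmid_{\mu_{i-1}}g$, one has $\mu_i(\phi_i q)>\mu_{i-1}(\phi_i q)\ge\mu_{i-1}(g)=\mu_i(g)$ (the strict inequality because $\mu_i(\phi_i)>\mu_{i-1}(\phi_i)$, the second inequality by Lemma~\ref{minimal0}(2), the last equality by Proposition~\ref{extension}(2)). Hence $g\sim_{\mu_i}a_0$ genuinely holds, and since $\deg a_0<\deg\phi_{i+1}$ you are done by $\mu_i$-minimality of $\phi_{i+1}$. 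This is exactly the content of the last paragraph of the proof of Lemma~\ref{units}, which is why the paper's one-line citation is cleaner than reconstructing it by hand.
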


\begin{proof}
By Proposition \ref{extension}, $\mu_{i-1}(g)=\mu_{i}(g)$. If $i=r$, we are done. If $i<r$, Lemma \ref{units} shows that $g\sim_{\mu_{i}}a$ for some $a\in K[x]$ with $\deg a<\deg\phi_{i}\le\deg \phi_{i+1}$. This implies that  $\phi_{i+1}\nmid_{\mu_{i}}g$, and the argument may be iterated.
\end{proof}

\begin{lemma}\label{existence}
Consider a chain of augmented valuations:
$$
\mu\ \stackrel{(\phi,\la)}\lra\  \mu'\ \stackrel{(\phi',\la')}\lra\ \mu''.
$$
If $\deg\phi=\deg\phi'$, then $\mu''=[\mu;(\phi',\la+\la')]$.
\end{lemma}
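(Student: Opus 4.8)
The plan is to reduce everything to the single identity $\mu'(\phi')=\mu(\phi')+\la$; once this is established, the equality $\mu''=[\mu;(\phi',\la+\la')]$ follows from a direct comparison of $\phi'$-expansions, and this reduction is the only step I expect to require an idea rather than routine bookkeeping.

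First I would prove the identity $\mu'(\phi')=\mu'(\phi)$, which is the crux. Since $\deg\phi'=\deg\phi$ and both polynomials are monic, the $\phi$-expansion of $\phi'$ has the form $\phi'=\phi+a$ with $\deg a<\deg\phi$. If one had $\mu'(\phi')<\mu'(\phi)$, then necessarily $\mu'(\phi')=\mu'(a)$ with $a\neq0$, so $\phi'\sim_{\mu'}a$ with $\deg a<\deg\phi'$; but $\hmp(a)=\hmp(\phi')$ then gives $\phi'\mid_{\mu'}a$, contradicting the $\mu'$-minimality of the key polynomial $\phi'$. By Lemma \ref{phi}, $\phi$ is also a key polynomial for $\mu'$, so the symmetric argument excludes $\mu'(\phi)<\mu'(\phi')$ as well. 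Hence $\mu'(\phi')=\mu'(\phi)=\mu(\phi)+\la$. This is precisely the point where one must use that $\phi'$ is a \emph{key} polynomial for $\mu'$ and not merely a monic polynomial of degree $\deg\phi$, exploiting the incompatibility of $\mu'$-minimality with $\mu'$-equivalence to a polynomial of lower degree.

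Next I would extract the consequences. From $\mu'(\phi')=\mu'(\phi)$ it follows that $\mu'(a)=\mu'(\phi'-\phi)\ge\mu'(\phi)=\mu(\phi)+\la$, i.e. $\mu(a)\ge\mu(\phi)+\la>\mu(\phi)$; therefore $\mu(\phi')=\min\{\mu(a),\mu(\phi)\}=\mu(\phi)$. On the one hand this gives the target identity $\mu'(\phi')=\mu(\phi)+\la=\mu(\phi')+\la$. On the other hand it shows $\phi'\smu\phi$, and since $\deg\phi'=\deg\phi$ this makes $\phi'$ a key polynomial for $\mu$, so that $[\mu;(\phi',\la+\la')]$ is indeed a well-defined augmented valuation. (Alternatively, $\phi'\in\kp(\mu)$ could be obtained from Lemma \ref{minDegree} applied to the pair $\mu<\mu''$, noting that $\deg\phi$ is the least degree of a monic polynomial on which $\mu$ and $\mu''$ disagree.)

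Finally I would compare the two valuations on an arbitrary $g\in K[x]$ through its $\phi'$-expansion $g=\sum_t h_t(\phi')^t$ with $\deg h_t<\deg\phi'=\deg\phi$; in particular $\mu'(h_t)=\mu(h_t)$. Then
$$\mu''(g)=\min_t\left\{\mu'(h_t)+t\mu'(\phi')+t\la'\right\}=\min_t\left\{\mu(h_t)+t\bigl(\mu'(\phi')+\la'\bigr)\right\},$$
whereas by definition
$$[\mu;(\phi',\la+\la')](g)=\min_t\left\{\mu(h_t)+t\bigl(\mu(\phi')+\la+\la'\bigr)\right\}.$$
Since $\mu'(\phi')+\la'=\mu(\phi')+\la+\la'$ by the crux identity, the two right-hand sides agree term by term, and hence $\mu''=[\mu;(\phi',\la+\la')]$.
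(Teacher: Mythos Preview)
Your proof is correct and follows essentially the same route as the paper's: write $\phi'=\phi+a$, establish $\mu'(\phi')=\mu'(\phi)$, deduce $\phi'\smu\phi$ so that $\phi'\in\kpm$, and compare the two valuations via the $\phi'$-expansion. The only minor difference is in the justification of $\mu'(\phi')=\mu'(\phi)$: you argue directly by a symmetric minimality contradiction, while the paper first obtains $\mu'(a)\ge\mu'(\phi)$ from Lemma~\ref{minimal0} and then invokes Lemma~\ref{bound} (both $\phi,\phi'\in\kp(\mu')$ have the same degree, hence the same $\mu'$-value).
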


\begin{proof}
Write $\phi'=\phi+a$, with $\deg a<\deg\phi$. By the definition of $\mu'$ and Lemma \ref{minimal0}:
$$
\mu(a)=\mu'(a)\ge \mu'(\phi)>\mu(\phi).
$$ 
Thus, $\phi\smu \phi'$, and  $\phi'$ is a key polynomial for $\mu$, by Lemma \ref{mid=sim}.  
In order to show that $\mu''=[\mu';(\phi',\la')]=[\mu;(\phi',\la+\la')]$, it suffices to check that both augmented valuations coincide on $\phi'$ and on all polynomials of degree less than $\deg\phi'$. For any $b\in K[x]$ with $\deg b<\deg \phi'=\deg\phi$, we have $\mu''(b)=\mu'(b)=\mu(b)$, by the definition of the augmented valuations. Finally, 
$$
\mu''(\phi')=\mu'(\phi')+\la'=\mu'(\phi)+\la'=\mu(\phi)+\la+\la'=\mu(\phi')+\la+\la',
$$
where the equality $\mu'(\phi')=\mu'(\phi)$ is deduced from Lemma \ref{bound} (because $\phi,\phi'$ are key polynomials for $\mu'$), and the equality $\mu(\phi')=\mu(\phi)$ is a consequence of $\phi\smu \phi'$.
\end{proof}

Lemma \ref{existence} shows that  every inductive valuation admits optimal MacLane chains.
Let us now discuss their unicity. 

\begin{lemma}\label{unique}
Let $\nu$ be an inductive valuation and let $\mu=[\nu;(\phi,\la)]$, $\mu'=[\nu;(\phi',\la')]$ be two augmented valuations of $\nu$. Then, $\mu=\mu'$ if and only if $\deg\phi=\deg\phi'$, $\mu(\phi)=\mu(\phi')$ and $\la=\la'$. 
In this case, we also have $\phi\sim_\nu\phi'$.
\end{lemma}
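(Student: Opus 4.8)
The plan is to prove the two implications separately, in each case reducing matters to the following minimality property of augmented valuations, which I would establish first: for $\sigma\in\V$, $\psi\in\kp(\sigma)$ and $\gamma\in\Q_{>0}$, the valuation $[\sigma;(\psi,\gamma)]$ is the least element of the set $\{\rho\in\V:\rho\ge\sigma,\ \rho(\psi)\ge\sigma(\psi)+\gamma\}$. This follows immediately from Definition \ref{muprima} and Proposition \ref{extension}(1): on one hand $[\sigma;(\psi,\gamma)]$ lies in the set; on the other, if $\rho$ is any member and $g=\sum_s g_s\psi^s$ is the $\psi$-expansion of $g$, then $\rho(g)\ge\min_s\{\rho(g_s)+s\rho(\psi)\}\ge\min_s\{\sigma(g_s\psi^s)+s\gamma\}=[\sigma;(\psi,\gamma)](g)$, so $\rho\ge[\sigma;(\psi,\gamma)]$.

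For the implication $\Leftarrow$, assuming $\deg\phi=\deg\phi'$, $\mu(\phi)=\mu(\phi')$ and $\la=\la'$, I would write the $\phi$-expansion $\phi'=a+\phi$ with $\deg a<\deg\phi$. Since $\phi$ is a key polynomial for $\mu$ (Lemma \ref{phi}), Lemma \ref{minimal0} gives $\mu(\phi')=\min\{\nu(a),\mu(\phi)\}$; as $\mu(\phi')=\mu(\phi)=\nu(\phi)+\la$ this forces $\nu(a)\ge\nu(\phi)+\la>\nu(\phi)$, hence $\nu(\phi')=\nu(\phi)$ and $\phi\sim_\nu\phi'$. Then $\mu(\phi')=\nu(\phi')+\la'$, so the minimality property gives $\mu\ge\mu'$; symmetrically, computing $\mu'(\phi)$ from the $\phi'$-expansion $\phi=-a+\phi'$ (using that $\phi'$ is a key polynomial for $\mu'$) yields $\mu'(\phi)=\min\{\nu(a),\nu(\phi')+\la'\}=\nu(\phi)+\la$, so $\mu'\ge\mu$. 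Hence $\mu=\mu'$.

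For the implication $\Rightarrow$, assuming $\mu=\mu'$, I would argue as follows. Since $\phi'\ne0$ and $\mu(\phi')=\mu'(\phi')=\nu(\phi')+\la'>\nu(\phi')$, Proposition \ref{extension}(2) applied to the augmentation $\mu=[\nu;(\phi,\la)]$ of $\nu$ forces $\phi\mid_\nu\phi'$; as $\phi$ is $\nu$-minimal this gives $\deg\phi\le\deg\phi'$, and exchanging the roles of $\phi$ and $\phi'$ gives $\deg\phi=\deg\phi'$. Then Lemma \ref{mid=sim} yields $\phi\sim_\nu\phi'$, so $\nu(\phi)=\nu(\phi')$; and since $\phi,\phi'$ are monic and $\mu$-minimal (Lemma \ref{phi}), Lemma \ref{bound} gives $\mu(\phi)/\deg\phi=C(\mu)=\mu(\phi')/\deg\phi'$, so $\mu(\phi)=\mu(\phi')$. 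Finally $\la=\mu(\phi)-\nu(\phi)=\mu(\phi')-\nu(\phi')=\mu'(\phi')-\nu(\phi')=\la'$.

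The argument is essentially bookkeeping once the minimality property of augmented valuations is in hand. The one place that requires a little care is the symmetric application of Proposition \ref{extension}(2) to force $\deg\phi=\deg\phi'$: neither Lemma \ref{mid=sim} nor Lemma \ref{bound} can be invoked before the degrees are known to agree, so this step must come first in the $\Rightarrow$ direction. I do not expect any genuine obstacle beyond this.
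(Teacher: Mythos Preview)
Your proof is correct and follows essentially the same route as the paper's. The only differences are organizational: you package the inequality $\rho(g)\ge\min_s\{\sigma(g_s\psi^s)+s\gamma\}$ as a separate ``minimality property'' before applying it, and in the $\Leftarrow$ direction you deduce $\nu(\phi)=\nu(\phi')$ from the computation of $\mu(\phi')$ rather than from Lemma~\ref{bound} applied to $\nu$; the paper instead invokes Lemma~\ref{bound} for $\nu$ first and then carries out the same expansion computation inline.
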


\begin{proof}
Suppose $\mu=\mu'$. By the definition of an augmented valuation, 
$$\deg\phi=\min\{\deg g\mid g\in K[x],\ \nu(g)<\mu(g)\},$$
so that $\deg\phi=\deg\phi'$. By Lemma \ref{phi}, $\phi,\phi'\in\kp(\nu)\cap \kpm$; hence, Lemma \ref{bound}
shows that $\nu(\phi)=\nu(\phi')$ and $\mu(\phi)=\mu(\phi')$. This implies $\la=\la'$ too. Also, since $\deg(\phi-\phi')<\deg\phi$, Proposition \ref{extension} shows that $\nu(\phi-\phi')=\mu(\phi-\phi')\ge\mu(\phi)>\nu(\phi)$, so that $\phi\sim_\nu\phi'$.

Conversely, suppose $\deg\phi=\deg\phi'$, $\mu(\phi)=\mu(\phi')$ and $\la=\la'$. We claim that:
\begin{equation}\label{all=}
\delta:=\mu(\phi)=\mu(\phi')=\mu'(\phi')=\mu'(\phi).
\end{equation}
In fact, Lemma \ref{bound} shows that $\nu(\phi)=\nu(\phi')$, leading to $\mu(\phi)=\nu(\phi)+\la=\nu(\phi')+\la'=\mu'(\phi')$. Also, if $\phi'=\phi+a$, then,
$$
\mu'(\phi)=\min\{\mu'(\phi'),\nu(a)\}=\min\{\mu(\phi),\nu(a)\}=\mu(\phi').
$$ 
This ends the proof of (\ref{all=}). Now, for any $\phi$-expansion $g=\sum_{0\le s}a_s\phi^s$, we have
$$
\mu'(g)\ge \min_{0\le s}\{\mu'(a_s\phi^s)\}=\min_{0\le s}\{\nu(a_s)+s\delta\}=\mu(g).
$$
By the symmetry of the argument, we deduce that $\mu=\mu'$.
\end{proof}

\begin{proposition}\label{unicity}
Suppose the inductive valuation $\mu$ admits an optimal MacLane chain as in (\ref{depth}). Consider another optimal MacLane chain
$$
\mu_0\ \stackrel{(\phi'_1,\la'_1)}\lra\  \mu'_1\ \stackrel{(\phi'_2,\la'_2)}\lra\ \cdots
\ \stackrel{(\phi'_{r'-1},\la_{r'-1})}\lra\ \mu'_{r'-1} 
\ \stackrel{(\phi'_{r'},\la'_{r'})}\lra\ \mu'_{r'}=\mu'.
$$
Then, $\mu=\mu'$ if and only if $r=r'$ and:
$$\deg \phi_i=\deg\phi'_i, \quad \mu_i(\phi_i)=\mu_i(\phi'_i), \quad \la_i=\la'_i,\quad\mbox{ for all }\ 1\le i \le r.
$$
In this case, we also have $\mu_i=\mu'_i$ and $\phi_i\sim_{\mu_{i-1}}\phi'_i$ for all $1\le i \le r$. 
\end{proposition}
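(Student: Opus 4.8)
The plan is to prove the substantive implication --- that $\mu=\mu'$ forces $r=r'$ together with the three listed families of equalities --- by induction on the length $r$ of the first chain; the converse implication and the extra assertions $\mu_i=\mu'_i$, $\phi_i\sim_{\mu_{i-1}}\phi'_i$ will then come out along the way.

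The starting observation is that \emph{the degree of the last key polynomial of an optimal chain is intrinsic to the valuation}: if $\nu$ admits an optimal chain with last key polynomial $\chi$, then applying Lemma~\ref{minimal} to the last augmentation shows that no monic polynomial of degree in $(0,\deg\chi)$ is $\nu$-minimal, while $\chi$ itself is; so $\deg\chi$ is the least degree of a monic $\nu$-minimal polynomial of positive degree, independently of the chain. Now the base case $r=0$ is immediate: $\mu=\mu_0$, and since $C(\mu_1)>C(\mu_0)=0$ along any chain, $\mu=\mu'$ forces $r'=0$. So assume $r\ge1$. From $0=C(\mu_0)<C(\mu_1)<\cdots<C(\mu)$ we get $\mu\neq\mu_0$, hence $r'\ge1$; and by the observation $\deg\phi_r=\deg\phi'_{r'}=:m$. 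As $\phi_r$ and $\phi'_{r'}$ are monic $\mu$-minimal of degree $m$, Lemma~\ref{bound} gives $\mu(\phi_r)=\mu(\phi'_{r'})=m\,C(\mu)$.

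Next I reduce to a \emph{common} top key polynomial. Write $\phi_r=\phi'_{r'}+b$ with $\deg b<m$; the $\phi'_{r'}$-expansion of $\phi_r$ is $b+\phi'_{r'}$, so Lemma~\ref{minimal0}(2) gives $\mu(\phi_r)=\min\{\mu(b),\mu(\phi'_{r'})\}$, whence $\mu(b)\ge\mu(\phi'_{r'})$. Since $\deg b<m$ and $\phi'_{r'}$ is $\mu'_{r'-1}$-minimal, Proposition~\ref{extension} yields $\mu'_{r'-1}(b)=\mu(b)\ge\mu(\phi'_{r'})>\mu'_{r'-1}(\phi'_{r'})$, i.e. $\phi_r\sim_{\mu'_{r'-1}}\phi'_{r'}$; by Lemmas~\ref{mid=sim} and~\ref{unique}, $\phi_r$ is a key polynomial for $\mu'_{r'-1}$ and $\mu=[\mu'_{r'-1};(\phi_r,\lambda'_{r'})]$. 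Of course also $\mu=[\mu_{r-1};(\phi_r,\lambda_r)]$ with $\phi_r\in\kp(\mu_{r-1})$. Both $\mu_{r-1}$ and $\mu'_{r'-1}$ agree with $\mu$ on polynomials of degree $<m$ (Proposition~\ref{extension}), and for any $g$ with $\phi_r$-expansion $g=\sum_s a_s\phi_r^s$ one has $\mu_{r-1}(g)=\min_s\{\mu(a_s)+s\,\mu_{r-1}(\phi_r)\}$ and likewise for $\mu'_{r'-1}$, by Lemma~\ref{minimal0}(3); hence $\mu_{r-1}$ and $\mu'_{r'-1}$ are comparable. We may assume $\mu_{r-1}\le\mu'_{r'-1}$ (the two chains play symmetric roles) and set $\delta:=\mu'_{r'-1}(\phi_r)-\mu_{r-1}(\phi_r)\ge0$. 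If $\delta=0$ the two $\phi_r$-expansion formulas coincide, so $\mu_{r-1}=\mu'_{r'-1}$. If $\delta>0$, the same formulas give $\mu'_{r'-1}=[\mu_{r-1};(\phi_r,\delta)]$; appending $(\phi_r,\delta)$ to the first chain truncated at $\mu_{r-1}$ then exhibits an optimal chain for $\mu'_{r'-1}$ whose last key polynomial has degree $m$, contradicting the intrinsic-degree observation applied to the original chain of $\mu'_{r'-1}$ (whose last key polynomial has degree $\deg\phi'_{r'-1}<m$). So in all cases $\mu_{r-1}=\mu'_{r'-1}$.

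Finally, $\mu_{r-1}$ admits an optimal chain of length $r-1<r$ and, being equal to $\mu'_{r'-1}$, also the optimal chain of length $r'-1$; the induction hypothesis gives $r-1=r'-1$ and the equalities $\deg\phi_i=\deg\phi'_i$, $\mu_i(\phi_i)=\mu_i(\phi'_i)$, $\lambda_i=\lambda'_i$, $\mu_i=\mu'_i$, $\phi_i\sim_{\mu_{i-1}}\phi'_i$ for $1\le i\le r-1$. At level $r$ one has $\deg\phi_r=\deg\phi'_r=m$, $\mu_r(\phi_r)=\mu_r(\phi'_r)=mC(\mu)$, $\lambda_r=\lambda'_r$ (from $\mu_{r-1}(\phi_r)=\mu'_{r-1}(\phi_r)$ together with $\mu(\phi_r)=\mu(\phi'_r)$ and $\phi_r\sim_{\mu_{r-1}}\phi'_r$), and $\phi_r\sim_{\mu_{r-1}}\phi'_r$ from the reduction step; this completes the forward implication. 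For the converse, a straightforward induction on $i$ using Lemma~\ref{unique} shows that the hypothesized equalities force $\mu_i=\mu'_i$ and $\phi_i\sim_{\mu_{i-1}}\phi'_i$ for every $i$, hence $\mu=\mu'$. The delicate point is the step $\mu_{r-1}=\mu'_{r'-1}$, and the two facts that make it work are that the degree of the top key polynomial is intrinsic and that two penultimate valuations sharing a key polynomial and agreeing in degrees below $m$ are automatically comparable.
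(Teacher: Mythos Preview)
Your proof is correct, but it follows a genuinely different route from the paper's argument.

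The paper proceeds \emph{bottom-up}: starting from $\mu_0=\mu'_0$, it shows inductively that $\mu_{i-1}=\mu'_{i-1}$ implies $\mu_i=\mu'_i$. The key identification of degrees is done via Lemma~\ref{stable}, which gives $\deg\phi_i=\min\{\deg g\mid \mu_{i-1}(g)<\mu(g)\}$; this is manifestly the same for both chains once $\mu_{i-1}=\mu'_{i-1}$. Then, writing $\phi'_i=\phi_i+a$ and using optimality to ensure $\phi_{i+1}\nmid_{\mu_i}\phi'_i$ and $\phi'_{i+1}\nmid_{\mu'_i}\phi_i$, another appeal to Lemma~\ref{stable} gives $\mu_i(\phi_i)=\mu(\phi_i)$ and $\mu'_i(\phi'_i)=\mu(\phi'_i)$, from which $\mu_i(\phi_i)=\mu'_i(\phi'_i)$ and $\lambda_i=\lambda'_i$ follow; Lemma~\ref{unique} then closes the step.

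You instead go \emph{top-down}: you characterize $\deg\phi_r$ intrinsically as the least positive degree of a monic $\mu$-minimal polynomial (via Lemma~\ref{minimal}), transplant $\phi_r$ to the second chain as a key polynomial for $\mu'_{r'-1}$, observe that both penultimate valuations are then determined by $\phi_r$-expansions with a single free parameter (the value at $\phi_r$), hence are comparable, and rule out strict inequality by the intrinsic-degree observation applied to $\mu'_{r'-1}$. This is a nice argument; the intrinsic characterization you use is arguably more conceptual (it refers only to $\mu$, not to an auxiliary comparison with $\mu_{i-1}$), and the comparability step is an attractive observation in its own right. The cost is a slightly longer reduction (transplanting the key polynomial, checking comparability, excluding $\delta>0$), whereas the paper's bottom-up step is a one-line invocation of Lemma~\ref{stable}. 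Both approaches recover the same auxiliary conclusions $\mu_i=\mu'_i$ and $\phi_i\sim_{\mu_{i-1}}\phi'_i$, and both reduce the converse direction to Lemma~\ref{unique}.
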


\begin{proof}
The sufficiency of the conditions is a consequence of Lemma \ref{unique}.

Suppose $\mu=\mu'$ and (for instance) $r\le r'$. Let us show that:
$$
\mu_{i-1}=\mu'_{i-1}\imp \deg \phi_i=\deg\phi'_i, \ \mu_i(\phi_i)=\mu_i(\phi'_i), \ \la_i=\la'_i, \ \mbox{ and }\  \mu_i=\mu'_i,
$$
for all  $1\le i\le r$. In fact, by Lemma \ref{stable},
$$\deg\phi_i=\min\{\deg g\mid g\in K[x],\ \mu_{i-1}(g)<\mu(g)\}=\deg\phi'_i.$$
Write $\phi_i'=\phi_i+a$, with $\deg a<\deg\phi_i=\deg \phi'_i$. By the optimality of both Maclane chains
, $\phi_{i+1}\nmid_{\mu_i}\phi_i'$ and $\phi'_{i+1}\nmid_{\mu'_i}\phi_i$; hence, Lemma \ref{stable} shows that 
$$
\begin{array}{c}
\mu_i(\phi_i)=\mu(\phi_i)=\mu'_i(\phi_i)=\min\{\mu'_i(\phi'_i),\mu_{i-1}(a)\},\\
\mu'_i(\phi'_i)=\mu(\phi'_i)=\mu_i(\phi'_i)=\min\{\mu_i(\phi_i),\mu_{i-1}(a)\}.
\end{array}
$$
Hence, $\mu_i(\phi_i)=\mu'_i(\phi'_i)$. Also, $\mu_{i-1}(\phi_i)=\mu_{i-1}(\phi'_i)$, by Lemma \ref{bound}, so that $\la_i=\mu_i(\phi_i)-\mu_{i-1}(\phi_i)=\mu'_i(\phi'_i)-\mu_{i-1}(\phi'_i)=\la'_i$. By Lemma \ref{unique}, $\mu_i=\mu'_i$.

Since both chains start with $\mu_0$, the iteration of this argument leads to $\mu=\mu_r=\mu'_r$. The inequality $r<r'$ implies $\mu=\mu'_r<\mu'$, against our assumption. Thus, $r=r'$. 
\end{proof}

Hence, in an optimal MacLane chain of $\mu$, the intermediate valuations $\mu_1,\dots,\mu_{r-1}$, the positive rational numbers $\la_1,\dots,\la_r$ and the integers $\deg\phi_1,\dots,\deg\phi_r$ are intrinsic data of $\mu$, whereas the key polynomials $\phi_1,\dots,\phi_r$ admit different choices. More precisely, $\phi'_1,\dots,\phi'_r$ is the family of key polynomials of another optimal MacLane chain of $\mu$ if and only if
$$\phi'_i=\phi_i+a_i, \quad \deg a_i<\deg\phi_i, \quad \mu_i(a_i)\ge \mu_i(\phi_i),\quad \mbox{for all }1\le i\le r.$$
We also have $\phi_i\sim_{\mu_{i-1}}\phi'_i$ for all $i$. Nevertheless, $\phi_i\not\sim_{\mu_{i}}\phi'_i$ when $\mu_i(a_i)=\mu_i(\phi_i)$.

\begin{definition}
The \emph{MacLane depth} of an inductive valuation $\mu$ is the length $r$ of any optimal MacLane chain of $\mu$.  
\end{definition}

We end this section with several applications of the existence of MacLane chains.  

\begin{proposition}\label{KKv}
Let $\mu$ be an inductive valuation on $K_v(x)$. The restriction of $\mu$
to $K(x)$ is an inductive valuation with graded algebra isomorphic to $\ggm$. The mapping
$\Vi(K_v)\to \Vi(K)$ obtained in this way is bijective.  
\end{proposition}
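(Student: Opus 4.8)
The plan is to build the bijection $\Vi(K_v)\to\Vi(K)$ by restriction and to produce an inverse by base change, checking at each step that MacLane chains, key polynomials, and graded algebras survive the passage between $K$ and $K_v$. First I would observe that, since $\F=\F_v$ and $\oo\subset\oo_v$, the minimal valuations agree: $\mu_0$ on $K(x)$ is the restriction of $\mu_0$ on $K_v(x)$, and the natural map $\gg(\mu_0)(K)\to\gg(\mu_0)(K_v)$ is an isomorphism of graded $\F$-algebras, because $\kp(\mu_0)$ consists of monic $g\in\oo[x]$ with $\bar g$ irreducible in $\F[x]$, and this set is the same whether computed over $K$ or over $K_v$ (every monic irreducible in $\F[x]$ lifts to $\oo[x]$, and the coefficients of such a lift lie in $\oo_v\cap K=\oo$). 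So the statement holds in MacLane depth zero, and the argument will proceed by induction on the depth, using the augmentation step $\mu_i=[\mu_{i-1};(\phi_i,\la_i)]$.

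The inductive step has two halves. For the restriction direction, suppose $\tilde\mu\in\Vi(K_v)$ has an optimal MacLane chain over $K_v$ with key polynomials $\tilde\phi_i\in\oo_v[x]$ (Corollary \ref{inO}). I would replace each $\tilde\phi_i$ by a polynomial $\phi_i\in\oo[x]$ of the same degree that is $\m_v$-adically close enough to $\tilde\phi_i$; by Lemma \ref{irredKv}-type approximation arguments (the same ones used in the proof of Lemma \ref{irredKv}) and Lemma \ref{mid=sim}, $\phi_i\smu[\mu_{i-1}]\tilde\phi_i$, so $\phi_i$ is a key polynomial for $\mu_{i-1}$ over $K$ with $\mu_{i-1}(\phi_i)=\mu_{i-1}(\tilde\phi_i)$, and then $[\mu_{i-1};(\phi_i,\la_i)]$ over $K$ restricts correctly; the formula $\mu'(g)=\min_s\{\mu(g_s\phi^s)+s\la\}$ depends only on the restriction of $\mu$ to $K[x]$, which is controlled inductively. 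The graded-algebra claim then follows from Proposition \ref{extension}(2): $\hm[\mu_i](\phi_i)\gg(\mu_i)=\op{Ker}(\gg(\mu_i)\to\gg(\mu_{i+1}))$, so each augmentation is a pushout of the same shape over $K$ and over $K_v$; combined with Proposition \ref{sameideal}, which identifies $\Delta(\mu_i)\twoheadrightarrow\F_{\phi_i}$ over $K$ with $\Delta(\tilde\mu_i)\twoheadrightarrow\F_{\tilde\phi_i}$ over $K_v$ (both equal to $\oo_{\phi_i}/\m_{\phi_i}$ since $K_{\phi_i}=K_v(\theta)=(K_v)_{\tilde\phi_i}$), one gets compatible isomorphisms $\gg(\mu_i)(K)\iso\gg(\mu_i)(K_v)$ at every stage.

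For the base-change direction, starting from $\mu\in\Vi(K)$ with optimal MacLane chain over $K$ using $\phi_i\in\oo[x]$, I would check that the very same $\phi_i$ and $\la_i$ define an inductive valuation $\mu^{(v)}$ over $K_v$: one must verify that $\phi_i$ remains a key polynomial for $\mu^{(v)}_{i-1}$, i.e.\ $\mu^{(v)}_{i-1}$-minimal and $\mu^{(v)}_{i-1}$-irreducible. Minimality is automatic from Lemma \ref{minimal0}, since the $\phi_i$-expansion and the values $\mu_{i-1}$ of its coefficients are unchanged. Irreducibility of $\hm[\mu^{(v)}_{i-1}](\phi_i)$ over $K_v$ is the delicate point and uses Proposition \ref{sameideal}: $\rr_{\mu_{i-1}}(\phi_i)$ is a maximal ideal of $\Delta(\mu_{i-1})$ with residue field $\F_{\phi_i}=\oo_{\phi_i}/\m_{\phi_i}$, and since $\Delta(\mu_{i-1})(K)\iso\Delta(\mu_{i-1})(K_v)$ by the inductive hypothesis, the corresponding ideal over $K_v$ is still maximal, hence $\phi_i$ is still $\mu^{(v)}_{i-1}$-irreducible. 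Restriction then recovers $\mu$, and the two constructions are mutually inverse; bijectivity follows, and the graded-algebra isomorphism is the one already built. The main obstacle I anticipate is precisely this point: making the approximation argument in the restriction direction uniform across the whole chain (choosing each $\phi_i$ so that it is simultaneously $\m_v$-close to $\tilde\phi_i$ and does not disturb the lower part of the chain), and dually, verifying that $\mu$-irreducibility of a key polynomial is insensitive to the ground field — both of which are handled cleanly once one routes everything through the residual ideals $\rr_{\mu_{i-1}}$ and the identification $\Delta(\mu_{i-1})\twoheadrightarrow\F_{\phi_i}$ of Proposition \ref{sameideal}, rather than arguing directly with $\phi$-expansions.
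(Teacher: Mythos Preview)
Your approach is correct in outline, but it is considerably more elaborate than the paper's, and there is one small slip worth flagging.

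The paper's proof is essentially a two-line application of Proposition~\ref{unicity} and the remark following it. That remark says the key polynomials of an optimal MacLane chain of $\tilde\mu$ may be freely perturbed by $\phi'_i=\tilde\phi_i+a_i$ with $\deg a_i<\deg\tilde\phi_i$ and $\mu_i(a_i)\ge\mu_i(\tilde\phi_i)$; density of $\oo$ in $\oo_v$ then lets one choose all $\phi'_i\in\oo[x]$, and the resulting $K$-chain visibly computes the restriction $\tilde\mu_{\mid K(x)}$. Conversely, a $K$-chain is read as a $K_v$-chain, and Proposition~\ref{unicity} over both fields makes the two constructions well-defined and mutually inverse. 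No step-by-step induction on depth, no residual-ideal bookkeeping; the graded-algebra isomorphism is declared obvious because both algebras are assembled from the same chain data.

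Your stated approximation threshold in the restriction step is too weak: $\phi_i\sim_{\mu_{i-1}}\tilde\phi_i$ only says $\mu_{i-1}(\phi_i-\tilde\phi_i)>\mu_{i-1}(\tilde\phi_i)$, which makes $\phi_i$ a key polynomial for $\mu_{i-1}$ but does \emph{not} force $[\mu_{i-1};(\phi_i,\la_i)]=[\mu_{i-1};(\tilde\phi_i,\la_i)]$. By Lemma~\ref{unique} you need $\mu_i(\phi_i)=\mu_i(\tilde\phi_i)$, i.e.\ $\mu_{i-1}(\phi_i-\tilde\phi_i)\ge\mu_{i-1}(\tilde\phi_i)+\la_i$; this is exactly the threshold in the remark after Proposition~\ref{unicity}, and density still provides it, so the fix is painless. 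In the base-change direction your detour through $\rr_{\mu_{i-1}}$ and Proposition~\ref{sameideal} is also slightly off: that proposition tells you $\rr(\phi)$ is maximal \emph{given} that $\phi$ is a key polynomial, not the converse, so it cannot be used to certify $\mu^{(v)}_{i-1}$-irreducibility. The simpler observation you want is that your inductive graded-algebra isomorphism $\gg(\mu_{i-1})(K)\iso\gg(\mu_{i-1})(K_v)$ carries $H_{\mu_{i-1}}(\phi_i)$ to $H_{\mu^{(v)}_{i-1}}(\phi_i)$, so primality of the generated ideal transfers directly.
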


\begin{proof}
Clearly, the restriction of the minimal valuation $\mu_0$ on $K_v(x)$ is the minimal valuation on $K(x)$. On the other hand, Proposition \ref{unicity} shows that every $\mu\in\Vi(K_v)$ admits an optimal MacLane chain whose key polynomials have coefficients in $K$; clearly, the inductive valuation on $K(x)$ determined by this optimal MacLane chain is the restriction $\mu_{\mid K(x)}$. Thus, the restriction of valuations induces a well-defined mapping $\Vi(K_v)\to \Vi(K)$. The statement about the graded algebras is obvious.

Conversely, an optimal MacLane chain of any $\mu\in \Vi(K)$ may be considered as an optimal MacLane chain of an inductive valuation $\tilde{\mu}$ on $K_v(x)$. By Proposition \ref{unicity} applied to both valuations $\mu$ and $\tilde{\mu}$, all optimal MacLane chains of $\mu$ determine the same valuation on $K_v(x)$. Therefore, we get a mapping  $\Vi(K)\to \Vi(K_v)$, which is the inverse of the restriction map. 
\end{proof}

\begin{proposition}\label{frf}
For any inductive valuation $\mu$, the canonical embedding $\Delta(\mu)\hookrightarrow \kappa(\mu)$ induces an isomorphism between the field of fractions of $\Delta(\mu)$ and $\kappa(\mu)$.
\end{proposition}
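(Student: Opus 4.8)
The plan is to show that the canonical injection $\Delta(\mu)\hookrightarrow\kappa(\mu)$ — which, $\kappa(\mu)$ being a field, extends by the universal property of localization to an injection $\op{Frac}(\Delta(\mu))\hookrightarrow\kappa(\mu)$ — is in fact surjective. Write $A\subset K(x)$ for the valuation ring of $\mu$ and $\m_A$ for its maximal ideal, so that $\kappa(\mu)=A/\m_A$, $\pset_0=K[x]\cap A$, $\pset_0^+=K[x]\cap\m_A$, and a polynomial $g\in\pset_0$ has nonzero image in $\Delta(\mu)\subseteq\kappa(\mu)$ exactly when $\mu(g)=0$. Thus the whole statement reduces to expressing an arbitrary nonzero element of $\kappa(\mu)$ as a quotient of reductions of polynomials of $\mu$-value $0$.

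The crucial input is the claim that \emph{every $\gamma\in\Gamma(\mu)$ equals $\mu(h)$ for some nonzero $h\in K[x]$}. Granting this, take $\xi\in\kappa(\mu)\setminus\{0\}$, lift it to $z\in A$ with $\mu(z)=0$, and write $z=f/g$ with $f,g\in K[x]$; then $\mu(f)=\mu(g)=:\delta\in\Gamma(\mu)$. Choosing $h\in K[x]$ with $\mu(h)=-\delta$, the polynomials $fh$ and $gh$ lie in $\pset_0\setminus\pset_0^+$, so their images $\overline{fh},\overline{gh}\in\Delta(\mu)$ are nonzero; since $z=(fh)/(gh)$ in $K(x)$ and reduction modulo $\m_A$ is a ring homomorphism, we get $\xi=\overline{fh}/\overline{gh}\in\op{Frac}(\Delta(\mu))$, as desired. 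Hence surjectivity.

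The remaining work — and, I expect, the only real difficulty — is to prove the claim, and this is where the structure of a MacLane chain enters. Fix an optimal MacLane chain (\ref{depth}) of $\mu$. When $r=0$ we have $\Gamma(\mu_0)=\Z$ and may take $h=\pi^\gamma\in K\subset K[x]$. When $r\ge1$, Lemma \ref{groups} (and its proof) tells us that $\Gamma(\mu)$ is generated by $\Gamma(\mu_{r-1})$ together with $\mu(\phi_r)$, and that $e_r\Gamma(\mu)=\Gamma(\mu_{r-1})$, so in particular $e_r\mu(\phi_r)\in\Gamma(\mu_{r-1})$. Writing $\gamma=n\,\mu(\phi_r)+\beta$ with $n\in\Z$, $\beta\in\Gamma(\mu_{r-1})$, and replacing $(n,\beta)$ by $(n+ke_r,\;\beta-ke_r\mu(\phi_r))$ for $k$ sufficiently large, we may assume $n\ge0$ while still having $\beta\in\Gamma(\mu_{r-1})$. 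By Corollary \ref{prescribed} there is $a\in K[x]$ with $\deg a<\deg\phi_r$ and $\mu_{r-1}(a)=\beta$; set $h=a\,\phi_r^{\,n}$. Since its $\phi_r$-expansion is the single monomial $a\phi_r^{\,n}$, Definition \ref{muprima} gives $\mu(h)=\mu_{r-1}(a)+n\,\mu(\phi_r)=\gamma$. (The claim can alternatively be proved by induction on the MacLane depth; either way, the point to be careful about is the reduction to $n\ge0$, since otherwise $\phi_r^{\,n}$ is not a polynomial — and that is precisely the step that forces us to use the finite quotient $\Gamma(\mu)/\Gamma(\mu_{r-1})$ recorded in Lemma \ref{groups}.)
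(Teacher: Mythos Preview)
Your proof is correct and follows essentially the same route as the paper: reduce surjectivity of $\op{Frac}(\Delta(\mu))\to\kappa(\mu)$ to the claim that every $\gamma\in\Gamma(\mu)$ is attained as $\mu(h)$ for a polynomial $h$, and then realize $h=a\,\phi_r^{\,n}$ using Corollary~\ref{prescribed}. The only cosmetic difference is that the paper parameterizes the target value via $\la_r$ and a residue $0\le s<e_r$, whereas you write $\gamma=n\,\mu(\phi_r)+\beta$ and shift $n$ to be nonnegative using $e_r\mu(\phi_r)\in\Gamma(\mu_{r-1})$; these are equivalent bookkeeping choices.
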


\begin{proof}
We must show that the natural morphism $\op{Frac}(\Delta(\mu))\to \kappa(\mu)$ is onto.
An element in $\kappa(\mu)^*$ is the class, modulo the maximal ideal of the valuation, of a fraction $g/h$ of polynomials with $\mu(g/h)=0$. Denote $\alpha=\mu(g)=\mu(h)\in\Gamma(\mu)$. If there exists a polynomial $f$  such that $\mu(f)=-\alpha$, then $\hm(fg),\hm(fh)$ belong to $\Delta(\mu)$ and the fraction $\hm(fg)/\hm(fh)$ is sent to the class of $g/h$ by the above morphism.

If $\mu=\mu_0$, then $\alpha\in\Z$ and there exists $f\in K$ with $\mu_0(f)=-\alpha$. If $\mu>\mu_0$, consider a MacLane chain of length $r>0$ of $\mu$, and let $-\alpha=m/e(\mu)$ for some $m\in\Z$. Since $\gcd(h_r,e_r)=1$, there exists an integer $s\ge0$ such that $m\equiv sh_r\md{e_r}$. Let $u=(m-sh_r)/e_r$ and take $\beta=u/e(\mu_{r-1})\in\Gamma(\mu_{r-1})$. By Corollary \ref{prescribed}, $\mu_{r-1}(a\phi_r^s)=\beta$ for some $a\in K[x]$ with $\deg a<\deg\phi_r$. Hence, $\mu(a\phi_r^s)=\beta+s\la_r=-\alpha$.   
\end{proof}

\begin{theorem}\label{preMLOk}
Let $\mu$ be an inductive valuation. For every monic $g\in K[x]$, we have $\mu(g)/\deg g\le C(\mu)$. Equality holds if and only if $g$ is $\mu$-minimal.
\end{theorem}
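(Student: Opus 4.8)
The plan is to induct on the MacLane depth $r$ of $\mu$; the inequality and the equality characterization will drop out simultaneously. For the base case $r=0$ we have $\mu=\mu_0$ and $C(\mu_0)=0$, and for monic $g=\sum_s a_sx^s$ of positive degree the leading coefficient $a_n=1$ gives $\mu_0(g)=\min_s v(a_s)\le v(1)=0$; equality means $a_s\in\oo$ for all $s$, i.e.\ $g\in\oo[x]$, which is exactly the criterion for $\mu_0$-minimality recorded after Lemma~\ref{bound}.

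For the inductive step, fix an optimal MacLane chain (\ref{depth}), write $\mu=[\mu_{r-1};(\phi_r,\la_r)]$ and $m=\deg\phi_r$, and observe that truncating the chain exhibits $\mu_{r-1}$ as an inductive valuation of depth $r-1$, so the statement is available for $\mu_{r-1}$. Take a monic $g$ of degree $n\ge1$ with $\phi_r$-expansion $g=\sum_{s=0}^{\ell}a_s\phi_r^s$, $\deg a_s<m$, $a_\ell\ne0$; since $\phi_r$ is monic, the term $a_\ell\phi_r^\ell$ has strictly larger degree than the others, whence $n=\ell m+\deg a_\ell$ and $a_\ell$ is monic. By Definition~\ref{muprima},
$$
\mu(g)\le\mu_{r-1}(a_\ell)+\ell\,\mu(\phi_r).
$$
Now $\phi_r$ is $\mu$-minimal by Lemma~\ref{phi}, so $\mu(\phi_r)=m\,C(\mu)$ by Lemma~\ref{bound}; the inductive hypothesis applied to the monic polynomial $a_\ell$ gives $\mu_{r-1}(a_\ell)\le(n-\ell m)\,C(\mu_{r-1})$ (a trivial $0\le0$ when $a_\ell=1$); and $C(\mu_{r-1})<C(\mu)$ by the strict growth of the constants $C(\mu_i)$ established in Section~\ref{subsecChains}. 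Chaining these,
$$
\mu(g)\le(n-\ell m)\,C(\mu_{r-1})+\ell m\,C(\mu)\le(n-\ell m)\,C(\mu)+\ell m\,C(\mu)=n\,C(\mu),
$$
which is the asserted inequality.

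Finally, suppose $\mu(g)=n\,C(\mu)$, so all three inequalities above are equalities. The strictness $C(\mu_{r-1})<C(\mu)$ forces the last one to hold only if $n-\ell m=0$, i.e.\ $a_\ell$ is a monic constant and hence $a_\ell=1$; the first one then reads $\mu(g)=\ell\,\mu(\phi_r)=\mu(a_\ell\phi_r^\ell)$, so $g$ satisfies condition~(2) of Lemma~\ref{minimal} and is $\mu$-minimal. The converse is immediate: if $g$ is monic and $\mu$-minimal then $\mu(g)/\deg g=C(\mu)$ by Lemma~\ref{bound}. The one point to keep an eye on is that the rigidity of the equality case hinges on the \emph{strict} inequality $C(\mu_{r-1})<C(\mu)$ — it is precisely what pins $g$ down to the shape $a_\ell=1$, $\mu(g)=\mu(\phi_r^\ell)$ needed to invoke Lemma~\ref{minimal}(2) — while the degenerate subcases ($\ell=0$, or $a_\ell$ a nonzero constant) are harmless, the inductive hypothesis there being applied to $a_\ell=1$ and contributing nothing.
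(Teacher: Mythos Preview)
Your proof is correct and follows essentially the same approach as the paper: induction on the length of a MacLane chain, bounding $\mu(g)$ via the leading term $a_\ell\phi_r^\ell$ of the $\phi_r$-expansion, applying the inductive hypothesis to the monic polynomial $a_\ell$, and using $C(\mu_{r-1})<C(\mu)$ together with Lemma~\ref{minimal} to handle the equality case. The only cosmetic difference is that the paper splits into the cases $\deg a_\ell=0$ and $\deg a_\ell>0$, whereas you treat them uniformly in a single chain of inequalities.
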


\begin{proof}
By induction on the length $r$ of a MacLane chain of $\mu$. For $r=0$, the statement is obvious because a monic polynomial $g$ has $\mu_0(g)\le0$, and $g$ is $\mu_0$-minimal if and only if it has coefficients in $\oo$. 

Let $r>0$ and suppose that $\mu_{r-1}(g)/\deg g\le C(\mu_{r-1})$ for all monic polynomials $g\in K[x]$. Let $g=\sum_{s=0}^\ell a_s\phi_r^s$ be the $\phi_r$-expansion of a monic polynomial $g$. If we denote $m_r=\deg\phi_r$, we have $\deg g=\deg a_\ell+\ell m_r$ and $\mu(g)\le \mu(a_\ell\phi_r^\ell)=\mu_{r-1}(a_\ell)+\ell m_rC(\mu)$.

If $\deg a_\ell=0$, we have $a_\ell=1$, because $g$ is monic. Hence, $\mu(g)\le \ell m_r C(\mu)=(\deg g) C(\mu)$. In this case, equality holds if and only if $\mu(g)=\mu(\phi_r^\ell)$, which is equivalent to $g$ being $\mu$-minimal, by Lemma \ref{minimal}. 

If $\deg a_\ell>0$, then $a_\ell$ is monic and $\mu_{r-1}(a_\ell)/\deg a_\ell\le C(\mu_{r-1})<C(\mu)$, by the induction hypothesis. Therefore,
\begin{equation}\label{rec}
\dfrac{\mu(g)}{\deg g}\le\dfrac{\mu_{r-1}(a_\ell)+\ell m_r C(\mu)}{\deg a_\ell +\ell m_r}<
\dfrac{C(\mu)\left(\deg a_\ell +\ell m_r\right)}{\deg a_\ell +\ell m_r}=C(\mu).
\end{equation}

In this case, the inequality is strict and $g$ is not $\mu$-minimal by Lemma \ref{minimal}. 
\end{proof}

\subsection{Numerical data of a MacLane chain}\label{subsecNum}
Let us fix an inductive valuation $\mu$ equipped with a Maclane chain of length $r$ as in (\ref{depth}). In this section and in sections \ref{subsecRat}, \ref{subsecR}, we attach to this chain several data and operators.  

Take $\phi_0:=x$, $\la_0:=0$ and $\mu_{-1}:=\mu_0$. We denote 
$$\Gamma_i=\Gamma(\mu_i)=e(\mu_i)^{-1}\Z, \quad \Delta_i=\Delta(\mu_i),\qquad 0\le i\le r.
$$
$$
\F_{-1}:=\F_0:=\op{Im}(\F\to\Delta_0); \quad \F_i:=\op{Im}(\Delta_{i-1}\to\Delta_i),\quad 1\le i\le r.
$$

By Proposition \ref{sameideal}, $\F_i$ is a field which may be identified with the residue class field $\F_{\phi_i}$ of the extension of $K_v$ determined by $\phi_i$; in particular, $\F_i$ is a finite extension of $\F$. We abuse of language and we identify $\F$ with $\F_0$ and each field $\F_i\subset\Delta_i$ with its image under the canonical map $ \Delta_i\to\Delta_j$ for $j\ge i$. In other words, we consider as inclusions the canonical embeddings
\begin{equation}\label{Fchain}
\F=\F_0\subset \F_1\subset\cdots\subset \F_r. 
\end{equation}

To these objects we attach several numerical data. For all $0\le i\le r$, we define:
$$
\as{1.2}
\begin{array}{l}
e_i:=e(\mu_i)/e(\mu_{i-1}), \\
f_{i-1}:=[\F_{i}\colon \F_{i-1}], \\
h_i:=e(\mu_i)\la_i,
\end{array}\qquad
\begin{array}{l}
m_i:=\deg\phi_i,\\
w_i:=\mu_{i-1}(\phi_i),\ V_i:=e(\mu_{i-1})w_i,\\ 
C_i:=C(\mu_i)=\mu_i(\phi_i)/\deg\phi_i,
\end{array}
$$
Note that $e_0=1$, $f_0=m_1$, $h_0=0$. Lemma \ref{groups} shows that $\gcd(h_i,e_i)=1$. All these data may be expressed in terms of the positive integers 
\begin{equation}\label{MLinvariants}
e_0,\dots,e_r, \ f_0,\dots,f_{r-1},\  h_1,\dots,h_r.
\end{equation}
In fact, the reader may easily check that for all $1\le i\le r$:
\begin{equation}\label{C}
\as{1.2}
\begin{array}{l}
e(\phi_i)=e(\mu_{i-1})=e_0\cdots e_{i-1},\\
f(\phi_i)=\left[\F_i\colon \F_0\right]=f_0\cdots f_{i-1},\\
\la_i=h_i/(e_0\cdots e_i),\\
m_i=e_{i-1}f_{i-1}m_{i-1}=(e_0\cdots e_{i-1})(f_0\cdots f_{i-1}),\\
w_i=e_{i-1}f_{i-1}(w_{i-1}+\la_{i-1})=\sum_{1\le j< i}(e_jf_j\cdots e_{i-1}f_{i-1})\la_j,\\
C_i=(w_i+\la_i)/m_i.
\end{array}
\end{equation}
The recurrence on $w_i$ is deduced from equation (\ref{rec}).

If the MacLane chain is optimal, Proposition \ref{unicity} shows that all these rational numbers are intrinsic data of $\mu$. In this case, we refer to them as $e_i(\mu)$, $f_i(\mu)$,
$h_i(\mu)$, $\la_i(\mu)$, $m_i(\mu)$, $w_i(\mu)$, $V_i(\mu)$, $C_i(\mu)$, respectively. The positive integers in (\ref{MLinvariants}) are then called the \emph{basic MacLane invariants} of $\mu$.
Also, the chain of $\F$-algebra homomorphisms $\Delta_0\to\cdots\to\Delta_r$ and the induced chain (\ref{Fchain}) of finite extensions of $\F$ are intrinsic objects attached to $\mu$.   

Clearly, the bijection $\Vi(K_v)\to\Vi(K)$ described in Proposition \ref{KKv} preserves all these invariants.

\subsection{Rational functions attached to a MacLane chain}\label{subsecRat}

For every $0\le i\le r$, we consider integers $\ell_i,\ell'_i$ uniquely determined by
$$
\ell_i h_i+\ell'_i e_i=1,\qquad 0\le \ell_i<e_i.
$$
We consider several rational functions in $K(x)$ defined in a recursive way. 

\begin{definition}\label{ratfracs}
We take $\pi_0=\pi_1=\pi$, $\Phi_0=\phi_0=\gamma_0=x$ and
$$
\Phi_i=\phi_i\,(\pi_{i})^{-V_i},\quad
\ga_i=(\Phi_i)^{e_i}(\pi_i)^{-h_i},\quad
\pi_{i+1}=(\Phi_{i})^{\ell_{i}}(\pi_{i})^{\ell'_{i}},\quad 1\le i\le r.
$$
\end{definition}

By construction, these rational functions may be expressed as $\pi^{n_0}(\phi_1)^{n_1}\cdots (\phi_r)^{n_r}$ for ade\-quate integers $n_j$. For $i\ge1$, it is easy to deduce from the definition that:
\begin{equation}\label{phis}
\as{1.2}
\begin{array}{ccl}
\Phi_i&=&\pi^{n_0}(\phi_1)^{n_1}\cdots (\phi_{i-1})^{n_{i-1}}\phi_i, \\
\pi_{i}&=&\pi^{n'_0}(\phi_1)^{n'_1}\cdots (\phi_{i-2})^{n'_{i-2}}(\phi_{i-1})^{\ell_{i-1}},\\
\gamma_i&=&\pi^{n''_0}(\phi_1)^{n''_1}\cdots (\phi_{i-1})^{n''_{i-1}}(\phi_i)^{e_i}.
\end{array}
\end{equation}

By Lemma \ref{stable}, $\mu_i(\phi_i)=
\mu_j(\phi_i)$ for all $1\le i\le j\le r$. Hence, (\ref{phis}) shows that 
\begin{equation}\label{stability}
\mu_i(\Phi_i)=\mu_{j}(\Phi_i),\quad \mu_i(\gamma_i)=\mu_{j}(\gamma_i),\quad \mu_{i}(\pi_{i+1})=\mu_{j}(\pi_{i+1}),\quad 1\le i\le j\le r.
\end{equation}

Let us compute these stable values.

\begin{lemma}\label{values}For every index $0\le i\le r$, we have
\begin{enumerate}
\item $\mu_{i}(\pi_{i})=1/e(\mu_{i-1})$, $\mu_{i}(\pi_{i+1})=1/e(\mu_{i})$.  
\item $\mu_{i-1}(\Phi_i)=0$, $\mu_{i}(\Phi_i)=\la_i$.
\item $\mu_{i}(\ga_i)=0$.
\end{enumerate}
Note that $\pi_{i+1}\in K(x)^*$ is a uniformizer of $\mu_{i}$.
\end{lemma}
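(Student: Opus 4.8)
The plan is to establish all three items simultaneously by induction on $i$, using the recursive definitions in Definition \ref{ratfracs} together with the valuation identities already at our disposal: $\mu_i(\phi_i)=\mu_{i-1}(\phi_i)+\la_i=w_i+\la_i$, the multiplicativity of $\mu_i$ on products of the form $\pi^{n_0}(\phi_1)^{n_1}\cdots(\phi_i)^{n_i}$ (which holds because each $\phi_j$ is $\mu_j$-minimal, hence $\mu_i$ behaves like a monomial valuation on such products by Lemma \ref{minimal0}(3) and Lemma \ref{stable}), and the relations $V_i=e(\mu_{i-1})w_i$, $h_i=e(\mu_i)\la_i=e_ie(\mu_{i-1})\la_i$, $\ell_ih_i+\ell'_ie_i=1$. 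Throughout, I would use \eqref{stability} freely so that it suffices to compute each value at the \emph{first} index at which the rational function is defined.

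\textbf{Base case.} For $i=0$: $\pi_0=\pi_1=\pi$, so $\mu_0(\pi_0)=v(\pi)=1=1/e(\mu_{-1})$ (recall $\mu_{-1}=\mu_0$, $e(\mu_{-1})=1$), and $\mu_0(\pi_1)=1=1/e(\mu_0)$. Also $\Phi_0=\gamma_0=x=\phi_0$, and since $\mu_0(x)\ge 0$ is actually $=0$ (indeed $\mu_{-1}=\mu_0$ and $\la_0=0$, $w_0$ would be $\mu_{-1}(\phi_0)=\mu_0(x)=0$), items (2) and (3) read $\mu_{-1}(\Phi_0)=\mu_0(\Phi_0)=0=\la_0$ and $\mu_0(\gamma_0)=0$, all clear.

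\textbf{Inductive step.} Assume the statement for indices $<i$, in particular $\mu_{i-1}(\pi_i)=1/e(\mu_{i-2})$ and $\mu_{i-1}(\pi_{i+1})$ will follow once we compute $\mu_i(\pi_{i+1})$. First, item (1): from $\pi_{i+1}=(\Phi_i)^{\ell_i}(\pi_i)^{\ell'_i}$, once items (1) (earlier part) and (2) are known at level $i$ we get $\mu_i(\pi_{i+1})=\ell_i\la_i+\ell'_i\mu_i(\pi_i)=\ell_i h_i/e(\mu_i)+\ell'_i/e(\mu_{i-1}) = (\ell_ih_i+\ell'_ie_i)/e(\mu_i)=1/e(\mu_i)$, using $\mu_i(\pi_i)=1/e(\mu_{i-1})$ (which equals $e_i/e(\mu_i)$) --- note $\mu_i(\pi_i)=\mu_{i-1}(\pi_i)$ by Lemma \ref{stable} since $\pi_i$ is a unit times a product of $\phi_j$ with $j<i$, hence $\phi_i\nmid_{\mu_{i-1}}\pi_i$. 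This last fact, that $\mu_i(\pi_i)=\mu_{i-1}(\pi_i)=1/e(\mu_{i-2})=e_{i-1}/e(\mu_{i-1})$, is exactly item (1) at level $i$ reconciled with level $i-1$. For item (2): $\mu_{i-1}(\Phi_i)=\mu_{i-1}(\phi_i)-V_i\,\mu_{i-1}(\pi_i)=w_i-e(\mu_{i-1})w_i\cdot(1/e(\mu_{i-1}))=w_i-w_i=0$; and then $\mu_i(\Phi_i)=\mu_i(\phi_i)-V_i\mu_i(\pi_i)=(w_i+\la_i)-e(\mu_{i-1})w_i\cdot(e_i/e(\mu_i))=(w_i+\la_i)-w_i=\la_i$. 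For item (3): $\mu_i(\gamma_i)=e_i\mu_i(\Phi_i)-h_i\mu_i(\pi_i)=e_i\la_i-h_i\cdot e_i/e(\mu_i)=e_i\la_i-e_i\la_i=0$, using $h_i=e(\mu_i)\la_i$.

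\textbf{Main obstacle.} The only genuinely delicate point is justifying that $\mu_i$ (and $\mu_{i-1}$) evaluates these rational functions monomially, i.e.\ that $\mu_i\big(\pi^{n_0}(\phi_1)^{n_1}\cdots(\phi_i)^{n_i}\big)=\sum_j n_j\mu_i(\phi_j)$ with $\phi_0=x$, $\mu_i(x)=\mu_0(x)$. This is where one must invoke that each $\phi_j\in\kp(\mu_j)$ is a key polynomial, hence $\mu_i$-minimal for $i\ge j$ (stability of minimality for key polynomials of earlier stages, via Lemma \ref{phi} and Lemma \ref{stable}), so that $\phi_j\nmid_{\mu_i}(\text{product of }\phi_k,\,k\ne j)$ and the valuation of a product equals the sum of valuations. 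After this is in place, everything reduces to the bookkeeping above with $e(\mu_i)=e_0\cdots e_i$, $V_i=e(\mu_{i-1})w_i$, $h_i=e(\mu_i)\la_i$, and $\ell_ih_i+\ell'_ie_i=1$. The final parenthetical remark that $\pi_{i+1}$ is a uniformizer of $\mu_i$ is then immediate from item (1): $\mu_i(\pi_{i+1})=1/e(\mu_i)$ generates $\Gamma(\mu_i)=e(\mu_i)^{-1}\Z$.
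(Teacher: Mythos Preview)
Your argument follows the paper's proof closely: the same induction on $i$, the same use of \eqref{stability} to replace $\mu_i(\pi_i)$ by $\mu_{i-1}(\pi_i)$, and the same three-line computation unwinding the definitions of $\Phi_i$, $\pi_{i+1}$, $\gamma_i$ via $V_i=e(\mu_{i-1})w_i$, $h_i=e(\mu_i)\la_i$, and the B\'ezout identity. The computations are correct.

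Two remarks. First, your ``main obstacle'' is not an obstacle at all. The identity $\mu_i\big(\pi^{n_0}\phi_1^{n_1}\cdots\phi_i^{n_i}\big)=\sum_j n_j\,\mu_i(\phi_j)$ holds simply because $\mu_i$ is a \emph{valuation} on $K(x)$: $\mu_i(fg)=\mu_i(f)+\mu_i(g)$ for any nonzero $f,g\in K(x)$, and this passes to negative exponents automatically. No appeal to $\mu$-minimality, Lemma~\ref{minimal0}, or Lemma~\ref{stable} is needed for multiplicativity; those tools are relevant for \emph{sums} (the $\phi$-expansion), not products. The only nontrivial input is the stability $\mu_i(\phi_j)=\mu_j(\phi_j)$ for $j\le i$, and that is exactly what \eqref{stability} (derived from Lemma~\ref{stable}) records. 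Second, there is an index slip: you write that the induction hypothesis gives $\mu_{i-1}(\pi_i)=1/e(\mu_{i-2})$, but item~(1) at level $i-1$ actually gives $\mu_{i-1}(\pi_i)=1/e(\mu_{i-1})$ (the second equality, with index shifted down by one). You use the correct value in the subsequent computations, so this is only a presentational error.
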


\begin{proof}
We prove items 1, 2 by induction on $i$. For $i=0$ the statements are obvious. Suppose that $i>0$ and (1), (2) hold for a lower index. The identity $\mu_{i}(\pi_{i})=\mu_{i-1}(\pi_{i})=1/e(\mu_{i-1})$ is a consequence of (\ref{stability}).
$$
\as{1.2}
\begin{array}{l}
\mu_{i-1}(\Phi_i)=\mu_{i-1}(\phi_i)-V_i/e(\mu_{i-1})=w_i-w_i=0.\\
\mu_{i}(\Phi_i)=\mu_{i}(\phi_i)-V_i/e(\mu_{i-1})=w_i+\la_i-w_i=\la_i.\\
\mu_{i}(\pi_{i+1})=\ell_{i}\la_{i}+\ell'_{i}/e(\mu_{i-1})=1/e(\mu_{i}).
\end{array}
$$
The third item follows from the first two items. 
\end{proof}


By Lemma \ref{units}, the element $H_{\mu_{i}}(\phi_k)$ is a unit in $\gg(\mu_i)$ for all $k< i\le r$. Hence, by using (\ref{phis}), it makes sense to define, for all $0\le i\le r$:
$$\as{1.2}
\begin{array}{l}
x_i:=H_{\mu_i}(\Phi_i):=H_{\mu_i}(\pi)^{n_0}H_{\mu_i}(\phi_1)^{n_1}\cdots H_{\mu_i}(\phi_{i-1})^{n_{i-1}}H_{\mu_i}(\phi_i)\in \gg(\mu_i),\\
p_{i}:=H_{\mu_{i}}(\pi_i):=H_{\mu_i}(\pi)^{n'_0}H_{\mu_i}(\phi_1)^{n'_1}\cdots H_{\mu_i}(\phi_{i-2})^{n'_{i-2}}H_{\mu_i}(\phi_{i-1})^{\ell_{i-1}}\in \gg(\mu_i)^*,\\
y_i:=H_{\mu_i}(\gamma_i):=(x_i)^{e_i}(p_i)^{-h_i}\in\Delta_i.
\end{array}
$$

All factors of $x_i$ except for $H_{\mu}(\phi_i)$ are units in $\gg(\mu_i)$. Hence, these two elements generate the same ideal in $\gg(\mu_i)$. Let us emphasize this observation.

\begin{lemma}\label{associate}
For $0\le i \le r$, the elements $x_i$ and $H_{\mu_i}(\phi_i)$ are associate in $\gg(\mu_{i})$.\hfill{$\Box$}
\end{lemma}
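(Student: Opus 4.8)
The plan is to unwind the definition of $x_i$ using the explicit factorization of $\Phi_i$ recorded in (\ref{phis}), and then to read the statement off from the fact that almost all of the resulting factors are units. The case $i=0$ is trivial: $\Phi_0=\phi_0=x$ by Definition \ref{ratfracs}, so $x_0=H_{\mu_0}(\Phi_0)=H_{\mu_0}(\phi_0)$, and an element is associate to itself.

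For $1\le i\le r$, I would first invoke the first line of (\ref{phis}), which expresses $\Phi_i=\pi^{n_0}(\phi_1)^{n_1}\cdots(\phi_{i-1})^{n_{i-1}}\phi_i$ for suitable integers $n_0,\dots,n_{i-1}$. Applying the multiplicative map $H_{\mu_i}$ (equation (\ref{Hmu})) and comparing with the definition of $x_i$ gives
$$
x_i=\Bigl(H_{\mu_i}(\pi)^{n_0}\,H_{\mu_i}(\phi_1)^{n_1}\cdots H_{\mu_i}(\phi_{i-1})^{n_{i-1}}\Bigr)\,H_{\mu_i}(\phi_i).
$$
The second and last step is to note that the bracketed factor is a unit of $\gg(\mu_i)$: indeed $H_{\mu_i}(\pi)$ is a unit because $\pi^{-1}\in K\subset K[x]$ forces $H_{\mu_i}(\pi)H_{\mu_i}(\pi^{-1})=H_{\mu_i}(1)=1$, while each $H_{\mu_i}(\phi_k)$ with $1\le k<i$ is a unit by the observation recorded immediately before the definition of $x_i$ (a consequence of Lemma \ref{units}). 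A product of units being a unit, it follows that $x_i$ and $H_{\mu_i}(\phi_i)$ differ by a unit, i.e. they are associate in $\gg(\mu_i)$.

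I do not expect any genuine obstacle: the statement is essentially a repackaging of material already in hand. The only subtlety worth flagging is that the exponents $n_j$ may be negative, so $\Phi_i$ is a rational function rather than a polynomial and $x_i=H_{\mu_i}(\Phi_i)$ has to be interpreted through the displayed product of powers of the units $H_{\mu_i}(\pi)$ and $H_{\mu_i}(\phi_k)$ ($k<i$) together with $H_{\mu_i}(\phi_i)$; this is legitimate exactly because those elements are units, which is why the unit observation is needed both to make sense of $x_i$ and to finish.
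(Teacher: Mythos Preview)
Your proposal is correct and follows essentially the same approach as the paper: the lemma is stated with a $\Box$ because it is merely a restatement of the sentence immediately preceding it, namely that all factors of $x_i$ other than $H_{\mu_i}(\phi_i)$ are units in $\gg(\mu_i)$. Your unwinding of the definition via (\ref{phis}) and the appeal to Lemma \ref{units} for the $\phi_k$ with $k<i$ reproduce exactly this reasoning, and your remark about negative exponents correctly identifies why the paper defines $x_i$ through the displayed product rather than as a naive image of $\Phi_i$.
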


Also, for $0\le i<r$ we define:
$$\as{1.2}
\begin{array}{l}
z_{i}\in\F_{i+1}, \ \mbox{ the image of $y_{i}$ under } \Delta_{i}\lra \Delta_{i+1},\\
\psi_{i}\in\F_{i}[y], \ \mbox{ minimal polynomial of $z_{i}$ over }\F_{i}.
\end{array}
$$

By Proposition \ref{sameideal}, $\op{Ker}(\Delta_i\to\Delta_{i+1})=\rr_{\mu_i}(\phi_{i+1})=H_{\mu_i}(\phi_{i+1})\gg(\mu_i)\cap \Delta_i$. For $i>0$, $\phi_{i+1}\nmid_{\mu_{i}}\phi_{i}$ implies that $H_{\mu_i}(\phi_i)$ has non-zero image in $\gg(\mu_{i+1})$. Therefore, $z_{i}\ne0$ for $i>0$, by Lemma \ref{associate}. In particular, $\psi_{i}\ne y$ for all $i>0$.
For $i=0$ we have $z_0=0$ (and $\psi_0=y$) if and only if $\phi_1\sim_{\mu_0} x$, or equivalently, $\overline{\phi}_1=\overline{x}$ in $\F[x]$.
We shall see in Corollary \ref{degpsi} that 
$$\F_{i+1}=\F_{i}[z_{i}]=\F_0[z_0,\dots,z_{i}],\qquad \deg\psi_{i}=f_{i}.
$$


\subsection{Operators attached to a MacLane chain}\label{subsecR} 
We consider Newton polygon operators 
$$N_i:=N_{\mu_{i-1},\phi_i}\colon\  K[x]\lra 2^{\R^2}, \quad 0\le i\le r,
$$ and residual polynomial operators:
$$
\as{1.3}
\begin{array}{rll}
R_{i,\alpha}\colon& \ppa(\mu_i)\lra \F_i[y],&\quad \ 0\le i\le r, \quad \alpha\in\Gamma_i,\\
R_{i}\colon& K[x]\lra \F_i[y],&\quad \ 0\le i\le r.
\end{array}
$$
The resi\-dual polynomial operators are defined by a recurrent formula involving certain constants $\ep_i(\alpha)\in\F_{i+1}^*$. Let us first define these constants.

Given $0\le i\le r$ and $\alpha\in\Gamma_i$, consider integers $s(\alpha)$, $u(\alpha)$ uniquely determined by 
$$(u(\alpha)/e(\mu_{i-1}))+s(\alpha)\la_i=\alpha, \qquad 0\le s(\alpha)<e_i,$$
or equivalently,
\begin{equation}\label{salpha}
u(\alpha)e_i+s(\alpha)h_i=e(\mu_i)\alpha, \qquad 0\le s(\alpha)<e_i.
\end{equation}
These integers $s(\alpha)$, $u(\alpha)$ depend on $i$, or more precisely on the group $\Gamma_i$. Note that $s(1/e(\mu_i))=\ell_i$, $u(1/e(\mu_i))=\ell'_i$, are the integers satisfying the B\'ezout identity $\ell_ih_i+\ell'_ie_i=1$, considered in section \ref{subsecRat}.

\begin{definition}\label{ep}
For $0\le i<r$ and $\alpha\in\Gamma_i$, we define 
$$\ep_i(\alpha)=
(z_{i})^{\ell'_is(\alpha)-\ell_iu(\alpha)}\in\F_{i+1}^*,
$$
where $s(\alpha)$, $u(\alpha)$ are the integers uniquely determined by (\ref{salpha}).

Note that $\ep_i(0)=1$ for all $0< i\le r$. For $i=0$, we get $\ep_0(\alpha)=(z_0)^0$ for all $\alpha\in\Z$. We convene that $\ep_0(\alpha)=1$, even in the case $z_0=0$.
\end{definition}

\begin{definition}\label{Rialpha}
For $0\le i\le r$, $\alpha\in\Gamma_i$, and $g=\sum_{0\le s}a_s\phi_i^s$ the $\phi_i$-expansion of $g\in\ppa(\mu_i)$, we define:
$$
R_{i,\alpha}(g)=\begin{cases}
\overline{g(y)/\pi^{\alpha}}\in\F_0[y], &\quad \mbox{if }i=0,\\
\sum\nolimits_{0\le j}\ep_{i-1}(\alpha_j)\,R_{i-1,\alpha_j}(a_{s_j})(z_{i-1})\,y^j\in\F_i[y], &\quad \mbox{if }i>0,
\end{cases}
$$
where $s_j:=s(\alpha)+je_i$ and $\alpha_j:=\alpha-s_j(w_{i}+\la_i)\in\Gamma_{i-1}$. 
\end{definition}

Let us explain the meaning of the data $s_j$, $\alpha_j$ involved in the computation of the $j$-th coefficient of $R_{i,\alpha}(g)$ (see Figure \ref{figAlphaj}).

\begin{figure}
\caption{Newton polygon $N_i(a_{s_j}\phi_i^{s_j})$}\label{figAlphaj}
\begin{center}
\setlength{\unitlength}{5mm}
\begin{picture}(12,9)
\put(0.75,6.8){$\times$}\put(2.2,6.05){$\times$}\put(3.7,5.3){$\times$}\put(5.2,4.55){$\times$}
\put(6.7,3.8){$\times$}\put(8.2,3.05){$\times$}\put(9.7,2.3){$\times$}\put(6.8,5.8){$\bullet$}
\put(-1,1){\line(1,0){13}}\put(0,0){\line(0,1){9}}
\put(-1,8){\line(2,-1){12}}
\multiput(7,.9)(0,.25){21}{\vrule height2pt}
\multiput(1.05,.9)(0,.25){25}{\vrule height2pt}
\put(7.4,4.1){\begin{footnotesize}$P_j$\end{footnotesize}}
\put(1.2,7.3){\begin{footnotesize}$P_0$\end{footnotesize}}
\put(6.9,.4){\begin{footnotesize}$s_j$\end{footnotesize}}
\put(11.2,2){\begin{footnotesize}$L_\alpha$\end{footnotesize}}
\put(6.8,6.5){\begin{footnotesize}$Q_{s_j}$\end{footnotesize}}
\put(-.5,.3){\begin{footnotesize}$0$\end{footnotesize}}
\put(-3.8,3.8){\begin{footnotesize}$u_j/e(\mu_{i-1})$\end{footnotesize}}
\put(-4.4,6.8){\begin{footnotesize}$u(\alpha)/e(\mu_{i-1})$\end{footnotesize}}
\put(.4,.3){\begin{footnotesize}$s(\alpha)$\end{footnotesize}}
\multiput(-.1,4)(.25,0){29}{\hbox to 2pt{\hrulefill }}
\multiput(-.1,7)(.25,0){5}{\hbox to 2pt{\hrulefill }}
\put(-.15,7.5){\line(1,0){.3}}
\put(-1,7.4){\begin{footnotesize}$\alpha$\end{footnotesize}}
\end{picture}
\end{center}
\end{figure}

For $1\le i\le r$, let $\cc_i=(\Z_{\ge0})\times \Gamma_{i-1}\subset\R^2$ be the set of points of the plane that may be vertexs of $N_i(g)$ for some $g\in K[x]$. 

Let $L_\alpha$ be the line of slope $-\la_i$ cutting the vertical axis at the point $(0,\alpha)$. The point $P_0=(s(\alpha),u(\alpha)/e(\mu_{i-1}))$ lies on $\cc_i\cap L_\alpha$ and it is the point with least abscissa in this set.
  Actually, the points on $\cc_i\cap L_\alpha$ may be parameterized as: 
$$P_j=(s(\alpha)+j e_i,(u(\alpha)-j h_i)/e(\mu_{i-1})), \quad j\in\Z_{\ge0}.
$$ 
We may write $P_j=(s_j,u_j/e(\mu_{i-1}))$, with $s_j=s(\alpha)+j e_i$, $u_j=u(\alpha)-j h_i\in\Z$.  

Also, since $u_j/e(\mu_{i-1})$ and $w_i$ (by definition) belong to $\Gamma_{i-1}$, we may consider $$\alpha_j=u_j/e(\mu_{i-1})-s_jw_i=\alpha-s_j(\la_i+w_i)\in\Gamma_{i-1}.
$$
 
Let $g=\sum_{0\le s}a_s\phi_i^s$ be the $\phi_i$-expansion of a polynomial $g\in\ppa(\mu_i)$. Denote $Q_s:=(s,\mu_{i-1}(a_s\phi_i^s))\in\cc_i$, so that $\{Q_s\mid 0\le s\}$ is the cloud of points whose lower convex hull is $N_i(g)$. 
  By Lemma \ref{muprimaN}, all $Q_s$ lie on or above the line $L_\alpha$, and 
$Q_s$ lies on $L_\alpha$ if and only if $\mu_i(a_s\phi_i^s)=\alpha$. Hence, 
\begin{equation}\label{onL}
\begin{cases}
s\not\in \{s_j\mid 0\le j\} &\imp\quad  \mu_i(a_s\phi_i^s)>\alpha,\\
\qquad s=s_j&\imp\quad \mu_i(a_s\phi_i^s)=\alpha \ \mbox{ if and only if }\mu_{i-1}(a_{s})=\alpha_j.
\end{cases} 
\end{equation}

The monomials of $R_{i,\alpha}(g)$ are in 1-1 correspondence with the points of $\cc_i\cap L_\alpha$. We shall see in Corollary \ref{nonzero} that the $j$-th coefficient of $R_{i,\alpha}(g)$ is non-zero if and only if $Q_{s_j}=P_j$. Let us now check that the $j$-th coefficient vanishes if $Q_{s_j}$ lies above $L_\alpha$.

\begin{lemma}\label{Rvanishes}
For all $0\le i\le r$, $\alpha\in\Gamma_i$, the operator $R_{i,\alpha}$ vanishes on $\ppa^+(\mu_i)$.
\end{lemma}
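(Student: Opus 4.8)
The statement to prove is Lemma \ref{Rvanishes}: for all $0\le i\le r$ and $\alpha\in\Gamma_i$, the operator $R_{i,\alpha}$ vanishes on $\ppa^+(\mu_i)$. The natural approach is induction on $i$, mirroring the recursive structure of Definition \ref{Rialpha}. For the base case $i=0$, if $g\in\pset_\alpha^+(\mu_0)$ then $\mu_0(g)>\alpha$, i.e. every coefficient of $g$ is divisible by $\pi^{\alpha+1}$ (recall $\alpha\in\Z$ here); hence $g(y)/\pi^\alpha$ has all coefficients in $\m_v$, so its reduction $\overline{g(y)/\pi^\alpha}$ is zero in $\F_0[y]$, which is exactly $R_{0,\alpha}(g)$.

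**Inductive step.** Suppose $i>0$ and the lemma holds for $i-1$. Let $g\in\pset_\alpha^+(\mu_i)$, so $\mu_i(g)>\alpha$, and write the $\phi_i$-expansion $g=\sum_{0\le s}a_s\phi_i^s$. By the formula in Definition \ref{Rialpha}, the $j$-th coefficient of $R_{i,\alpha}(g)$ is $\ep_{i-1}(\alpha_j)\,R_{i-1,\alpha_j}(a_{s_j})(z_{i-1})$, where $s_j=s(\alpha)+je_i$ and $\alpha_j=\alpha-s_j(w_i+\la_i)\in\Gamma_{i-1}$. Since $\ep_{i-1}(\alpha_j)\in\F_i^*$ is a unit, it suffices to show $R_{i-1,\alpha_j}(a_{s_j})=0$ in $\F_{i-1}[y]$ for every $j\ge 0$. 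Now $\mu_i(g)=\min_s\{\mu_i(a_s\phi_i^s)\}>\alpha$ (using that $\phi_i$ is $\mu_i$-minimal, Lemma \ref{minimal0}), so in particular $\mu_i(a_{s_j}\phi_i^{s_j})>\alpha$ for the index $s=s_j$. Writing $\mu_i(a_{s_j}\phi_i^{s_j})=\mu_{i-1}(a_{s_j})+s_j(w_i+\la_i)$ (this is the content of the stability Lemma \ref{stable} together with $\mu_i(\phi_i)=w_i+\la_i$), we get $\mu_{i-1}(a_{s_j})>\alpha-s_j(w_i+\la_i)=\alpha_j$. Hence $a_{s_j}\in\pset_{\alpha_j}^+(\mu_{i-1})$, and the induction hypothesis gives $R_{i-1,\alpha_j}(a_{s_j})=0$. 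Therefore every coefficient of $R_{i,\alpha}(g)$ vanishes, proving the lemma.

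**Main obstacle.** The only delicate point is the bookkeeping in the identity $\mu_i(a_s\phi_i^s)=\mu_{i-1}(a_s)+s(w_i+\la_i)$: one must check that $\mu_i(a_s)=\mu_{i-1}(a_s)$ (which holds because $\deg a_s<\deg\phi_i$, so $\phi_i\nmid_{\mu_{i-1}}a_s$, and Lemma \ref{stable} applies), and that $\mu_i(\phi_i)=\mu_{i-1}(\phi_i)+\la_i=w_i+\la_i$ by definition of the augmented valuation. Also one should confirm that the index $s_j$ is indeed among the indices $s$ appearing in the $\phi_i$-expansion (or that $a_{s_j}=0$, in which case $R_{i-1,\alpha_j}(0)=0$ trivially), and that $\alpha_j$ genuinely lies in $\Gamma_{i-1}$ so that $R_{i-1,\alpha_j}$ is defined — both facts were already recorded in the discussion preceding the lemma (using $u_j/e(\mu_{i-1})\in\Gamma_{i-1}$ and $w_i\in\Gamma_{i-1}$). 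Everything else is a routine unwinding of the recursive definition, so the proof is short once this indexing is set up cleanly.
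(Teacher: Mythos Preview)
Your proof is correct and follows essentially the same inductive argument as the paper's own proof. The paper compresses the inductive step by citing the observation (\ref{onL}) just before the lemma, whereas you spell out the identity $\mu_i(a_{s_j}\phi_i^{s_j})=\mu_{i-1}(a_{s_j})+s_j(w_i+\la_i)$ explicitly; note that $\mu_i(a_s)=\mu_{i-1}(a_s)$ already follows directly from the definition of the augmented valuation (Definition \ref{muprima}), so invoking Lemma \ref{stable} is slightly more than necessary but not wrong.
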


\begin{proof}
We proceed by induction on $i$. For $i=0$ the statement is clear. Assume $i>0$ and consider the $\phi_i$-expansion $g=\sum_{0\le s}a_s\phi_i^s$ of a polynomial $g$ with $\mu_i(g)>\alpha$. By (\ref{onL}), $\mu_{i-1}(a_{s_j})>\alpha_j$ for all $j\ge0$; hence, by the induction hypothesis, all $R_{i-1,\alpha_j}(a_{s_j})$ vanish (as polynomials in $\F_{i-1}[y]$) and all coefficients of $R_{i,\alpha}(g)$ vanish too.
\end{proof}

Take $g$ as above with $\mu_i(g)=\alpha$, and let $s(g)=s_{\mu_i}(g)\le s'(g)=s'_{\mu_i}(g)$ be the abscissas of the end points of the $\la_i$-component of $g$ (Definition \ref{sla}).
These end points belong to $\cc_i\cap L_\alpha$, so that $s(g)=s_{j_0}$ for $j_0=(s(g)-s(\alpha))/e_i=\lfloor s(g)/e_i\rfloor$, and $s'(g)=s_{j_0+d}$, where $d=(s'(g)-s(g))/e_i$ is called the \emph{degree} of the segment $S_{\la_i}$. 

\begin{figure}
\caption{Newton polygon of $g\in K[x]$ with $\mu_i(g)=\alpha$}\label{figAlpha}
\begin{center}
\setlength{\unitlength}{5mm}
\begin{picture}(14,10)
\put(0.7,6.8){$\times$}\put(2.2,6.05){$\times$}\put(3.8,5.3){$\bullet$}\put(5.2,4.55){$\times$}\put(6.7,3.8){$\times$}\put(8.3,3.05){$\bullet$}\put(9.8,2.25){$\times$}\put(6.85,5.8){$\bullet$}
\put(-1,1){\line(1,0){15}}\put(0,0){\line(0,1){10}}
\put(-1,8){\line(2,-1){12}}
\put(4,5.5){\line(-1,2){2}}\put(4,5.54){\line(-1,2){2}}
\put(4,5.5){\line(2,-1){4.5}}\put(4,5.54){\line(2,-1){4.5}}
\put(8.5,3.2){\line(4,-1){4}}\put(8.5,3.24){\line(4,-1){4}}
\multiput(4,.9)(0,.25){19}{\vrule height2pt}
\multiput(8.5,.9)(0,.25){10}{\vrule height2pt}
\multiput(1,.9)(0,.25){25}{\vrule height2pt}
\multiput(7,.9)(0,.25){21}{\vrule height2pt}
\put(1.2,7.3){\begin{footnotesize}$P_0$\end{footnotesize}}
\put(8.5,3.5){\begin{footnotesize}$P_{j_0+d}$\end{footnotesize}}
\put(4,5.8){\begin{footnotesize}$P_{j_0}$\end{footnotesize}}
\put(7.4,4.1){\begin{footnotesize}$P_j$\end{footnotesize}}
\put(6.8,6.5){\begin{footnotesize}$Q_{s_j}$\end{footnotesize}}
\put(6.8,.4){\begin{footnotesize}$s_j$\end{footnotesize}}
\put(8,.3){\begin{footnotesize}$s'(g)$\end{footnotesize}}
\put(3.4,.3){\begin{footnotesize}$s(g)$\end{footnotesize}}
\put(11.2,1.7){\begin{footnotesize}$L_\alpha$\end{footnotesize}}
\put(7,8.5){\begin{footnotesize}$N_i(g)$\end{footnotesize}}
\put(-.5,.3){\begin{footnotesize}$0$\end{footnotesize}}
\put(-4.3,5.3){\begin{footnotesize}$u(g)/e(\mu_{i-1})$\end{footnotesize}}
\put(-4.4,6.8){\begin{footnotesize}$u(\alpha)/e(\mu_{i-1})$\end{footnotesize}}
\put(.4,.3){\begin{footnotesize}$s(\alpha)$\end{footnotesize}}
\multiput(-.1,5.5)(.25,0){16}{\hbox to 2pt{\hrulefill }}
\multiput(-.1,7)(.25,0){5}{\hbox to 2pt{\hrulefill }}
\put(-.15,7.5){\line(1,0){.3}}
\put(-1,7.4){\begin{footnotesize}$\alpha$\end{footnotesize}}
\end{picture}
\end{center}
\end{figure}
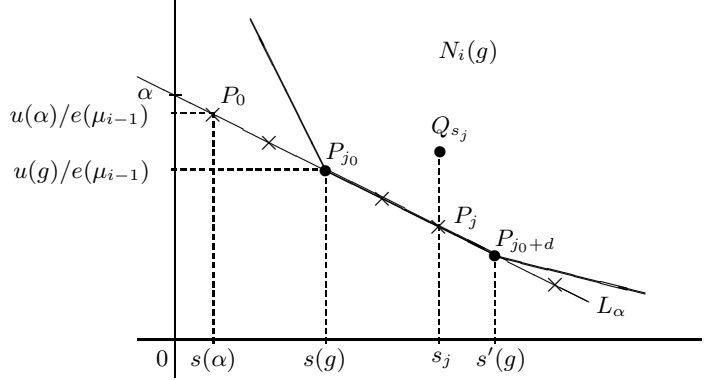

By Lemma \ref{Rvanishes}, the non-zero coefficients of $R_{i,\alpha}(g)$ correspond to abscissas $s_j$ with $j_0\le j\le j_0+d$ (see Figure \ref{figAlpha}). Hence, we may define the residual polynomial operator
$\;R_{i}\colon K[x]\lra \F_i[y]$ as follows.

\begin{definition}\label{Ri}
For $g\in K[x]$, $g\ne0$, let $\alpha=\mu_i(g)$. We define $R_0(g)=R_{0,\alpha}(g)=\overline{g(y)/\pi^{\alpha}}\in\F_0[y]$. For $1\le i\le r$, we define
$$
R_i(g):=    
R_{i,\alpha}(g)/y^{j_0}=\sum\nolimits_{0\le k\le d}\ep_{i-1}(\alpha_{j_0+k})\,R_{i-1,\alpha_{j_0+k}}(a_{s_{j_0+k}})(z_{i-1})\,y^k\in\F_i[y],
$$
where $j_0=\lfloor s(g)/e_i\rfloor$. We convene that $R_i(0)=0$ for all $0\le i\le r$.
\end{definition}

For any $1\le i\le r$ and any abscissa $s\ge0$, $N_i(\phi_i^s)$ is the point $(s,sw_i)$ and $\alpha:=\mu_i(\phi_i^s)=s(w_i+\la_i)$. Let $j=\lfloor s/e_i\rfloor=(s-s(\alpha))/e_i$. With the above notation, $s=s_j$ and $\alpha_j=0$. Since $\ep_{i-1}(0)=1=R_{i-1,0}(1)$, we have
\begin{equation}\label{LC}
R_{i,\alpha}(\phi_i^s)=y^{\lfloor s/e_i\rfloor},\qquad R_i(\phi_i^s)=1,\qquad 1\le i\le r.
\end{equation}

Corollary \ref{nonzero} below shows that $R_i(g)$ has always degree $d$ and $R_i(g)(0)\ne0$.
Also, Corollary \ref{astypes} shows that $R_i(\phi_{i+1})=\psi_i$ for all $0\le i<r$. 

\section{Structure of the graded algebra of an inductive valuation}\label{secGr}
In this section, we fix an inductive valuation $\mu$ equipped with a MacLane chain of length $r$, and we denote $\Delta=\Delta(\mu)$. We shall freely use all data and operators of the MacLane chain described in section \ref{secInductive}.

The main property of the residual polynomial operators is reflected in Theorem \ref{Hmug}.  We shall derive from this result some more properties of the residual polynomials, their link with the residual ideals, and the structure of the graded algebra of $\mu$.

\begin{lemma}\label{varphi}
For $0\le i\le r$, $\alpha\in\Gamma_i$ and a non-zero $g\in K[x]$, consider $$\varphi_i(\alpha):=x_i^{s(\alpha)}p_i^{u(\alpha)},\qquad 
\varphi_i(g):=x_i^{s(g)}p_i^{u(g)},$$ 
where $s(\alpha),\,u(\alpha)$ are defined in (\ref{salpha}), $(s(g),u(g)/e(\mu_{i-1}))$ is the left end point of $S_{\la_i}(g)$, if $i>0$ (see Figure \ref{figAlpha}), and $s(g)=0$, $u(g)=\mu_0(g)$, if $i=0$. 
These homogeneous elements in $\gg(\mu_i)$ have degree
$\deg\varphi_i(\alpha)=\alpha$, $\deg 
\varphi_i(g)= \mu_i(g)$.
Moreover, $\varphi_i(gh)=\varphi_i(g)\varphi_i(h)$
for any pair of non-zero polynomials $g,h\in K[x]$. 
\end{lemma}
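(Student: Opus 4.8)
The plan is to reduce the three assertions to three inputs already available: the stable values of the building blocks $x_i$ and $p_i$ computed in Lemma~\ref{values}; the arithmetic identity (\ref{salpha}) relating $s(\alpha),u(\alpha)$ to $\alpha$, together with the fact that the left end point of $S_{\la_i}(g)$ sits on the line that reads off $\mu_i(g)$ from $N_i(g)$ (Lemma~\ref{muprimaN}); and the additivity of $\la_i$-components from Corollary~\ref{sumSla}. Everything else is bookkeeping with the value-group normalizations and the degenerate case $i=0$.

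First I would record the degrees of the generators. The element $x_i=H_{\mu_i}(\Phi_i)$ lies in the homogeneous piece of degree $\mu_i(\Phi_i)$, which equals $\la_i$ by item~2 of Lemma~\ref{values} (this also covers $i=0$, where $\la_0=0=\mu_0(x)$); and $p_i=H_{\mu_i}(\pi_i)\in\gg(\mu_i)^*$ has degree $\mu_i(\pi_i)=1/e(\mu_{i-1})$ by item~1 of Lemma~\ref{values}. Since $p_i$ is a unit, the possibly negative powers $p_i^{u(\alpha)}$ and $p_i^{u(g)}$ are meaningful, so $\varphi_i(\alpha)$ and $\varphi_i(g)$ are nonzero homogeneous elements of degrees $s(\alpha)\la_i+u(\alpha)/e(\mu_{i-1})$ and $s(g)\la_i+u(g)/e(\mu_{i-1})$ respectively.

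For $\deg\varphi_i(\alpha)=\alpha$ with $i>0$: dividing the identity $u(\alpha)e_i+s(\alpha)h_i=e(\mu_i)\alpha$ of (\ref{salpha}) by $e(\mu_i)=e_i\,e(\mu_{i-1})$ and using $h_i/e(\mu_i)=\la_i$ gives $u(\alpha)/e(\mu_{i-1})+s(\alpha)\la_i=\alpha$; the case $i=0$ is immediate, since then $s(\alpha)=0$, $u(\alpha)=\alpha$ and $\deg p_0=1$. For $\deg\varphi_i(g)=\mu_i(g)$ with $i>0$: the left end point $(s(g),u(g)/e(\mu_{i-1}))$ of $S_{\la_i}(g)$ lies on the line of slope $-\la_i$ first touching $N_i(g)$ from below, which by Lemma~\ref{muprimaN} cuts the vertical axis at $(0,\mu_i(g))$; substituting the left end point into the equation of this line yields $u(g)/e(\mu_{i-1})+s(g)\la_i=\mu_i(g)$. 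For $i=0$ this reduces to $\varphi_0(g)=p_0^{\mu_0(g)}$.

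For multiplicativity, with $i>0$ I would apply Corollary~\ref{sumSla} to the augmentation $\mu_i=[\mu_{i-1};(\phi_i,\la_i)]$ to obtain $S_{\la_i}(gh)=S_{\la_i}(g)+S_{\la_i}(h)$; by the definition of the sum of segments, the left end point of the right-hand side is the vector sum of the two left end points, whence $s(gh)=s(g)+s(h)$ and $u(gh)=u(g)+u(h)$, and therefore $\varphi_i(gh)=x_i^{s(g)+s(h)}p_i^{u(g)+u(h)}=\varphi_i(g)\varphi_i(h)$. For $i=0$ this is just $\mu_0(gh)=\mu_0(g)+\mu_0(h)$. No step is genuinely hard; the only care needed is in juggling the two normalizations---values of $\mu_{i-1}$ lie in $e(\mu_{i-1})^{-1}\Z$ while the coordinates $u(\cdot)$ are integers---and in checking the degenerate case $i=0$ uniformly, which is where essentially all the bookkeeping sits.
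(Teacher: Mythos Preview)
Your proof is correct and follows essentially the same approach as the paper: the paper invokes Lemma~\ref{values} for the degree computations and Corollary~\ref{sumSla} for the multiplicativity, which is exactly what you do. Your version is simply more explicit---spelling out the use of (\ref{salpha}) and Lemma~\ref{muprimaN} to pin down the ordinates, and treating $i=0$ separately---whereas the paper compresses everything into two sentences.
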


\begin{proof}
The equalities $\mu_i\left(\Phi_i^{s(\alpha)}\pi_i^{u(\alpha)}\right)=\alpha$, $\mu_i\left(\Phi_i^{s(g)}\pi_i^{u(g)}\right)=\mu_i(g)$ are a consequence of Lemma \ref{values}. Corollary \ref{sumSla} shows that $\varphi_i(gh)=\varphi_i(g)\varphi_i(h)$. 
\end{proof}

\begin{theorem}\label{Hmug}
Let $g\in K[x]$ be a non-zero polynomial and let $\alpha=\mu(g)$. Then,
$$
H_{\mu}(g)=\varphi_r(\alpha)R_{r,\alpha}(g)(y_r)=\varphi_r(g)R_r(g)(y_r).
$$
In particular, $\ppa(\mu)/\ppa^+(\mu)=\varphi_r(\alpha)\Delta$ is a free $\Delta$-module of rank one.
\end{theorem}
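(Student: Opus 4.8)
The plan is to prove the displayed formula by induction on the length $r$ of the MacLane chain of $\mu$, and then to deduce the freeness statement as a formal consequence. For $r=0$ we have $\mu=\mu_0$, $\varphi_0(\alpha)=H_{\mu_0}(\pi)^{\alpha}$, $y_0=H_{\mu_0}(x)$ and $R_0(g)=R_{0,\alpha}(g)=\overline{g(y)/\pi^{\alpha}}$; writing $g=\sum_s a_sx^s$ with $\alpha=\mu_0(g)=\min_s v(a_s)$, each monomial with $v(a_s)=\alpha$ contributes $H_{\mu_0}(\pi)^{\alpha}\,\overline{a_s/\pi^{\alpha}}\,H_{\mu_0}(x)^s=H_{\mu_0}(a_sx^s)$ in $\gg(\mu_0)$ by the multiplicativity in (\ref{Hmu}), while the rest vanish in $\Delta_0$, so the formula follows from the additivity in (\ref{Hmu}) applied to the monomials of minimal value.

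For the inductive step I would fix $g\neq 0$, set $\alpha=\mu(g)=\mu_r(g)$, and use the $\phi_r$-expansion $g=\sum_{0\le s}a_s\phi_r^s$. By (\ref{Hmu}), $H_\mu(g)=\sum_{s\in I}H_\mu(a_s)\,H_\mu(\phi_r)^{s}$ with $I=\{s:\mu(a_s\phi_r^s)=\alpha\}$; as discussed around Definition \ref{Ri}, $I\subseteq\{s_j=s(\alpha)+je_r:j\ge0\}$ and consists of those $s_j$ lying on the line $L_\alpha$ of slope $-\la_r$. Each $a_s$ has $\deg a_s<m_r$, hence $\phi_r\nmid_{\mu_{r-1}}a_s$, so Proposition \ref{extension} gives $\mu_{r-1}(a_s)=\mu(a_s)$ and identifies $H_\mu(a_s)$ with the image of $H_{\mu_{r-1}}(a_s)$ under the canonical homomorphism $\rho\colon\gg(\mu_{r-1})\to\gg(\mu)$. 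Since moreover $\mu_{r-1}(a_{s_j})=\alpha_j$ precisely when $s_j\in I$ (by (\ref{onL})), the induction hypothesis applied to $a_{s_j}$ over $\mu_{r-1}$ (its MacLane chain is the length-$(r-1)$ truncation) yields $H_{\mu_{r-1}}(a_{s_j})=\varphi_{r-1}(\alpha_j)\,R_{r-1,\alpha_j}(a_{s_j})(y_{r-1})$; and as $\rho$ restricts on degree-zero parts to $\Delta_{r-1}\to\Delta_r$ sending $y_{r-1}$ to $z_{r-1}$ and $\F_{r-1}$ into $\F_r$, its effect on the last factor is to replace $y_{r-1}$ by $z_{r-1}$.

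The heart of the matter — and the step I expect to be the main obstacle — is the reassembly: one must compute $\rho(\varphi_{r-1}(\alpha_j))$ and $H_\mu(\phi_r)$ explicitly in terms of $x_r,p_r,z_{r-1}$, and recognize that the discrepancy between the normalizations of the integers $(s,u)$ relative to $\Gamma_{r-1}$ and to $\Gamma_r$ is exactly the constant $\ep_{r-1}(\alpha_j)$. Using Definition \ref{ratfracs}, the relations $\phi_r=\Phi_r\pi_r^{V_r}$, $\pi_r=\Phi_{r-1}^{\ell_{r-1}}\pi_{r-1}^{\ell'_{r-1}}$, $\gamma_{r-1}=\Phi_{r-1}^{e_{r-1}}\pi_{r-1}^{-h_{r-1}}$, together with $z_{r-1}=H_\mu(\gamma_{r-1})$ (by (\ref{stability})) and the B\'ezout identity $\ell_{r-1}h_{r-1}+\ell'_{r-1}e_{r-1}=1$, let me invert a unimodular exponent matrix among the units $H_\mu(\Phi_{r-1}),H_\mu(\pi_{r-1})\in\gg(\mu)^*$ and obtain $H_\mu(\Phi_{r-1})=p_r^{h_{r-1}}z_{r-1}^{\ell'_{r-1}}$, $H_\mu(\pi_{r-1})=p_r^{e_{r-1}}z_{r-1}^{-\ell_{r-1}}$. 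Substituting into $\varphi_{r-1}(\alpha_j)=x_{r-1}^{s(\alpha_j)}p_{r-1}^{u(\alpha_j)}$ and invoking (\ref{salpha}) (for $\Gamma_{r-1}$) and Definition \ref{ep} gives $\rho(\varphi_{r-1}(\alpha_j))=p_r^{\,e(\mu_{r-1})\alpha_j}\,\ep_{r-1}(\alpha_j)$, while $H_\mu(\phi_r)^{s_j}=x_r^{s_j}p_r^{s_jV_r}$. A short computation with $V_r=e(\mu_{r-1})w_r$ and $\alpha_j=\alpha-s_j(w_r+\la_r)$ shows $e(\mu_{r-1})\alpha_j+s_jV_r=u(\alpha)-jh_r$, so for $s_j\in I$ one gets $H_\mu(a_{s_j}\phi_r^{s_j})=x_r^{s(\alpha)}p_r^{u(\alpha)}\big(x_r^{e_r}p_r^{-h_r}\big)^{j}\ep_{r-1}(\alpha_j)R_{r-1,\alpha_j}(a_{s_j})(z_{r-1})=\varphi_r(\alpha)\,y_r^{\,j}\,\ep_{r-1}(\alpha_j)R_{r-1,\alpha_j}(a_{s_j})(z_{r-1})$, using $y_r=x_r^{e_r}p_r^{-h_r}$. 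Summing these over $s_j\in I$ and extending the sum to all $j\ge0$ (the extra summands vanish since $R_{r-1,\alpha_j}(a_{s_j})=0$ for $s_j\notin I$, by Lemma \ref{Rvanishes}), Definition \ref{Rialpha} gives $H_\mu(g)=\varphi_r(\alpha)R_{r,\alpha}(g)(y_r)$. The second equality then follows from $R_{r,\alpha}(g)=y^{j_0}R_r(g)$ (Definition \ref{Ri}) and $\varphi_r(g)=\varphi_r(\alpha)y_r^{\,j_0}$, the latter because $(s(g),u(g)/e(\mu_{r-1}))=(s_{j_0},u_{j_0}/e(\mu_{r-1}))$ is the left end point of $S_{\la_r}(g)$.

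Finally, for the freeness statement: since $R_{r,\alpha}(g)(y_r)\in\Delta$, the identity already gives $\ppa(\mu)/\ppa^+(\mu)\subseteq\varphi_r(\alpha)\Delta$, because every nonzero element of this degree-$\alpha$ piece of $\gg(\mu)$ equals $H_\mu(g)$ for some $g$ with $\mu(g)=\alpha$. Conversely, $\varphi_r(\alpha)=x_r^{s(\alpha)}p_r^{u(\alpha)}$ is itself a homogeneous element of $\gg(\mu)$ of degree $\alpha$ (Lemma \ref{varphi}; here $p_r\in\gg(\mu)^*$ makes sense of a possibly negative exponent), hence lies in $\ppa(\mu)/\ppa^+(\mu)$, so $\varphi_r(\alpha)\Delta\subseteq\ppa(\mu)/\ppa^+(\mu)$ as this piece is a $\Delta$-module. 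Thus $\ppa(\mu)/\ppa^+(\mu)=\varphi_r(\alpha)\Delta$, and since $\gg(\mu)$ is a domain, $\delta\mapsto\varphi_r(\alpha)\delta$ is an injective $\Delta$-module homomorphism $\Delta\to\ppa(\mu)/\ppa^+(\mu)$; therefore this piece is free of rank one over $\Delta$.
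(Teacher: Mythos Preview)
Your proof is correct and follows essentially the same inductive strategy as the paper: decompose $H_\mu(g)$ via the $\phi_r$-expansion, apply the induction hypothesis to each $a_{s_j}$ at level $r-1$, push through the canonical map $\gg(\mu_{r-1})\to\gg(\mu)$, and then reassemble. The only real difference is presentational: where you invert the unimodular $2\times 2$ matrix relating $(H_\mu(\Phi_{r-1}),H_\mu(\pi_{r-1}))$ to $(p_r,z_{r-1})$ and compute directly in the graded algebra, the paper isolates the equivalent calculation as a separate identity of rational functions in $K(x)$ (their Lemma~\ref{recurrence}), namely $(\Phi_{r-1})^{s(\alpha_j)}(\pi_{r-1})^{u(\alpha_j)}\phi_r^{s_j}=\Phi_r^{s_j}\pi_r^{u_j}(\gamma_{r-1})^{\ell'_{r-1}s(\alpha_j)-\ell_{r-1}u(\alpha_j)}$, and then applies $H_\mu$ to both sides; the underlying algebra (and in particular the appearance of $\ep_{r-1}(\alpha_j)$ as the $z_{r-1}$-power correcting the change of normalization) is identical.
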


\begin{proof}
Let $g=\sum_{0\le s}a_s\phi_r^s$ be the $\phi_r$-expansion of $g$, and consider the set of indices $I=\{s\ge0\mid \mu\left(a_s\phi_r^s\right)=\alpha\}$. Since $g\smu \sum_{s\in I}a_s\phi_r^s$, we have
 $\hm(g)=\sum_{s\in I}\hm(a_s\phi_r^s)$ by equation (\ref{Hmu}). 

Let us prove by induction on $r$ the identity 
\begin{equation} \label{aim}
H_{\mu}(g)=\varphi_r(\alpha)R_{r,\alpha}(g)(y_r). 
\end{equation}

If $r=0$, we have $\phi_0=x$, $e_0=1$, $s(\alpha)=0$ and $\varphi_0(\alpha)=H_{\mu_0}(\pi)^\alpha$. For all $s\in I$, we have $\mu_0(a_s)=\mu_0(a_sx^s)=\alpha$; thus, $b_s:=a_s\pi^{-\alpha}$ has $\mu_0(b_s)=0$, and
$$
H_{\mu_0}(a_sx^s)=\varphi_0(\alpha) H_{\mu_0}(b_s)\,y_0^s=\varphi_0(\alpha) \overline{b}_s\,y_0^s,
$$ 
the last equality by the identification of $\F$ with the subfield $\F_0\subset\Delta_0$. This proves (\ref{aim}) in this case.

Let now $i>0$ and suppose that (\ref{aim}) is true for all inductive valuations equipped with a MacLane chain of length less than $r$. By (\ref{onL}), $s\in I$ if and only if $s=s_j$ and $ \mu_{r-1}(a_{s_j})=\alpha_j$ for some $j\ge 0$. 
Thus,  (\ref{aim}) is equivalent to
$$
\hm(a_{s_j}\phi_r^{s_j})=\varphi_r(\alpha)\ep_{r-1}(\alpha_j)R_{r-1,\alpha_j}(a_{s_j})(z_{r-1})\,y_r^j,
$$
for all $j\ge 0$ such that $\mu_{r-1}(a_{s_j})=\alpha_j$.
Since 
\begin{equation}\label{varphiy}
\varphi_r(\alpha)\,y_r^j=x_r^{s(\alpha)}p_r^{u(\alpha)}\,y_r^j=x_r^{s(\alpha)+je_i}p_r^{u(\alpha)-jh_i}=
x_r^{s_j}p_r^{u_j}, 
\end{equation}
our aim is equivalent to showing that $\mu_{r-1}(a_{s_j})=\alpha_j$ implies: 
\begin{equation}\label{claim}
\hm(a_{s_j}\phi_r^{s_j})=x_r^{s_j}p_r^{u_j}\ep_{r-1}(\alpha_j)R_{r-1,\alpha_j}(a_{s_j})(z_{r-1}).
\end{equation}

By the induction hypothesis, if $\mu_{r-1}(a_{s_j})=\alpha_j$ we have
\begin{equation}\label{step}
H_{\mu_{r-1}}(a_{s_j})=\varphi_{r-1}(\alpha_j)R_{r-1,\alpha_j}(a_{s_j})(y_{r-1}). 
\end{equation}

Since $\deg a_{s_j}<\deg\phi_r$, we have $\mu(a_{s_j})=\mu_{r-1}(a_{s_j})$. Also, (\ref{stability}) implies that $\mu(\Phi_{r-1})=\mu_{r-1}(\Phi_{r-1})$, $\mu(\pi_{r-1})=\mu_{r-1}(\pi_{r-1})$. Hence, if we apply the canonical homomorphism $\gg(\mu_{r-1})\to\ggm$ to the identity  (\ref{step}), we get
\begin{equation}\label{ne0}
H_{\mu}(a_{s_j})=\hm(\Phi_{r-1})^{s(\alpha_j)}\hm(\pi_{r-1})^{u(\alpha_j)}R_{r-1,\alpha_j}(a_{s_j})(z_{r-1}). 
\end{equation}

Therefore, (\ref{claim}) is equivalent to
$$
\hm\left((\Phi_{r-1})^{s(\alpha_j)}(\pi_{r-1})^{u(\alpha_j)}\phi_r^{s_j}\right)=
\hm\left(\Phi_{r}^{s_j}\pi_{r}^{u_j}\right)\ep_{r-1}(\alpha_j),
$$
and this is a consequence of an identity between the involved rational functions which is proved in Lemma \ref{recurrence} below.

Finally, equality (\ref{varphiy}) applied to $j=j_0=(s(g)-s(\alpha))/e_r=\lfloor s(g)/e_r\rfloor$ yields $\varphi_r(\alpha)\,y_r^{j_0}=\varphi_r(g)$. Hence, $\varphi_r(\alpha)R_{r,\alpha}(g)(y_r)=\varphi_r(g)R_r(g)(y_r)$.
\end{proof}

\begin{lemma}\label{recurrence}
With the above notation, take $i\ge1$, $j\ge0$. Then, 
$$
(\Phi_{i-1})^{s(\alpha_j)}(\pi_{i-1})^{u(\alpha_j)}\,\phi_i^{s_j}=\Phi_i^{s_j}\pi_i^{u_j}(\gamma_{i-1})^{\ell'_{i-1}s(\alpha_j)-\ell_{i-1}u(\alpha_j)},
$$ 
where $\alpha_j$ is considered as an element in $\Gamma_{i-1}$.
\end{lemma}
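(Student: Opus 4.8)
The plan is to treat the asserted equality as an identity between explicit monomials in the rational functions $\pi,\phi_1,\dots,\phi_i$ and to verify it by bringing both sides to the common shape $\phi_i^{a}\,\Phi_{i-1}^{b}\,\pi_{i-1}^{c}$ and checking that the three exponents agree. On the left-hand side this shape is already present, with $(a,b,c)=(s_j,\,s(\alpha_j),\,u(\alpha_j))$. On the right-hand side I would use the recurrences of Definition \ref{ratfracs}, valid for every $i\ge1$:
$$
\pi_i=\Phi_{i-1}^{\ell_{i-1}}\pi_{i-1}^{\ell'_{i-1}},\qquad
\Phi_i=\phi_i\,\pi_i^{-V_i},\qquad
\gamma_{i-1}=\Phi_{i-1}^{e_{i-1}}\pi_{i-1}^{-h_{i-1}},
$$
the last one holding also for $i-1=0$ because $\gamma_0=x=\Phi_0$ and $e_0=1$, $h_0=0$. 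Substituting and collecting, with $k:=\ell'_{i-1}s(\alpha_j)-\ell_{i-1}u(\alpha_j)$, the right-hand side becomes
$$
\Phi_i^{s_j}\pi_i^{u_j}\gamma_{i-1}^{k}=\phi_i^{s_j}\;\Phi_{i-1}^{\,\ell_{i-1}(u_j-V_is_j)+e_{i-1}k}\;\pi_{i-1}^{\,\ell'_{i-1}(u_j-V_is_j)-h_{i-1}k}.
$$
The exponents of $\phi_i$ match immediately, so it remains to identify the exponents of $\Phi_{i-1}$ and of $\pi_{i-1}$ with $s(\alpha_j)$ and $u(\alpha_j)$ respectively.

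The \emph{crucial} point is the numerical identity
$$
u_j-V_is_j=e(\mu_{i-1})\,\alpha_j=e_{i-1}\,u(\alpha_j)+h_{i-1}\,s(\alpha_j).
$$
Its first equality follows at once from $\alpha_j=u_j/e(\mu_{i-1})-s_jw_i$ together with $V_i=e(\mu_{i-1})w_i$, and its second equality is precisely the defining relation (\ref{salpha}) for the integers $s(\alpha_j),u(\alpha_j)$ attached to $\alpha_j\in\Gamma_{i-1}$. Setting $t:=u_j-V_is_j=e_{i-1}u(\alpha_j)+h_{i-1}s(\alpha_j)$, substituting this and the value of $k$ into the two exponents above, and using the B\'ezout relation $\ell_{i-1}h_{i-1}+\ell'_{i-1}e_{i-1}=1$, a one-line cancellation gives $\ell_{i-1}t+e_{i-1}k=s(\alpha_j)$ and $\ell'_{i-1}t-h_{i-1}k=u(\alpha_j)$; these are exactly the exponents on the left-hand side, so the two monomials coincide.

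I do not expect a genuine obstacle: the argument is a finite computation with no hidden ingredient. The only thing that requires attention is the bookkeeping of indices — the data $s(\alpha_j),u(\alpha_j),\ell_{i-1},\ell'_{i-1},e_{i-1},h_{i-1}$ belong to level $i-1$, whereas $V_i,s_j,u_j$ and the polynomials $\phi_i,\Phi_i,\pi_i$ belong to level $i$ — and checking that the uniform formulas used for $\pi_i,\Phi_i,\gamma_{i-1}$ genuinely hold down to $i=1$ (there $V_1=0$, $\Phi_1=\phi_1$, $\pi_1=\pi$, $\gamma_0=x$ and $\ell_0=0$, $\ell'_0=1$), which is immediate.
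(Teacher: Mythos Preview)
Your proof is correct and follows essentially the same approach as the paper's: both arguments reduce the identity to the same numerical relation $u_j-V_is_j=e_{i-1}u(\alpha_j)+h_{i-1}s(\alpha_j)$ and unwind it via the B\'ezout identity $\ell_{i-1}h_{i-1}+\ell'_{i-1}e_{i-1}=1$ together with the recurrences $\Phi_i=\phi_i\pi_i^{-V_i}$, $\pi_i=\Phi_{i-1}^{\ell_{i-1}}\pi_{i-1}^{\ell'_{i-1}}$, $\gamma_{i-1}=\Phi_{i-1}^{e_{i-1}}\pi_{i-1}^{-h_{i-1}}$. The only difference is cosmetic: the paper moves $\gamma_{i-1}^{k}$ to the left and collapses everything into a power of $\pi_i$ before using $\Phi_i=\phi_i\pi_i^{-V_i}$, while you expand the right-hand side fully into the basis $\phi_i,\Phi_{i-1},\pi_{i-1}$ and match exponents; the underlying computation is identical.
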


\begin{proof}
Denote for simplicity $s=s_j$, $u=u_j$, $\bar{s}=s(\alpha_j)$, $\bar{u}=u(\alpha_j)$, $\ell=\ell_{i-1}$, $\ell'=\ell'_{i-1}$, $e=e_{i-1}$, $f=f_{i-1}$. 
The following identities are derived from the definitions of $\ga_{i-1}$, $\pi_i$, $\Phi_i$ and the B\'ezout identity $\ell h+\ell'e=1$.
\begin{align*}
(\Phi_{i-1})^{\bar{s}}(\pi_{i-1})^{\bar{u}}&\ (\gamma_{i-1})^{\ell \bar{u}-\ell'\bar{s}}= 
(\Phi_{i-1})^{\bar{s}+e(\ell \bar{u}-\ell'\bar{s})}(\pi_{i-1})^{\bar{u}-h(\ell \bar{u}-\ell'\bar{s})}\\=&\ 
(\Phi_{i-1})^{\ell(h\bar{s}+e\bar{u})}(\pi_{i-1})^{\ell'(h\bar{s}+e\bar{u})}= 
\pi_i^{h\bar{s}+e\bar{u}}=\pi_i^{u-sV_i}=(\Phi_i/\phi_i)^s\pi_i^u.
\end{align*}
In the last but one equality we used the identity $u-sV_i=\bar{u}e+\bar{s}h$, which is derived from $u/e(\mu_{i-1})-sw_i=\alpha_j=\bar{u}/e(\mu_{i-2})+\bar{s}\la_{i-1}$ by multiplying by $e(\mu_{i-1})$.
\end{proof}

\begin{corollary}\label{nonzero}Let $1\le i\le r$ and consider $g\in \ppa(\mu_i)$, $g\ne0$.
\begin{enumerate}
\item $R_{i,\alpha}(g)(y_i)=0$ if and only if $R_{i,\alpha}(g)=0$ if and only if $g\in\ppa^+(\mu_i)$. 
\item The $j$-th coefficient of $R_{i,\alpha}(g)$ is non-zero if and only if $\mu_{i-1}(a_{s_j})=\alpha_j$, or equivalently, the point $Q_{s_j}$ lies on $L_\alpha$ (see Figure \ref{figAlpha}).
\item $\deg R_{i,\alpha}(g)=\lfloor s'(g)/e_i\rfloor$ and $\ord_y(R_{i,\alpha}(g))=\lfloor s(g)/e_i\rfloor$.
\item $\deg R_i(g)=(s'(g)-s(g))/e_i$ and $R_i(g)(0)\ne0$.
\end{enumerate}
\end{corollary}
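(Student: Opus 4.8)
The plan is to deduce all four statements from Theorem \ref{Hmug}, applied not only to $\mu=\mu_r$ but to each truncation $\mu_0\to\cdots\to\mu_i$ of the chain; this way no fresh induction on $i$ is required, because the relevant recursion is already performed inside the proof of that theorem. First I would dispose of (1): if $g\in\ppa^+(\mu_i)$ then $R_{i,\alpha}(g)=0$ by Lemma \ref{Rvanishes}, hence also $R_{i,\alpha}(g)(y_i)=0$; conversely, if $g\notin\ppa^+(\mu_i)$, that is $\mu_i(g)=\alpha$, then Theorem \ref{Hmug} at level $i$ gives $H_{\mu_i}(g)=\varphi_i(\alpha)\,R_{i,\alpha}(g)(y_i)$, where $H_{\mu_i}(g)\ne0$ since $g\ne0$ and $\varphi_i(\alpha)=x_i^{s(\alpha)}p_i^{u(\alpha)}$ is a nonzero element of $\gg(\mu_i)$ ($p_i$ being a unit and $x_i=H_{\mu_i}(\Phi_i)\ne0$); as $\gg(\mu_i)$ is a domain this forces $R_{i,\alpha}(g)(y_i)\ne0$, and a fortiori $R_{i,\alpha}(g)\ne0$. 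This yields the three equivalences.

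The heart of the argument is (2). By Definition \ref{Rialpha} the $j$-th coefficient of $R_{i,\alpha}(g)$ equals $\ep_{i-1}(\alpha_j)\,R_{i-1,\alpha_j}(a_{s_j})(z_{i-1})$, and $\ep_{i-1}(\alpha_j)\in\F_i^*$ is a unit of the integral domain $\F_i[y]$, so this coefficient vanishes exactly when $R_{i-1,\alpha_j}(a_{s_j})(z_{i-1})$ does. From $g\in\ppa(\mu_i)$ and $\mu_i(a_{s_j}\phi_i^{s_j})=\mu_{i-1}(a_{s_j})+s_j(w_i+\la_i)\ge\alpha$, together with $\alpha_j=\alpha-s_j(w_i+\la_i)$, one reads $\mu_{i-1}(a_{s_j})\ge\alpha_j$, so $a_{s_j}\in\pset_{\alpha_j}(\mu_{i-1})$. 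If $Q_{s_j}$ lies strictly above $L_\alpha$, i.e.\ $\mu_{i-1}(a_{s_j})>\alpha_j$ (or $a_{s_j}=0$), then $R_{i-1,\alpha_j}(a_{s_j})=0$ by Lemma \ref{Rvanishes} and the coefficient is zero. If $\mu_{i-1}(a_{s_j})=\alpha_j$, I would apply the canonical homomorphism $\gg(\mu_{i-1})\to\gg(\mu_i)$ to the identity $H_{\mu_{i-1}}(a_{s_j})=\varphi_{i-1}(\alpha_j)\,R_{i-1,\alpha_j}(a_{s_j})(y_{i-1})$ provided by Theorem \ref{Hmug} at level $i-1$: by the stability relations (\ref{stability}) the values of $\Phi_{i-1},\pi_{i-1},\gamma_{i-1}$ are unchanged, so $y_{i-1}$ is sent to $z_{i-1}$, while $\deg a_{s_j}<\deg\phi_i$ gives $\mu_{i-1}(a_{s_j})=\mu_i(a_{s_j})$ by Proposition \ref{extension}; the outcome is precisely the computation that leads to equation (\ref{ne0}) in the proof of Theorem \ref{Hmug}, with $\mu_i$ playing the role of $\mu_r$, whose left-hand side $H_{\mu_i}(a_{s_j})$ is nonzero --- hence $R_{i-1,\alpha_j}(a_{s_j})(z_{i-1})\ne0$ and the coefficient does not vanish. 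This establishes (2). This non-vanishing at $z_{i-1}$ is the step I expect to be the genuine obstacle: the obvious alternative, bounding $\deg R_{i-1,\alpha_j}(a_{s_j})$ strictly below $\deg\psi_{i-1}$, would require the identity $\deg\psi_{i-1}=f_{i-1}$, which at this stage is not yet available (it is Corollary \ref{degpsi}); routing the argument through Theorem \ref{Hmug} as above sidesteps this potential circularity.

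Granting (2), parts (3) and (4) are bookkeeping with the Newton polygon picture of section \ref{subsecR}. Assume $\alpha=\mu_i(g)$, the only case in which $R_{i,\alpha}(g)\ne0$. By (\ref{onL}) and the discussion preceding Lemma \ref{additivity}, the set $I=\{s:\mu_i(a_s\phi_i^s)=\alpha\}$ of abscissas of the lattice points of $N_i(g)$ lying on $L_\alpha$ --- equivalently on $S_{\la_i}(g)=N_i(g)\cap L_\alpha$ --- satisfies $I\subseteq\{s_j:j\ge0\}$, $\min I=s(g)$ and $\max I=s'(g)$; writing $s_j=s(\alpha)+je_i$ with $0\le s(\alpha)<e_i$ this forces $s(g)$ and $s'(g)$ to be congruent to $s(\alpha)$ modulo $e_i$ and gives $\min\{j:s_j\in I\}=\lfloor s(g)/e_i\rfloor$, $\max\{j:s_j\in I\}=\lfloor s'(g)/e_i\rfloor$. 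By (2) the set $\{j:s_j\in I\}$ is exactly the set of $j$ with nonzero $y^j$-coefficient in $R_{i,\alpha}(g)$, whence $\ord_y(R_{i,\alpha}(g))=\lfloor s(g)/e_i\rfloor$ and $\deg R_{i,\alpha}(g)=\lfloor s'(g)/e_i\rfloor$, which is (3). Finally (4) follows from Definition \ref{Ri}: $R_i(g)=R_{i,\alpha}(g)/y^{j_0}$ with $j_0=\lfloor s(g)/e_i\rfloor=\ord_y(R_{i,\alpha}(g))$, so $R_i(g)(0)\ne0$, and $\deg R_i(g)=\lfloor s'(g)/e_i\rfloor-\lfloor s(g)/e_i\rfloor=(s'(g)-s(g))/e_i$ by the congruence just noted.
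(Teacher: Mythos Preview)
Your proof is correct and follows essentially the same approach as the paper: both deduce item (1) from Lemma \ref{Rvanishes} and Theorem \ref{Hmug}, item (2) from Lemma \ref{Rvanishes} together with equation (\ref{ne0}) in the proof of Theorem \ref{Hmug}, and items (3)--(4) as bookkeeping consequences of (2). Your version is simply more explicit where the paper is terse --- in particular, your unpacking of why (2) yields the exact values $\lfloor s(g)/e_i\rfloor$ and $\lfloor s'(g)/e_i\rfloor$ via the congruence $s(g)\equiv s'(g)\equiv s(\alpha)\pmod{e_i}$, and your remark about avoiding circularity with Corollary \ref{degpsi}, are details the paper leaves implicit.
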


\begin{proof}
By Lemma \ref{Rvanishes}, $R_{i,\alpha}(g)=0$ if $g\in\ppa^+(\mu_i)$.
If $\mu_i(g)=\alpha$, Theorem \ref{Hmug} shows that $R_{i,\alpha}(g)(y_i)\ne0$ as an element in $\gg(\mu_i)$; thus, $R_{i,\alpha}(g)\ne0$. This proves item 1.

By Lemma \ref{Rvanishes}, the $j$-th coefficient of $R_{i,\alpha}(g)$ vanishes if $\mu_{i-1}(a_{s_j})>\alpha_j$. On the other hand,  $R_{i-1,\alpha_j}(a_{s_j})(z_{i-1})\ne0$ if $\mu_{i-1}(a_{s_j})=\alpha_j$, by equation (\ref{ne0}). This proves item 2.
Items 3 and 4 are a consequence of item 2.  
\end{proof}

\begin{corollary}\label{uniqueness}
For non-zero $g,h\in K[x]$, the following conditions are equivalent: 
\begin{enumerate}
\item $g\smu h$.
\item $\mu(g)=\mu(h)$ and $R_{r,\alpha}(g)=R_{r,\alpha}(h)$ for $\alpha=\mu(g)$.
\item $S_{\la_r}(g)=S_{\la_r}(h)$ and $R_{r}(g)=R_{r}(h)$.
\end{enumerate}
\end{corollary}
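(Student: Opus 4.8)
The plan is to prove $(1)\Rightarrow(2)\Rightarrow(1)$ together with the equivalence $(2)\Leftrightarrow(3)$, using Theorem \ref{Hmug} as the workhorse. In particular, $(2)\Rightarrow(1)$ requires no work beyond that theorem: since $\hm(g)=\varphi_r(\alpha)R_{r,\alpha}(g)(y_r)$ for $\alpha=\mu(g)$, the hypotheses $\mu(g)=\mu(h)$ and $R_{r,\alpha}(g)=R_{r,\alpha}(h)$ immediately give $\hm(g)=\hm(h)$, that is, $g\smu h$.

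For $(1)\Rightarrow(2)$ I would proceed through additivity. If $g\smu h$ with $g,h\ne0$, then $\mu(g)=\mu(h)=:\alpha$ and $g-h\in\ppa^+(\mu)$ (the case $g=h$ being trivial), so it suffices to know that $R_{r,\alpha}$ is additive on $\ppa(\mu)$: then $R_{r,\alpha}(g)-R_{r,\alpha}(h)=R_{r,\alpha}(g-h)=0$ by Lemma \ref{Rvanishes}. This additivity of $R_{i,\alpha}$ on $\ppa(\mu_i)$ I would establish by induction on $i$ — the base case $i=0$ is the visible additivity of $g\mapsto\overline{g(y)/\pi^\alpha}$, and the inductive step reduces, via Definition \ref{Rialpha}, to additivity of $\phi_i$-expansions together with the remark (from (\ref{onL})) that each coefficient $a_{s_j}$ of an element of $\ppa(\mu_i)$ lies in $\pset_{\alpha_j}(\mu_{i-1})$, where the inductive hypothesis applies. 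I expect this additivity to be the only real obstacle. The naive alternative — cancel the nonzero homogeneous element $\varphi_r(\alpha)$ from $\varphi_r(\alpha)R_{r,\alpha}(g)(y_r)=\varphi_r(\alpha)R_{r,\alpha}(h)(y_r)$ in the domain $\ggm$ and conclude directly — breaks down, because passing from $R_{r,\alpha}(g)(y_r)=R_{r,\alpha}(h)(y_r)$ to $R_{r,\alpha}(g)=R_{r,\alpha}(h)$ would require the evaluation map $\F_r[y]\to\Delta$, $P\mapsto P(y_r)$ to be injective, i.e. $y_r$ transcendental over $\F_r$, which is not yet at our disposal; the additivity route sidesteps it.

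It remains to prove $(2)\Leftrightarrow(3)$, which is bookkeeping on top of Corollary \ref{nonzero} and Lemma \ref{muprimaN}, using the defining relation $R_{r,\alpha}(g)=y^{j_0}R_r(g)$ with $j_0=\lfloor s(g)/e_r\rfloor$ and $R_r(g)(0)\ne0$; thus $\ord_y R_{r,\alpha}(g)=j_0$ and $\deg R_{r,\alpha}(g)=j_0+\deg R_r(g)$, where $\deg R_r(g)=(s'(g)-s(g))/e_r$. For $(2)\Rightarrow(3)$ I would read off from $\mu(g)=\mu(h)=\alpha$ and $R_{r,\alpha}(g)=R_{r,\alpha}(h)$ that the $y$-orders agree, hence $j_0(g)=j_0(h)$ and $R_r(g)=R_r(h)$; since $S_{\la_r}(g)$ sits on the line of slope $-\la_r$ through $(0,\alpha)$ (Lemma \ref{muprimaN}), the abscissas $s(g)=s(\alpha)+j_0e_r$ and $s'(g)=s(g)+e_r\deg R_r(g)$ of its endpoints — and hence the whole segment — are forced to equal those of $h$. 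For $(3)\Rightarrow(2)$: any point $(x_0,y_0)\in S_{\la_r}(g)=S_{\la_r}(h)$ gives $\mu(g)=y_0+\la_r x_0=\mu(h)=:\alpha$ (Lemma \ref{muprimaN}), equal left endpoints give $j_0(g)=j_0(h)$, and therefore $R_{r,\alpha}(g)=y^{j_0}R_r(g)=y^{j_0}R_r(h)=R_{r,\alpha}(h)$.
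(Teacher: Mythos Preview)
Your proof is correct. The paper's own argument is a one-line appeal to Theorem~\ref{Hmug} and Corollary~\ref{nonzero}; your write-up is essentially the careful unpacking of what that appeal requires, and your $(2)\Leftrightarrow(3)$ bookkeeping via Corollary~\ref{nonzero} matches the paper's.

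Your observation about the naive cancellation route is well taken and worth recording: deducing $R_{r,\alpha}(g)=R_{r,\alpha}(h)$ from equality after evaluation at $y_r$ would need transcendence of $y_r$ over $\F_r$, which is only established later (Theorem~\ref{Delta}). Your direct inductive proof of additivity from Definition~\ref{Rialpha} sidesteps this cleanly. The paper does state additivity separately (Corollary~\ref{Ralgebra}, just after this corollary), but its proof there again passes through evaluation at $y_i$ via Theorem~\ref{Hmug}; so your argument---induction on $i$ using only the recursive definition, linearity of $\phi_i$-expansions, and the fact from (\ref{onL}) that each $a_{s_j}$ lies in $\pset_{\alpha_j}(\mu_{i-1})$---is in fact the most logically self-contained justification available at this point in the development.
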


\begin{proof}
Conditions (1) and (2) are equivalent by Theorem \ref{Hmug} and  Corollary \ref{nonzero}. 
Conditions (2) and (3) are equivalent by Corollary \ref{nonzero}. 
\end{proof}

\begin{corollary}\label{RR}
For any non-zero $g\in K[x]$, let $\alpha=\mu(g)$. Then, 
$$\rr(g)=y_r^{\lceil s(\alpha)/e_r\rceil}R_{r,\alpha}(g)(y_r)\Delta=y_r^{\lceil s(g)/e_r\rceil}R_{r}(g)(y_r)\Delta.
$$
For $r=0$ we agree that $s(g)=0$.
\end{corollary}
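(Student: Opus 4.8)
The plan is to unwind the definition $\rr(g)=\Delta\cap\hm(g)\ggm$ using the explicit form of $\hm(g)$ supplied by Theorem \ref{Hmug}, and then reduce the statement to a short bookkeeping computation with the homogeneous elements $x_r$, $p_r=H_{\mu_r}(\pi_r)$ and $y_r$ attached to the chain.

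First I would note that $\hm(g)$ is homogeneous of degree $\alpha=\mu(g)$, so $\hm(g)\ggm$ is a graded ideal of $\ggm$ whose degree-zero component is precisely $\Delta\cap\hm(g)\ggm=\hm(g)\cdot\ggm_{-\alpha}$; here $\ggm_{-\alpha}$ is the piece of $\ggm$ of degree $-\alpha$, which makes sense since $\Gamma(\mu)$ is a group. By Theorem \ref{Hmug} (together with the fact, implicit in the proof of Proposition \ref{frf}, that every element of $\Gamma(\mu)$ is the value of a polynomial), each such piece is a free $\Delta$-module of rank one with basis $\varphi_r$ of the appropriate degree; in particular $\ggm_{-\alpha}=\varphi_r(-\alpha)\Delta$, and therefore $\rr(g)=\hm(g)\,\varphi_r(-\alpha)\,\Delta$. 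Substituting the two forms $\hm(g)=\varphi_r(\alpha)R_{r,\alpha}(g)(y_r)=\varphi_r(g)R_r(g)(y_r)$ from Theorem \ref{Hmug}, and recalling that $R_{r,\alpha}(g)(y_r),R_r(g)(y_r)\in\Delta$, the corollary reduces to the two identities
$$\varphi_r(\alpha)\,\varphi_r(-\alpha)=y_r^{\lceil s(\alpha)/e_r\rceil},\qquad\varphi_r(g)\,\varphi_r(-\alpha)=y_r^{\lceil s(g)/e_r\rceil}$$
in $\ggm$.

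To prove these I would expand $\varphi_r(\alpha)=x_r^{s(\alpha)}p_r^{u(\alpha)}$, $\varphi_r(g)=x_r^{s(g)}p_r^{u(g)}$, use $x_r^{e_r}=y_r\,p_r^{h_r}$ (from $y_r=x_r^{e_r}p_r^{-h_r}$), and recall that $p_r$ is a unit in $\ggm$ (Lemma \ref{values}). The key elementary point is that, since $\gcd(h_r,e_r)=1$, one has $s(-\alpha)\equiv-s(\alpha)\pmod{e_r}$, whence $s(\alpha)+s(-\alpha)=e_r\lceil s(\alpha)/e_r\rceil$ (this is $0$ if $s(\alpha)=0$ and $e_r$ if $0<s(\alpha)<e_r$); and similarly $s(g)\equiv s(\alpha)\pmod{e_r}$ with $s(g)\ge0$ gives $s(g)+s(-\alpha)=e_r\lceil s(g)/e_r\rceil$. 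Hence $x_r^{s(\alpha)+s(-\alpha)}=(x_r^{e_r})^{\lceil s(\alpha)/e_r\rceil}=y_r^{\lceil s(\alpha)/e_r\rceil}p_r^{h_r\lceil s(\alpha)/e_r\rceil}$, so $\varphi_r(\alpha)\varphi_r(-\alpha)$ equals $y_r^{\lceil s(\alpha)/e_r\rceil}$ times a power of the unit $p_r$; comparing degrees (both sides have degree $0$ by Lemma \ref{varphi} and Lemma \ref{values}, while $\deg p_r\ne0$) forces that power to be zero, giving the first identity, and the second follows in the same way with $s(g),u(g)$ replacing $s(\alpha),u(\alpha)$.

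Finally, since $R_{r,\alpha}(g)=y^{j_0}R_r(g)$ with $j_0=\lfloor s(g)/e_r\rfloor$ and $s(\alpha)=s(g)-j_0e_r$, one has $\lceil s(\alpha)/e_r\rceil+j_0=\lceil s(g)/e_r\rceil$, so the two claimed generators $y_r^{\lceil s(\alpha)/e_r\rceil}R_{r,\alpha}(g)(y_r)$ and $y_r^{\lceil s(g)/e_r\rceil}R_r(g)(y_r)$ are literally the same element of $\Delta$; and for $r=0$ the argument degenerates (with $s(g)=s(\alpha)=0$, $\varphi_0(\alpha)=H_{\mu_0}(\pi)^\alpha$ a unit) to $\rr(g)=R_0(g)(y_0)\Delta_0$. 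The only delicate part is the exponent computation of the preceding paragraph — computing $s(-\alpha)$ and $u(-\alpha)$, handling the ceilings, and checking that every power of the unit $p_r$ cancels; the edge case $s(\alpha)=0$ versus $s(\alpha)>0$, and its compatibility with the $r=0$ convention, is where a slip would most easily occur. Everything else is a mechanical consequence of Theorem \ref{Hmug}.
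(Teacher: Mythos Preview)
Your proof is correct and uses the same ingredients as the paper's proof---Theorem \ref{Hmug}, the explicit values $s(-\alpha)=e_r-s(\alpha)$ and $u(-\alpha)=-h_r-u(\alpha)$ (when $s(\alpha)>0$), and the relation $x_r^{e_r}=y_r p_r^{h_r}$. The organization differs slightly: the paper argues the two containments $\rr(g)\subset y_r^{\lceil s(\alpha)/e_r\rceil}R_{r,\alpha}(g)(y_r)\Delta$ and $\supset$ separately by running over all $h$ with $\mu(h)=-\alpha$ and then exhibiting one specific product, whereas you invoke the free rank-one structure $\ggm_{-\alpha}=\varphi_r(-\alpha)\Delta$ from Theorem \ref{Hmug} to get $\rr(g)=\hm(g)\varphi_r(-\alpha)\Delta$ in one stroke and then compute the generator via a degree comparison. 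Your route is a bit cleaner and avoids the explicit case split for the two containments, but the underlying computation (the arithmetic of $s(\pm\alpha)$ and the cancellation of the $p_r$ power) is identical.
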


\begin{proof}
By Theorem \ref{Hmug}, $\hm(g)=x_r^{s(\alpha)}p_r^{u(\alpha)}R_{r,\alpha}(g)(y_r)$. If $s(\alpha)=0$, then since $p_r$ is a unit, we have $\rr(g)=\hm(g)\ggm\cap\Delta=R_{r,\alpha}(g)(y_r)\Delta$. If $s(\alpha)>0$, then $\lceil s(\alpha)/e_r\rceil=1$ and equation (\ref{salpha}) shows that
$$s(-\alpha)=e_r-s(\alpha), \qquad u(-\alpha)=-h_r-u(\alpha).$$ 
A polynomial $h\in K[x]$ satisfies $\hm(gh)\in\Delta$ if and only if $\mu(h)=-\alpha$; in this case,  $\hm(h)=x_r^{e_r-s(\alpha)}p_r^{-h_r-u(\alpha)}R_{r,-\alpha}(h)(y_r)$, by Theorem \ref{Hmug}. Hence, 
$$\rr(g)= \left\{y_rR_{r,\alpha}(g)(y_r)R_{r,-\alpha}(h)(y_r)\mid \mu(h)=-\alpha\right\}\subset y_rR_{r,\alpha}(g)(y_r)\Delta.
$$
On the other hand, $\hm(g)x_r^{e_r-s(\alpha)}p_r^{-h_r}=y_rR_{r,\alpha}(g)(y_r)$, so that
$y_rR_{r,\alpha}(g)(y_r)$ belongs to $\rr(g)$, and $\rr(g)=y_rR_{r,\alpha}(g)(y_r)\Delta$.   

Finally, $y_r^{\lceil s(\alpha)/e_r\rceil}R_{r,\alpha}(g)(y_r)=y_r^{\lceil s(g)/e_r\rceil}R_{r}(g)(y_r)$, by (3) of Corollary \ref{nonzero}. 
\end{proof}

\begin{corollary}\label{Ralgebra}
Let $0\le i\le r$ and $\alpha\in\Gamma_i$. 
\begin{enumerate}
\item $R_{i,\alpha}(g+h)=R_{i,\alpha}(g)+R_{i,\alpha}(h)$ for all $g,h\in\ppa(\mu_i)$.
\item If $\beta\in \Gamma_{i-1}$, then $R_{i,\alpha+\beta}(gh)=R_{i,\alpha}(g)R_{i,\beta}(h)$ for all $g\in\ppa(\mu_i)$, $h\in\pset_\beta(\mu_i)$.
\item $R_i(gh)=R_i(g)R_i(h)$ for all $g,h\in K[x]$.
\end{enumerate}
\end{corollary}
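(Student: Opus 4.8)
The plan is to establish (1) first, by induction on $i$, and then deduce (2) and (3) from Theorem \ref{Hmug}; throughout we use that $\mu_i$ is itself an inductive valuation carrying the truncated MacLane chain $\mu_0\to\cdots\to\mu_i$, so that the level-$r$ results apply verbatim at level $i$. We may assume all polynomials involved are nonzero, the degenerate cases being trivial from the convention $R_i(0)=0$.

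For (1): the case $i=0$ is just additivity of reduction modulo $\m$ on $\ppa(\mu_0)$. For $i>0$, write the $\phi_i$-expansions $g=\sum a_s\phi_i^s$, $h=\sum b_s\phi_i^s$; since $\deg(a_s+b_s)<\deg\phi_i$, the $\phi_i$-expansion of $g+h$ is $\sum(a_s+b_s)\phi_i^s$. For each relevant index $s_j$ the coefficients $a_{s_j},b_{s_j}$ lie in $\pset_{\alpha_j}(\mu_{i-1})$ by (\ref{onL}), because $g,h\in\ppa(\mu_i)$; hence the inductive hypothesis gives $R_{i-1,\alpha_j}(a_{s_j}+b_{s_j})=R_{i-1,\alpha_j}(a_{s_j})+R_{i-1,\alpha_j}(b_{s_j})$, and assembling the contributions prescribed by Definition \ref{Rialpha} yields (1). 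Note the useful by-product: combining (1) with part (1) of Corollary \ref{nonzero}, if $g,h\in\ppa(\mu_i)$ and $R_{i,\alpha}(g)(y_i)=R_{i,\alpha}(h)(y_i)$ then $R_{i,\alpha}(g-h)(y_i)=0$, hence $R_{i,\alpha}(g-h)=0$ and $R_{i,\alpha}(g)=R_{i,\alpha}(h)$ as polynomials.

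For (2): first reduce to $\mu_i(g)=\alpha$ and $\mu_i(h)=\beta$. If, say, $\mu_i(g)>\alpha$, then $\mu_i(gh)>\alpha+\beta$ and both sides vanish by Lemma \ref{Rvanishes}; so assume the values are exactly $\alpha,\beta$, whence $\mu_i(gh)=\alpha+\beta$ since $\gg(\mu_i)$ is a domain. Applying Theorem \ref{Hmug} to $g$, $h$ and $gh$ and using $H_{\mu_i}(gh)=H_{\mu_i}(g)H_{\mu_i}(h)$ from (\ref{Hmu}) gives $\varphi_i(\alpha+\beta)\,R_{i,\alpha+\beta}(gh)(y_i)=\varphi_i(\alpha)\varphi_i(\beta)\,R_{i,\alpha}(g)(y_i)\,R_{i,\beta}(h)(y_i)$. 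Now $\beta\in\Gamma_{i-1}$ forces $s(\beta)=0$ (since $\gcd(h_i,e_i)=1$ in (\ref{salpha})), whence, comparing the defining relations, $s(\alpha+\beta)=s(\alpha)$, $u(\alpha+\beta)=u(\alpha)+u(\beta)$, so $\varphi_i(\alpha+\beta)=\varphi_i(\alpha)\varphi_i(\beta)$; cancelling this nonzero homogeneous element in the domain $\gg(\mu_i)$ leaves $R_{i,\alpha+\beta}(gh)(y_i)=\bigl(R_{i,\alpha}(g)R_{i,\beta}(h)\bigr)(y_i)$. It remains to upgrade this to an equality of polynomials. I would do so by showing that the right-hand product is itself a value $R_{i,\alpha+\beta}(g')$ of the operator — a realizability statement proved by a parallel induction: descending to level $i-1$ and using $\F_i=\F_{i-1}[z_{i-1}]$ with $[\F_i:\F_{i-1}]=\deg\psi_{i-1}=f_{i-1}$, one sees every polynomial of $\F_i[y]$ with the appropriate $y$-adic valuation range occurs as a residual polynomial — and then conclude by the injectivity noted at the end of the previous paragraph. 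Part (3) is entirely parallel: apply $H_{\mu_i}(g)=\varphi_i(g)R_i(g)(y_i)$ (the second form of Theorem \ref{Hmug}) to $g$, $h$, $gh$, cancel the nonzero factor $\varphi_i(g)\varphi_i(h)=\varphi_i(gh)$ provided by Lemma \ref{varphi}, and obtain $R_i(gh)(y_i)=\bigl(R_i(g)R_i(h)\bigr)(y_i)$; here $R_i(gh)$ and $R_i(g)R_i(h)$ have the same degree and nonzero constant term by Corollaries \ref{sumSla} and \ref{nonzero}, and the upgrade is concluded as before.

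The routine part is the bookkeeping in (1) and the cancellations in (2)--(3). The genuine obstacle is the final passage from an identity of images in $\Delta_i$ to an identity of polynomials in $\F_i[y]$. It rests on two facts: that residual polynomials are determined by their value at $y_i$ (from (1) together with Corollary \ref{nonzero}(1)), and that the relevant products of residual polynomials are again residual polynomials. The cleanest way to handle --- and in fact to bypass --- the second point is to prove separately, by induction along the MacLane chain, that $y_i$ is transcendental over $\F_i$, equivalently that the $\F_i$-algebra map $\F_i[y]\to\Delta_i$, $y\mapsto y_i$, is injective: the base case $\Delta_0=\F_0[y_0]$ is clear, and the augmentation step transports transcendence through the construction of $y_{i+1}$ from the $\mu_i$-minimal key polynomial $\phi_{i+1}$. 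With that lemma in hand, (2) and (3) follow immediately from the cancelled identities above.
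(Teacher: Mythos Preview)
Your proof is correct, and in one respect more careful than the paper's.  For item~(1) you argue by direct induction on $i$ using the additivity of $\phi_i$-expansions, whereas the paper treats all three items uniformly via Theorem~\ref{Hmug}: from $H_{\mu_i}$-identities it writes down the three displayed equalities in $\gg(\mu_i)$ (with $R_{i,\alpha}(g)$ standing for $R_{i,\alpha}(g)(y_i)$), cancels the $\varphi_i$-factors, and declares the items proved.  Your inductive argument for (1) is cleaner and avoids any question of how to pass from an equality in $\Delta_i$ to one in $\F_i[y]$.

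For (2) and (3) your derivation of the evaluated identity is exactly the paper's; you then isolate the step the paper leaves implicit---the upgrade from an identity at $y_i$ to one in $\F_i[y]$---and correctly observe that it hinges on the transcendence of $y_i$ over $\F_i$.  Your realizability route would work but is roundabout; the transcendence route you single out as ``cleanest'' is indeed the right one, and it can be extracted from Theorem~\ref{Hmug} alone, without any induction on $i$ and without appealing to Corollary~\ref{Ralgebra} (hence with no circularity): for $\alpha=0$ one has $\varphi_i(0)=1$, so Theorem~\ref{Hmug} gives $H_{\mu_i}(g)=R_{i,0}(g)(y_i)\in\F_i[y_i]$ for every $g$ with $\mu_i(g)=0$, whence $\Delta_i=\F_i[y_i]$; but $y_i=x_i^{e_i}p_i^{-h_i}$ is a nonunit in the domain $\gg(\mu_i)$ (Lemma~\ref{associate} and Lemma~\ref{phi} make $x_i$ prime), hence a nonunit in $\Delta_i$, so $\Delta_i$ is not a field and $y_i$ must be transcendental.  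With that in hand, the cancellation of $\varphi_i(\alpha)\varphi_i(\beta)=\varphi_i(\alpha+\beta)$ (respectively $\varphi_i(g)\varphi_i(h)=\varphi_i(gh)$, Lemma~\ref{varphi}) finishes (2) (respectively (3)) exactly as in the paper.
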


\begin{proof}
For $i=0$ the identities are easy to check. If $i>0$,  
equation (\ref{Hmu}), Theorem \ref{Hmug}, and item 1 of Corollary \ref{nonzero} show that:
$$
\as{1.2}
\begin{array}{ll}
\varphi_i(\alpha)R_{i,\alpha}(g+h)=\varphi_i(\alpha)R_{i,\alpha}(g)+\varphi_i(\alpha)R_{i,\alpha}(h),&\ \mbox{ for }g,h\in\ppa(\mu_i).\\
\varphi_i(\alpha+\beta)R_{i,\alpha+\beta}(gh)=\varphi_i(\alpha)R_{i,\alpha}(g)\varphi_i(\beta)R_{i,\beta}(h),&\ \mbox{ for }g\in\ppa(\mu_i), \ h\in\pset_\beta(\mu_i).\\
\varphi_i(gh)R_i(gh)=\varphi_i(g)R_i(g)\varphi_i(h)R_i(h),&\ \mbox{ for }g,h\in K[x].
\end{array}
$$ 
The first equality proves item 1. The second equality proves item 2 because $s(\beta)=0$, and this leads to $s(\alpha+\beta)=s(\alpha)$, $u(\alpha+\beta)=u(\alpha)+u(\beta)$. 
By Lemma \ref{varphi}, the third equality proves item 3.
\end{proof}

By Corollary \ref{nonzero}, $R_{r,\alpha}$ induces an injective mapping $\rb_{r,\alpha}\colon \ppa(\mu)/\ppa^+(\mu)\lra \F_r[y]$.
 
\begin{theorem}\label{Delta}
The mapping $\rb_{r,0}\colon \Delta\lra \F_r[y]$ is an isomorphism of $\F_r$-algebras and   $\;(\rb_{r,0})^{-1}\colon \F_r[y]\lra\Delta$ is the $\F_r$-map determined by $y\mapsto y_r$. In particular, the element $y_r\in\Delta$ is transcendental over $\F_r$ and $\Delta=\F_r[y_r]$.
\end{theorem}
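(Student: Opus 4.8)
The plan is to prove that $\rb_{r,0}$ is the inverse of the $\F_r$-algebra homomorphism $\operatorname{ev}\colon\F_r[y]\to\Delta$ determined by $y\mapsto y_r$. First I would record the formal properties. By Lemma \ref{Rvanishes}, $R_{r,0}$ vanishes on $\pset_0^+$, so $\rb_{r,0}$ is well defined on $\Delta=\pset_0/\pset_0^+$, and it is injective by (1) of Corollary \ref{nonzero}. It is a ring homomorphism: additivity is (1) of Corollary \ref{Ralgebra}, and multiplicativity is (2) of the same corollary taken with $\alpha=\beta=0$ (legitimate, since $0\in\Gamma_{r-1}$); also $\rb_{r,0}(1)=R_{r,0}(1)=1$ by (\ref{LC}).

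Second, I would check $\F_r$-linearity, i.e.\ that $\rb_{r,0}$ carries the subfield $\F_r\subset\Delta$ identically onto the constants of $\F_r[y]$. By Proposition \ref{sameideal}, any nonzero $\zeta\in\F_r$ (the case $\zeta=0$ being trivial) is the image in $\Delta_r$ of $H_{\mu_{r-1}}(a)$ for some $a\in K[x]$ with $\deg a<\deg\phi_r$ and $\mu_{r-1}(a)=0$. Since the $\phi_r$-expansion of $a$ reduces to $a$ itself, only the term $j=0$ of Definition \ref{Rialpha} survives and, as $\ep_{r-1}(0)=1$, one gets $R_{r,0}(a)=R_{r-1,0}(a)(z_{r-1})$, a constant of $\F_r$. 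On the other hand Theorem \ref{Hmug} applied to $\mu_{r-1}$ (note $\varphi_{r-1}(0)=1$) gives $H_{\mu_{r-1}}(a)=R_{r-1,0}(a)(y_{r-1})$, whose image under $\Delta_{r-1}\to\Delta_r$ is $R_{r-1,0}(a)(z_{r-1})=\zeta$. Hence $\rb_{r,0}(\zeta)=\zeta$, and $\rb_{r,0}$ is a homomorphism of $\F_r$-algebras.

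Third, I would invoke Theorem \ref{Hmug} at level $r$: since $\varphi_r(0)=1$ (because $s(0)=u(0)=0$ in (\ref{salpha})), every $g\in\pset_0$ with $\mu(g)=0$ satisfies $\hm(g)=R_{r,0}(g)(y_r)=\operatorname{ev}(\rb_{r,0}(\hm(g)))$; therefore $\operatorname{ev}\circ\rb_{r,0}=\operatorname{id}_\Delta$, and $\operatorname{ev}$ is surjective. The crux is that $\operatorname{ev}$ is injective. Its kernel is a prime ideal of the PID $\F_r[y]$; were it nonzero, $\Delta$ would be a finite field extension of $\F_r$, hence finite over $\F$, and $\rb_{r,0}$ would embed it as a subring of $\F_r[y]$ that is a field finite over $\F$ and contains $\F_r$ (by the previous step). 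As $\F_r$ is algebraically closed in $\F_r[y]$, this would force $\Delta=\F_r$; but then $y_r$ would be a unit of $\ggm$, hence so would $x_r^{e_r}=y_r\,p_r^{h_r}$, hence so would $x_r$ (as $\ggm$ is a domain), hence so would $H_\mu(\phi_r)$, which is associate to $x_r$ by Lemma \ref{associate} --- contradicting the $\mu$-minimality of the key polynomial $\phi_r$ through Lemma \ref{units}. (For $r=0$ one argues directly, $\Delta(\mu_0)$ being a polynomial ring over $\F$, not a field.) Thus $\operatorname{ev}$ is an isomorphism of $\F_r$-algebras, and $\operatorname{ev}\circ\rb_{r,0}=\operatorname{id}_\Delta$ gives $\rb_{r,0}=\operatorname{ev}^{-1}$.

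This yields the statement: $\rb_{r,0}$ is an isomorphism of $\F_r$-algebras whose inverse is the $\F_r$-map $y\mapsto y_r$, and the resulting isomorphism $\F_r[y]\to\Delta$ exhibits $y_r$ as transcendental over $\F_r$ with $\Delta=\F_r[y_r]$. The one substantive point is the injectivity of $\operatorname{ev}$ (equivalently, the transcendence of $y_r$); everything else is bookkeeping built on Theorem \ref{Hmug}. If one prefers to avoid the field-theoretic step, an alternative is to produce a polynomial $g=a\,\phi_r^{e_r}$ with $\deg a<\deg\phi_r$, $\mu(g)=0$ and $H_\mu(g)=y_r$; then only the $j=1$ coefficient of $R_{r,0}(g)$ is nonzero, so $R_{r,0}(g)=c\,y$ for some $c\in\F_r^{*}$, and the $\F_r$-algebra endomorphism $\rb_{r,0}\circ\operatorname{ev}$ of $\F_r[y]$, which fixes the image of $\rb_{r,0}$ and hence $c\,y$, fixes $y$; so $\rb_{r,0}\circ\operatorname{ev}=\operatorname{id}_{\F_r[y]}$ as well.
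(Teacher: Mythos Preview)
Your proof is correct and shares the paper's overall scaffolding: establish that $\rb_{r,0}$ is an injective ring homomorphism (Corollaries \ref{nonzero}, \ref{Ralgebra}), check $\F_r$-linearity, and use Theorem \ref{Hmug} with $\varphi_r(0)=1$ to identify the inverse as evaluation at $y_r$. The difference lies in the crux. The paper proves surjectivity of $\rb_{r,0}$ directly: by Corollary \ref{prescribed} it produces $g=a\phi_r^{e_r}$ with $N_r(g)=\{(e_r,-e_r\la_r)\}$, so that $R_{r,0}(g)=\epsilon y$ for some $\epsilon\in\F_r^*$, and then the $\F_r$-subalgebra image of $\rb_{r,0}$ contains $y$. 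Your main argument instead shows $\operatorname{ev}$ is injective by contradiction: a nontrivial kernel would make $\Delta$ a finite field extension of $\F_r$, whence $\rb_{r,0}(\Delta)\subset\F_r$ and $y_r\in\F_r^*$, forcing $x_r$ and $H_\mu(\phi_r)$ to be units, contradicting that $\phi_r$ is a key polynomial. Your ``alternative'' at the end is precisely the paper's argument. Both routes are short; the paper's is more constructive (and feeds directly into Corollaries \ref{preconstruct}--\ref{construct}), while yours exploits structural properties already at hand.

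Two small remarks. First, your citation of Proposition \ref{sameideal} for the representation of $\zeta\in\F_r^*$ as the image of $H_{\mu_{r-1}}(a)$ with $\deg a<\deg\phi_r$ is not quite the right hook; the paper invokes Proposition \ref{extension} and Lemma \ref{units} for this (an element of $\F_r^*$ is the image of some $H_{\mu_{r-1}}(b)$ with $\phi_r\nmid_{\mu_{r-1}}b$, hence $H_\mu(b)=H_\mu(a)$ for some $a$ of small degree). Second, in your alternative you cannot in general arrange $H_\mu(g)=y_r$ on the nose for $g=a\phi_r^{e_r}$ with $\deg a<\deg\phi_r$; one only gets $H_\mu(g)=c\,y_r$ with $c\in\F_r^*$ (equivalently $R_{r,0}(g)=c\,y$), exactly as the paper states. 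Your subsequent reasoning uses only that $c\,y$ lies in the image of $\rb_{r,0}$, so the slip is harmless.
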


\begin{proof}
By Corollaries \ref{nonzero} and \ref{Ralgebra},  $\rb_{r,0}$ is an injective ring homomorphism. Let us check that its restriction to $\F_r\subset\Delta$ is the identity.

For $r=0$, an element $\xi\in\F_0^*$ is of the form $\xi=H_{\mu_0}(a)$ for some $a\in\oo^*$. By definition, $\rb_{0,0}(\xi)=R_{0,0}(a)=\overline{a}=H_{\mu_0}(a)=\xi$, modulo the identification $\F=\F_0$.

For $r>0$, Proposition \ref{extension} and Lemma \ref{units} show that an element $\xi\in\F_r^*$ is of the form $\xi=H_{\mu}(a)$ for some $a\in K[x]$ such that $\deg a<\deg\phi_r$ and $\mu_{r-1}(a)=\mu(a)=0$. The Newton polygon $N_r(a)$ is the single point $(0,0)$ and $R_{r,0}(a)\in\F_r^*$ is a degree zero polynomial. By Theorem \ref{Hmug}, $\xi=H_{\mu}(a)=R_{r,0}(a)=\rb_{r,0}(\xi)$.

By Corollary \ref{prescribed}, there exists $a\in K[x]$ such that $g=a\phi_r^{e_r}$ has Newton polygon $N_r(g)=\{(e_r,-e_r\la_r)\}$. Hence, $\mu(g)=0$ and $R_{r,0}(g)=\epsilon y$ for some $\epsilon\in\F_r^*$ (by Corollary \ref{nonzero}). Therefore,  $\rb_{r,0}$ is an onto map. 

The statement about $(\rb_{r,0})^{-1}$ is a consequence of Theorem \ref{Hmug} and $\varphi_r(0)=1$. 
\end{proof}

\begin{corollary}\label{degpsi}
For all $0\le i<r$, $\F_{i+1}=\F_{i}[z_{i}]=\F_0[z_0,\dots,z_{i}]$ and $\deg\psi_{i}=f_{i}$.
\hfill{$\Box$}
\end{corollary}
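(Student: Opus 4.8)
The plan is to derive the whole statement from Theorem \ref{Delta}, applied not only to $\mu$ itself but to each truncation of its MacLane chain. First I would observe that for every $0\le i\le r$ the valuation $\mu_i$ is inductive and carries the length-$i$ MacLane chain $\mu_0\to\cdots\to\mu_i$; Theorem \ref{Delta} then gives $\Delta_i=\F_i[y_i]$ with $y_i$ transcendental over $\F_i$, that is, $\Delta_i$ is a polynomial ring in one variable over $\F_i$.

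Next, fixing $0\le i<r$, I would look at the transition map $\Delta_i\to\Delta_{i+1}$. Since $\mu_{i+1}=[\mu_i;(\phi_{i+1},\la_{i+1})]$, Proposition \ref{extension} gives $\mu_i\le\mu_{i+1}$ and hence a canonical homomorphism of graded $\F$-algebras $\gg(\mu_i)\to\gg(\mu_{i+1})$, whose degree-zero part is precisely $\Delta_i\to\Delta_{i+1}$; by definition its image is $\F_{i+1}$, which by item 2 of Proposition \ref{sameideal} is a field, canonically isomorphic to $\F_{\phi_{i+1}}$. Under the identifications (\ref{Fchain}) this map restricts on $\F_i\subset\Delta_i$ to the inclusion $\F_i\subset\F_{i+1}$, so it is an $\F_i$-algebra epimorphism, and by the definition of $z_i$ it sends $y_i\mapsto z_i$. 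Applying it to $\Delta_i=\F_i[y_i]$ yields $\F_{i+1}=\F_i[z_i]$, and iterating over $i$ gives $\F_{i+1}=\F_i[z_i]=\F_{i-1}[z_{i-1},z_i]=\cdots=\F_0[z_0,\dots,z_i]$.

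For the degree of $\psi_i$ I would argue that, $\F_i$ and $\F_{i+1}$ being finite extensions of $\F$ (Proposition \ref{sameideal}), the element $z_i$ is algebraic over $\F_i$ with $\F_i[z_i]=\F_{i+1}$, so its minimal polynomial $\psi_i$ satisfies $\deg\psi_i=[\F_i[z_i]:\F_i]=[\F_{i+1}:\F_i]=f_i$. In the special case $i=0$ with $z_0=0$ this reduces to $\psi_0=y$ and $f_0=1$, which is consistent, since then $\overline{\phi}_1=\overline{x}$ forces $\deg\phi_1=m_1=e_0f_0=1$.

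There is no real obstacle once Theorem \ref{Delta} is in hand; the only point that deserves an explicit line is the verification that the transition morphism $\Delta_i\to\Delta_{i+1}$ is $\F_i$-linear and carries $y_i$ to $z_i$, and this is immediate from how $z_i$ and the identifications in (\ref{Fchain}) are defined.
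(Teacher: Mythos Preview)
Your proof is correct and follows exactly the approach the paper intends: the corollary is stated with only a $\Box$, signalling that it is an immediate consequence of Theorem \ref{Delta} applied to each truncated chain $\mu_0\to\cdots\to\mu_i$. You have simply spelled out the details---that $\Delta_i=\F_i[y_i]$, that the transition map $\Delta_i\to\Delta_{i+1}$ is $\F_i$-linear with image $\F_{i+1}$ and sends $y_i\mapsto z_i$, and that $\deg\psi_i=[\F_{i+1}:\F_i]=f_i$---all of which is straightforward from the definitions.
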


By Proposition \ref{frf} and Theorem \ref{Delta}, we get an isomorphism $\kappa(\mu)\simeq \F_r(y)$. In particular,  $\kappa(\mu)^{alg}\simeq \F_r$ and the next result follows.

\begin{corollary}\label{MLDelta}
For an inductive valuation $\mu$, the field $\kappa(\mu)^{alg}$ is a finite extension of $\F$ and $\kappa(\mu)\simeq \kappa(\mu)^{alg}(y)$, where $y$ is an indeterminate.\hfill{$\Box$}
\end{corollary}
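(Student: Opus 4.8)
The plan is to read everything off the explicit description of $\Delta(\mu)$ already obtained. First I would combine Theorem \ref{Delta}, which gives an $\F_r$-algebra isomorphism $\Delta(\mu)\simeq\F_r[y_r]$ with $y_r$ transcendental over $\F_r$, with Proposition \ref{frf}, which identifies $\kappa(\mu)$ with the field of fractions of $\Delta(\mu)$. Passing to fraction fields in the first isomorphism therefore yields an isomorphism of $\F_r$-algebras $\kappa(\mu)\simeq\F_r(y)$, where $y$ is an indeterminate.

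Next I would check that $\F_r$ is a finite extension of $\F$. This is immediate from the discussion in section \ref{subsecNum} (or from Corollary \ref{degpsi}): one has $[\F_r\colon\F]=f_0\cdots f_{r-1}$, a product of the residual degrees $[\F_{i+1}\colon\F_i]$, each of which is finite since, by Proposition \ref{sameideal}, $\F_i$ is the residue class field $\F_{\phi_i}$ of the finite extension $K_{\phi_i}/K_v$. Consequently, under the identification $\kappa(\mu)\simeq\F_r(y)$, every element of the subfield $\F_r$ is algebraic over $\F$, so $\F_r\subseteq\kappa(\mu)^{\op{alg}}$.

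For the reverse inclusion I would invoke the elementary fact that any field $L$ is algebraically closed in the rational function field $L(y)$: if $\xi=p(y)/q(y)\in L(y)$ is written in lowest terms with $n:=\max(\deg p,\deg q)\ge 1$, then $p(T)-\xi\,q(T)$ is (up to a scalar) the minimal polynomial of $y$ over $L(\xi)$, so $[L(y)\colon L(\xi)]=n<\infty$; were $\xi$ algebraic over $L$, the tower $L\subseteq L(\xi)\subseteq L(y)$ would make $L(y)/L$ finite, contradicting the transcendence of $y$. Applying this with $L=\F_r$: an element of $\kappa(\mu)\simeq\F_r(y)$ algebraic over $\F$ is a fortiori algebraic over $\F_r$, hence lies in $\F_r$. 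Thus $\kappa(\mu)^{\op{alg}}\simeq\F_r$, which is finite over $\F$, and the isomorphism $\kappa(\mu)\simeq\F_r(y)$ becomes $\kappa(\mu)\simeq\kappa(\mu)^{\op{alg}}(y)$, as claimed. The only step requiring genuine argument is this last standard lemma on rational function fields; the rest is bookkeeping with Theorem \ref{Delta}, Proposition \ref{frf}, and the finiteness of the $f_i$.
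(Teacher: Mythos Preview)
Your proof is correct and follows the same route as the paper: combine Theorem \ref{Delta} and Proposition \ref{frf} to get $\kappa(\mu)\simeq\F_r(y)$, then identify $\kappa(\mu)^{\op{alg}}$ with $\F_r$. The paper simply records this in one line, leaving implicit both the finiteness of $\F_r/\F$ and the standard fact that $\F_r$ is algebraically closed in $\F_r(y)$, which you have spelled out in full.
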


\begin{corollary}\label{preconstruct}
The mapping $\rb_{r,\alpha}\colon \ppa(\mu)/\ppa^+(\mu)\lra \F_r[y]$ is bijective.
\end{corollary}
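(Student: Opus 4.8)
The plan is to combine Theorem \ref{Hmug} with Theorem \ref{Delta}, since injectivity of $\rb_{r,\alpha}$ has already been recorded (right before Theorem \ref{Delta}, as a consequence of Corollary \ref{nonzero}). The idea is that Theorem \ref{Hmug} identifies the degree-$\alpha$ piece $\ppa(\mu)/\ppa^+(\mu)$ with the free rank-one $\Delta$-module $\varphi_r(\alpha)\Delta$, and Theorem \ref{Delta} identifies $\Delta$ with $\F_r[y]$ via the inverse of $\rb_{r,0}$, the $\F_r$-algebra map $y\mapsto y_r$. Composing these identifications should exhibit $\rb_{r,\alpha}$ as a bijection.

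First I would note that every nonzero element of $\ppa(\mu)/\ppa^+(\mu)$ is of the form $\hm(g)$ for some $g\in K[x]$ with $\mu(g)=\alpha$: the degree-$\alpha$ piece of $\ggm$ is $\pset_\alpha/\pset_\alpha^+$ by definition, and a coset $g+\pset_\alpha^+$ with $\mu(g)\ge\alpha$ is zero when $\mu(g)>\alpha$ and equals $\hm(g)$ when $\mu(g)=\alpha$. Then, for surjectivity, given $Q\in\F_r[y]$ I would consider the element $\varphi_r(\alpha)\,Q(y_r)\in\ppa(\mu)/\ppa^+(\mu)$ (it lies in this module by Theorem \ref{Hmug}). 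If $Q\ne 0$ it is nonzero, because multiplication by $\varphi_r(\alpha)$ is injective on $\Delta$ and $y_r$ is transcendental over $\F_r$ (Theorem \ref{Delta}); hence $\varphi_r(\alpha)\,Q(y_r)=\hm(g)$ for some $g$ with $\mu(g)=\alpha$. Applying Theorem \ref{Hmug} again gives $\hm(g)=\varphi_r(\alpha)\,R_{r,\alpha}(g)(y_r)$; cancelling $\varphi_r(\alpha)$ and then applying the isomorphism $\rb_{r,0}$ (equivalently, using that $y_r$ is transcendental over $\F_r$) yields $R_{r,\alpha}(g)=Q$, i.e. $\rb_{r,\alpha}$ sends the class of $g$ to $Q$. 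The case $Q=0$ is covered by $\rb_{r,\alpha}(0)=0$. Together with the injectivity from Corollary \ref{nonzero}, this gives bijectivity.

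The step requiring the most care is the cancellation of $\varphi_r(\alpha)$: since $\varphi_r(\alpha)$ is homogeneous of degree $\alpha$, which need not be $0$, it is in general \emph{not} a unit of $\ggm$, so one cannot literally divide by it. Instead the argument must invoke that multiplication by $\varphi_r(\alpha)$ is an isomorphism of $\Delta$-modules $\Delta\to\ppa(\mu)/\ppa^+(\mu)$ — which is precisely the last sentence of Theorem \ref{Hmug} — together with the fact that $\ggm$ is a domain, so this multiplication is injective. Beyond this one point, the proof is pure bookkeeping, and I would expect no further obstacle; one could even phrase it crisply by saying that $\rb_{r,\alpha}$ coincides with the inverse of the composite bijection $\F_r[y]\xrightarrow{\,\rb_{r,0}^{-1}\,}\Delta\xrightarrow{\,\cdot\,\varphi_r(\alpha)\,}\ppa(\mu)/\ppa^+(\mu)$, which is a bijection by Theorems \ref{Delta} and \ref{Hmug}.
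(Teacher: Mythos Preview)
Your proof is correct and follows essentially the same route as the paper's own argument: injectivity from Corollary~\ref{nonzero}, and surjectivity by realizing $\varphi_r(\alpha)\psi(y_r)$ as $\hm(g)$ for some $g$ with $\mu(g)=\alpha$, then applying Theorem~\ref{Hmug} and Theorem~\ref{Delta} to recover $R_{r,\alpha}(g)=\psi$. Your extra care about the cancellation of $\varphi_r(\alpha)$ (via injectivity of multiplication in the domain $\ggm$, or equivalently the rank-one statement in Theorem~\ref{Hmug}) makes explicit a step the paper leaves implicit, but the argument is the same.
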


\begin{proof}
By Corollary \ref{nonzero}, $\rb_{r,\alpha}$ is injective. Let us show that it is onto. For any  non-zero polynomial $\psi\in\F_r[y]$, the element $\psi(y_r)\in\Delta$ is non-zero by Theorem \ref{Delta}; hence, $\varphi_r(\alpha)\psi(y_r)=\hm(g)$ for some $g\in K[x]$ with $\mu(g)=\alpha$. By Theorem \ref{Hmug}, $R_{r,\alpha}(g)(y_r)=\psi(y_r)$, and this implies $R_{r,\alpha}(g)=\psi$, by Theorem \ref{Delta}.  
\end{proof}

\begin{corollary}\label{construct}
Let $\psi\in\F_r[y]$ be a monic polynomial of degre $f$ such that $\psi(0)\ne0$. Then, for any $\alpha\in\Gamma(\mu)$ there exists $g\in K[x]$ monic such that $\deg g=e_rf m_r$, $\mu(g)=e_rf(w_r+\la_r)$ and $R_r(g)=\psi$.
\end{corollary}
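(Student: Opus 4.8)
If $r=0$ the statement is trivial: one lifts $\psi$ to a monic $g\in\oo[x]$ of degree $f$ with $g(0)$ a unit, so that $\mu_0(g)=0$ and $R_0(g)=\overline g=\psi$. Assume from now on that $r\ge1$ and write $\psi=y^f+\sum_{j=0}^{f-1}c_jy^j$ with $c_0=\psi(0)\ne0$. The plan is to build $g$ directly from its $\phi_r$-expansion, reversing the recursion of Definition \ref{Ri}, and to reduce the choice of its coefficients to Corollary \ref{preconstruct} one level down. Write $\alpha$ for the prescribed value $e_rf(w_r+\la_r)$. Since $e(\mu_r)(w_r+\la_r)=e_rV_r+h_r$ and $\gcd(h_r,e_r)=1$ (Lemma \ref{groups}), equation (\ref{salpha}) forces $s(\alpha)=0$; hence, along the line $L_\alpha$ of slope $-\la_r$, the relevant abscissas are $s_j=je_r$ and the associated values are $\alpha_j=\alpha-s_j(w_r+\la_r)=(f-j)e_r(w_r+\la_r)\in\Gamma_{r-1}$.

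I look for $g$ of the form
$$g=\phi_r^{e_rf}+\sum_{j=0}^{f-1}a_j\,\phi_r^{je_r},\qquad \deg a_j<m_r,\quad \mu_{r-1}(a_j)\ge\alpha_j .$$
Any such $g$ is monic of degree $e_rfm_r$; since $\phi_r$ is a key polynomial for $\mu$ (Lemma \ref{phi}), Lemma \ref{minimal0} gives $\mu(g)=\mu(\phi_r^{e_rf})=\alpha$ (and $g$ is $\mu$-minimal, by Lemma \ref{minimal}). Moreover, if also $\mu_{r-1}(a_0)=\alpha$, then the term of abscissa $0$ attains the value $\alpha$, so $s(g)=0$, the shift $j_0$ of Definition \ref{Ri} is $0$, and Definitions \ref{Rialpha} and \ref{Ri} give
$$R_r(g)=R_{r,\alpha}(g)=y^f+\sum_{j=0}^{f-1}\ep_{r-1}(\alpha_j)\,R_{r-1,\alpha_j}(a_j)(z_{r-1})\,y^j ,$$
the $y^f$-coefficient being $\ep_{r-1}(0)\,R_{r-1,0}(1)(z_{r-1})=1$ by (\ref{LC}) and the terms of index $j>f$ vanishing because the corresponding coefficient of the $\phi_r$-expansion is $0$.

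It remains to pick the $a_j$, of degree $<m_r$, so that the $y^j$-coefficient above equals $c_j$ for $0\le j<f$. When $c_j=0$ take $a_j=0$. When $c_j\ne0$ (in particular for $j=0$): since $\ep_{r-1}(\alpha_j)\in\F_r^*$ (Definition \ref{ep}) and $\F_r=\F_{r-1}[z_{r-1}]$ with $[\F_r:\F_{r-1}]=f_{r-1}$ (Corollary \ref{degpsi}), write $c_j\,\ep_{r-1}(\alpha_j)^{-1}=p_j(z_{r-1})$ for the unique nonzero $p_j\in\F_{r-1}[y]$ of degree $<f_{r-1}$. Applying Corollary \ref{preconstruct} to the inductive valuation $\mu_{r-1}$ and the value $\alpha_j\in\Gamma_{r-1}$, there is $\tilde a_j\in K[x]$ with $\mu_{r-1}(\tilde a_j)=\alpha_j$ and $R_{r-1,\alpha_j}(\tilde a_j)=p_j$, hence $R_{r-1,\alpha_j}(\tilde a_j)(z_{r-1})=c_j\,\ep_{r-1}(\alpha_j)^{-1}$, as wanted.

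The one genuine obstacle is the degree bound $\deg a_j<m_r$, which Corollary \ref{preconstruct} does not supply. I expect to remove it by truncating $\tilde a_j$ to its $\la_{r-1}$-component: let $a_j$ be the sum of those terms $b_t\phi_{r-1}^t$ in the $\phi_{r-1}$-expansion of $\tilde a_j$ for which $\mu_{r-1}(b_t\phi_{r-1}^t)=\alpha_j$. By (\ref{Hmu}) one gets $a_j\sim_{\mu_{r-1}}\tilde a_j$, so $\mu_{r-1}(a_j)=\alpha_j$ and, by Theorem \ref{Hmug} together with the transcendence of $y_{r-1}$ over $\F_{r-1}$ (Theorem \ref{Delta}), $R_{r-1,\alpha_j}(a_j)=p_j$; in particular the $z_{r-1}$-value is unchanged. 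Finally, $\lfloor s'(\tilde a_j)/e_{r-1}\rfloor=\deg p_j<f_{r-1}$ by Corollary \ref{nonzero}, so the largest abscissa appearing in $a_j$ is $<e_{r-1}f_{r-1}$, whence $\deg a_j<e_{r-1}f_{r-1}m_{r-1}=m_r$. Substituting these $a_j$ into the displayed $g$ produces the required polynomial.
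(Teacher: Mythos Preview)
Your proof is correct but takes a genuinely different route from the paper's. The paper stays entirely at level $r$: it invokes Corollary~\ref{preconstruct} once, at level $r$, to produce $g_0$ with $\mu(g_0)=\alpha$ and $R_{r,\alpha}(g_0)=\psi-y^f$; since $\deg R_{r,\alpha}(g_0)<f$ forces $s'(g_0)<e_rf$ (Corollary~\ref{nonzero}), truncating the $\phi_r$-expansion of $g_0$ at abscissa $e_rf$ removes only terms of $\mu$-value $>\alpha$ and hence preserves $R_{r,\alpha}$. Then $g=\phi_r^{e_rf}+g_0$ is monic of the right degree, and $R_{r,\alpha}(g)=R_{r,\alpha}(\phi_r^{e_rf})+R_{r,\alpha}(g_0)=y^f+(\psi-y^f)=\psi$ by additivity and (\ref{LC}); finally $\psi(0)\ne0$ gives $R_{r}(g)=R_{r,\alpha}(g)$. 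This is three lines and treats all $r\ge0$ uniformly.

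You instead descend one level: you build each coefficient $a_j$ of the $\phi_r$-expansion separately, by invoking Corollary~\ref{preconstruct} at level $r-1$ to realise the prescribed value $p_j\in\F_{r-1}[y]$ and then truncating along the $\la_{r-1}$-component to enforce $\deg a_j<m_r$. This is longer and needs $r=0$ handled separately, but it is more explicit---it unwinds the recursion in Definition~\ref{Rialpha} and shows concretely how the coefficients of $\psi$ are lifted, which is closer in spirit to an algorithm. A minor point: for the step $a_j\sim_{\mu_{r-1}}\tilde a_j\Rightarrow R_{r-1,\alpha_j}(a_j)=p_j$ you can cite Corollary~\ref{uniqueness}(2) directly instead of going through Theorems~\ref{Hmug} and~\ref{Delta}.
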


\begin{proof}
Denote $\alpha:=e_rf(w_r+\la_r)$. By Corollary  \ref{preconstruct}, there exists $g_0\in K[x]$ with $\mu(g_0)=\alpha$ and $R_{r,\alpha}(g_0)=\psi-y^f$. By dropping all terms with abscissa $s\ge e_rf$ from the $\phi_r$-expansion of $g_0$, we may assume that $\deg g_0<e_rf m_r$. Then, $g=\phi_r^{e_rf}+g_0$ satisfies what we want. In fact, $\deg (g)$, $\mu(g)$ are the right ones, and 
$R_{r,\alpha}(g)=R_{r,\alpha}(\phi_r^{e_rf})+R_{r,\alpha}(g_0)=\psi$, by the first item of Corollary \ref{Ralgebra} and equation (\ref{LC}).  Since $R_{r,\alpha}(g)(0)=\psi(0)\ne0$, we have $R_{r,\alpha}(g)=R_{r}(g)$.
\end{proof}

Corollary \ref{construct} is crucial for the computational applications of inductive valuations. It yields a routine for the construction of key polynomials with prescribed residual ideal.

\begin{theorem}\label{structure}
We get an isomorphism of graded $\F_r$-algebras
$$
\ggm=\bigoplus\nolimits_{\alpha\in\Gamma(\mu)}\varphi_r(\alpha)\Delta\simeq \F_r[y,p,p^{-1}][x],
$$
where $y,p$ are indeterminates and $x$ is an algebraic element satisfying $x^{e_r}=p^{h_r}y$. As elements in the graded algebra, these elements are homogeneous of degree $\deg y=0$, $\deg p=1/e(\mu_{r-1})$, $\deg x=\la_r$ .
\end{theorem}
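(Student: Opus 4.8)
The plan is to write down an explicit graded $\F_r$-algebra homomorphism from the abstract ring
$A:=\F_r[y,p,p^{-1}][x]/(x^{e_r}-p^{h_r}y)$ onto $\ggm$ and to check that it is bijective piece by piece. First I would define $\Psi$ on generators by $y\mapsto y_r$, $p\mapsto p_r$, $x\mapsto x_r$. This is well defined: $p_r$ is a unit in $\ggm=\gg(\mu_r)$ (see section \ref{subsecRat}), so $p^{-1}\mapsto p_r^{-1}$ is forced and legitimate, and the defining relation is respected because $x_r^{e_r}p_r^{-h_r}=y_r$ by the very definition of $y_r$. By Lemma \ref{values}, the elements $y_r$, $p_r$, $x_r$ are homogeneous of degrees $0$, $\mu_r(\pi_r)=1/e(\mu_{r-1})$ and $\mu_r(\Phi_r)=\la_r$; since $e_r\la_r=h_r/e(\mu_{r-1})$, the element $x^{e_r}-p^{h_r}y$ is homogeneous, so $A$ carries a $\Gamma(\mu)$-grading with $\deg y=0$, $\deg p=1/e(\mu_{r-1})$, $\deg x=\la_r$, and $\Psi$ is a morphism of graded $\F_r$-algebras.

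Next I would identify the homogeneous components on both sides. On the source, $A$ is a free $\F_r[y,p,p^{-1}]$-module with basis $1,x,\dots,x^{e_r-1}$, and the monomial $p^nx^jy^k$ has degree $(ne_r+jh_r)/e(\mu_r)$; since $\gcd(h_r,e_r)=1$ by Lemma \ref{groups}, for each $\alpha=m/e(\mu_r)\in\Gamma(\mu)$ there is exactly one $j\in\{0,\dots,e_r-1\}$ with $jh_r\equiv m\pmod{e_r}$, namely $j=s(\alpha)$, and then necessarily $n=u(\alpha)$ by (\ref{salpha}). Hence the degree-$\alpha$ part of $A$ is the free rank-one $\F_r[y]$-module $x^{s(\alpha)}p^{u(\alpha)}\F_r[y]$. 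On the target, Theorem \ref{Hmug} gives that the degree-$\alpha$ part of $\ggm$ equals $\varphi_r(\alpha)\Delta$, free of rank one over $\Delta$, with $\varphi_r(\alpha)=x_r^{s(\alpha)}p_r^{u(\alpha)}$ by Lemma \ref{varphi}; and Theorem \ref{Delta} says $\psi(y)\mapsto\psi(y_r)$ is an isomorphism $\F_r[y]\iso\Delta$.

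Combining these, $\Psi$ sends $x^{s(\alpha)}p^{u(\alpha)}\psi(y)$ to $\varphi_r(\alpha)\psi(y_r)$, so in degree $\alpha$ it factors as the isomorphism $\F_r[y]\iso\Delta$ followed by the $\Delta$-module isomorphism $\Delta\iso\varphi_r(\alpha)\Delta$, and is therefore bijective. Since this holds for every $\alpha\in\Gamma(\mu)$, the graded morphism $\Psi$ is an isomorphism, and the asserted degrees of $y,p,x$ are exactly the ones just recorded. I expect the only delicate point to be the bookkeeping that turns the abstract grading on $A$ into the stated one and matches the distinguished monomial $x^{s(\alpha)}p^{u(\alpha)}$ in each component with $\varphi_r(\alpha)$ coming from the normalization (\ref{salpha}); once this dictionary is in place, bijectivity drops out of Theorems \ref{Hmug} and \ref{Delta} with no further computation.
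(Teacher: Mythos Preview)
Your proposal is correct and uses essentially the same approach as the paper: the same map $y\mapsto y_r$, $p\mapsto p_r$, $x\mapsto x_r$, and the same two inputs, Theorem \ref{Hmug} (each graded piece of $\ggm$ is $\varphi_r(\alpha)\Delta$) and Theorem \ref{Delta} ($\Delta\simeq\F_r[y]$). The only difference is organizational: the paper establishes injectivity by verifying that $y_r,p_r$ are algebraically independent over $\F_r$ and that $x^{e_r}=p^{h_r}y$ is the minimal relation for $x_r$ (the latter from $\gcd(h_r,e_r)=1$), whereas you compare the graded pieces of source and target directly and conclude bijectivity degree by degree; both arguments encode the same content.
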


\begin{proof}
By sending $y\mapsto y_r$,  $p\mapsto p_r$,  $x\mapsto x_r$, we get an onto $\F_r$-homomorphism  of graded algebras: $\F_r[y,p,p^{-1}][x]\twoheadrightarrow \ggm$. In order to show that it is an isomorphism we need only to check that $y_r,p_r$ are algebraically independent over $\F_r$, and the algebraic equation of $x_r$ over $\F_r[y,p,p^{-1}]$ has minimal degree.

Let us prove that the family $\Sigma:=\{y_r^m p_r^n\mid m\in\Z_{\ge0},\ n\in\Z\}$ is linearly independent over $\F_r$. We may group these elements by its degree:
$$
\Sigma=\bigcup\nolimits_{\alpha\in\Gamma(\mu)}\Sigma_\alpha,\qquad
\Sigma_\alpha=\{y_r^m p_r^{e(\mu_{r-1})\alpha}\mid m\in\Z_{\ge0}\}.
$$  
The families $\Sigma_\alpha$ are all $\F_r$-linearly independent because $y_r$ is transcendental over $\F_r$. Therefore, $\Sigma$ is also $\F_r$-linearly independent because a linear combination of its elements vanishes if and only if each homogeneous component vanishes.

The minimality of the equation $x_r^{e_r}=p_r^{h_r}y_r$ is a consequence of $\gcd(h_r,e_r)=1$.  
\end{proof}

\begin{corollary}\label{primes}
Let $\psi\in \F_r[y]$ such that $\psi(0)\ne0$. Then, $\psi(y_r)\in\Delta$ is a prime element in $\ggm$ if and only if $\psi$ is irreducible in $\F_r[y]$.  
\end{corollary}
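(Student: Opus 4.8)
The plan is to combine the structure theorem (Theorem~\ref{structure}) with the identification $\Delta=\F_r[y_r]$ (Theorem~\ref{Delta}) and then reduce primality of $\psi(y_r)$ to an elementary statement about Laurent polynomial rings. First I would record that, since $p$ is a unit in $\ggm\cong\F_r[y,p,p^{-1}][x]$ and $y=p^{-h_r}x^{e_r}$, one may eliminate $y$ and identify $\ggm$ with $\F_r[x,p,p^{-1}]$, the localization of the polynomial ring $\F_r[x,p]$ at $p$; in particular $\ggm^*=\F_r^*\cdot\{p^n:n\in\Z\}$, and the only units of $\ggm$ lying in the subring $\Delta=\F_r[y_r]$ are the constants $\F_r^*$, since $y_r$ is transcendental over $\F_r$.

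For the ``only if'' implication I would argue by contraposition: if $\psi$ is reducible in $\F_r[y]$ then either $\psi\in\F_r^*$, so that $\psi(y_r)$ is a unit of $\ggm$, or $\psi=\psi_1\psi_2$ with $\deg\psi_1,\deg\psi_2\ge1$, so that $\psi(y_r)=\psi_1(y_r)\psi_2(y_r)$ is a product of two non-units of $\ggm$. In either case $\psi(y_r)$ is not a prime element.

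For the ``if'' implication, assume $\psi$ irreducible and compute $\ggm/(\psi(y_r))$ directly from the presentation of $\ggm$. Reducing modulo $\psi(y)$ gives $\ggm/(\psi(y_r))\cong L[p,p^{-1}][x]/(x^{e_r}-\theta p^{h_r})$, where $L:=\F_r[y]/(\psi)$ is a field (because $\psi$ is irreducible) and $\theta$ is the image of $y$; crucially $\theta\ne0$ exactly because $\psi(0)\ne0$. In this ring $\overline{x}$ is a unit, as its $e_r$-th power is, so the quotient equals $L[x^{\pm1},p^{\pm1}]/(x^{e_r}-\theta p^{h_r})$. Now I would invoke $\gcd(e_r,h_r)=1$: choosing $a,b\in\Z$ with $ae_r+bh_r=1$, the monomials $\sigma:=x^{e_r}p^{-h_r}$ and $\tau:=x^bp^a$ have exponent matrix in $\operatorname{GL}_2(\Z)$, hence form a coordinate system for the two-variable Laurent ring, and after multiplying the defining relation by the unit $p^{h_r}$ it becomes $\sigma=\theta$. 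Thus $\ggm/(\psi(y_r))\cong L[\tau^{\pm1}]$, a domain; since $\psi(y_r)$ is neither zero nor a unit of $\ggm$, it is a prime element.

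Everything here is formal once the structure theorem is in hand; the one point that deserves attention is the hypothesis $\psi(0)\ne0$. It is genuinely necessary — for instance $\psi=y$ is irreducible, but $\psi(y_r)=x^{e_r}p^{-h_r}$ is a prime power, not a prime, when $e_r>1$ — and it enters precisely in guaranteeing $\theta\ne0$ (equivalently $x\nmid\psi(y_r)$ in $\ggm$). If one prefers to avoid the observation that $\overline{x}$ becomes a unit in the quotient, the same conclusion can be reached by localizing the UFD $\ggm$ at its prime element $x$: since $x\nmid\psi(y_r)$, one shows $\ggm[x^{-1}]/(\psi(y_r))\cong L[\tau^{\pm1}]$ as above, and then transfers primality back to $\ggm$ using that a prime factorization of an element not divisible by $x$ stays a product of primes after inverting $x$.
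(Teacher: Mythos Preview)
Your argument is correct and follows the same overall strategy as the paper: for the ``if'' direction, both compute the quotient $\ggm/\psi(y_r)\ggm\cong L[p,p^{-1}][x]$ with $x^{e_r}=p^{h_r}\theta$ (where $L=\F_r[y]/(\psi)$ and $\theta\ne0$) and conclude it is a domain. The paper simply asserts this last fact, while you supply a proof via the $\operatorname{GL}_2(\Z)$ change of coordinates in the Laurent ring---a welcome clarification. For the ``only if'' direction the paper argues that a homogeneous prime of degree zero in the graded algebra restricts to a prime of $\Delta$, whereas you factor $\psi$ directly and check the factors are non-units in $\ggm$; both are short and valid.
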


\begin{proof}
If $\psi(y_r)$ is a prime element in $\ggm$, then it is a prime element in $\Delta$
and Theorem \ref{Delta} shows that $\psi$ is irreducible.

Conversely, if $\psi$ is irreducible, consider $\F'=\F_r[y]/(\psi)$ and denote by $z\in\F'$ the class of $y$. By Theorem \ref{structure}, $\ggm/\psi(y_r)\ggm\simeq \F'[p,p^{-1}][x]$, where $p$ is an indeterminate and $x$ satisfies $x^{e_r}=p^{h_r}z$. Since $\psi(0)\ne0$, we have $z\ne0$ and $\F'[p,p^{-1}][x]$ is an integral domain. Hence,  $\psi(y_r)\ggm$ is a prime ideal.
\end{proof}


\section{Canonical decomposition of the set of key polynomials}\label{secKP} 
Let $\mu$ be an inductive valuation and denote $\Delta=\Delta(\mu)$. In this section we want to study the fibers of the mapping: 
$$\rr\colon \kpm\lra \mx(\Delta),\qquad \phi\mapsto \rr(\phi)=\op{Ker}(\Delta\to \F_\phi).
$$
That is, we want to describe the partition:
$$
\kpm=\bigcup\nolimits_{\ll\in\mx(\Delta)}\kpm_\ll,\qquad \kpm_\ll:=\left\{\phi\in\kpm\mid \rr(\phi)=\ll\right\}.
$$
It is hard to analyze these subsets from a purely abstract perspective. Thus, we suppose that $\mu$ is equipped with a fixed MacLane chain of length $r$.
We shall freely use all data and operators of the MacLane chain described in section \ref{secInductive}. 

Also, for a non-zero $g\in K[x]$ we denote by $s(g)=s_\mu(g)\le s'(g)=s'_\mu(g)$ the abscissas of the end points of the $\la_r$-component of $g$ (cf. Definition \ref{sla}).

\subsection{Further properties of key polynomials}
Let us first obtain criterions for $\mu$-irreduci\-bility and for being a key polynomial, in terms of $\phi_r$-expansions.

\begin{lemma}\label{muirred}
A polynomial $g\in K[x]$ is $\mu$-irreducible if and only if either:
\begin{itemize}
\item $H_{\mu}(g)$ and $H_{\mu}(\phi_r)$ are associate elements in $\ggm$, or 
\item $s(g)=0$ and $R_r(g)$ is irreducible in $\F_r[y]$.
\end{itemize}
The first condition is equivalent to $s(g)=s'(g)=1$.
\end{lemma}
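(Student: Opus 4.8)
The plan is to reduce the statement to a factorization question for the homogeneous element $H_\mu(g)$ of the graded ring $\ggm$, whose structure is described completely in Theorem \ref{structure}. Assume $g\ne0$ (otherwise $H_\mu(g)=0$ and neither alternative holds). By Theorem \ref{Hmug} and Lemma \ref{varphi}, $H_\mu(g)=\varphi_r(g)\,R_r(g)(y_r)$ with $\varphi_r(g)=x_r^{s(g)}p_r^{u(g)}$; since $p_r\in\ggm^{*}$ and, by Lemma \ref{associate}, $x_r$ is associate to $H_\mu(\phi_r)$, the element $H_\mu(g)$ is associate in $\ggm$ to $H_\mu(\phi_r)^{s(g)}\,R_r(g)(y_r)$. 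Writing the factorization $R_r(g)=c\prod_k q_k^{m_k}$ into distinct monic irreducibles of $\F_r[y]$, with $c\in\F_r^{*}$, Corollary \ref{nonzero} gives $R_r(g)(0)\ne0$ (so $q_k(0)\ne0$ for all $k$) and $\deg R_r(g)=(s'(g)-s(g))/e_r$, and Corollary \ref{primes} shows that each $q_k(y_r)$ is a prime element of $\ggm$. Thus $H_\mu(g)$ is, up to a unit of $\ggm$, a product of $n:=s(g)+\sum_k m_k$ prime elements.

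The next step is the elementary observation that in an integral domain a product of $n\ge0$ prime elements is itself prime if and only if $n=1$: a product of $n\ge2$ primes is a product of two nonunits, hence not prime, and a product of $0$ primes is a unit. Since $\mu$-irreducibility of $g$ means precisely that $H_\mu(g)\ggm$ is a (nonzero) prime ideal, and in the domain $\ggm$ this holds iff $H_\mu(g)$ is a prime element, we conclude that $g$ is $\mu$-irreducible exactly when $n=1$, i.e. in exactly one of two mutually exclusive cases. Either $s(g)=1$ and $\sum_k m_k=0$, in which case $R_r(g)=c$ is a nonzero constant, so $\deg R_r(g)=0$, that is $s'(g)=s(g)=1$, and $H_\mu(g)$ is associate to $x_r\sim H_\mu(\phi_r)$; or $s(g)=0$ and $\sum_k m_k=1$, in which case $R_r(g)=c\,q_1$ is irreducible in $\F_r[y]$. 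The converse implications are immediate: $H_\mu(\phi_r)\ggm$ is prime because $\phi_r$ is a key polynomial for $\mu=\mu_r$ (Lemma \ref{phi}), and $R_r(g)(y_r)\ggm$ is prime when $R_r(g)$ is irreducible, by Corollary \ref{primes}. This establishes the dichotomy.

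For the last sentence I would show that $H_\mu(g)$ is associate to $H_\mu(\phi_r)$ if and only if $s(g)=s'(g)=1$. One direction is the first case above. For the other, if $H_\mu(g)\sim H_\mu(\phi_r)$ then $x_r^{s(g)}R_r(g)(y_r)$ is associate to the prime $x_r$, so it has a single prime factor; since no $q_k(y_r)$ is associate to $x_r$ — indeed $x_r\ggm\cap\Delta=\rr(\phi_r)=y_r\Delta$ by Corollary \ref{RR} (as $s(\phi_r)=1$ and $R_r(\phi_r)=1$), whereas $q_k(y_r)\ggm\cap\Delta=q_k(y_r)\Delta$, and $q_k$ is not a scalar multiple of $y$ because $q_k(0)\ne0$ — this forces $\sum_k m_k=0$; then $R_r(g)(y_r)$ is a unit and $s(g)=1$, and $\sum_k m_k=0$ yields $s'(g)=s(g)=1$.

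The main obstacle is not conceptual but bookkeeping: one must keep track of the three sources of a unit factor of $H_\mu(g)$ — the Laurent monomial $p_r^{u(g)}$, the unit relating $x_r$ to $H_\mu(\phi_r)$, and the leading coefficient of $R_r(g)$ — and one must verify carefully that $x_r$ is not associate to any $q_k(y_r)$, which is where the identification $\rr(\phi_r)=y_r\Delta$ of the residual ideal of $\phi_r$ is used. All the real content is already in Theorems \ref{Hmug} and \ref{structure} and Corollary \ref{primes}.
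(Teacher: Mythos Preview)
Your proof is correct and follows essentially the same approach as the paper: both use Theorem~\ref{Hmug} to write $H_\mu(g)$ as $x_r^{s(g)}p_r^{u(g)}R_r(g)(y_r)$, then invoke Lemma~\ref{associate}, Corollary~\ref{primes}, and Corollary~\ref{nonzero} to reduce primality of $H_\mu(g)$ to the stated dichotomy. Your version is slightly more explicit in fully factoring $R_r(g)$ and counting prime factors, and you give a more careful argument (via the residual ideals) for why $x_r$ is not associate to any $q_k(y_r)$; the paper handles this last point more tersely, but the content is the same.
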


\begin{proof}
By Lemma \ref{associate}, $x_r$ is associate in $\ggm$ to the prime element $H_{\mu}(\phi_r)$.
On the other hand,  $H_{\mu}(g)=x_r^{s(g)}p_r^{u(g)}R_r(g)(y_r)$, by Theorem \ref{Hmug}.  Since $p_r$ is a unit, $H_{\mu}(g)$ is a prime element if and only if either:
\begin{itemize}
\item $s(g)=1$ and $R_r(g)(y_r)$ is a unit, or
\item $s(g)=0$ and $R_r(g)(y_r)$ is a prime element.
\end{itemize}
By Theorem \ref{Delta}, the first condition is equivalent to $s(g)=1$ and $\deg R_r(g)=0$, which is equivalent to $s(g)=s'(g)=1$, by Corollary \ref{nonzero}. Also, this holds if and only if $H_{\mu}(g)$ and $H_{\mu}(\phi_r)$ are associate. 
By Corollary \ref{primes}, the second condition is equivalent to
$s(g)=0$ and $R_r(g)$ irreducible in $\F_r[y]$.
\end{proof}

\begin{definition}\label{one-sided}
For a non-zero $g\in K[x]$, we say that $N_{\mu,\phi}(g)$ is \emph{one-sided of slope} $-\la$ if $N_{\mu,\phi}(g)=S_\la(g)$, $s(g)=0$ and $s'(g)>0$.  
\end{definition}

\begin{lemma}\label{kp}
A monic polynomial $g\in K[x]$ belongs to $\kpm$ if and only if either:
\begin{enumerate}
\item $\deg g=m_r$ and $g\smu\phi_r$, or
\item $s(g)=0$, $\deg g=s'(g)m_r$ and $R_r(g)$ is irreducible in $\F_r[y]$.
\end{enumerate}
In the last case, $\deg g=e_r(\deg R_r(g))m_r$, $N_r(g)$ is one-sided of slope $-\la_r$, and $R_r(g)\in \F_r[y]$ is monic.
\end{lemma}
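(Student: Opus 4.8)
The plan is to merge the $\mu$-minimality criterion of Lemma~\ref{minimal} with the $\mu$-irreducibility criterion of Lemma~\ref{muirred} (both phrased in terms of $\phi_r$-expansions) and then distribute the resulting disjunction. Fix the MacLane chain, so that $\mu=\mu_r$ and $\phi_r\in\kpm$ by Lemma~\ref{phi}, with $\deg\phi_r=m_r$. First I would record that a monic $g\in K[x]$ is $\mu$-minimal if and only if $\deg g=s'(g)m_r$ (Lemma~\ref{minimal}(3)), and that in this case the $\phi_r$-expansion of $g$ is $g=\sum_{0\le s\le s'(g)}a_s\phi_r^s$ with $a_{s'(g)}=1$ (Lemma~\ref{minimal}(2) together with monicity of $g$ and $\phi_r$) and $\mu(g)=\mu(\phi_r^{s'(g)})=s'(g)(w_r+\la_r)$. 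Hence $g\in\kpm$ precisely when $g$ is monic, $\deg g=s'(g)m_r$, and $g$ is $\mu$-irreducible; and by Lemma~\ref{muirred} the last condition holds in exactly one of two mutually exclusive ways: either $s(g)=s'(g)=1$, or $s(g)=0$ and $R_r(g)$ is irreducible in $\F_r[y]$.

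If $s(g)=s'(g)=1$, then $\deg g=s'(g)m_r=m_r$, and $H_\mu(g)$ is associate to $H_\mu(\phi_r)$ in $\ggm$, so $\phi_r\mmu g$; with $\deg g=\deg\phi_r$ and $g$ monic, Lemma~\ref{mid=sim} yields $g\smu\phi_r$. Conversely, if $\deg g=m_r$ and $g\smu\phi_r$, then $g$ is monic with $H_\mu(g)=H_\mu(\phi_r)$ prime, hence $\mu$-irreducible, and $\mu$-minimal (being $\mu$-equivalent, of the same degree, to the $\mu$-minimal $\phi_r$); moreover $S_{\la_r}(g)=S_{\la_r}(\phi_r)$ by Lemma~\ref{additivity}(1), and $N_r(\phi_r)$ is the single point $(1,w_r)$, so $s(g)=s'(g)=1$. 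Thus the first alternative matches case~(1) exactly.

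The second alternative, together with $\deg g=s'(g)m_r$, is literally case~(2); conversely the hypotheses of~(2) already give $\mu$-minimality by Lemma~\ref{minimal}(3) and $\mu$-irreducibility by Lemma~\ref{muirred}. This establishes $g\in\kpm\iff$ (1) or (2), with the two cases disjoint because the two alternatives of Lemma~\ref{muirred} are.

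For the supplementary claims in case~(2), assume $g\in\kpm$ with $s(g)=0$, $\deg g=s'(g)m_r$ and $R_r(g)$ irreducible. Since $s(g)=0$, Corollary~\ref{nonzero}(4) gives $\deg R_r(g)=s'(g)/e_r$, and irreducibility forces $\deg R_r(g)\ge1$, so $s'(g)=e_r\deg R_r(g)>0$ and $\deg g=s'(g)m_r=e_r(\deg R_r(g))m_r$. As $s(g)=0$ we have $a_0\ne0$, so the left end point of $N_r(g)$ has abscissa $0$ and lies on the line $L_{-\la_r}$, while the right end point is $(s'(g),s'(g)w_r)$, also on $L_{-\la_r}$; since $N_r(g)$ is a convex polygon lying on or above $L_{-\la_r}$ (which by definition first touches it from below) and meets it at both of its end points, it must coincide with the segment these span, so $N_r(g)=N_r(g)\cap L_{-\la_r}=S_{\la_r}(g)$, i.e.\ $N_r(g)$ is one-sided of slope $-\la_r$. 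Finally, the leading coefficient of $R_r(g)$ is its coefficient of $y^{\deg R_r(g)}$, which by Definition~\ref{Ri} equals $\ep_{r-1}(\alpha)\,R_{r-1,\alpha}(a_{s'(g)})(z_{r-1})$ with $\alpha=\mu(g)-s'(g)(w_r+\la_r)=0$ and $a_{s'(g)}=1$; using $\ep_{r-1}(0)=1=R_{r-1,0}(1)$ this coefficient is $1$, so $R_r(g)$ is monic. I expect the main obstacle to be this last block: it rests on the elementary but needs-stating convexity fact that a Newton polygon meeting a supporting line at both end points lies wholly on it, and on the bookkeeping via Lemma~\ref{minimal}(2) that pins down the leading term of the $\phi_r$-expansion as $\phi_r^{s'(g)}$ and the value $\mu(g)=s'(g)(w_r+\la_r)$, which is exactly what makes both the one-sidedness and the monicity of $R_r(g)$ fall out.
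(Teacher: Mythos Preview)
Your proof is correct and follows essentially the same approach as the paper: both combine the $\mu$-minimality criterion of Lemma~\ref{minimal} with the $\mu$-irreducibility dichotomy of Lemma~\ref{muirred}, then match the two alternatives to cases~(1) and~(2). The only cosmetic differences are that the paper obtains $g\smu\phi_r$ in case~(1) by reading off the $\phi_r$-expansion directly (rather than invoking Lemma~\ref{mid=sim}), and proves monicity of $R_r(g)$ via equation~(\ref{LC}) and the additivity in Corollary~\ref{Ralgebra}(1) (splitting $g=\phi_r^{s'(g)}+g_0$) instead of your direct computation of the top coefficient from Definition~\ref{Ri}; both routes are valid and your convexity argument for one-sidedness makes explicit what the paper leaves as ``clearly implies''.
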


\begin{proof}
A polynomial $g$ satisfying (1) is a key polynomial by Lemma \ref{mid=sim}. 
A polynomial $g$ satisfying (2) is a key polynomial by the criteria of Lemmas \ref{minimal} and \ref{muirred}.

Conversely, suppose $g$ is a key polynomial. By  Lemma \ref{minimal}, $\deg g=s'(g)m_r$. By Lemma \ref{muirred}, either $s(g)=s'(g)=1$, or $s(g)=0$ and $R_r(g)$ is irreducible. 

In the first case, we have $\deg g=m_r$ and the component $S_{\la_r}(g)$ is a single point with abscissa $s=1$. This implies that $g=\phi_r+a$ with $\deg a<m_r$ and $\mu(a)>\mu(\phi_r)$, by Lemma \ref{muprimaN}. Thus, $g$ satisfies (1). 

In the second case, $g$ satisfies (2), which clearly implies $N_r(g)=S_{\la_r}(g)$. By Corollary \ref{nonzero}, $s'(g)=e_r\deg R_r(g)$. Hence, $N_r(g)$ is one-sided of slope $-\la_r$. The polynomial $R_r(g)$ is monic by equation (\ref{LC}) and item 1 of Corollary \ref{Ralgebra}.
\end{proof}

The next result is a consequence of Theorem \ref{Hmug}, Corollary \ref{RR} and Lemma \ref{kp}.

\begin{corollary}\label{Hmuphi}
For any $\phi\in\kpm$, we have:
$$\as{1.2}\qquad  
\begin{array}{lll}
\hm(\phi)=\hm(\phi_r)=x_rp_r^{V_r},& \rr(\phi)=y_r\Delta,& \mbox{ if }\phi\smu\phi_r,\\
\hm(\phi)=p_r^{(e_rV_r+h_r)\deg R_r(\phi)}R_r(\phi)(y_r),& \rr(\phi)=R_r(\phi)(y_r)\Delta,&\mbox{ if }\phi\not\smu\phi_r.\qquad\Box
\end{array}
$$ 
\end{corollary}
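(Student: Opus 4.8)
The plan is to split the argument along the two alternatives of Lemma \ref{kp}, reading off $\hm(\phi)$ from Theorem \ref{Hmug} and $\rr(\phi)$ from Corollary \ref{RR} in each case; the whole computation then reduces to identifying, for the given key polynomial $\phi$, the left end point $(s(\phi),u(\phi)/e(\mu_{r-1}))$ of its $\la_r$-component together with the residual polynomial $R_r(\phi)$.

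For the case $\phi\smu\phi_r$, I would first note that $\hm(\phi)=\hm(\phi_r)$ by the definition of $\mu$-equivalence and $\rr(\phi)=\rr(\phi_r)$ by \eqref{basic}, so it is enough to treat $\phi_r$ itself. Its $\phi_r$-expansion is the single monomial $\phi_r$, so $N_r(\phi_r)$ is the single point $(1,w_r)$; hence $s(\phi_r)=1$, the left end point of $S_{\la_r}(\phi_r)$ is $(1,w_r)$ so $u(\phi_r)=e(\mu_{r-1})w_r=V_r$, and $R_r(\phi_r)=1$ by \eqref{LC}. Plugging into Theorem \ref{Hmug} gives $\hm(\phi_r)=\varphi_r(\phi_r)R_r(\phi_r)(y_r)=x_r^{s(\phi_r)}p_r^{u(\phi_r)}\cdot 1=x_r p_r^{V_r}$, and into Corollary \ref{RR} gives $\rr(\phi_r)=y_r^{\lceil 1/e_r\rceil}\cdot 1\cdot\Delta=y_r\Delta$, using $\lceil 1/e_r\rceil=1$.

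For the case $\phi\not\smu\phi_r$, the real content is pinning down the exponent of $p_r$. From Lemma \ref{kp} I get $s(\phi)=0$, that $N_r(\phi)$ is one-sided of slope $-\la_r$, and that $\deg\phi=e_r(\deg R_r(\phi))m_r$. Since the left end point of $S_{\la_r}(\phi)=N_r(\phi)$ lies on the vertical axis, Lemma \ref{muprimaN} identifies its ordinate with $\mu(\phi)$, so $u(\phi)=e(\mu_{r-1})\mu(\phi)$; and because $\phi$ is a monic $\mu$-minimal polynomial, Lemma \ref{bound} gives $\mu(\phi)=C(\mu)\deg\phi=C_r\,e_r(\deg R_r(\phi))m_r$. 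Combining these with the formulas \eqref{C} for $C_r$, $V_r$, $h_r$ and the relation $e(\mu_r)=e_re(\mu_{r-1})$, I would check the identity $e(\mu_{r-1})e_r(w_r+\la_r)=e_rV_r+h_r$, which gives $u(\phi)=(e_rV_r+h_r)\deg R_r(\phi)$. Then Theorem \ref{Hmug} yields $\hm(\phi)=\varphi_r(\phi)R_r(\phi)(y_r)=x_r^{s(\phi)}p_r^{u(\phi)}R_r(\phi)(y_r)=p_r^{(e_rV_r+h_r)\deg R_r(\phi)}R_r(\phi)(y_r)$ using $s(\phi)=0$, and Corollary \ref{RR} yields $\rr(\phi)=y_r^{\lceil 0/e_r\rceil}R_r(\phi)(y_r)\Delta=R_r(\phi)(y_r)\Delta$, the power of $y_r$ collapsing because $s(\phi)=0$.

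The only genuinely delicate point is the elementary bookkeeping with the invariants in the third paragraph, namely verifying $e(\mu_{r-1})e_r(w_r+\la_r)=e_rV_r+h_r$; this is immediate from the explicit expressions in \eqref{C}. I would also keep in mind the degenerate conventions at $r=0$ (where $s(g)=0$ by fiat in Corollary \ref{RR}, $V_0=h_0=0$, $x_0=\hm(x)$), but these are already incorporated into the statements of Theorem \ref{Hmug} and Corollary \ref{RR}, so no separate treatment is required.
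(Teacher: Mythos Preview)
Your proof is correct and follows exactly the approach indicated by the paper, which simply cites Theorem \ref{Hmug}, Corollary \ref{RR} and Lemma \ref{kp} without further detail; you have filled in the computation of $s(\phi)$, $u(\phi)$ and $R_r(\phi)$ in each case and reduced the exponent identity $e(\mu_{r-1})e_r(w_r+\la_r)=e_rV_r+h_r$ to the formulas in \eqref{C}, which is the only nontrivial bookkeeping. One small caveat: your first-case argument uses $s(\phi_r)=1$ and $R_r(\phi_r)=1$ via \eqref{LC}, which is stated only for $1\le i\le r$, so for $r=0$ (where $s(g)=0$ by convention and $R_0(x)=y$) the verification goes through a slightly different route---but as you note, the conventions built into Theorem \ref{Hmug} and Corollary \ref{RR} still give the stated formulas.
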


\begin{corollary}\label{identification}
For $\phi\in\kpm$, take
$\psi=R_r(\phi)$, if $\phi\not\smu\phi_r$, and $\psi=y$, if $\phi\smu\phi_r$. Then,
 under the isomorphism $\Delta\simeq \F_r[y]$ determined by $\rb_{r,0}$, the maximal ideal $\rr(\phi)$ is mapped to $\psi\F_r[y]$. Thus, $\F_\phi\simeq \F_r[y]/(\psi)$ and $f(\phi)=f_0\cdots f_{r-1}\deg\psi$.\hfill{$\Box$}
\end{corollary}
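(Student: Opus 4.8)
The plan is to obtain the statement directly from Corollary~\ref{Hmuphi} together with the explicit description of $\Delta$ in Theorem~\ref{Delta}. Corollary~\ref{Hmuphi} already exhibits $\rr(\phi)$ as a principal ideal of $\Delta$: it equals $y_r\Delta$ when $\phi\smu\phi_r$, and $R_r(\phi)(y_r)\,\Delta$ when $\phi\not\smu\phi_r$. On the other hand, Theorem~\ref{Delta} provides the $\F_r$-algebra isomorphism $\rb_{r,0}\colon\Delta\to\F_r[y]$ whose inverse sends $y$ to $y_r$; thus $\rb_{r,0}(y_r)=y$, and since $\rb_{r,0}$ is $\F_r$-linear and multiplicative it sends $R_r(\phi)(y_r)$ to the polynomial obtained by substituting $y$ for $y_r$, namely $R_r(\phi)$ itself. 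Since a ring isomorphism carries a principal ideal to the principal ideal generated by the image of a generator, in both cases we get $\rb_{r,0}(\rr(\phi))=\psi\,\F_r[y]$, which is the first assertion.

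Next I would identify $\F_\phi$. By item~(1) of Proposition~\ref{sameideal}, $\rr(\phi)$ is the kernel of an onto ring homomorphism $\Delta\twoheadrightarrow\F_\phi$, so $\F_\phi\simeq\Delta/\rr(\phi)$; transporting this isomorphism along $\rb_{r,0}$ gives $\F_\phi\simeq\F_r[y]/(\psi)$. For the degree formula I would compare $\F$-dimensions: $\psi$ is monic of positive degree (it is $y$ when $\phi\smu\phi_r$, and the monic irreducible polynomial $R_r(\phi)$ of Lemma~\ref{kp} otherwise), hence $[\F_\phi:\F_r]=\dim_{\F_r}\F_r[y]/(\psi)=\deg\psi$. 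Combining with $[\F_r:\F]=f_0\cdots f_{r-1}$, which is the second line of~(\ref{C}) for $i=r$ under the identification $\F=\F_0$, yields $f(\phi)=[\F_\phi:\F]=[\F_\phi:\F_r]\,[\F_r:\F]=f_0\cdots f_{r-1}\deg\psi$.

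There is essentially no real obstacle: the substance has already been carried out in Corollary~\ref{Hmuphi} and Theorem~\ref{Delta}. The only points deserving a moment's attention are that the \emph{algebra} map $\rb_{r,0}$ genuinely takes the element $R_r(\phi)(y_r)\in\Delta$ to the polynomial $R_r(\phi)\in\F_r[y]$ (immediate from $\rb_{r,0}$ fixing $\F_r$ and sending $y_r\mapsto y$), and that in the case $\phi\not\smu\phi_r$ one uses the monic irreducible representative $R_r(\phi)$ furnished by the last sentence of Lemma~\ref{kp}, so that $\F_r[y]/(\psi)$ is a field of $\F_r$-degree exactly $\deg\psi$.
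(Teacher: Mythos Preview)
Your proposal is correct and matches the paper's intended approach: the corollary is stated with a bare $\Box$ because it is meant as an immediate consequence of Corollary~\ref{Hmuphi} (which computes $\rr(\phi)$ as $y_r\Delta$ or $R_r(\phi)(y_r)\Delta$) and Theorem~\ref{Delta} (which identifies $\Delta\simeq\F_r[y]$ via $y_r\leftrightarrow y$), together with Proposition~\ref{sameideal} for the identification $\F_\phi\simeq\Delta/\rr(\phi)$. Your use of Lemma~\ref{kp} and (\ref{C}) to compute $f(\phi)$ is exactly the right way to finish.
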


\begin{corollary}\label{astypes}For all $0\le i<r$, 
\begin{enumerate}
\item $N_i(\phi_{i+1})$ is one-sided of slope $-\la_i$. 
\item $R_i(\phi_{i+1})=\psi_i$, the minimal polynomial of $z_i$ over $\F_i$.
\end{enumerate}
\end{corollary}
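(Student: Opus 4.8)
The plan is to read the statement as the specialization, to the intermediate valuations of the chain, of the structure theory of Sections~\ref{secGr}--\ref{secKP}. The observation I would record first is that, for each $0\le i<r$, the initial segment $\mu_0\stackrel{(\phi_1,\la_1)}\lra\cdots\stackrel{(\phi_i,\la_i)}\lra\mu_i$ is itself a MacLane chain of length $i$ of the inductive valuation $\mu_i$ --- the inequivalences $\phi_{j+1}\not\sim_{\mu_j}\phi_j$ for $1\le j<i$ being inherited from the standing MacLane chain of $\mu$ --- and that $N_i=N_{\mu_{i-1},\phi_i}$, $R_i$ (Definition~\ref{Ri}) together with $\Delta_i$, $\F_i$, $y_i$ are precisely the ``top level'' operators and data attached to it. Hence every result proved for ``$\mu$ equipped with a MacLane chain of length $r$'' --- in particular Lemma~\ref{kp}, Corollaries~\ref{RR} and \ref{Hmuphi}, Proposition~\ref{sameideal} and Theorem~\ref{Delta} --- applies verbatim with $r$ replaced by $i$. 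I would also use that $\phi_{i+1}$ is a key polynomial for $\mu_i$, being the polynomial chosen in the augmentation step $\mu_i\lra\mu_{i+1}$.

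For part~(1), fix $1\le i<r$. Since a MacLane chain satisfies $\phi_{i+1}\not\sim_{\mu_i}\phi_i$, the key polynomial $\phi_{i+1}$ of $\mu_i$ does not fall under alternative~(1) of Lemma~\ref{kp} and must satisfy alternative~(2); that alternative asserts exactly that $N_i(\phi_{i+1})$ is one-sided of slope $-\la_i$ and that $R_i(\phi_{i+1})$ is monic and irreducible in $\F_i[y]$. For $i=0$ the same reasoning works whenever $\phi_1\not\sim_{\mu_0}x$; the degenerate case $\phi_1\sim_{\mu_0}x$ (where $z_0=0$, $\psi_0=y$) I would settle by hand from the explicit description of $\mu_0$ below.

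For part~(2), I would identify the ideal $\rr_{\mu_i}(\phi_{i+1})$ of $\Delta_i$ in two ways. First, Corollary~\ref{RR} (applied to $\mu_i$) gives $\rr_{\mu_i}(\phi_{i+1})=R_i(\phi_{i+1})(y_i)\,\Delta_i$, using $s_{\mu_i}(\phi_{i+1})=0$ (from alternative~(2) of Lemma~\ref{kp} when $i\ge1$, and by the convention of Corollary~\ref{RR} when $i=0$); equivalently one may invoke Corollary~\ref{Hmuphi}. Second, Proposition~\ref{sameideal}(2) identifies $\rr_{\mu_i}(\phi_{i+1})$ with $\op{Ker}(\Delta_i\to\Delta_{i+1})$, whose image is the field $\F_{i+1}$. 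Transporting both descriptions through the $\F_i$-algebra isomorphism $\Delta_i\cong\F_i[y]$ of Theorem~\ref{Delta} (for $\mu_i$), which sends $y_i\mapsto y$: the first becomes the principal ideal generated by $R_i(\phi_{i+1})$, and the second becomes the kernel of the surjection $\F_i[y]\twoheadrightarrow\F_{i+1}$ carrying $y$ to the image $z_i$ of $y_i$, i.e. the ideal generated by the minimal polynomial $\psi_i$ of $z_i$ over $\F_i$ (here $\F_{i+1}=\F_i[z_i]$, Corollary~\ref{degpsi}). Thus $R_i(\phi_{i+1})$ and $\psi_i$ generate the same ideal of $\F_i[y]$; as $\psi_i$ is monic and $R_i(\phi_{i+1})$ is monic --- by Lemma~\ref{kp} when $i\ge1$, and because for $i=0$ one has $\mu_0(\phi_1)=0$ (as $\phi_1$ is monic in $\oo[x]$ by Corollary~\ref{inO}), whence $R_0(\phi_1)=\overline{\phi_1}$ is monic --- we conclude $R_i(\phi_{i+1})=\psi_i$. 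In the leftover case $\phi_1\sim_{\mu_0}x$ the same computation gives $R_0(\phi_1)=\overline{\phi_1}=\overline{x}=y=\psi_0$, settling both parts.

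I do not expect a real obstacle: the argument is essentially an assembly of results from Sections~\ref{secGr}--\ref{secKP}. The two points deserving attention are (a) the legitimacy of applying those sections to the truncated valuations $\mu_i$ --- the content of the first paragraph, which rests on the truncations being MacLane chains and on $N_i$, $R_i$ coinciding with their top-level analogues --- and (b) the exceptional low-degree configuration $i=0$, $\phi_1\sim_{\mu_0}x$, disposed of by the explicit computation above.
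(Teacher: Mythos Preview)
Your proof is correct and follows essentially the paper's approach. For item (2) the paper argues slightly more directly---showing $R_i(\phi_{i+1})(z_i)=0$ via Corollary \ref{Hmuphi} and the fact that $H_{\mu_i}(\phi_{i+1})\in\ker(\gg(\mu_i)\to\gg(\mu_{i+1}))$ (Proposition \ref{extension}), then invoking monic irreducibility---rather than equating two descriptions of $\rr_{\mu_i}(\phi_{i+1})$, but the two arguments are equivalent; your extra care with the degenerate case $i=0$, $\phi_1\sim_{\mu_0}x$ is warranted, since the MacLane-chain condition $\phi_{i+1}\not\sim_{\mu_i}\phi_i$ is only imposed for $i\ge1$.
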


\begin{proof}
The polynomial $\phi_{i+1}$ is a key polynomial for $\mu_i$ and $\phi_{i+1}\not\sim_{\mu_i}\phi_i$. Hence, it satifies (2) of Lemma \ref{kp}. This proves item 1.

By Corollary \ref{Hmuphi}, $H_{\mu_i}(\phi_{i+1})$ is associate to $R_i(\phi_{i+1})(y_i)$ in $\gg(\mu_i)$; hence, its image under the canonical homomorphism $\gg(\mu_i)\to\gg(\mu_{i+1})$ is associate to $R_i(\phi_{i+1})(z_i)$
in $\gg(\mu_{i+1})$. This implies that $R_i(\phi_{i+1})(z_i)=0$ because $H_{\mu_i}(\phi_{i+1})$ belongs to the kernel of $\gg(\mu_i)\to\gg(\mu_{i+1})$, by Proposition \ref{extension}. Since $R_i(\phi_{i+1})$ is monic and irreducible (Lemma \ref{kp}), we have $R_i(\phi_{i+1})=\psi_i$.       
\end{proof}

\subsection{Analysis of the mapping $\kpm\to\mx(\Delta)$}

\begin{proposition}\label{samefiber}
Let $\phi,\phi'\in\kpm$. The following conditions are equivalent:
\begin{enumerate}
\item $\rr(\phi)=\rr(\phi')$.
\item $R_r(\phi)=R_r(\phi')$.
\item $\phi\smu\phi'$.
\item $\hm(\phi)$ and $\hm(\phi')$ are associate in $\ggm$. 
\item $\phi\mmu\phi'$. 
\end{enumerate}
\end{proposition}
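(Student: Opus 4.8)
The plan is to establish a cycle of implications, most of which are already essentially packaged in the results of sections \ref{secGr} and the first subsection of \ref{secKP}. First I would prove $(3)\Rightarrow(4)\Rightarrow(5)\Rightarrow(2)\Rightarrow(3)$, and then close with $(1)\Leftrightarrow(2)$, using Corollary \ref{identification}. The implication $(3)\Rightarrow(4)$ is immediate: if $\phi\smu\phi'$ then $\hm(\phi)=\hm(\phi')$, which are certainly associate. For $(4)\Rightarrow(5)$, if $\hm(\phi)$ and $\hm(\phi')$ are associate then in particular $\hm(\phi)\mid \hm(\phi')$ in $\ggm$, which is exactly $\phi\mmu\phi'$ by Definition \ref{mu}.

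The step $(5)\Rightarrow(2)$ is where the real content lies, and I expect it to be the main obstacle. Suppose $\phi\mmu\phi'$. By Lemma \ref{additivity}(2) and Corollary \ref{sumSla}, $s(\phi)\le s(\phi')$ as left end-point abscissas of $\la_r$-components, and likewise by Corollary \ref{sumSla} applied with Lemma \ref{minimal} the effective degrees satisfy $s'(\phi)\le s'(\phi')$ (since $\phi'\smu\phi h$ forces $s'(\phi')=s'(\phi)+s'(h)$). Now invoke Lemma \ref{muirred}: each of $\phi,\phi'$ is $\mu$-irreducible, so each falls into exactly one of the two cases there — either $s=s'=1$ ($\phi\smu\phi_r$), or $s=0$ and $R_r$ is irreducible. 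One checks the cross-cases are impossible: if $\phi\smu\phi_r$ (so $s(\phi)=1$) but $s(\phi')=0$, then $\phi\mmu\phi'$ would force, via Theorem \ref{Hmug}, $x_r\mid p_r^{u}R_r(\phi')(y_r)$ in $\ggm$, contradicting that $p_r$ is a unit and $R_r(\phi')(y_r)\in\Delta$ is nonzero (Corollary \ref{nonzero}) with $x_r$ prime (Lemma \ref{associate}); conversely if $s(\phi)=0$ and $\phi'\smu\phi_r$, then $\deg\phi'=m_r\le\deg\phi$ forces $R_r(\phi)$ to have degree zero by Lemma \ref{kp}, contradicting irreducibility unless $\deg R_r(\phi)=1$, but then $\deg\phi=e_rm_r$ and divisibility $\hm(\phi)\mid\hm(\phi_r)$ in the UFD-like structure of $\ggm$ (Theorem \ref{structure}, Corollary \ref{primes}) is impossible since $R_r(\phi)(y_r)$ and $x_r$ are non-associate primes. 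So both lie in the same case. In the first case both are $\smu\phi_r$, hence $\smu$ each other, and $R_r(\phi)=R_r(\phi')$ by \eqref{LC}. In the second case, $\hm(\phi)$ is associate to the prime $R_r(\phi)(y_r)$ and similarly for $\phi'$ (Corollary \ref{Hmuphi}); from $\hm(\phi)\mid\hm(\phi')$ and both being prime, they are associate, so $R_r(\phi)(y_r)$ and $R_r(\phi')(y_r)$ are associate primes in $\Delta\simeq\F_r[y]$ (Theorem \ref{Delta}), and since both $R_r(\phi),R_r(\phi')$ are monic (Lemma \ref{kp}) they are equal.

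For $(2)\Rightarrow(3)$: if $R_r(\phi)=R_r(\phi')$ then in particular $S_{\la_r}(\phi)=S_{\la_r}(\phi')$ — both are one-sided of slope $-\la_r$ with the same abscissas determined by $\deg R_r$ and $e_r$ (Lemma \ref{kp}, Corollary \ref{nonzero}), or both single points of abscissa $1$ — so by Corollary \ref{uniqueness}(3) we get $\phi\smu\phi'$. This closes the cycle $(2)\Leftrightarrow(3)\Leftrightarrow(4)\Leftrightarrow(5)$. Finally, $(1)\Leftrightarrow(2)$: by Corollary \ref{identification}, $\rr(\phi)$ corresponds under $\rb_{r,0}$ to the ideal generated by $R_r(\phi)$ (or by $y$, when $\phi\smu\phi_r$), and since both $R_r(\phi),R_r(\phi')$ are monic, equality of the principal ideals $R_r(\phi)\F_r[y]=R_r(\phi')\F_r[y]$ is equivalent to $R_r(\phi)=R_r(\phi')$; the $\phi\smu\phi_r$ sub-case is handled by noting it is equivalent to $\rr(\phi)=y_r\Delta$ on one side and to $s(\phi)=1$ on the other. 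This completes the proof. \hfill$\Box$
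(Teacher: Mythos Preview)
Your proof is essentially correct, but it takes a considerably more laborious route than the paper, and one of the cross-case arguments in $(5)\Rightarrow(2)$ is written in a garbled way.

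The paper's proof proceeds as follows: it first shows $(1)\Leftrightarrow(2)\Leftrightarrow(3)$ directly from Corollary~\ref{Hmuphi} and Theorem~\ref{Delta} (essentially your argument for $(1)\Leftrightarrow(2)$ and $(2)\Rightarrow(3)$), then notes $(3)\Rightarrow(4)\Rightarrow(5)$ as you do. The key difference is the closing step: the paper observes that $(5)\Rightarrow(1)$ is a one-line argument from the basic inclusion $\phi\mmu\phi'\Rightarrow\rr(\phi')\subset\rr(\phi)$ (see \eqref{basic}) together with the fact that $\rr(\phi')$ is a \emph{maximal} ideal (Proposition~\ref{sameideal}). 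This bypasses entirely the case analysis you carry out for $(5)\Rightarrow(2)$ using Lemma~\ref{muirred}, Theorem~\ref{structure}, and the prime factorization structure of $\ggm$.

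Regarding your cross-case analysis: when $s(\phi)=0$ and $\phi'\smu\phi_r$, you write ``$\deg\phi'=m_r\le\deg\phi$ forces $R_r(\phi)$ to have degree zero.'' The inequality should be $\deg\phi\le\deg\phi'=m_r$ (from $\mu$-minimality of $\phi$, since $\phi\mmu\phi'$), which combined with $\deg\phi=e_r\deg R_r(\phi)\,m_r$ from Lemma~\ref{kp} gives $e_r\deg R_r(\phi)\le1$. The rest of your argument then goes through. But note that a cleaner way to handle this sub-case is simply: $\deg\phi=\deg\phi'=m_r$, so Lemma~\ref{mid=sim} gives $\phi\smu\phi'\smu\phi_r$, contradicting $s(\phi)=0$.

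In summary: your argument works, and the structural analysis you invoke (Corollary~\ref{Hmuphi}, Theorem~\ref{structure}, Corollary~\ref{primes}) is exactly what lies behind the paper's proof of $(1)\Leftrightarrow(2)\Leftrightarrow(3)$. But you should be aware that the implication $(5)\Rightarrow(1)$ via maximality of residual ideals is the clean closing move, and it renders the delicate case analysis in your $(5)\Rightarrow(2)$ unnecessary.
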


\begin{proof}
By equation (\ref{LC}) and Lemma \ref{kp}, $R_r(\phi)=1$ if $\phi\smu\phi_r$, and $R_r(\phi)$ is monic, irreducible, and different from $y$ (because $R_r(\phi)(0)\ne0)$) otherwise. Therefore, Corollary \ref{Hmuphi} and Theorem \ref{Delta} show that (1), (2) and (3) are equivalent. Clearly, (3) implies (4), and (4) implies (5). Finally, (5) implies $\rr(\phi')\subset \rr(\phi)$, and this implies (1), because $\rr(\phi')$ is a maximal ideal.    
\end{proof}

The analysis of the key polynomials provided by the use of a MacLane chain yields an intrinsic description of the mapping $\rr\colon \kpm\to\mx(\Delta)$.   

\begin{theorem}\label{Max}
Let $\mu$ be an inductive valuation. The mapping $\rr\colon \kpm\to\mx(\Delta)$ induces a bijection  
between $\kpm/\!\smu$ and $\mx(\Delta)$.
\end{theorem}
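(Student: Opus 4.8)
The induced map $\bar\rr\colon\kpm/\!\smu\to\mx(\Delta)$ is well defined and injective for free: by Proposition \ref{samefiber}, two key polynomials $\phi,\phi'\in\kpm$ satisfy $\rr(\phi)=\rr(\phi')$ exactly when $\phi\smu\phi'$, and by Proposition \ref{sameideal} each $\rr(\phi)$ is indeed a maximal ideal of $\Delta$. So the whole content of the theorem is surjectivity, i.e.\ realizing every $\ll\in\mx(\Delta)$ as $\rr(\phi)$ for some $\phi\in\kpm$. The plan is to transport the problem through the isomorphism $\Delta\simeq\F_r[y]$ of Theorem \ref{Delta}, which sends $y_r$ to $y$: since $\F_r[y]$ is a polynomial ring over a field, $\ll$ corresponds to $\psi\,\F_r[y]$ for a unique monic irreducible $\psi\in\F_r[y]$, equivalently $\ll=\psi(y_r)\Delta$.

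I would then split into two cases according to $\psi$. If $\psi=y$, then $\ll=y_r\Delta=\rr(\phi_r)$ by Corollary \ref{Hmuphi}, and $\phi_r\in\kpm$ by Lemma \ref{phi}; nothing more is needed. If $\psi\ne y$ — equivalently $\psi(0)\ne0$ — I would apply Corollary \ref{construct} with $f=\deg\psi$ to obtain a monic $g\in K[x]$ with $\deg g=e_rfm_r$, $\mu(g)=e_rf(w_r+\la_r)$ and $R_r(g)=\psi$. The only genuine step is to verify that this $g$ actually lies in $\kpm$, via the criterion of Lemma \ref{kp}(2): one needs $s(g)=0$ and $\deg g=s'(g)m_r$, irreducibility of $R_r(g)=\psi$ being automatic. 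Since $g$ is monic of degree $e_rfm_r=\deg(\phi_r^{e_rf})$, the top coefficient of its $\phi_r$-expansion is $1$, so the point $(e_rf,\,\mu_{r-1}(\phi_r^{e_rf}))=(e_rf,\,e_rfw_r)$ appears in the cloud defining $N_r(g)$; and since $\mu(g)=e_rf(w_r+\la_r)=\mu_r(\phi_r^{e_rf})$, this point lies on the line $L_{\mu(g)}$ of Lemma \ref{muprimaN} and is the right-hand end of the $\la_r$-component, so $s'(g)=e_rf$. Combining $s'(g)=e_rf$ with the identity $\deg R_r(g)=\deg\psi=f=(s'(g)-s(g))/e_r$ of Corollary \ref{nonzero} forces $s(g)=0$, and then $\deg g=e_rfm_r=s'(g)m_r$. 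Hence Lemma \ref{kp}(2) gives $g\in\kpm$.

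To finish, I would note $g\not\smu\phi_r$ (because $R_r(g)=\psi\ne1=R_r(\phi_r)$, using Corollary \ref{uniqueness} together with (\ref{LC})), so Corollary \ref{Hmuphi} yields $\rr(g)=R_r(g)(y_r)\Delta=\psi(y_r)\Delta=\ll$, which establishes surjectivity. The main, and essentially the only, obstacle is the verification that the polynomial produced by Corollary \ref{construct} is a key polynomial — precisely the computation $s(g)=0$ above; everything else is bookkeeping with the dictionary already set up between $\phi_r$-expansions, Newton polygons, the residual polynomial operators $R_r$, the residual ideal operator $\rr$, and the structure $\Delta\cong\F_r[y]$.
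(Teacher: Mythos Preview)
Your proposal is correct and follows essentially the same route as the paper: injectivity from Proposition~\ref{samefiber}, surjectivity by transporting through the isomorphism $\Delta\simeq\F_r[y]$ of Theorem~\ref{Delta}, handling $\psi=y$ via $\phi_r$, and for $\psi\ne y$ invoking Corollary~\ref{construct} and then checking the criterion of Lemma~\ref{kp}(2). The only cosmetic difference is in that last check: you identify $s'(g)=e_rf$ directly from the leading monomial and then read off $s(g)=0$ from Corollary~\ref{nonzero}, whereas the paper squeezes the inequality chain $\deg g\ge s'(g)m_r\ge (s'(g)-s(g))m_r=e_r(\deg\psi)m_r=\deg g$ to force both equalities at once; the content is the same.
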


\begin{proof}
By Proposition \ref{samefiber}, for any $\ll\in\mx(\Delta)$, the fiber $\kpm_\ll$ is either empty or it is one of the classes of the equivalence relation $\smu$ on the set $\kpm$. Thus, $\rr$ induces an injective mapping $\kpm/\!\smu\,\lra\mx(\Delta)$. 

Let us show that the residual ideal mapping $\rr$ is onto. A maximal ideal $\ll$ in $\Delta$
corresponds to a monic irreducible polynomial $\psi\in\F_r[y]$, under the isomorphism $\Delta\simeq\F_r[y]$ of Theorem \ref{Delta}. If $\psi=y$, then $\ll=\rr(\phi_r)$, by Corollary \ref{Hmuphi}. If $\psi\ne y$, then there exists a monic polynomial $\phi\in K[x]$ of degree $\deg\phi=e_r(\deg\psi) m_r$ such that $R_r(\phi)=\psi$, by item 2 of Corollary \ref{construct}. As a general fact, $\deg \phi\ge s'(\phi)m_r$. By Corollary \ref{nonzero}, $s'(\phi)-s(\phi)=e_r\deg\psi$; thus:
$$
\deg \phi\ge s'(\phi)m_r\ge (s'(\phi)-s(\phi))m_r=e_r(\deg\psi) m_r=\deg\phi.
$$
Hence, $s(\phi)=0$ and $\deg\phi=s'(\phi)m_r$. Therefore, $\phi$ satisfies condition (2) of Lemma \ref{kp}, and it is a key polynomial for $\mu$. By Corollary \ref{Hmuphi}, $\rr(\phi)=\psi(y_r) \Delta=\ll$.
\end{proof}


\begin{corollary}\label{homogeneousprimes}
Let $\pset\subset\kpm$ be a set of representatives of key polynomials under $\mu$-equivalence. 
Then, the set $H\pset=\left\{\hm(\phi)\mid \phi\in \pset\right\}$ is a system of representatives of homogeneous prime elements of $\ggm$ up to associates in the algebra.  Moreover, up to units in $\ggm$, for any non-zero $g\in K[x]$, there is a unique factorization:
\begin{equation}\label{factorization}
g\smu \prod\nolimits_{\phi\in \pset}\phi^{a_\phi},\quad  a_\phi=\ord_{\mu,\phi}(g).
\end{equation}
\end{corollary}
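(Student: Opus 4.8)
The plan is to read off both assertions from the description of $\Delta=\Delta(\mu)$ and $\ggm$ obtained in Section~\ref{secGr}, combined with the analysis of key polynomials in the present section, working throughout in the integral domain $\ggm$ and recording elements only up to associates. First I would check that the elements of $H\pset$ are pairwise non-associate homogeneous primes. For $\phi\in\pset\subset\kpm$ the element $\hm(\phi)$ is homogeneous of degree $\mu(\phi)$ by the very definition of $\hm$, and it is a prime element because $\phi$ is $\mu$-irreducible, i.e.\ $\hm(\phi)\ggm$ is a non-zero prime ideal. If $\phi\ne\phi'$ in $\pset$ then $\phi\not\smu\phi'$, hence $\hm(\phi)$ and $\hm(\phi')$ are not associate in $\ggm$ by the equivalence of conditions (3) and (4) in Proposition~\ref{samefiber}.

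The main step is to show that every homogeneous prime $\Pi$ of $\ggm$ is associate to some $\hm(\phi)$ with $\phi\in\pset$. Writing $\alpha=\deg\Pi\in\Gamma(\mu)$, the degree-$\alpha$ component $\ppa(\mu)/\ppa^+(\mu)$ of $\ggm$ consists, apart from $0$, exactly of the classes $\hm(g)$ with $g\in K[x]$ and $\mu(g)=\alpha$, so $\Pi=\hm(g)$ for such a $g$; then $\hm(g)\ggm=\Pi\ggm$ is a non-zero prime ideal, so $g$ is $\mu$-irreducible. Lemma~\ref{muirred} leaves two possibilities. If $\hm(g)$ and $\hm(\phi_r)$ are associate, then $\Pi$ is associate to $\hm(\phi_\star)$, where $\phi_\star\in\pset$ denotes the representative of the $\smu$-class of $\phi_r$ (so that $\hm(\phi_\star)=\hm(\phi_r)$). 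Otherwise $s(g)=0$ and $R_r(g)$ is irreducible in $\F_r[y]$; by Corollary~\ref{RR}, $\rr(g)=R_r(g)(y_r)\Delta$, which under the isomorphism $\Delta\simeq\F_r[y]$ of Theorem~\ref{Delta} is the maximal ideal generated by the irreducible polynomial $R_r(g)$, and it differs from $y_r\Delta$ because $R_r(g)(0)\ne0$ (Corollary~\ref{nonzero}). By the surjectivity in Theorem~\ref{Max} there is a unique $\phi\in\pset$ with $\rr(\phi)=\rr(g)$; this $\phi$ satisfies $\phi\not\smu\phi_r$, since $\rr(\phi)\ne y_r\Delta=\rr(\phi_r)$ (Proposition~\ref{samefiber}), so Corollary~\ref{Hmuphi} shows that $\hm(\phi)$ is associate to $R_r(\phi)(y_r)$. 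Finally, $\rr(\phi)=R_r(\phi)(y_r)\Delta=R_r(g)(y_r)\Delta$ forces $R_r(\phi)(y_r)$ and $R_r(g)(y_r)$ to be associate in $\Delta$, while $\hm(g)$ is associate to $R_r(g)(y_r)$ by Theorem~\ref{Hmug} (as $s(g)=0$ and $p_r$ is a unit); hence $\Pi=\hm(g)\sim\hm(\phi)$. Together with the previous paragraph this proves the first assertion.

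For the factorization, take a non-zero $g\in K[x]$. Theorem~\ref{Hmug} gives $\hm(g)=\varphi_r(g)\,R_r(g)(y_r)$ with $\varphi_r(g)=x_r^{s(g)}p_r^{u(g)}$; since $p_r$ is a unit and $x_r$ is associate to $\hm(\phi_r)=\hm(\phi_\star)$ (Lemma~\ref{associate}), the factor $\varphi_r(g)$ is associate to $\hm(\phi_\star)^{s(g)}$. The factor $R_r(g)(y_r)$ lies in $\Delta\simeq\F_r[y]$, a principal ideal domain, where it is a unit times a product of powers $\psi_j^{b_j}$ of distinct monic irreducibles $\psi_j$, none equal to $y$ since $R_r(g)(0)\ne0$; by the main step each $\psi_j(y_r)$ is associate to $\hm(\phi^{(j)})$ for the unique $\phi^{(j)}\in\pset$ with $\rr(\phi^{(j)})=\psi_j(y_r)\Delta$, and the $\phi^{(j)}$ are pairwise distinct and distinct from $\phi_\star$. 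Collecting these factors and using the multiplicativity (\ref{Hmu}) of $\hm$, we get, up to a unit of $\ggm$,
$$
\hm(g)\ \sim\ \prod\nolimits_{\phi\in\pset}\hm(\phi)^{c_\phi}\ =\ \hm\Big(\prod\nolimits_{\phi\in\pset}\phi^{c_\phi}\Big),
$$
where $c_{\phi_\star}=s(g)$, $c_{\phi^{(j)}}=b_j$ and $c_\phi=0$ otherwise, only finitely many being non-zero; that is, $g\smu\prod_{\phi\in\pset}\phi^{c_\phi}$ up to units in $\ggm$. To identify $c_\phi$ with $\ord_{\mu,\phi}(g)$ — which yields both (\ref{factorization}) and the uniqueness of the exponents — I would argue in the domain $\ggm$: clearly $\hm(\phi)^{c_\phi}\mid\hm(g)$, whereas $\hm(\phi)^{c_\phi+1}\mid\hm(g)$ would force the prime $\hm(\phi)$ to divide a unit times $\prod_{\phi'\ne\phi}\hm(\phi')^{c_{\phi'}}$, hence to be associate to some $\hm(\phi')$ with $\phi'\ne\phi$ in $\pset$, contradicting the first step.

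The step I expect to be the main obstacle is the second one: recognising an abstract homogeneous prime as the $\hm$-image of a key polynomial requires assembling the concrete residual-polynomial and Newton-polygon machinery (Lemma~\ref{muirred}, Corollaries~\ref{nonzero}, \ref{RR}, \ref{Hmuphi}, Theorems~\ref{Delta}, \ref{Max}), whereas the remaining steps amount to bookkeeping with associates and to elementary divisibility in $\ggm$ and in the PID $\F_r[y]$.
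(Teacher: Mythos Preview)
Your proof is correct and follows essentially the same route as the paper's: first using Proposition~\ref{samefiber} for the non-associate primes in $H\pset$, then Lemma~\ref{muirred} together with Theorem~\ref{Max} and Corollary~\ref{Hmuphi} to show every homogeneous prime is associate to one in $H\pset$, and finally Theorem~\ref{Hmug} with the PID structure of $\Delta\simeq\F_r[y]$ (Corollary~\ref{primes}) for the factorization. The paper compresses these steps into a few lines, while you spell out more explicitly the identification $c_\phi=\ord_{\mu,\phi}(g)$ via divisibility in $\ggm$; this is a welcome detail the paper leaves implicit.
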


\begin{proof}
All elements in $H\pset$ are homogeneous prime elements by the definition of $\mu$-irreducibi\-lity, and they are pairwise non-associate by Proposition \ref{samefiber}. By Lemma \ref{muirred}, every homogeneous prime element is associate either to $\hm(\phi_r)$ or to $\psi(y_r)$ for some irreducible polynomial $\psi\in\F_r[y]$. The proof of Theorem \ref{Max} and Corollary \ref{Hmuphi} show that $\psi(y_r)$ is associate to an element in $H\pset$.
Finally, every homogeneous element in $\ggm$ is a product of homogeneous prime elements, by Theorem \ref{Hmug} and Corollary \ref{primes}. This implies the unique factorization (\ref{factorization}).
\end{proof}

\subsection{Proper and strong key polynomials}
Theorems \ref{Max} and \ref{Delta} yield bijections $$\kpm/\!\!\smu\,\lra\mx(\Delta)\lra\P(\F_r),$$ where $\P(\F_r)$ denotes the set of monic irreducible polynomials with coefficients in $\F_r$. 
The first bijection is canonical, but the second one depends, in principle, on the choice of a MacLane chain of $\mu$.

The class of $\phi_r$ is mapped to $y \in \P(\F_r)$ under the composition of the above bijections, and it has special properties when the prime ideal $x_r\ggm$ is ramified over the subalgebra $\Delta[p_r,p_r^{-1}]$.
In this section we analyze to what extent the bijection $\kpm/\!\!\smu\,\lra \P(\F_r)$
depends on the chosen MacLane chain for $\mu$, and the distinguished ``bad" class is intrinsic. 

Recall that  the numerical data attached to any optimal MacLane chain of $\mu$ are intrinsic data of $\mu$, denoted  $e_i(\mu)$, $f_i(\mu)$, $h_i(\mu)$, $m_i(\mu)$, $w_i(\mu)$, $\la_i(\mu)$, $C_i(\mu)$ (cf. section \ref{subsecNum}). We may formulate two different intrinsic distinctions between key polynomials, according to their degree.

\begin{definition}\label{strong}
Let $\mu$ be an inductive valuation of depth $r$, and let $\phi\in\kpm$. 

We say that $\phi$ is a \emph{proper key polynomial} for $\mu$ if $\deg\phi$ is a multiple of $e_r(\mu)m_r(\mu)$. 

We say that $g\in K[x]$ is  \emph{$\mu$-proper} if $\phi\nmid_\mu g$ for all improper key polynomials $\phi$. 

We say that $\phi$ is a \emph{strong key polynomial} for $\mu$ if $r=0$ or $\deg\phi>m_r(\mu)$. 

We denote $\kpp$, $\kps$ the sets of proper and strong key polynomials for $\mu$, respectively. 

\end{definition}

By Lemma \ref{kp}, $\kps\subset\kpp\subset\kpm$. If $e_r(\mu)=1$, all key polynomials are proper and all polynomials are $\mu$-proper. If $e_r(\mu)>1$, there is a single improper $\mu$-equivalence class of key polynomials, distinguished by the property $\deg\phi=m_r(\mu)$; all other key polynomials are proper and strong. Note that $\kp(\mu_0)^{\op{str}}=\kp(\mu_0)$. 

In every MacLane chain, all $\phi_i$ are proper key polynomials for $\mu_{i-1}$, by Lemma \ref{kp}.

\begin{lemma}\label{choice}
Suppose that in the given MacLane chain of $\mu$, we replace $\phi_r$ by $\phi_r'=\phi_r+a$ for some $a\in K[x]$ such that $\deg a<m_r$ and $\mu(a)\ge \mu(\phi_r)$. For some $\alpha\in \Gamma(\mu)$, denote by $R'_{r,\alpha}$ the residual polynomial operator attached to the new MacLane chain of $\mu$ obtained in this way.  
\begin{enumerate}
\item If $\phi'_r\smu\phi_r$, then $R'_{r,\alpha}=R_{r,\alpha}$. 
\item We may choose $a$ such that $\phi'_r\not\smu\phi_r$ if and only if $e_r=1$. For such a choice,
$R'_{r,\alpha}(g)(y)=R_{r,\alpha}(g)(y-\eta)$ for all $g\in \ppa(\mu)$, where $\eta=R_r(a)\in\F_r^*$.
\end{enumerate}
\end{lemma}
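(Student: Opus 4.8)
The statement to prove is Lemma \ref{choice}, describing how the residual polynomial operator changes when we swap $\phi_r$ for a $\mu$-equivalent-or-not competitor $\phi'_r = \phi_r + a$. The natural strategy is to separate the two cases according to whether $\phi'_r \smu \phi_r$, and in each case reduce everything to the explicit description of $H_\mu$ and of the residual polynomial operators furnished by Theorem \ref{Hmug} and Theorem \ref{Delta}.

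For item (1), assume $\phi'_r \smu \phi_r$. The new chain is a genuine MacLane chain of $\mu$ of the same length $r$ (the augmentation step $[\mu_{r-1};(\phi'_r,\la_r)]$ gives $\mu$ again, by Lemma \ref{unique}, since $\deg\phi'_r=\deg\phi_r$, $\mu_{r-1}(\phi'_r)=\mu_{r-1}(\phi_r)$ by Lemma \ref{bound}, and $\la_r$ is unchanged). All the numerical data $e_r, f_{r-1}, h_r, w_r, V_r$ and the fields $\F_i$, and the integers $\ell_i,\ell'_i$, coincide for the two chains. The only ingredients of Definition \ref{Rialpha} that could a priori differ are the elements $z_{r-1}$ and the constants $\ep_{r-1}(\alpha_j)$; but $z_{r-1}$ lives in $\F_r$ and is defined from $\Delta_{r-1}\to\Delta_r$, which does not involve $\phi_r$ at all, so it is unchanged, and hence so are the $\ep_{r-1}(\alpha_j)$. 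Finally, for a polynomial $g\in\ppa(\mu)$, its $\phi_r$-expansion and its $\phi'_r$-expansion have the same image under $H_\mu$ because $\phi_r\smu\phi'_r$ implies $H_\mu(\phi_r)=H_\mu(\phi'_r)$, and more concretely the coefficients $a_s$ of the two expansions are related by lower-order corrections that do not affect the monomials lying on $L_\alpha$. Cleanest is to invoke Corollary \ref{uniqueness}: since $R_{r,\alpha}$ is determined by the $\mu$-equivalence class of $g$ together with $\alpha$ — once one knows that $R_{r,\alpha}$ factors through $\hm(g)$, which is what Theorem \ref{Hmug} plus injectivity of $\rb_{r,\alpha}$ (Corollary \ref{nonzero}) gives — and since $\hm(g)$, $y_r=H_\mu(\ga_r)$, $x_r$, $p_r$, $\varphi_r(\alpha)$ are all intrinsic to $\mu$ and $\alpha$ (they do not reference the key polynomial, only $\phi_r$ up to $\mu$-equivalence via Corollary \ref{Hmuphi} and Lemma \ref{associate}), the equality $R'_{r,\alpha}=R_{r,\alpha}$ follows. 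I would phrase it this way to avoid re-expanding polynomials.

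For item (2): first the combinatorial claim. If $e_r>1$, then $\mu(\phi_r)=w_r+\la_r$ has $e(\mu)$-denominator divisible by $e_r$, while any $a$ with $\deg a<m_r$ has $\mu(a)\in\Gamma(\mu_{r-1})$ whose $e(\mu)$-denominator is prime to $e_r$; so $\mu(a)>\mu(\phi_r)$ is forced by $\mu(a)\ge\mu(\phi_r)$ (equality is impossible), hence $\phi'_r\smu\phi_r$ always. If $e_r=1$, then $\mu(\phi_r)=w_r+\la_r\in\Gamma(\mu_{r-1})$, and by Corollary \ref{prescribed} (applied at level $r-1$, with $s=0$) there is $a\in K[x]$, $\deg a<m_r$, with $\mu_{r-1}(a)=w_r+\la_r=\mu(\phi_r)$; then $\mu(a)=\mu(\phi_r)$, so $\phi'_r=\phi_r+a\not\smu\phi_r$ (indeed $\hm(\phi'_r)=\hm(\phi_r)+\hm(a)$ and $\hm(a)\ne0$, and one checks $\hm(\phi_r)+\hm(a)\ne\hm(\phi_r)$ since $\hm(a)\ne 0$; note $R_r(a)=:\eta\in\F_r^*$ by Corollary \ref{nonzero} since $a\ne 0$ has $s(a)=s'(a)=0$). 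Now for the substitution formula, with $e_r=1$: here $x_r^{e_r}=x_r = p_r^{h_r}y_r$, so $y_r=x_r p_r^{-h_r}$, and $\gamma_r=\Phi_r^{e_r}\pi_r^{-h_r}$ with $\Phi_r=\phi_r\pi_r^{-V_r}$; the key point is that in the new chain $\Phi'_r=\phi'_r\pi_r^{-V_r}=\Phi_r+a\pi_r^{-V_r}$ and $\mu(a\pi_r^{-V_r})=\mu(\Phi_r)=\la_r$, so $\hm(\Phi'_r)=\hm(\Phi_r)+\hm(a)p_r^{-V_r}$, i.e. $x'_r=x_r+\eta' x$-correction; translating through $y'_r=x'_r p_r^{-h_r}$ one gets $y'_r=y_r+\eta$ where $\eta=R_r(a)\in\F_r$ (using Theorem \ref{Hmug} on $a$: $\hm(a)=\varphi_r(\mu(a))R_r(a)(y_r)=p_r^{V_r h_r}\cdots\eta$ up to a unit power of $p_r$ matching $-h_r V_r$ — the bookkeeping of $p_r$-exponents is the one routine computation). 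Then apply Theorem \ref{Hmug} in the new chain: $\hm(g)=\varphi'_r(\alpha)R'_{r,\alpha}(g)(y'_r)$, compare with $\hm(g)=\varphi_r(\alpha)R_{r,\alpha}(g)(y_r)$, note $\varphi'_r(\alpha)=\varphi_r(\alpha)$ (as $s(\alpha)=0$ when $e_r=1$, so $\varphi_r(\alpha)=p_r^{u(\alpha)}$ depends only on the level-$r-1$ data), and use injectivity of the substitution $y\mapsto y_r$ (Theorem \ref{Delta}) to deduce $R'_{r,\alpha}(g)(y)=R_{r,\alpha}(g)(y-\eta)$ from $R'_{r,\alpha}(g)(y_r)=R_{r,\alpha}(g)(y_r)$ together with $y'_r = y_r+\eta$: substituting $y\mapsto y_r$ in $R'_{r,\alpha}(g)(y)$ must be read as substituting $y\mapsto y'_r = y_r+\eta$ in the new chain's polynomial variable, which forces the shift.

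**Main obstacle.** The genuine work — and the only place one must be careful — is item (2)'s explicit identification of the shift constant as exactly $\eta = R_r(a)$, with the correct sign and no stray unit. This requires tracking the powers of $p_r$ in Definition \ref{ratfracs} and in $\varphi_r$ through the identity $\gamma_r = \Phi_r^{e_r}\pi_r^{-h_r}$ with $e_r=1$, and verifying that the $p_r$-exponents introduced by $\hm(a) = \varphi_r(\mu(a))\,R_r(a)(y_r)$ (with $\mu(a)=\la_r=\mu(\phi_r)$, $s(\mu(a))=0$, $u(\mu(a)) = e(\mu_{r-1})\la_r = h_r + e(\mu_{r-1})w_r = h_r + V_r$ when $e_r=1$) cancel correctly against the normalizing factor $\pi_r^{-h_r}$ in $\gamma_r$, so that the induced correction to $y_r = H_\mu(\gamma_r)$ is the degree-zero element $\eta\in\F_r$ and nothing else. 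Everything else is a matter of citing Theorem \ref{Hmug}, Theorem \ref{Delta}, Corollary \ref{prescribed}, Corollary \ref{nonzero} and Lemma \ref{groups} and assembling them; item (1) is essentially immediate once one observes that $R_{r,\alpha}$ factors through $\hm$ and that $z_{r-1}$, $p_r$, $x_r$, $y_r$, $\varphi_r(\alpha)$ are insensitive to replacing $\phi_r$ by a $\mu$-equivalent polynomial.
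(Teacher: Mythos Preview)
Your approach is correct and takes a genuinely different route from the paper's. The paper works directly with the recursive Definition \ref{Rialpha}: it uses Corollary \ref{Ralgebra} to reduce to monomials $b\phi_r^s$, observes that $R_{r,\beta}(b)=R'_{r,\beta}(b)$ for $\deg b<m_r$ (since the recursion only involves level $r-1$ data), and then compares $R_{r,\gamma}(\phi_r^s)$ with $R'_{r,\gamma}(\phi_r^s)$ by hand, using Corollary \ref{uniqueness} and equation (\ref{LC}) for item (1), and the direct computation $R'_{r,\delta}(\phi_r)=R'_{r,\delta}(-a+\phi'_r)=y-R_r(a)$ for item (2). You instead pass through Theorem \ref{Hmug}: you argue that $p_r$, $x_r$, $y_r$, $\varphi_r(\alpha)$ are unchanged in item (1) (since $\pi_r$ only involves $\phi_1,\dots,\phi_{r-1}$ by (\ref{phis}) and $H_\mu(\phi_r)=H_\mu(\phi'_r)$), and in item (2) you establish the key identity $y'_r=y_r+\eta$ in $\Delta$ and then read off the substitution from the two instances of Theorem \ref{Hmug} together with the transcendence of $y_r$ (Theorem \ref{Delta}). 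Your route is more conceptual and explains \emph{why} the shift is exactly $\eta$; the paper's route avoids the $p_r$-bookkeeping by staying inside the recursive definition.

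Two small things to clean up. First, in item (1), the claim that $x_r,p_r,y_r,\varphi_r(\alpha)$ are unchanged deserves one explicit sentence (it follows from (\ref{phis}) for $\pi_r$ and from $H_\mu(\phi_r)=H_\mu(\phi'_r)$ for $\Phi_r$); the passing reference to Corollary \ref{uniqueness} is a distraction here, since that corollary compares $R_{r,\alpha}(g)$ with $R_{r,\alpha}(h)$ for the \emph{same} chain. Second, in your bookkeeping sketch you wrote $u(\mu(a))=e(\mu_{r-1})\la_r=h_r+e(\mu_{r-1})w_r$; the first equality is a slip (you mean $e(\mu_{r-1})(w_r+\la_r)$), though your final value $u(\mu(a))=V_r+h_r$ is correct and is exactly what makes $\hm(a)\,p_r^{-V_r-h_r}=\eta$ land in $\F_r^*$, giving $y'_r=y_r+\eta$ as claimed.
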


\begin{proof}
 For any $g=\sum_{0\le s}a_s\phi_r^s\in \ppa(\mu)$, each term $a_s\phi_r^s$ belongs to $\ppa(\mu)$ and 
$$
R_{r,\alpha}(g)=\sum\nolimits_{0\le s}R_{r,\alpha}(a_s\phi_r^s),\quad
R'_{r,\alpha}(g)=\sum\nolimits_{0\le s}R'_{r,\alpha}(a_s\phi_r^s),
$$ 
by Corollary \ref{Ralgebra}. Hence, it is sufficient to compare the action of both operators on polynomials of the form $g=b\phi_r^s$ with $\deg b<\deg\phi_r$ and $\mu(g)=\alpha$. 
On the other hand, if $\beta=\mu(b)=\mu_{r-1}(b)$ and $\gamma=\alpha-\beta=\mu(\phi_r^s)$, Corollary \ref{Ralgebra} shows that 
$$
R_{r,\alpha}(b\phi_r^s)=R_{r,\beta}(b)R_{r,\gamma}(\phi_r^ s), \quad
R'_{r,\alpha}(b\phi_r^s)=R'_{r,\beta}(b)R'_{r,\gamma}(\phi_r^ s).
$$
Since $R_{r,\beta}(b)=\ep_{r-1}(\beta)R_{r-1,\beta}(b)(z_{r-1})=R'_{r,\beta}(b)\in\F_r^*$, we need only to compare $R_{r,\gamma}(\phi_r^s)$ with $R'_{r,\gamma}(\phi_r^s)$.

If $\phi'_r\smu\phi_r$, then Corollary \ref{uniqueness} and equation (\ref{LC}) show that $R'_{r,\gamma}(\phi_r^s)=R'_{r,\gamma}((\phi_r')^s)=y^{\lfloor s/e_r \rfloor}=R_{r,\gamma}(\phi_r^s)$. This proves item 1.

If $e_r>1$, then $\la_r\not\in \Gamma_{r-1}$ and $\mu(\phi_r)=\mu_{r-1}(\phi_r)+\la_r\not\in \Gamma_{r-1}$. Hence,  $\mu(\phi_r)\ne\mu(a)=\mu_{r-1}(a)$ for any $a\in K[x]$ with $\deg a<\deg\phi_r$; thus, $\phi_r\smu\phi'_r$.
If $e_r=1$, then $\Gamma(\mu)=\Gamma_{r-1}$, and the proof of Lemma \ref{groups} shows that $\mu(\phi_r)=\mu(a)$ for some $a\in K[x]$ with $\deg a<\deg\phi_r$. 

Finally, suppose that  $e_r=1$ and $\phi'_r\not\smu\phi_r$. By Corollary \ref{Ralgebra}, $R'_{r,\gamma}(\phi_r^s)=R'_{r,\delta}(\phi_r)^s$, where $\delta=\mu(\phi_r)$. Since $R_{r,\gamma}(\phi_r^s)=y^s$, we need only to show that $R'_{r,\delta}(\phi_r)=y-R_r(a)$. In fact,
$$
R'_{r,\delta}(\phi_r)=R'_{r,\delta}(-a+\phi'_r)=-R'_{r,\delta}(a)+R'_{r,\delta}(\phi'_r)=-R_{r,\delta}(a)+y.
$$
Since $\mu(a)=\mu(\phi_r)=\delta$, Corollary \ref{nonzero} shows that $R_{r,\delta}(a)=R_r(a)\in\F_r^*$. 
\end{proof}


Therefore, in the case $e_r(\mu)>1$, the improper class of key polynomials always corresponds to $y\in\P(\F_r)$. In the case $e_r(\mu)=1$, we have $\phi_r\leftrightarrow y$, but an adequate choice of the MacLane chain changes the 1-1 correspondence via $\psi(y)\leftrightarrow\psi(y-\eta)$ in $\P(\F_r)$, for a certain $\eta\in\F_r^*$.
Thus, in this case, for any given $\phi\in\kpm$, we may always find a MacLane chain for $\mu$ such that $\phi\not\smu\phi_r$.

\begin{corollary}\label{criterionproper}
A key polynomial $\phi$ for $\mu$ is proper if and only if there exists a MacLane chain of $\mu$ such that $\phi\not\smu\phi_r$, where $r$ is the length of the chain.\hfill{$\Box$}
\end{corollary}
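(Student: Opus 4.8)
The plan is to establish both implications of Corollary~\ref{criterionproper} by combining the characterization of $\mu$-equivalence classes of key polynomials with the flexibility of MacLane chains analyzed in Lemma~\ref{choice}. The statement to prove is that a key polynomial $\phi$ for $\mu$ is proper (in the sense of Definition~\ref{strong}, i.e.\ $e_r(\mu)m_r(\mu)\mid\deg\phi$) if and only if there is \emph{some} MacLane chain of $\mu$, say of length $r$, for which $\phi\not\smu\phi_r$.

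First I would prove the ``only if'' direction. Assume $\phi$ is proper. Fix any MacLane chain of $\mu$ of length $r$, with last key polynomial $\phi_r$. If already $\phi\not\smu\phi_r$, we are done. Otherwise $\phi\smu\phi_r$, and by Proposition~\ref{samefiber} (applied with $\phi'=\phi_r$) this means $\deg\phi=m_r(\mu)$, since the class of $\phi_r$ is the unique $\mu$-equivalence class of key polynomials of degree $m_r$ and any $\phi\smu\phi_r$ has $\deg\phi=\deg\phi_r=m_r$ by Lemma~\ref{mid=sim}. But properness of $\phi$ forces $e_r(\mu)m_r(\mu)\mid\deg\phi=m_r(\mu)$, hence $e_r(\mu)=1$. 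Now invoke item~(2) of Lemma~\ref{choice}: when $e_r=1$ we may replace $\phi_r$ by $\phi_r'=\phi_r+a$ for a suitable $a\in K[x]$ with $\deg a<m_r$ and $\mu(a)=\mu(\phi_r)$, obtaining a new MacLane chain of $\mu$ with $\phi_r'\not\smu\phi_r$. Since $\phi\smu\phi_r\not\smu\phi_r'$, we get $\phi\not\smu\phi_r'$ in this new chain, as required.

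Next the ``if'' direction. Suppose there is a MacLane chain of $\mu$ of some length $r$ with $\phi\not\smu\phi_r$. Then $\phi$ falls under case~(2) of Lemma~\ref{kp}: $s(\phi)=0$, $\deg\phi=s'(\phi)m_r$, and $R_r(\phi)$ is irreducible in $\F_r[y]$; moreover $\deg\phi=e_r(\deg R_r(\phi))m_r$. In particular $\deg\phi$ is a multiple of $e_r m_r$. It remains to note that the pair $(e_r,m_r)$ arising from \emph{this} (possibly non-optimal) chain still satisfies $e_r m_r = e_r(\mu)m_r(\mu)$; indeed, by Lemma~\ref{existence} one may contract any repeated-degree steps of the chain to obtain an optimal MacLane chain of $\mu$ without changing the last key polynomial's degree or the relevant ramification index, so $e_r m_r$ equals the intrinsic quantity $e_r(\mu)m_r(\mu)$ of Definition~\ref{strong}. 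Hence $e_r(\mu)m_r(\mu)\mid\deg\phi$, i.e.\ $\phi$ is proper.

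The main obstacle is the bookkeeping in the ``if'' direction: one must be careful that a \emph{non-optimal} MacLane chain still yields the correct intrinsic value $e_r(\mu)m_r(\mu)$. The key point is that the last segment of any MacLane chain of $\mu$ — i.e.\ the last stretch of augmentation steps sharing a common degree — can be compressed to a single step via Lemma~\ref{existence}, producing an optimal chain with the same final $m_r$ and a ramification index $e_r(\mu)$ dividing the $e_r$ of the original chain; and when $\phi\not\smu\phi_r$, Lemma~\ref{kp} already forces $\deg\phi$ to be a multiple of $e_r m_r$, which is itself a multiple of $e_r(\mu)m_r(\mu)$. Conversely $e_r m_r$ divides $\deg\phi=e_r(\deg R_r(\phi))m_r$ trivially, so no subtlety arises there. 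Everything else is a direct appeal to Lemmas~\ref{choice},~\ref{kp} and Proposition~\ref{samefiber}.
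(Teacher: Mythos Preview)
Your argument is correct in outline and follows the same route as the paper's discussion preceding the corollary: use Lemma~\ref{kp} to see that $\phi\not\smu\phi_r$ forces $e_r m_r\mid\deg\phi$, and use Lemma~\ref{choice}(2) to manufacture a chain with $\phi\not\smu\phi_r$ when $e_r=1$. The one point that deserves a cleaner justification is your passage between the data $(e_r,m_r)$ of an \emph{arbitrary} MacLane chain and the intrinsic invariants $(e_r(\mu),m_r(\mu))$ of Definition~\ref{strong}. You assert that contraction via Lemma~\ref{existence} leaves $m_r$ unchanged and produces an $e_r(\mu)$ dividing the original $e_r$, but you do not prove this; in fact equality holds in both cases. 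For $m_r$: since $\phi_r$ and $\phi_r^{\mathrm{opt}}$ are each key polynomials for $\mu$, Lemma~\ref{kp} gives $m_r\mid m_r(\mu)$ and $m_r(\mu)\mid m_r$. For $e_r$: the proof of Lemma~\ref{groups} shows $\Gamma(\mu_{r-1})=\Gamma_{\phi_r}(\mu)=\{\mu(g):0\ne g,\ \deg g<m_r\}$, and since $m_r=m_r(\mu)$ this group depends only on $\mu$; hence $e(\mu_{r-1})$ is intrinsic and $e_r=e(\mu)/e(\mu_{r-1})=e_r(\mu)$. With this in hand both directions go through exactly as you wrote, and in the ``only if'' direction you could simplify further by just starting from an optimal chain, avoiding the issue altogether.
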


\begin{lemma}\label{multproper}
For non-zero $g,h\in K[x]$ with $g$ $\mu$-proper, we have $\rr(gh)=\rr(g)\rr(h)$. 
\end{lemma}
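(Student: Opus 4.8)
The plan is to reduce the multiplicativity of $\rr$ to that of the residual polynomial operator $R_r$, via the explicit formula of Corollary~\ref{RR}, and then to observe that $\mu$-properness is exactly what makes the remaining ceiling-function bookkeeping come out right.

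First I would invoke Corollary~\ref{RR}: for every nonzero $f\in K[x]$ the ideal $\rr(f)$ is the principal ideal of $\Delta$ generated by $y_r^{\,\lceil s(f)/e_r\rceil}R_r(f)(y_r)$. Since a product of principal ideals is generated by the product of the generators, this gives
\[
\rr(g)\,\rr(h)=y_r^{\,\lceil s(g)/e_r\rceil+\lceil s(h)/e_r\rceil}\,R_r(g)(y_r)\,R_r(h)(y_r)\,\Delta .
\]
Now $R_r(gh)=R_r(g)R_r(h)$ by Corollary~\ref{Ralgebra}, while $s(gh)=s(g)+s(h)$ because $S_{\la_r}(gh)=S_{\la_r}(g)+S_{\la_r}(h)$ (Corollary~\ref{sumSla}) and the left endpoint of a sum of segments is the sum of the left endpoints. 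Hence
\[
\rr(gh)=y_r^{\,\lceil(s(g)+s(h))/e_r\rceil}\,R_r(g)(y_r)\,R_r(h)(y_r)\,\Delta ,
\]
and the assertion is reduced to the numerical identity $\lceil(s(g)+s(h))/e_r\rceil=\lceil s(g)/e_r\rceil+\lceil s(h)/e_r\rceil$; from here on $h$ is irrelevant and everything is a matter of controlling $s(g)$.

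Next I would settle this identity by a short case analysis. If $e_r=1$ (in particular if $r=0$) the three ceiling functions are the identity map and there is nothing to prove. If $e_r>1$, then $\phi_r$ is a key polynomial for $\mu$ (Lemma~\ref{phi}) of degree $m_r=m_r(\mu)$, hence, by Definition~\ref{strong} and the discussion following it, it represents the unique improper $\mu$-equivalence class of key polynomials; since $g$ is $\mu$-proper this forces $\phi_r\nmid_\mu g$, i.e.\ $s(g)=\ord_{\mu,\phi_r}(g)=0$ by Lemma~\ref{additivity}(2). Then both sides of the numerical identity equal $\lceil s(h)/e_r\rceil$, whence the identity and hence the Lemma.

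The argument is almost entirely bookkeeping; the only substantive point is the single step in which $\mu$-properness is converted into $s(g)=0$, that is, the identification of the improper $\mu$-class of key polynomials with the class of $\phi_r$ when $e_r>1$. This is precisely the contribution that would otherwise destroy multiplicativity, as one already sees from $\rr(\phi_r^{e_r})=y_r\Delta\ne y_r^{e_r}\Delta=\rr(\phi_r)^{e_r}$ for $e_r>1$; no genuine obstacle is hidden here.
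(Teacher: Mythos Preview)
Your proof is correct and follows essentially the same route as the paper's: reduce via Corollary~\ref{RR} and Corollary~\ref{Ralgebra} to the ceiling identity $\lceil(s(g)+s(h))/e_r\rceil=\lceil s(g)/e_r\rceil+\lceil s(h)/e_r\rceil$, then dispose of it by showing $s(g)=0$ when $e_r>1$. The only cosmetic difference is that the paper phrases the last step as $x_r\nmid H_\mu(g)$ (via Theorem~\ref{Hmug} and Lemma~\ref{associate}) while you phrase it as $\phi_r\nmid_\mu g$ (via Lemma~\ref{additivity}(2)); since $x_r$ and $H_\mu(\phi_r)$ are associate, these are the same observation.
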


\begin{proof}
Denote $e=e_r(\mu)$. By Corollary \ref{RR} and Theorem \ref{Delta}, $\rr(gh)=\rr(g)\rr(h)$ is equivalent to the following equality, up to factors in $\F_r^*$:
$$
y^{\lceil s(gh)/e\rceil}R_{r}(gh)=
y^{\lceil s(g)/e\rceil}R_{r}(g)
y^{\lceil s(h)/e\rceil}R_{r}(h).
$$
By Lemma \ref{additivity}, $s(gh)=s(g)+s(h)$, and by Corollary \ref{Ralgebra}, $R_{r}(gh)=R_{r}(g)R_{r}(h)$. Thus, we want to show that 
\begin{equation}\label{end}
\lceil (s(g)+s(h))/e\rceil=
\lceil s(g)/e\rceil+\lceil s(h)/e\rceil.
\end{equation}
If $e=1$ this equality is obvious. If $e>1$, we have $x_r\nmid \hm(g)$, because $g$ is $\mu$-proper. By Theorem \ref{Hmug}, $x_r^{s(g)}\mid \hm(g)$, so that $s(g)=0$ and (\ref{end}) is obvious too. 
\end{proof}

\begin{proposition}\label{nextlength}
Let $\phi\in\kpm$ and $\ll=\rr(\phi)$. For any non-zero $g\in K[x]$:
$$
\ord_{\ll}(\rr(g))=
\begin{cases}
\ord_{\mu,\phi}(g),&\mbox{ if $\phi$ is proper},\\
\lceil\ord_{\mu,\phi}(g)/e_r(\mu)\rceil,&\mbox{ if $\phi$ is improper}. 
\end{cases}
$$
where $\ord_\ll(\rr(g))$ is the largest non-negative integer $n$ such that $\ll^n\mid \rr(g)$.
\end{proposition}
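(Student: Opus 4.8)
The plan is to transport everything to the polynomial ring $\F_r[y]$ through the isomorphism $\rb_{r,0}\colon\Delta\to\F_r[y]$ of Theorem \ref{Delta} (sending $y_r\mapsto y$), under which $\ord_\ll$ becomes the multiplicity of an irreducible factor. Since $\ord_\ll(\rr(g))$, $\ord_{\mu,\phi}(g)$, $e_r(\mu)$ and the property of $\phi$ being proper or improper are all intrinsic to $\mu,\phi,g$, I would first assume the fixed MacLane chain is optimal, so that $r$ is the depth and $e_r=e_r(\mu)$, $m_r=m_r(\mu)$. Set $\psi:=R_r(\phi)$ if $\phi\not\smu\phi_r$ and $\psi:=y$ if $\phi\smu\phi_r$; then $\psi$ is monic irreducible in $\F_r[y]$, Corollary \ref{identification} gives $\rb_{r,0}(\ll)=\psi\,\F_r[y]$, and Corollary \ref{RR} gives $\rb_{r,0}(\rr(g))=y^{\lceil s(g)/e_r\rceil}R_r(g)(y)\,\F_r[y]$, whence
$$\ord_\ll\bigl(\rr(g)\bigr)=\ord_\psi\!\bigl(y^{\lceil s(g)/e_r\rceil}R_r(g)\bigr),$$
where $\ord_\psi$ denotes multiplicity of the irreducible $\psi$. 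It then remains to evaluate the right-hand side in the two cases $\psi\ne y$ and $\psi=y$.

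Case $\phi\not\smu\phi_r$. Here $\phi$ is proper (Corollary \ref{criterionproper}, the present chain being a witness) and $\psi\ne y$ because $\psi(0)\ne0$ (Corollary \ref{nonzero}), so the right-hand side equals $\ord_\psi(R_r(g))$, and I must show this is $\ord_{\mu,\phi}(g)$. By Theorem \ref{Hmug}, $\hm(g)=x_r^{s(g)}p_r^{u(g)}R_r(g)(y_r)$, and by Corollary \ref{Hmuphi} (using $s(\phi)=0$ and $p_r\in\ggm^*$) $\hm(\phi)$ is associate in $\ggm$ to $\psi(y_r)$. Now $\psi(y_r)$ is prime in $\ggm$ (Corollary \ref{primes}), $x_r$ is prime in $\ggm$ (Lemma \ref{associate}), and they are non-associate since $\phi\not\smu\phi_r$ (Proposition \ref{samefiber}); hence cancelling the unit $p_r^{u(g)}$ and the $x_r$-power yields $\ord_{\mu,\phi}(g)=\ord_{\psi(y_r)}(\hm(g))=\ord_{\psi(y_r)}(R_r(g)(y_r))$, the orders being computed in $\ggm$. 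Finally, since $\ggm$ is graded and $R_r(g)(y_r),\psi(y_r)$ are homogeneous of degree $0$, any factorization of $R_r(g)(y_r)$ by $\psi(y_r)$ in $\ggm$ takes place already inside the degree-zero subring $\Delta$, so this last order equals the one in $\Delta$, i.e.\ $\ord_\psi(R_r(g))$ under $\rb_{r,0}$.

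Case $\phi\smu\phi_r$. Then $\psi=y$ and $\deg\phi=\deg\phi_r=m_r(\mu)$, so $\phi$ is proper iff $e_r(\mu)=1$ and improper iff $e_r(\mu)>1$; the right-hand side is $\lceil s(g)/e_r\rceil+\ord_y(R_r(g))$. For $r\ge1$ we have $\ord_y(R_r(g))=0$ since $R_r(g)(0)\ne0$ (Corollary \ref{nonzero}), and $s(g)=\ord_{\mu,\phi_r}(g)=\ord_{\mu,\phi}(g)$ by the second item of Lemma \ref{additivity} together with $\hm(\phi)=\hm(\phi_r)$; thus $\ord_\ll(\rr(g))=\lceil\ord_{\mu,\phi}(g)/e_r(\mu)\rceil$, which is the improper formula when $e_r(\mu)>1$ and reduces to $\ord_{\mu,\phi}(g)$ when $e_r(\mu)=1$. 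For $r=0$ one has $s(g)=0$, $e_0=1$ and $\hm(\phi)=\hm(x)=y_0$, so $\ord_\ll(\rr(g))=\ord_y(R_0(g))=\ord_{y_0}(\hm(g))=\ord_{\mu_0,\phi}(g)$ directly (and $\phi$ is then automatically proper).

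The step I expect to be the main obstacle is the identity $\ord_\psi(R_r(g))=\ord_{\mu,\phi}(g)$ of the proper case: it hinges on the relation $\hm(\phi)\sim\psi(y_r)$, which is why one must pass to a chain with $\phi\not\smu\phi_r$ (forcing $s(\phi)=0$), and on the fact that the order of the prime $\psi(y_r)$ is unchanged whether computed in $\ggm$ or in its degree-zero part $\Delta$ — true by the grading, but needing to be said. The remaining ingredients — rewriting $\rr(g)$ and $\ll$ inside $\F_r[y]$ via Corollaries \ref{RR} and \ref{identification}, and the ceiling bookkeeping in the case $\phi\smu\phi_r$ — are routine.
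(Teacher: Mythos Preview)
Your proof is correct but follows a different route from the paper's. The paper argues via the unique factorization of Corollary \ref{homogeneousprimes}: writing $g\smu\prod_{\phi\in\pset}\phi^{a_\phi}$ with $a_\phi=\ord_{\mu,\phi}(g)$, it applies $\rr$ and invokes the multiplicativity of Lemma \ref{multproper} to get $\rr(g)=\prod_\phi\rr(\phi^{a_\phi})$, then computes each $\rr(\phi^{a_\phi})$ separately (trivially $\rr(\phi)^{a_\phi}$ in the proper case, and via Corollary \ref{RR} and equation (\ref{LC}) in the improper case). You instead bypass both the factorization and Lemma \ref{multproper}: you write down $\rr(g)$ directly from Corollary \ref{RR}, transport to $\F_r[y]$, and compute the $\psi$-multiplicity there by relating it to $\ord_{\psi(y_r)}(\hm(g))$ in $\ggm$. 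Your approach is more hands-on and essentially proves Corollary \ref{nextlength2} along the way rather than deducing it afterwards; the paper's approach is more structural and makes the role of Lemma \ref{multproper} explicit. One small remark: in your proper case, the step ``cancelling the $x_r$-power'' to pass from $\ord_{\psi(y_r)}(\hm(g))$ to $\ord_{\psi(y_r)}(R_r(g)(y_r))$ implicitly uses the unique homogeneous factorization of Corollary \ref{homogeneousprimes} (to know that the prime $x_r$ contributes nothing to the $\psi(y_r)$-order), so you are not entirely avoiding that result, just using it in a lighter form.
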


\begin{proof}
Denote $a_\phi=\ord_{\mu,\phi}(g)$: If we apply $\rr$ to both terms of the factorization  (\ref{factorization}), Lemma \ref{multproper} shows that:
$$
\rr(g)=\rr\left(\prod\nolimits_{\phi\in \pset}\phi^{a_\phi}\right)=\prod\nolimits_{\phi\in \pset}\rr(\phi^{a_\phi}).
$$
For all proper $\phi\in\pset$ we have $\rr(\phi^{a_\phi})=\rr(\phi)^{a_\phi}$, by Lemma \ref{multproper}. For the improper $\phi\in\pset$ (if $e_r(\mu)>1$), we have $\rr(\phi^{a_\phi})=\rr(\phi)^{\lceil a_\phi/e_r(\mu)\rceil}$ by
Corollary \ref{RR}, equation (\ref{LC}) and Corollary \ref{Hmuphi}.
\end{proof}

The next result follows from Proposition \ref{nextlength} and Corollary \ref{Hmuphi}.

\begin{corollary}\label{nextlength2}
Let $\phi$ be a proper key polynomial for $\mu$ and denote $\psi=R_r(\phi)$. Then, $\ord_\psi(R_r(g))=\ord_{\mu,\phi}(g)$ for any non-zero $g\in K[x]$.\hfill{$\Box$} 
\end{corollary}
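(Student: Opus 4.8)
The plan is to push everything through the isomorphism $\rb_{r,0}\colon\Delta\to\F_r[y]$ of Theorem \ref{Delta}, so that the $\ll$-adic divisibility of ideals in $\Delta$ that appears in Proposition \ref{nextlength} becomes ordinary divisibility of polynomials in the principal ideal domain $\F_r[y]$; the corollary will then drop out by reading off the $\psi$-adic valuation and quoting the ``proper'' case of Proposition \ref{nextlength}.

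Concretely, I would proceed as follows. First I would reduce to the case $\phi\not\smu\phi_r$. This is automatic for a proper key polynomial unless $e_r(\mu)=1$ and $\deg\phi=m_r$; in that borderline case $R_r(\phi)=R_r(\phi_r)=1$ by Corollary \ref{uniqueness} and (\ref{LC}), and the statement is to be understood with $\psi=y$ as in Corollary \ref{identification} (equivalently, one may replace the given MacLane chain by one with $\phi\not\smu\phi_r$, using Corollary \ref{criterionproper}; this is harmless since Lemma \ref{choice} shows that the quantity $\ord_{R_r(\phi)}(R_r(g))$ does not change). So assume $\phi\not\smu\phi_r$. By Lemma \ref{kp}, $\psi=R_r(\phi)$ is then monic and irreducible in $\F_r[y]$, and $\psi\neq y$ because $R_r(\phi)(0)\neq0$ by Corollary \ref{nonzero}. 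Next, set $\ll=\rr(\phi)=\psi(y_r)\Delta$ by Corollary \ref{Hmuphi}, and recall from Corollary \ref{RR} that for any non-zero $g$ one has $\rr(g)=y_r^{\lceil s(g)/e_r\rceil}R_r(g)(y_r)\Delta$, with $R_r(g)\neq0$ by Corollary \ref{nonzero}. Applying $\rb_{r,0}$ (which sends $y_r\mapsto y$) carries $\ll$ to $\psi\,\F_r[y]$ and $\rr(g)$ to $y^{\lceil s(g)/e_r\rceil}R_r(g)\,\F_r[y]$. Since $\F_r[y]$ is a PID, $\ord_\ll(\rr(g))$ — the largest $n$ with $\ll^n\supseteq\rr(g)$ — is exactly the exponent of the irreducible $\psi$ in $y^{\lceil s(g)/e_r\rceil}R_r(g)$, and as $\psi\neq y$ this exponent equals $\ord_\psi(R_r(g))$. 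Finally, since $\phi$ is proper, Proposition \ref{nextlength} gives $\ord_\ll(\rr(g))=\ord_{\mu,\phi}(g)$, and combining the two identities yields $\ord_\psi(R_r(g))=\ord_{\mu,\phi}(g)$.

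I do not anticipate a real obstacle here: all the substance is already in Proposition \ref{nextlength}, in the explicit description of $\rr$ on key polynomials and on arbitrary polynomials (Corollaries \ref{Hmuphi} and \ref{RR}), and in the structure statement $\Delta\cong\F_r[y]$. The only points demanding a little care are the degenerate case $\phi\smu\phi_r$ (where $R_r(\phi)=1$ and one must invoke the convention of Corollary \ref{identification}, or change the chain via Corollaries \ref{criterionproper} and \ref{choice}) and the elementary observation that a power of $y$ contributes nothing to the $\psi$-adic valuation because $\psi\neq y$.
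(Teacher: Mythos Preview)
Your argument is correct and is exactly what the paper has in mind: the authors say only that the result ``follows from Proposition \ref{nextlength} and Corollary \ref{Hmuphi}'', and you have spelled out precisely that deduction, via Corollary \ref{RR} and the isomorphism $\Delta\simeq\F_r[y]$ of Theorem \ref{Delta}, together with the crucial observation $\psi\neq y$. Your treatment of the degenerate case $\phi\smu\phi_r$ is more careful than the paper's (the authors tacitly assume $\phi\not\smu\phi_r$, since otherwise $\psi=R_r(\phi)=1$ and the statement is vacuous; all applications in the paper are in that setting), though your chain-change remedy via Corollary \ref{criterionproper} and Lemma \ref{choice} is a reasonable way to make sense of it.
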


\section{MacLane-Okutsu invariants of prime polynomials}\label{secOkutsu} 
In this section, we apply inductive valuations $\mu$ on $K(x)$ to polynomials in $K_v[x]$, without any explicit mention to the natural extension of $\mu$ to $K_v(x)$ described in Proposition \ref{KKv}.   

\subsection{Prime polynomials and inductive valuations}
\begin{definition}\label{prime}
Let $\P=\P(\oo_v)\subset\oo_v[x]$ be the set of all monic irreducible polynomials in $\oo_v[x]$. We say that an element in $\P$ is a \emph{prime polynomial} with respect to $v$. 
\end{definition}

Let $F\in\P$ be a prime polynomial and fix $\t\in\kb$ a root of $F$. Let $K_F=K_v(\t)$ be the finite extension of $K_v$ generated by $\t$, $\oo_F$ the ring of integers of $K_F$, $\m_F$ the maximal ideal and $\F_F$ the residue class field. 
We have $\deg F=e(F)f(F)$, where $e(F)$, $f(F)$ are the ramification index and residual degree of $K_F/K_v$, respectively.

In coherence with section \ref{subsecRideal}, we denote by $\mu_{\infty,F}$ the pseudo-valuation on $K[x]$ defined by $\mu_{\infty,F}(g)=v(g(\t))$ for any $g\in K[x]$.
 
\begin {lemma}\label{symmetry}
Let $F,F'\in\P$ be two prime polynomials, and let $\t,\t'\in\kb$ be roots of $F,F'$, respectively. Then, $v(F(\t'))/\deg(F)=v(F'(\t))/\deg(F')$. 
\end {lemma}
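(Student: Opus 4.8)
The statement is a symmetry property of resultants: for prime polynomials $F,F'\in\P$ with roots $\t,\t'$, one has $v(F(\t'))/\deg F = v(F'(\t))/\deg F'$. This is essentially a restatement of the symmetry of the resultant $\res(F,F')$ up to sign, combined with the product formula for valuations over the extension generated by all roots.

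The plan is to work in a common algebraic closure $\kb$ of $K_v$ and use the factorizations $F(X)=\prod_{i}(X-\t_i)$ and $F'(X)=\prod_j(X-\t'_j)$, where the $\t_i$ are the conjugates of $\t=\t_1$ over $K_v$ and the $\t'_j$ the conjugates of $\t'=\t'_1$. Then
$$
\res(F,F') \;=\; \prod_{i,j}(\t_i-\t'_j) \;=\; \prod_i F'(\t_i) \;=\; (-1)^{\deg F\cdot \deg F'}\prod_j F(\t'_j).
$$
Applying $v$ (which is a valuation on $\kb$, extending the one on $K_v$, so $v$ of a unit times something equals $v$ of that something), we get
$$
v(\res(F,F')) \;=\; \sum_i v(F'(\t_i)) \;=\; \sum_j v(F(\t'_j)).
$$

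The key point is then that $v$ is invariant under the Galois action (more precisely, under any $K_v$-embedding into $\kb$), because $v$ is the unique extension of the valuation on $K_v$ to $\kb$. Hence $v(F'(\t_i))=v(F'(\t))$ for every $i$ and $v(F(\t'_j))=v(F(\t'))$ for every $j$. Since there are $\deg F$ values of $i$ and $\deg F'$ values of $j$, the two sums become $\deg F\cdot v(F'(\t))$ and $\deg F'\cdot v(F(\t))$ respectively, giving
$$
\deg F\cdot v(F'(\t)) \;=\; \deg F'\cdot v(F(\t)),
$$
which is exactly the claimed identity after dividing by $\deg F\cdot\deg F'$. Note $\mu_{\infty,F}(F')=v(F'(\t))$ by definition, so the statement can equivalently be phrased as $\mu_{\infty,F}(F')/\deg F = \mu_{\infty,F'}(F)/\deg F'$.

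I do not anticipate a real obstacle here; the only thing to be slightly careful about is the invariance of $v$ under conjugation, which follows from uniqueness of the extension of a discrete (hence Henselian-after-completion) valuation from $K_v$ to its algebraic closure — a fact already invoked in the excerpt (e.g.\ in the discussion preceding Corollary \ref{inO} and in the definition of $\mu_{\infty,\phi}$). One could alternatively avoid resultants entirely and argue directly: $v(F'(\t))=\sum_j v(\t-\t'_j)$ and $v(F(\t'))=\sum_i v(\t_i-\t')$, and then pair up the Galois orbit of $(\t,\t')$ in $\kb\times\kb$ to see that $\frac1{\deg F\deg F'}\sum_{i,j}v(\t_i-\t'_j)$ equals both averages; but the resultant formulation is cleaner.
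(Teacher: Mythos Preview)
Your proof is correct and follows essentially the same approach as the paper: both compute $v(\res(F,F'))$ via the two product expressions for the resultant and use the fact that $v$ is independent of the choice of root (by uniqueness of the extension of $v$ to $\kb$). The paper's version is slightly terser and explicitly notes that roots are counted with multiplicity in the inseparable case, which your phrase ``conjugates of $\t$'' leaves implicit but is covered by your factorization $F(X)=\prod_i(X-\t_i)$.
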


\begin{proof}
The value $v(F(\t'))$ does not depend on the choice of the root $\t'$; hence,
$$
\deg(F')v(F(\t'))=v(\res(F,F'))=\deg(F)v(F'(\t)),
$$ 
because $\res(F,F')=\prod_{\t\in Z(F)}F'(\t)=\pm\prod_{\t'\in Z(F')}F(\t')$, where $Z(F)$ is the multiset of roots of $F$ in $\kb$, with due count of multiplicities if $F$ is inseparable.
\end{proof}

We are interested in finding properties of prime polynomials leading to a certain comprehension of the structure of the set $\P$. An inductive valuation $\mu$ admitting a key polynomial $\phi$ such that $\phi\mmu F$ reveals many properties of $F$. 

\begin{theorem}\label{fundamental}
Let $F\in\P$ be a prime polynomial and $\t\in\kb$ a root of $F$. Let $\mu$ be an inductive valuation and $\phi$ a key polynomial for $\mu$. Then, $\phi\mmu F$ if and only if
$v(\phi(\t))>\mu(\phi)$. Moreover, if this condition holds, then:
\begin{enumerate}
\item Either $F=\phi$, or the Newton polygon $N_{\mu,\phi}(F)$ is one-sided of slope $-\la$, where $\la=v(\phi(\t))-\mu(\phi)\in\Q_{>0}$.    
\item Let $\ell=\ell(N_{\mu,\phi}(F))$. Then, $\deg F=\ell\deg \phi$ and $F$ is $\mu$-minimal.
\item  $F\smu\phi^\ell$, so that $\rr(F)$ is a power of the maximal ideal $\rr(\phi)$.  
\end{enumerate}
\end{theorem}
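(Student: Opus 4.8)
The plan is to prove the ``if and only if'' first, and then deduce the three numbered assertions as consequences, reading everything off the Newton polygon $N_{\mu,\phi}(F)$ together with the pseudo-valuation $\mu_{\infty,F}$.

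For the equivalence, recall that $\phi\mmu F$ means, by Definition \ref{mu}, that $\hm(\phi)$ divides $\hm(F)$ in $\ggm$, equivalently $\ord_{\mu,\phi}(F)\ge 1$, equivalently (Lemma \ref{length2}) that $\ell(\nph^-(F))\ge 1$, i.e. $N_{\mu,\phi}(F)$ has a side of negative slope. On the other hand, the composite pseudo-valuation $\mu_{\infty,F}$ satisfies $\mu\le\mu_{\infty,F}$ in the sense that $\mu(g)\le v(g(\t))$ for all $g$; this is immediate once $\mu$ is realized as a valuation that is dominated by $\mu_{\infty,F}$ --- here I would invoke Lemma \ref{minDegree} applied to the pair $\mu<\mu_{\infty,F}$ (valid provided $\mu\ne\mu_{\infty,F}$, the degenerate case $F=\phi$ being trivial), which produces a key polynomial $\phi^*$ of minimal degree with $\mu(\phi^*)<\mu_{\infty,F}(\phi^*)$. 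The key point is then: $\phi\mmu F$ happens exactly when $\phi$ is itself such a minimal-degree polynomial, i.e. exactly when $v(\phi(\t))>\mu(\phi)$. Concretely, writing the $\phi$-expansion $F=\sum_{s}a_s\phi^s$ with $\deg a_s<\deg\phi$: since $\deg a_s<\deg\phi$ forces $\phi\nmid_\mu a_s$, Proposition \ref{theta}(1) gives $v(a_s(\t))=\mu(a_s)$; hence $v(F(\t))=\mu_{\infty,F}(F)$ is computed, via the nonarchimedean inequality, from the cloud of points $\bigl(s,\,\mu(a_s)+s\,v(\phi(\t))\bigr)$, which is the affine image $\hh'(\nph(F))$ of the Newton polygon under $(s,y)\mapsto (s,y+s\,v(\phi(\t)))$ --- compare Lemma \ref{affinity} and Lemma \ref{muprimaN}. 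One sees $v(\phi(\t))>\mu(\phi)$ forces the $s=0$ term $\mu(a_0)$ to be strictly smaller than every $s\ge1$ term's $\mu$-value plus the excess, which is exactly the statement that $N_{\mu,\phi}(F)$ has a side of slope $-\la$ with $\la=v(\phi(\t))-\mu(\phi)>0$; conversely such a side gives back $v(\phi(\t))>\mu(\phi)$. I expect the bookkeeping identifying ``side of slope $-\la$ in $N_{\mu,\phi}(F)$'' with ``$\la=v(\phi(\t))-\mu(\phi)$'' to be the main obstacle, since one must simultaneously use that $a_0(\t)\ne 0$ (because $\deg a_0<\deg\phi=[K_F:K_v]$ forces $a_0(\t)\ne 0$ unless $a_0=0$, and $a_0=0$ would mean $\phi\mid F$ in $K_v[x]$, hence $F=\phi$) and that the right endpoint of the polygon sits at abscissa $\ell$ with the $s=\ell$ term $\phi^\ell$ contributing ordinate $\ell\,v(\phi(\t))$ after the affinity.

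For the numbered consequences, assume $\phi\mmu F$ and $F\ne\phi$. \emph{(1)} The affinity argument above shows $\hh'(N_{\mu,\phi}(F))$ is the principal Newton polygon of $F$ with respect to $\mu_{\infty,F}=[\mu;(\phi,\la)]$ (Lemma \ref{affinity}), and since $F$ is irreducible over $K_v$ its $\mu_{\infty,F}$-Newton polygon must be one-sided: any genuine vertex would, via the Theorem of the product (Theorem \ref{product}), exhibit a nontrivial factorization of $\hm(F)$ and hence, by the standard lifting argument in the proof of Lemma \ref{irredKv}, a factorization of $F$ in $K_v[x]$. Thus $N_{\mu,\phi}(F)=S_\la(F)$ is the single side of slope $-\la$, starting at abscissa $0$ (the $a_0$-term is on it) and ending at $\ell$. \emph{(2)} From $F=\sum_{s\le\ell}a_s\phi^s$ with $a_\ell=1$ (as $F$ is monic in $\phi$) we get $\deg F=\ell\deg\phi$, and $F$ is $\mu$-minimal by Lemma \ref{minimal}, whose condition (3) $\deg F=s'(F)\deg\phi=\ell\deg\phi$ is now satisfied since one-sidedness gives $s'(F)=\ell$. \emph{(3)} Since the side is one-sided of slope $-\la$ with left endpoint at abscissa $0$, we have $\mu_{\infty,F}(F)=\mu_{\infty,F}(\phi^\ell)=\ell\,v(\phi(\t))$, i.e. every $\phi$-expansion term other than $\phi^\ell$ has strictly larger $\mu_{\infty,F}$-value; hence $F\sim_{\mu}\phi^\ell$ in the sense that $\hm(F)=\hm(\phi)^\ell$ (all lower terms $\mu$-dominated), and therefore $\rr(F)=\rr(\phi^\ell)=\rr(\phi)^\ell$ by the basic properties \eqref{basic} of the residual ideal operator together with the multiplicativity of $\hm$ in \eqref{Hmu}; in particular $\rr(F)$ is a power of the maximal ideal $\rr(\phi)$ (Proposition \ref{sameideal}). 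This completes the proof.
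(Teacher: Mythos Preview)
Your argument has a genuine circular dependency at its core. You claim that Proposition~\ref{theta}(1) gives $v(a_s(\theta))=\mu(a_s)$, where $\theta$ is a root of $F$. But Proposition~\ref{theta} concerns $\mu_{\infty,\phi}$, not $\mu_{\infty,F}$: it yields $v(a_s(\theta_\phi))=\mu(a_s)$ for $\theta_\phi$ a root of $\phi$. The statement you actually need, that $\mu(a)=v(a(\theta))$ whenever $\deg a<\deg\phi$ and $\theta$ is a root of $F$, is exactly Corollary~\ref{efphiF} (equivalently the equality case of Theorem~\ref{computation}), and both are proved \emph{after} Theorem~\ref{fundamental} and rely on it. Likewise your invocation of Lemma~\ref{minDegree} for the pair $\mu<\mu_{\infty,F}$ presupposes $\mu\le\mu_{\infty,F}$, which is again Theorem~\ref{computation}. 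So the entire Newton-polygon-of-$F$-via-$\mu_{\infty,F}$ approach is not available at this point in the development.

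The paper sidesteps this by an auxiliary construction you are missing: take $g\in\oo_v[x]$ the minimal polynomial of $\phi(\theta)$ over $K_v$ and set $G=g(\phi(x))$. Then $F\mid G$ in $K_v[x]$, and the $\phi$-expansion of $G$ has \emph{constant} coefficients (the coefficients of $g$), so $N_{\mu,\phi}(G)$ can be read off directly from the $v$-values of these constants; it is one-sided of slope $-\lambda$. Now Theorem~\ref{product} applied to $G=F\cdot(G/F)$ transfers one-sidedness to $N_{\mu,\phi}(F)$ and gives the equivalence. There are two further gaps in your items~(1)--(3). First, your one-sidedness argument (``a vertex would lift to a factorization of $F$'') is not a result available in the paper; the Theorem of the product goes the other way. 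Second, your claim ``$a_\ell=1$ (as $F$ is monic in $\phi$)'' is unjustified: $a_\ell$ is monic, but could a priori have positive degree; the paper rules this out by combining Lemma~\ref{symmetry} (relating $v(a_0(\theta_\phi))$ to $v(\phi(\theta))$) with the bound of Theorem~\ref{preMLOk}. Finally, the conclusion $\rr(F)=\rr(\phi)^\ell$ is stronger than what the theorem asserts and is in general false when $\phi$ is improper (cf.\ Proposition~\ref{nextlength}); the theorem only claims $\rr(F)$ is \emph{some} power of $\rr(\phi)$, which follows immediately from $F\smu\phi^\ell$ and \eqref{basic}.
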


\begin{proof}
If $F=\phi$, then both conditions $\phi\mmu F$ and $v(\phi(\t))>\mu(\phi)$ hold. 

If $F\ne\phi$, consider the minimal polynomial $g(x)=\sum_{j=0}^kb_jx^j\in \oo_v[x]$ of $\phi(\t)$ over $K_v$.
All roots of $g(x)$ in $\kb$ have $v$-value equal to $\delta:=v(\phi(\t))\ge0$; hence,
$$
v(b_0)=k\delta,\quad v(b_j)\ge (k-j)\delta, \ 1\le j<k,\quad v(b_k)=0.
$$
Let us denote $N:=N_{\mu,\phi}$. These conditions imply that the Newton polygon $N(G)$ of the polynomial $G(x)=g(\phi(x))=\sum_{j=0}^kb_j\phi^j$ is one-sided of slope $\mu(\phi)-\delta=-\la$. Since $G(\t)=0$, the polynomial $F$ is a factor of $G$ and Theorem \ref{product} shows that
\begin{equation}\label{sum}
N^-(G)=N^-(F)+N^-(G/F).
\end{equation}
 
Now, if $\phi\mmu F$, then Lemma \ref{length2} shows that $\ell(N^-(F))=\ord_{\mu,\phi}(F)>0$; hence, $N^-(G)$ has positive length too, and $\la$ must be a positive rational number. Conversely, if $\la>0$, then $N(G)=N^-(G)$ and we have
$$
\ell(N(F))+\ell(N(G/F))\le\ell(N(G))=
\ell(N^-(G))=\ell(N^-(F))+\ell(N^-(G/F)).
$$
This implies $N(F)=N^-(F)$ and since $N^-(G)$ is one-sided of slope $-\la$, (\ref{sum}) shows that  
$N(F)$ is one-sided of slope $-\la$ too. This proves that $\phi\mmu F$ if and only if $\la>0$, and also that item 1 holds in this case. 

\begin{figure}
\caption{Newton polygon $N_{\mu,\phi}(F)$}\label{figNF}
\begin{center}
\setlength{\unitlength}{5mm}
\begin{picture}(6,7.5)
\put(8.8,2.8){$\bullet$}\put(-.2,5.8){$\bullet$}
\put(-1,1){\line(1,0){11}}\put(0,0){\line(0,1){7}}
\put(0,6){\line(3,-1){9}}
\multiput(9,.9)(0,.25){9}{\vrule height2pt}
\multiput(-.1,3)(.25,0){37}{\hbox to 2pt{\hrulefill }}
\put(-5.2,2.85){\begin{footnotesize}$\mu(F)=\mu(a_\ell\phi^\ell)$\end{footnotesize}}
\put(-7,5.8){\begin{footnotesize}$\mu(a_0)=\mu(a_\ell\phi^\ell)+\ell\la$\end{footnotesize}}
\put(-.45,.3){\begin{footnotesize}$0$\end{footnotesize}}
\put(8.85,.3){\begin{footnotesize}$\ell$\end{footnotesize}}
\put(4.5,4.5){\begin{footnotesize}$-\la$\end{footnotesize}}
\end{picture}
\end{center}
\end{figure}
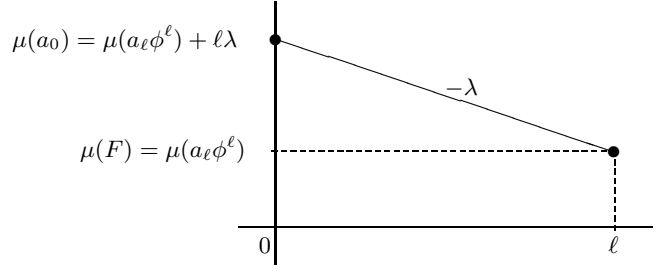

Let $F=\sum_{s=0}^\ell a_s\phi^s$ be the $\phi$-expansion of $F$. Let $\theta_\phi\in\kb$ be a root of $\phi$; by Lemma \ref{symmetry}, $v(\phi(\t))/\deg \phi=v(F(\theta_\phi))/\deg F=v(a_0(\theta_\phi))/\deg F$. On the other hand, since $\deg a_0<\deg\phi$, Proposition \ref{theta} shows that  $\mu(a_0)=v(a_0(\theta_\phi))$. Therefore, item 1 and a look at Figure \ref{figNF} show that
\begin{align*}
 \mu(a_\ell)+\ell(\mu(\phi)+\la)=\;&\mu(a_0)=v(a_0(\theta_\phi))=\dfrac{\deg F}{\deg \phi}v(\phi(\t))=\dfrac{\deg F}{\deg \phi}(\mu(\phi)+\la)\\=\;&\dfrac{\deg a_\ell+\ell\deg \phi}{\deg \phi}(\mu(\phi)+\la)=\left(\dfrac{\deg a_\ell}{\deg\phi}+\ell\right)(\mu(\phi)+\la).
\end{align*}
If $\deg a_\ell>0$, then $a_\ell$ would be a monic polynomial satisfying an inequality that contradicts Theorem \ref{preMLOk}:
$$
\dfrac{\mu(a_\ell)}{\deg a_\ell}=
\dfrac{\mu(\phi)+\la}{\deg \phi}>
\dfrac{\mu(\phi)}{\deg \phi}.
$$ 
Therefore, $a_\ell=1$ and $\deg F=\ell\deg \phi$. Also, $\mu(F)/\deg F=\mu(\phi^\ell)/\deg F=\mu(\phi)/\deg\phi$; thus, $F$ is $\mu$-minimal by Theorem \ref{preMLOk}. This proves item 2.

Item 3 follows from $\mu(F)=\mu(\phi^\ell)<\mu(a_s\phi^s)$ for all $s<\ell$. 
\end{proof}

\begin{corollary}\label{lower}
With the above notation, suppose that $\phi\mmu F$ and $\mu$ admits a MacLane chain of length $r$ as in (\ref{depth}) such that $\phi\not\smu\phi_r$. Then, for any $1\le i\le r$, the Newton polygon $N_i(F)$ is one-sided of slope $-\la_i$, we have $\mu(\phi_i)=v(\phi_i(\t))$ and
\begin{equation}\label{elli}
F\sim_{\mu_{i-1}}\phi_{i}^{\ell_{i}}, \qquad \deg F=\ell_i\deg \phi_{i}, \qquad R_{i-1}(F)=(\psi_{i-1})^{\ell_i},
\end{equation}
where $\ell_i:=\ell(N_i(F))$. 
In particular, $\ell_{i}=e_if_i\ell_{i+1}$ for all $1\le i< r$.
\end{corollary}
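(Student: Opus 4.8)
The plan is to apply Theorem~\ref{fundamental} repeatedly, walking down the MacLane chain from $\mu=\mu_r$ to $\mu_0$, and at each level transferring the divisibility $\phi_i\mmu F$ from one valuation to the previous one. First I would observe that by hypothesis $\phi\mmu F$ with $\phi\not\smu\phi_r$; by Proposition~\ref{samefiber} this forces $\rr(\phi)=\rr(\phi_r)$ would fail unless... actually, the cleanest route: since $\phi$ is a key polynomial for $\mu$ with $\phi\mmu F$, Theorem~\ref{fundamental}(3) gives $F\smu\phi^{\ell}$ where $\ell=\ell(N_{\mu,\phi}(F))$, hence $\phi_r\mmu F$ as well (because $\rr(F)$ is a power of $\rr(\phi)=\rr(\phi_r)$, using that $\phi\not\smu\phi_r$ together with Proposition~\ref{samefiber} and Theorem~\ref{fundamental}), and by Lemma~\ref{minimal0} and $\mu$-minimality of $F$ we get $v(\phi_r(\t))>\mu(\phi_r)$, i.e. the hypothesis of Theorem~\ref{fundamental} holds for $(\mu_r,\phi_r)$. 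Applying Theorem~\ref{fundamental} to $(\mu_{r-1},\phi_r)$ — noting $\phi_r\in\kp(\mu_{r-1})$ by Lemma~\ref{phi} — yields that $N_r(F)=N_{\mu_{r-1},\phi_r}(F)$ is one-sided of slope $-\la_r$, $\deg F=\ell_r\deg\phi_r$ with $\ell_r=\ell(N_r(F))$, $\mu_{r-1}(\phi_r)=v(\phi_r(\t))-\la_r$... wait, I need the clean statement $\mu(\phi_r)=v(\phi_r(\t))$; this comes out only if $\la_r$ is absorbed correctly, so I should state the conclusion as in the corollary: the slope is $-\la_r$ means $v(\phi_r(\t))-\mu_{r-1}(\phi_r)=\la_r$, which is exactly $\mu_r(\phi_r)=v(\phi_r(\t))$.

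The inductive step is the heart of the argument. Suppose for some $i$ with $1\le i\le r$ we have established $\phi_i\mmu F$ and $v(\phi_i(\t))>\mu_{i-1}(\phi_i)$ (equivalently the Theorem~\ref{fundamental} hypothesis for $(\mu_{i-1},\phi_i)$, using $\phi_i\in\kp(\mu_{i-1})$). Then Theorem~\ref{fundamental} applied to $(\mu_{i-1},\phi_i)$ gives: $N_i(F)$ one-sided of slope $-\la_i$, $\deg F=\ell_i\deg\phi_i$ with $\ell_i=\ell(N_i(F))$, $F\sim_{\mu_{i-1}}\phi_i^{\ell_i}$, hence $\mu_i(\phi_i)=v(\phi_i(\t))$, and (by the remark preceding this section, or directly from $F\sim_{\mu_{i-1}}\phi_i^{\ell_i}$, equation~(\ref{LC}), and Corollary~\ref{Ralgebra}) $R_{i-1}(F)=R_{i-1}(\phi_i)^{\ell_i}=(\psi_{i-1})^{\ell_i}$ using Corollary~\ref{astypes}. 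To continue downward I then need $\phi_{i-1}\mmu F$ together with $v(\phi_{i-1}(\t))>\mu_{i-2}(\phi_{i-1})$ — but for $i-1\ge 1$ this is precisely the statement that the Theorem~\ref{fundamental} hypothesis holds for $(\mu_{i-2},\phi_{i-1})$. The bridge: from $F\sim_{\mu_{i-1}}\phi_i^{\ell_i}$ and $\phi_i\not\sim_{\mu_{i-1}}\phi_{i-1}$ (MacLane chain condition), I get $\phi_{i-1}\nmid_{\mu_{i-1}}\phi_i^{\ell_i}$... no, that's the wrong direction. Instead: $\phi_{i-1}\mmu F$ follows because $\rr_{\mu_{i-1}}(F)=\rr_{\mu_{i-1}}(\phi_i)^{\ell_i}$ is a power of $\rr_{\mu_{i-1}}(\phi_i)\ne\rr_{\mu_{i-1}}(\phi_{i-1})$, which does NOT give $\phi_{i-1}\mmu F$ at level $\mu_{i-1}$. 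So the descent must instead go: $\ell_i>0$ together with Lemma~\ref{length2} and Lemma~\ref{additivity} shows $s'_{\mu_{i-1}}(F)=\ell_i\cdot e_{i-1}$ or similar at level $\mu_{i-2}$; more straightforwardly, since $\deg F=\ell_i\deg\phi_i>\deg\phi_{i-1}$ and $F$ is $\mu_{i-1}$-minimal hence (via Lemma~\ref{minimal0}) $F$ is not $\mu_{i-1}$-divisible by $\phi_{i-1}$ only when... Let me reconsider: the right claim is $\phi_{i-1}\mmu F$ at level $\mu_{i-2}$, which holds because $v(\phi_{i-1}(\t))>\mu_{i-2}(\phi_{i-1})$, and this last inequality I should extract from $\mu_{i-1}(\phi_i)=v(\phi_i(\t))$ combined with the recursive formula for $w_i$ in~(\ref{C}) — or, cleanest of all, by a separate short lemma: $\phi_{i-1}\mmu F$ always holds (for all $i$) because $F$ is the minimal polynomial of $\t$ and $\t$ is ``close'' to a root of $\phi_{i-1}$; i.e. apply Theorem~\ref{fundamental} in reverse, noting $\mu_{i-2}<\mu_{i-1}$ and $\mu_{i-1}(F)$ versus $\mu_{i-2}(F)$: since $\phi_{i-1}\mid_{\mu_{i-2}}F$ iff $\mu_{i-2}(F)<\mu_{i-1}(F)$ (Proposition~\ref{extension}), and the latter is forced because otherwise $F$ would be $\mu_{i-1}$-equivalent to a polynomial of degree $<\deg\phi_{i-1}<\deg F$, contradicting $F$ being $\mu_{i-1}$-minimal.

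So the skeleton is: (1) show $\phi_i\mmu F$ at level $\mu_{i-1}$ for every $i$, by downward induction using that $F$ is $\mu_i$-minimal for every $i\le r$ (from Theorem~\ref{fundamental}(2) at level $i$) plus Proposition~\ref{extension} to compare $\mu_{i-1}(F)$ and $\mu_i(F)$; (2) at each level invoke Theorem~\ref{fundamental} with $(\mu_{i-1},\phi_i)$ to get the one-sided Newton polygon of slope $-\la_i$, the degree identity $\deg F=\ell_i\deg\phi_i$, and $F\sim_{\mu_{i-1}}\phi_i^{\ell_i}$; (3) deduce $\mu(\phi_i)=v(\phi_i(\t))$ — here I read $\mu(\phi_i)=\mu_i(\phi_i)$ by Lemma~\ref{stable} (as $\phi_{i+1}\nmid_{\mu_i}\phi_i$) and $\mu_i(\phi_i)=\mu_{i-1}(\phi_i)+\la_i=v(\phi_i(\t))$ since the slope is exactly $-\la_i$; (4) deduce $R_{i-1}(F)=(\psi_{i-1})^{\ell_i}$ from $F\sim_{\mu_{i-1}}\phi_i^{\ell_i}$, Corollary~\ref{uniqueness}, Corollary~\ref{Ralgebra}, equation~(\ref{LC}) and Corollary~\ref{astypes}($R_{i-1}(\phi_i)=\psi_{i-1}$); (5) the final relation $\ell_i=e_if_i\ell_{i+1}$ for $i<r$ follows by comparing the two degree identities $\deg F=\ell_i\deg\phi_i=\ell_{i+1}\deg\phi_{i+1}$ together with $\deg\phi_{i+1}=e_if_i\deg\phi_i$ from~(\ref{C}) (i.e. $m_{i+1}=e_if_im_i$). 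The main obstacle is step~(1): getting the divisibility $\phi_i\mmu F$ at the \emph{right} level for every $i$, which hinges on carefully tracking the minimality of $F$ and applying Proposition~\ref{extension} and Lemma~\ref{minimal0} in the correct direction; everything else is a mechanical unwinding of the numerical data in~(\ref{C}) and the residual-polynomial formalism.
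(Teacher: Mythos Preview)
Your overall plan---downward induction along the MacLane chain, applying Theorem~\ref{fundamental} at each level---is the right one and matches the paper's approach. Your mechanism for step~(1) (using $\mu_i$-minimality of $F$ together with Lemma~\ref{units} and Proposition~\ref{extension} to force $\phi_i\mid_{\mu_{i-1}}F$) is a legitimate alternative to the paper's device of tracking $\deg R_i(F)>0$; both work. Steps~(4) and~(5) are fine.

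There is, however, a real gap in step~(2). Theorem~\ref{fundamental}(1) applied to $(\mu_{i-1},\phi_i)$ only tells you that $N_i(F)$ is one-sided of slope $-\la$ with $\la=v(\phi_i(\t))-\mu_{i-1}(\phi_i)$; it does \emph{not} tell you that $\la=\la_i$. You assert this without argument, and then in step~(3) you derive $\mu_i(\phi_i)=v(\phi_i(\t))$ from ``the slope is exactly $-\la_i$''---which is circular, since those two statements are equivalent. This is precisely the nontrivial point in the proof, not a mechanical unwinding.

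The paper closes this gap by contradiction: if the slope were not $-\la_i$, then the $\la_i$-component $S_{\la_i}(F)$ would collapse to a single vertex, so $\deg R_i(F)=0$, and then Corollary~\ref{nextlength2} gives $\ord_{\mu_i,\phi_{i+1}}(F)=\ord_{\psi_i}(R_i(F))=0$, contradicting $\phi_{i+1}\mid_{\mu_i}F$ (and similarly $\phi\nmid_\mu F$ for $i=r$). An equivalent patch avoiding Corollary~\ref{nextlength2}: from $F\sim_{\mu_i}\phi_{i+1}^{\ell_{i+1}}$ and Corollary~\ref{uniqueness} you get $S_{\la_i}(F)=S_{\la_i}(\phi_{i+1}^{\ell_{i+1}})$, which by Corollary~\ref{sumSla} and Corollary~\ref{astypes}(1) has positive length $\ell_{i+1}e_if_i$; this forces the one-sided polygon $N_i(F)$ to have slope exactly $-\la_i$.

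A minor point: in your first paragraph you write ``$\rr(F)$ is a power of $\rr(\phi)=\rr(\phi_r)$''. This is false; since $\phi\not\smu\phi_r$, Proposition~\ref{samefiber} gives $\rr(\phi)\ne\rr(\phi_r)$. You seem to abandon this line anyway, but be careful not to confuse divisibility at level $\mu_r$ (where $\phi_r\nmid_\mu F$) with divisibility at level $\mu_{r-1}$ (where $\phi_r\mid_{\mu_{r-1}}F$).
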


\begin{proof}
Since $\phi\not\smu\phi_r$, Corollary \ref{Hmuphi} shows that $\deg R_r(\phi)>0$. Since $F\smu \phi^\ell$, we have $R_r(F)=R_r(\phi)^\ell$ by Corollaries \ref{uniqueness} and \ref{Ralgebra}; hence,  $\ell(N^-_r(F))\ge e_r\deg R_r(F)>0$, and this implies that $\phi_r\mid_{\mu_{r-1}}F$ by Lemma \ref{length2}.
Therefore, $\phi_i\mid_{\mu_{i-1}} F$ for all $1\le i\le r$, and (\ref{elli}) is a consequence of Theorem \ref{fundamental} and Corollaries \ref{uniqueness}, \ref{Ralgebra} and \ref{astypes}. 

We have $F\ne\phi_i$ and the slope of $N_i(F)$ is $-\la_i$, because otherwise $R_i(F)$ would be a constant, leading by Corollary \ref{nextlength2}
to  $\phi_{i+1}\nmid_{\mu_{i}} F$ for $i<r$, or to $\phi\nmid_\mu F$ for $i=r$. Finally, $\mu_i(\phi_i)-\mu_{i-1}(\phi_i)=\la_i=v(\phi_i(\t))-\mu_{i-1}(\phi_i)$ by Theorem \ref{fundamental}. Hence, $\mu(\phi_i)=\mu_i(\phi_i)=v(\phi_i(\t))$ by Lemma \ref{stable}.
\end{proof}

If $F\ne\phi$, we may extend the given MacLane chain to a MacLane chain of length $r+1$ of the valuation $\mu'=[\mu;(\phi,\la)]$ just by taking $\phi_{r+1}=\phi$, $\la_{r+1}=\la$.
$$
\mu_0\ \stackrel{(\phi_1,\la_1)}\lra\  \mu_1\ \stackrel{(\phi_2,\la_2)}\lra\ \cdots
\ \stackrel{(\phi_{r},\la_{r})}\lra\ \mu_{r}=\mu\stackrel{(\phi_{r+1},\la_{r+1})}\lra\mu_{r+1}=\mu'.
$$
Since $s_{\mu'}(F)=0$ and $s'_{\mu'}(F)=\ell$, Corollary \ref{nonzero} shows that $\deg R_{r+1}(F)=\ell/e_{r+1}>0$. Let $\psi$ be an irreducible factor of $R_{r+1}(F)$ in $\F_{r+1}[y]$. By Theorem \ref{Max}, there exists $\phi'\in\kp(\mu')$ such that $R_{r+1}(\phi')=\psi$. Since $R_{r+1}(F)(0)\ne0$, we have $\psi\ne y$ and $\phi'\not\sim_{\mu'}\phi$. Also, $\deg\phi'=e_{r+1}\deg\psi \deg\phi$ by Lemma \ref{kp},
and $\phi'\mid_{\mu'}F$ by Corollary \ref{nextlength2}. 
By Theorem \ref{fundamental}, $F\sim_{\mu'}(\phi')^{\ell'}$ for $\ell'=\ell/(e_{r+1}\deg\psi)$. By Corollary \ref{uniqueness}, $R_{r+1}(F)=\psi^{\ell'}$. This procedure may be iterated as long as $F\ne\phi'$.

These ideas of MacLane are the germ of an algorithm to compute approximations to $F$ by prime polynomials with coefficients in $\oo$, with prescribed precision. We shall discuss the relevant computational aspects of this algorithm in \cite{gen}.  

We now deduce from Theorem \ref{fundamental} the fundamental result concerning factorization of polynomials over $K_v$. It has to be considered as a generalization of Hensel's lemma. 

\begin{definition}\label{degree}\mbox{\null}
We define the \emph{degree} of $\ll\in\mx(\Delta)$ as $\deg\ll=\dim_{\F_r}(\Delta/\ll)$.
\end{definition}

Equivalently, $\deg\ll=\deg \psi$ for the unique monic irreducible polynomial $\psi\in \F_r[y]$ such that $\ll=\psi(y_r) \Delta$.   

\begin{theorem}\label{main}
Let $\mu$ be an inductive valuation and let $\phi$ be a proper key polynomial for $\mu$. Then, every monic polynomial $g\in\oo_v[x]$ factorizes into a product of monic polynomials in $\oo_v[x]$:
$$
g=g_0\,\phi^{\ord_\phi(g)}\prod\nolimits_{(\la,\ll)} g_{\la,\ll},$$
where $-\la$ runs on the slopes of $N_{\mu,\phi}^-(g)$. For each $\la$, if $\mu_\la=[\mu;(\phi, \la)]$, then $\ll$ runs on the maximal ideals of $\Delta(\mu_\la)$ dividing $\rr_{\mu_\la}(g)$. If $e(\mu)\la=h_\la/e_\la$, with $h_\la,e_\la$ positive coprime integers, then
$$\deg g_0=\deg g-\ell(N^-_{\mu,\phi}(g))\deg\phi,\quad
\deg g_{\la,\ll}=e_\la\ord_\ll(\rr_{\mu_\la}(g))\deg\ll\deg\phi.
$$ 
Moreover, if $\ord_\ll(\rr_{\mu_\la}(g))=1$, then $g_{\la,\ll}$ is irreducible in $\oo_v[x]$. 
\end{theorem}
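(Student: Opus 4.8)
The plan is to reduce the statement to an analysis of the monic irreducible factors of $g$ and then to apply Theorem~\ref{fundamental} to each of them. Since $g$ is monic with coefficients in $\oo_v$, its roots are integral over $\oo_v$, so every monic irreducible factor of $g$ in $K_v[x]$ already lies in $\oo_v[x]$; thus I may write $g=\prod_j F_j$ with $F_j\in\P$ (repetitions allowed). Fixing a root $\t_j\in\kb$ of $F_j$, Theorem~\ref{fundamental} puts each $F_j$ into exactly one of three cases: (a) $F_j=\phi$; (b) $F_j\neq\phi$ and $\phi\nmid_{\mu}F_j$ (equivalently $v(\phi(\t_j))\le\mu(\phi)$); (c) $F_j\neq\phi$ and $\phi\mmu F_j$ (equivalently $v(\phi(\t_j))>\mu(\phi)$), in which case $N_{\mu,\phi}(F_j)$ is one-sided of slope $-\la_j:=v(\phi(\t_j))-\mu(\phi)\in\Q_{>0}$, the polynomial $F_j$ is $\mu$-minimal, $\deg F_j=\ell_j\deg\phi$ with $\ell_j:=\ell(N_{\mu,\phi}(F_j))$, and $F_j\smu\phi^{\ell_j}$. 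I then set $g_0:=\prod F_j$ over the factors of type (b), and for each pair $(\la,\ll)$ I let $g_{\la,\ll}$ be the product of the type-(c) factors with $\la_j=\la$ whose associated maximal ideal $\ll_j$ (defined next) equals $\ll$.

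To attach an invariant to a type-(c) factor I use that $\phi$ is \emph{proper}: by Corollary~\ref{criterionproper} I may fix a MacLane chain of $\mu$, say of length $r$, with $\phi\not\smu\phi_r$, and for a fixed slope $\la$ I extend it by $\phi_{r+1}:=\phi$, $\la_{r+1}:=\la$ to a MacLane chain of $\mu_\la:=[\mu;(\phi,\la)]$. I write $e_\la:=e_{r+1}$, so that $e(\mu)\la=h_{r+1}/e_\la$ in lowest terms, and let $R_{r+1}$ be the top residual polynomial operator of this chain. For a type-(c) factor $F_j$ with $\la_j=\la$ one reads off, with respect to $\mu_\la$, that $s(F_j)=0$ and $s'(F_j)=\ell_j$, hence $\deg R_{r+1}(F_j)=\ell_j/e_\la$ and $R_{r+1}(F_j)(0)\neq 0$ by Corollary~\ref{nonzero}. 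Now I argue as in the discussion after Corollary~\ref{lower}: pick a monic irreducible factor $\psi_j$ of $R_{r+1}(F_j)$; by Theorem~\ref{Max} (via Corollary~\ref{construct}) there is a key polynomial $\phi'_j$ for $\mu_\la$ with $R_{r+1}(\phi'_j)=\psi_j$, and $\phi'_j$ is automatically proper by Lemma~\ref{kp}; Corollary~\ref{nextlength2} gives $\phi'_j\mid_{\mu_\la}F_j$; and Theorem~\ref{fundamental} applied to $F_j$, $\phi'_j$ and $\mu_\la$ yields $F_j\sim_{\mu_\la}(\phi'_j)^{\,e'_j}$ for $e'_j:=\ell_j/(e_\la\deg\psi_j)\ge 1$, whence $R_{r+1}(F_j)=\psi_j^{\,e'_j}$ by Corollaries~\ref{uniqueness} and~\ref{Ralgebra}. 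Setting $\ll_j:=\psi_j(y_{r+1})\Delta(\mu_\la)=\rr_{\mu_\la}(\phi'_j)$, a maximal ideal of degree $\deg\psi_j$ different from $\rr_{\mu_\la}(\phi)=y_{r+1}\Delta(\mu_\la)$ (Corollary~\ref{Hmuphi}), Corollary~\ref{RR} gives $\rr_{\mu_\la}(F_j)=\ll_j^{\,e'_j}$, and therefore
$$\deg F_j=\ell_j\deg\phi=e_\la\,\ord_{\ll_j}\!\big(\rr_{\mu_\la}(F_j)\big)\,\deg\ll_j\,\deg\phi .$$

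Then comes the bookkeeping. By the Theorem of the product (Theorem~\ref{product}), $N^-_{\mu,\phi}(g)=\sum_j N^-_{\mu,\phi}(F_j)$, where type (a) contributes a point at abscissa $1$, type (b) a point at abscissa $0$, and type (c) a side of slope $-\la_j$ of length $\ell_j$; this identifies the slopes of $N^-_{\mu,\phi}(g)$ with the distinct $-\la_j$, gives $\ell(N^-_{\mu,\phi}(g))=\ord_\phi(g)+\sum_{\mathrm{(c)}}\ell_j$, and (using $\deg F_j=\ell_j\deg\phi$) yields $\deg g_0=\deg g-\ell(N^-_{\mu,\phi}(g))\deg\phi$. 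Fixing a slope $-\la$, multiplicativity of $R_{r+1}$ (Corollary~\ref{Ralgebra}) gives $R_{r+1}(g)=\prod_j R_{r+1}(F_j)$, and one checks that $R_{r+1}(F_j)\in\F_{r+1}^*$ unless $F_j$ is of type (c) with $\la_j=\la$ (for type (a) use~(\ref{LC}); for type (b) the Newton polygon has only non-negative slopes with left end on the vertical axis, so its $\la$-component is a single point; for type (c) with $\la_j\neq\la$ the one-sided polygon of slope $-\la_j$ has a single point as $\la$-component — in all these cases Corollary~\ref{nonzero} gives degree $0$), while for $\la_j=\la$ it equals $\psi_j^{\,e'_j}$. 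Since $R_{r+1}(g)(0)\neq 0$ and $\rr_{\mu_\la}(g)=y_{r+1}^{\lceil s_{\mu_\la}(g)/e_\la\rceil}R_{r+1}(g)(y_{r+1})\Delta(\mu_\la)$ (Corollary~\ref{RR}), the maximal ideals of $\Delta(\mu_\la)$ dividing $\rr_{\mu_\la}(g)$ other than $\rr_{\mu_\la}(\phi)$ are exactly the $\ll_j$ with $\la_j=\la$, and for such $\ll$ one has $\ord_\ll(\rr_{\mu_\la}(g))=\sum_{\la_j=\la,\,\ll_j=\ll}e'_j=\sum_{\la_j=\la,\,\ll_j=\ll}\ord_\ll(\rr_{\mu_\la}(F_j))$. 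Summing the displayed degree identity over these $F_j$ gives $\deg g_{\la,\ll}=e_\la\,\ord_\ll(\rr_{\mu_\la}(g))\,\deg\ll\,\deg\phi$. Finally, if $\ord_\ll(\rr_{\mu_\la}(g))=1$, then this sum of the integers $e'_j\ge 1$ equals $1$, so there is a single type-(c) factor $F_j$ with $\la_j=\la$, $\ll_j=\ll$ and $e'_j=1$; hence $g_{\la,\ll}=F_j\in\P$ is irreducible in $\oo_v[x]$. Throughout, $\ll$ is read as ranging over the maximal ideals dividing $\rr_{\mu_\la}(g)$ \emph{other than} $\rr_{\mu_\la}(\phi)$; this restriction is forced by the degree count and is harmless, since $y$ never divides $R_{r+1}(g)$.

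The step I expect to be the main obstacle is the one in the second paragraph: showing that the residual polynomial of a type-(c) \emph{prime} factor $F_j$ is a power of a single monic irreducible, equivalently that $\rr_{\mu_\la}(F_j)$ is a prime power. This is the inductive core of MacLane's procedure, and it is exactly where properness of $\phi$ enters: it is what allows $\mu_\la$ to be realized as a one-step augmentation on top of a genuine MacLane chain (Corollary~\ref{criterionproper}), so that Theorem~\ref{fundamental} can be re-applied one level higher, in combination with the construction of key polynomials of prescribed residual type (Theorem~\ref{Max}, Corollary~\ref{construct}) and the compatibility $\ord_{\psi}R_{r+1}=\ord_{\mu_\la,\phi'}$ (Corollary~\ref{nextlength2}). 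The remaining work is Newton-polygon bookkeeping through the Theorem of the product together with the dictionary between residual polynomials and residual ideals (Corollaries~\ref{RR}, \ref{Hmuphi}, \ref{nonzero}).
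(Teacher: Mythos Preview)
Your proof is correct and follows essentially the same route as the paper: factor $g$ into prime polynomials, classify each factor via Theorem~\ref{fundamental}, and for the type-(c) factors re-apply Theorem~\ref{fundamental} one level up (after extending to a MacLane chain of $\mu_\la$) to see that each $R_{r+1}(F_j)$ is a power of a single irreducible. The only cosmetic difference is that you run the bookkeeping through the multiplicativity of $R_{r+1}$ (Corollary~\ref{Ralgebra}) and then translate to residual ideals via Corollary~\ref{RR}, whereas the paper works directly with $\rr_{\mu_\la}$ and invokes Lemma~\ref{multproper}; your choice is arguably cleaner since $R_{r+1}$ is unconditionally multiplicative, and your explicit remark that $\ll$ must exclude $\rr_{\mu_\la}(\phi)$ is a correct reading of the statement.
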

 
\begin{proof}
Let $g=F_1\cdots F_t$ be the factorization of $g$ into a product of monic irreducible polynomials in $\oo_v[x]$. Denote $\ell_j:=\ell\left(N^-_{\mu,\phi}(F_j)\right)=\ord_{\mu,\phi}(F_j)$ (Lemma \ref{length2}). The factor $g_0$ is the product of all $F_j$ satisfying $\phi\nmid_\mu F_j$. The factors $F_j$ with $\phi\mmu F_j$ have $\deg F_j=\ell_j \deg \phi$, by Theorem \ref{fundamental}. By Theorem \ref{product}, $N_{\mu,\phi}^-(g)=\sum_{j}N_{\mu,\phi}^-(F_j)$; hence,
$$
 \deg g-\deg g_0=\sum_{\phi\mmu F_j}\deg F_j=\sum_{\phi\mmu F_j}\ell_j\deg \phi=\sum_{j}\ell_j\deg \phi=\ell\left(N^-_{\mu,\phi}(g)\right)\deg\phi.
$$
   
The factor $\phi^{\ord_\phi(g)}$ is the product of all $F_j$ equal to $\phi$. 
By Theorem \ref{fundamental}, for the factors $F_j\ne\phi$ such that $\phi\mmu F_j$, the Newton polygon $N_{\mu,\phi}(F_j)$ is one-sided of slope $-\la$, and Theorem \ref{product} shows that $-\la$ is one of the slopes of $N^-_{\mu, \phi}(g)$.  Along the discussion previous to Theorem \ref{main}, we saw that these $F_j$ are $\mu_\la$-proper and 
$$\rr_{\mu_\la}(F_j)=\ll^{\ell'_j}, \qquad \deg F_j=e_\la\ell'_j\deg\ll\deg \phi,$$
where  $\ll$ is a certain maximal ideal in $\Delta(\mu_\la)$ and $\ell'_j=\ell_j/(e_\la\deg\ll)$. Also, since $s_{\mu_\la}(F_j)=0$, Lemma \ref{additivity} shows that $\phi\nmid_{\mu_\la} F_j$, so that $\ll\ne\rr_{\mu_\la}(\phi)$, by Proposition \ref{nextlength}. 
Now, for a given pair $(\la,\ll)$ we take $g_{\la,\ll}$ to be the product of all $F_j$ such that $N_{\mu,\phi}(F_j)$ is one-sided of slope $-\la$ and $\rr_{\mu_\la}(F_j)$ is a power of $\ll$. Let $J_{\la,\ll}$ be the set of all indices $j$ of the irreducible factors $F_j$ of  $g_{\la,\ll}$. 

We claim that $\ll\nmid \rr_{\mu_\la}(F_j)$ for all $j\not\in J_{\la,\ll}$. In fact, since  $\ll\ne\rr_{\mu_\la}(\phi)$, the statement is clear for the factors $F_j$ equal to $\phi$. If $\phi\nmid_\mu F_j$, or $N_{\mu,\phi}(F_j)$ is one-sided of a slope lower than $-\la$, then $F_j\sim_{\mu_\la} a_0$, where $a_0$ is the $0$-th term of the $\phi$-expansion of $F_j$ (see Figure \ref{figSlopes}); hence, $H_{\mu_\la}(F_j)$ is a unit and $\rr_{\mu_\la}(F_j)=1$. If $N_{\mu,\phi}(F_j)$ is one-sided of a slope larger than $-\la$, then $F_j\sim_{\mu_\la} \phi^{\ell_j}$ (see Figure \ref{figSlopes}); by Proposition \ref{nextlength}, $\rr_{\mu_\la}(F_j)$ is a power of $\rr_{\mu_\la}(\phi)$ and it is not divided by $\ll$. 

Therefore, from the equality $\rr_{\mu_\la}(g)=\prod_j \rr_{\mu_\la}(F_j)$ of Lemma \ref{multproper}, we deduce 
\begin{align*}
\ord_\ll \rr_{\mu_\la}(g)=&\ \sum\nolimits_j \ord_\ll \rr_{\mu_\la}(F_j)=\sum\nolimits_{j\in J_{\la,\ll}} \ord_\ll \rr_{\mu_\la}(F_j)\\
=&\ \sum\nolimits_{j\in J_{\la,\ll}}\deg F_j/(e_\la\deg\ll\deg \phi)=\deg g_{\la,\ll}/(e_\la\deg\ll\deg \phi).
\end{align*}
Finally, if $\ord_\ll \rr_{\mu_\la}(g)=1$, there is only one irreducible factor $F_j$ dividing $g_{\la,\ll}$.
\end{proof}

\begin{figure}\caption{$\lambda$-component of the irreducible factor $F_j$.}\label{figSlopes}
\begin{center}
\setlength{\unitlength}{5mm}
\begin{picture}(21,6.5)
\put(-.2,4.3){$\bullet$}\put(5.8,2.85){$\bullet$}
\put(-1,0.5){\line(1,0){9}}\put(0,-.5){\line(0,1){6.5}}
\put(7,1){\line(-2,1){8}}
\put(0,4.5){\line(4,-1){6}}\put(0,4.52){\line(4,-1){6}}
\put(2,2.3){\begin{footnotesize}$-\la$\end{footnotesize}}
\multiput(6,.4)(0,.25){11}{\vrule height2pt}
\put(5.8,-.2){\begin{footnotesize}$\ell_j$\end{footnotesize}}
\put(2.2,4.5){\begin{footnotesize}$N_{\mu,\phi}(F_j)$\end{footnotesize}}
\put(-.45,-.1){\begin{footnotesize}$0$\end{footnotesize}}
\put(11.8,5.25){$\bullet$}\put(17.8,1.3){$\bullet$}
\put(11,0.5){\line(1,0){9}}\put(12,-.5){\line(0,1){6.5}}
\put(19,1){\line(-2,1){8}}
\put(18,1.5){\line(-3,2){6}}\put(18,1.52){\line(-3,2){6}}
\put(14,2.5){\begin{footnotesize}$-\la$\end{footnotesize}}
\multiput(18,.4)(0,.25){5}{\vrule height2pt}
\put(17.8,-.2){\begin{footnotesize}$\ell_j$\end{footnotesize}}
\put(14.4,4.3){\begin{footnotesize}$N_{\mu,\phi}(F_j)$\end{footnotesize}}
\put(11.55,-.1){\begin{footnotesize}$0$\end{footnotesize}}
\end{picture}
\end{center}
\end{figure}
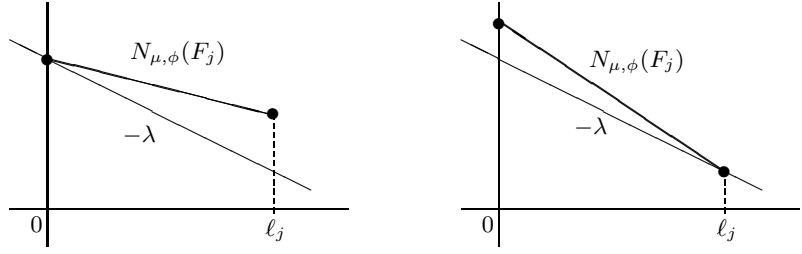

\begin{theorem}\label{computation}
 Let $F\in \P$ be a prime polynomial.
An inductive valuation $\mu$ satisfies $\mu\le\mu_{\infty,F}$ if and only if there exists $\phi\in\kpm$ such that $\phi\mmu F$. In this case, for a non-zero polynomial $g\in K[x]$, we have
\begin{equation}\label{equality}
\mu(g)=\mu_{\infty,F}(g) \quad\mbox{if and only if} \quad \phi\nmid_{\mu}g. 
\end{equation}
\end{theorem}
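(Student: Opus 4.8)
The plan is to establish the two implications of the first assertion separately and then deduce \eqref{equality} from the relation $\mu\le\mu_{\infty,F}$; throughout I read $\mu$ via its canonical extension to $K_v(x)$ (Proposition \ref{KKv}), so that $\hm(F)$ and $\phi\mmu F$ make literal sense for $F\in\P$.

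First I would treat the implication ``$\mu\le\mu_{\infty,F}$ $\Rightarrow$ there is such a $\phi$'', together with \eqref{equality}. Note $\mu\ne\mu_{\infty,F}$: the degree-zero ring $\Delta(\mu_{\infty,F})$ embeds in the residue field $\F_F$ of $v$ on $K_F$ via $g\mapsto g(\t)$, hence is algebraic over $\F$, whereas $\Delta(\mu)=\F_r[y_r]$ contains the transcendental $y_r$ (Theorem \ref{Delta}); so $\mu<\mu_{\infty,F}$. Lemma \ref{minDegree} then gives a monic $\phi$ of least degree with $\mu(\phi)<\mu_{\infty,F}(\phi)$; it lies in $\kpm$, and for every non-zero $g$ one has $\mu(g)=\mu_{\infty,F}(g)\iff\phi\nmid_\mu g$. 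Since $\mu_{\infty,F}(\phi)=v(\phi(\t))>\mu(\phi)$, Theorem \ref{fundamental} yields $\phi\mmu F$; this proves the ``only if'' direction and proves \eqref{equality} for this $\phi$. For any other $\phi'\in\kpm$ with $\phi'\mmu F$, Theorem \ref{fundamental}(3) gives $F\smu\phi^{n}$ and $F\smu(\phi')^{n'}$ with $n,n'\ge1$, so $\hm(\phi)^{n}=\hm(F)=\hm(\phi')^{n'}$ by \eqref{Hmu}; since $\hm(\phi)$ is a prime element of $\ggm$ it divides $\hm(\phi')$, i.e. $\phi\mmu\phi'$, hence $\phi\smu\phi'$ by Proposition \ref{samefiber}. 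Therefore $\phi'\nmid_\mu g\iff\phi\nmid_\mu g$, and \eqref{equality} holds for every key polynomial of $\mu$ dividing $F$.

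For the converse I would argue by induction on the MacLane depth $r$ of $\mu$: assuming some $\phi\in\kpm$ satisfies $\phi\mmu F$, prove $\mu\le\mu_{\infty,F}$. If $r=0$ then $\mu=\mu_0$ and $\mu_0(g)\le v(g(\t))$ for all $g$, because $v(\t)\ge0$. For $r\ge1$, fix an optimal chain ending with $\mu=[\mu_{r-1};(\phi_r,\la_r)]$. The crucial claim is that in every case $\phi_r\mid_{\mu_{r-1}}F$ and $v(\phi_r(\t))\ge\mu(\phi_r)$. If $\phi\not\smu\phi_r$, Corollary \ref{lower} applied to this chain gives at once $\mu(\phi_r)=v(\phi_r(\t))$ and $F\sim_{\mu_{r-1}}\phi_r^{\ell_r}$. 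If $\phi\smu\phi_r$, then $\hm(\phi)=\hm(\phi_r)$ forces $\phi_r\mmu F$ with respect to $\mu$; Theorem \ref{fundamental} applied to $(\mu,\phi_r)$ then gives $v(\phi_r(\t))>\mu(\phi_r)=\mu_{r-1}(\phi_r)+\la_r>\mu_{r-1}(\phi_r)$, and Theorem \ref{fundamental} applied to $(\mu_{r-1},\phi_r)$ gives $\phi_r\mid_{\mu_{r-1}}F$. Now the inductive hypothesis applied to $\mu_{r-1}$ (of depth $r-1$, with the key polynomial $\phi_r$ dividing $F$) gives $\mu_{r-1}\le\mu_{\infty,F}$. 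Writing $\la':=v(\phi_r(\t))-\mu_{r-1}(\phi_r)\ge\la_r$ and taking the $\phi_r$-expansion $g=\sum_s a_s\phi_r^s$ with $\deg a_s<m_r$, the definition of $\mu=[\mu_{r-1};(\phi_r,\la_r)]$ gives $\mu(g)=\min_s\bigl(\mu_{r-1}(a_s)+s\mu_{r-1}(\phi_r)+s\la_r\bigr)$, while $v(g(\t))\ge\min_s\bigl(v(a_s(\t))+s\mu_{r-1}(\phi_r)+s\la'\bigr)$; comparing term by term using $\mu_{r-1}(a_s)\le v(a_s(\t))$ and $\la_r\le\la'$ yields $\mu(g)\le v(g(\t))$, i.e. $\mu\le\mu_{\infty,F}$. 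Once this is known, \eqref{equality} follows by the argument of the previous paragraph.

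I expect the hard part to be the converse implication: one must extract from the single hypothesis $\phi\mmu F$ enough arithmetic along an optimal chain of $\mu$ — essentially the value identities $v(\phi_i(\t))=\mu(\phi_i)$ — to run the $\phi_r$-expansion comparison, and Corollary \ref{lower} delivers exactly this when $\phi\not\smu\phi_r$. The delicate point is the remaining case $\phi\smu\phi_r$, in which Corollary \ref{lower} is not available, so the required divisibility $\phi_r\mid_{\mu_{r-1}}F$ and the inequality $v(\phi_r(\t))\ge\mu(\phi_r)$ must be recovered instead by two successive applications of Theorem \ref{fundamental}, to $\mu$ and then to $\mu_{r-1}$ with key polynomial $\phi_r$.
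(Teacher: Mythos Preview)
Your proof is correct and follows essentially the same approach as the paper's: Lemma~\ref{minDegree} for the forward direction together with the $\phi\smu\phi'$ argument via Proposition~\ref{samefiber}, and induction on the depth using the $\phi_r$-expansion and the case split $\phi\smu\phi_r$ versus $\phi\not\smu\phi_r$ (handled by Theorem~\ref{fundamental} and Corollary~\ref{lower}) for the converse. Your version is arguably cleaner in two spots: you explicitly justify $\mu<\mu_{\infty,F}$ before invoking Lemma~\ref{minDegree}, and you verify $\phi_r\mid_{\mu_{r-1}}F$ \emph{before} applying the inductive hypothesis to $\mu_{r-1}$, which the paper leaves somewhat implicit; you also only need the inequality $\mu_{r-1}(a_s)\le v(a_s(\t))$ in the induction step, whereas the paper invokes the full equality from \eqref{equality}.
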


\begin{proof}
If $\mu\le\mu_{\infty,F}$, we may consider $\phi\in K[x]$ monic with minimal degree among all polynomials satisfying $\mu(\phi)<\mu_{\infty,F}(\phi)$. By Lemma \ref{minDegree}, $\phi$ is a key polynomial for $\mu$, and condition (\ref{equality}) is satisfied. In particular,  $\phi\mmu F$.   

Conversely, suppose that $\phi\mmu F$ for some $\phi\in\kpm$. If $F=\phi$, the statement of the theorem is proved in Proposition \ref{theta}; thus, we may assume $F\ne\phi$. If we show that $\mu\le\mu_{\infty,F}$, then there
exists $\phi'\in \kpm$ such that $\phi'\mmu F$ and (\ref{equality}) is  satisfied for $\phi'$. By Theorem \ref{fundamental}, $F\smu \phi^\ell$ for some $\ell>0$, so that $\phi'\mmu \phi$, and this implies $\phi\smu\phi'$ by Proposition \ref{samefiber}. Hence, $\phi$ satisfies (\ref{equality}) as well.

Let us prove the inequality $\mu\le\mu_{\infty,F}$ by induction on the depth $r$ of $\mu$. 
If $r=0$, then clearly $\mu=\mu_0\le\mu_{\infty,F}$.
Suppose $r>0$ and the statement true for all valuations with lower depth. Consider an optimal MacLane chain of $\mu$. 

Let $g\in K[x]$ with $\phi_r$-expansion $g=\sum_{0\le s}a_s\phi_r^s$. Since $\phi_r\nmid_{\mu_{r-1}}a_s$, we have $\mu(a_s)=\mu_{r-1}(a_s)=\mu_{\infty,F}(a_s)$ by Proposition \ref{extension} and the induction hypothesis. Thus, we need only to show that $\mu(\phi_r)\le\mu_{\infty,F}(\phi_r)$, because then 
$$
\mu_{\infty,F}(g)\ge \min_{0 \le s}\left\{\mu_{\infty,F}(a_s\phi_r^s)\right\}\ge\min_{0 \le s}\left\{\mu(a_s\phi_r^s)\right\}=\mu(g).
$$

If $\phi\smu\phi_r$, then $\phi_r\mmu F$ and Theorem \ref{fundamental} shows that $v(\phi_r(\t))>\mu(\phi_r)$. If $\phi\not\smu\phi_r$, then $v(\phi_r(\t))=\mu(\phi_r)$ by Corollary \ref{lower}. 
\end{proof}

Theorem \ref{computation} may be applied as a device for the computation of $\mu_{\infty,F}$. 
Given $g\in K[x]$, we find a pair $(\mu,\phi)$ such that $\phi\mmu F$ and $\phi\nmid_\mu g$, leading to $v(g(\t))=\mu(g)$. From a computational perspective, the condition $\phi\nmid_{\mu}g$ is checked as $R_r(\phi)\nmid R_r(F)$, with respect to a MacLane chain for $\mu$. This yields a very efficient routine for the computation of the $\p$-adic valuations $v_\p\colon K^*\to\Z$, with respect to prime ideals $\p$ in a number field $K$ \cite{newapp}.

\begin{corollary}\label{efphiF}
With the above notation, let $\t_\phi\in\kb$ be a root of $\phi$.
\begin{enumerate}
\item For any polynomial $g\in K[x]$ with $\deg g<\deg\phi$, we have $v(g(\t_\phi))=v(g(\t))$. In particular, $e(\phi)\mid e(F)$.
\item There is a canonical embedding $\F_\phi\to\F_F$, given by $g(\t_\phi)+\m_\phi\mapsto
g(\t)+\m_F$ for any polynomial $g\in K[x]$ with $\deg g<\deg\phi$ such that $v(g(\t_\phi))\ge0$.   
\end{enumerate}
\end{corollary}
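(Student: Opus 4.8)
The plan is to treat the two items separately, in both cases leaning on the observation that the hypothesis $\phi\mmu F$ forces $\mu\le\mu_{\infty,F}$ (Theorem \ref{computation}), whereas $\mu\le\mu_{\infty,\phi}$ comes for free since $\phi$ is a key polynomial for $\mu$ (Proposition \ref{theta}).

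For item (1): if $g$ is constant the assertion is immediate, so I would assume $0<\deg g<\deg\phi$. The $\mu$-minimality of $\phi$ gives $\phi\nmid_\mu g$ at once, so the equality criterion (\ref{equality}) of Theorem \ref{computation} yields $\mu(g)=\mu_{\infty,F}(g)=v(g(\t))$, while Proposition \ref{theta}(1) applied to $\phi$ yields $\mu(g)=\mu_{\infty,\phi}(g)=v(g(\t_\phi))$; comparing the two gives $v(g(\t_\phi))=v(g(\t))$. For the divisibility statement I would combine this with Proposition \ref{theta}(2):
\[
v(K_\phi^*)=\Gamma_\phi(\mu)=\{\mu(g)\mid g\in K[x]\setminus\{0\},\ \deg g<\deg\phi\}=\{v(g(\t))\mid g\in K[x]\setminus\{0\},\ \deg g<\deg\phi\}\subseteq v(K_F^*),
\]
and, since $v$ is normalized so that $v(K_v^*)=\Z$, this inclusion reads $\tfrac1{e(\phi)}\Z\subseteq\tfrac1{e(F)}\Z$, i.e.\ $e(\phi)\mid e(F)$.

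For item (2) I would avoid verifying directly that the proposed formula is a ring homomorphism --- awkward, because a product of two polynomials of degree $<\deg\phi$ may again fail to have degree $<\deg\phi$ --- and instead route everything through $\Delta:=\Delta(\mu)$. Evaluation $g\mapsto g(\t)$ sends $\pset_0(\mu)$ into $\oo_F$ and $\pset_0^+(\mu)$ into $\m_F$ (since $v(g(\t))=\mu_{\infty,F}(g)\ge\mu(g)$ by $\mu\le\mu_{\infty,F}$), so it descends to an $\F$-algebra homomorphism $\pi\colon\Delta\to\F_F$ with $\pi(g+\pset_0^+)=g(\t)+\m_F$. The heart of the matter is the identity $\ker\pi=\rr(\phi)$; since $\rr(\phi)$ is a maximal ideal of $\Delta$ (Proposition \ref{sameideal}) and $\pi(1)=1$, it is enough to prove $\rr(\phi)\subseteq\ker\pi$. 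Given a nonzero class $g+\pset_0^+\in\rr(\phi)$, one may take the representative with $\mu(g)=0$; then $g(\t_\phi)\in\m_\phi$ means $\mu_{\infty,\phi}(g)>\mu(g)$, hence $\phi\mmu g$ by Proposition \ref{theta}, so $g\smu\phi f$ for some $f\in K[x]$ with $\mu(f)=-\mu(\phi)$. Now
\[
v(g(\t))\ \ge\ \min\bigl\{\,v\bigl((g-\phi f)(\t)\bigr),\ v(\phi(\t))+v(f(\t))\,\bigr\},
\]
and both entries are strictly positive: the first because $\mu(g-\phi f)>\mu(g)=0$ and $\mu\le\mu_{\infty,F}$, the second because $v(\phi(\t))>\mu(\phi)$ by Theorem \ref{fundamental} (as $\phi\mmu F$) together with $v(f(\t))\ge\mu(f)=-\mu(\phi)$. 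Hence $g(\t)\in\m_F$, giving $\rr(\phi)\subseteq\ker\pi$, and therefore $\ker\pi=\rr(\phi)$. Finally, Proposition \ref{sameideal} identifies $\Delta/\rr(\phi)$ with $\F_\phi$ through $g+\pset_0^+\mapsto g(\t_\phi)+\m_\phi$, so $\pi$ induces an injection $\F_\phi\hookrightarrow\F_F$; unwinding that identification --- and using that every class of $\F_\phi$ is represented by some $g\in K[x]$ with $\deg g<\deg\phi$ and $v(g(\t_\phi))\ge0$ --- shows the injection is $g(\t_\phi)+\m_\phi\mapsto g(\t)+\m_F$, exactly as claimed.

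The step I expect to be the main obstacle is the inclusion $\rr(\phi)\subseteq\ker\pi$: it is precisely there that one must exploit the \emph{strict} inequality $v(\phi(\t))>\mu(\phi)$ provided by $\phi\mmu F$, along with the value bookkeeping $\mu(f)=\mu(g)-\mu(\phi)$ and the pointwise estimate $v(f(\t))\ge\mu(f)$ coming from $\mu\le\mu_{\infty,F}$; the remaining verifications are formal.
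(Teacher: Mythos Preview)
Your proof is correct and follows essentially the same strategy as the paper: item~1 is identical, and for item~2 both arguments route through the ring homomorphism $\Delta\to\F_F$ and identify its kernel with $\rr(\phi)$ via Proposition~\ref{sameideal}. The only minor variation is in establishing $\rr(\phi)\subseteq\ker\pi$: the paper observes that $\rr(\phi)^a=\rr(F)\subset\ll_F$ (using Theorem~\ref{fundamental}(3) and that $\ll_F$ is maximal as a nonzero prime in the PID $\Delta$), whereas you prove the inclusion by the explicit decomposition $g\smu\phi f$ --- a slightly longer computation that could be shortened by quoting the contrapositive of~(\ref{equality}) directly, but which has the merit of not invoking the PID structure of $\Delta$.
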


\begin{proof}
If a polynomial $g\in K[x]$ has $\deg g<\deg\phi$, then $\phi\nmid_\mu g$ and $v(g(\t_\phi))=\mu(g)=v(g(\t))$, by Proposition \ref{theta} and Theorem \ref{computation}, respectively. This proves item 1.

Let $\ll_F$ be the kernel of the canonical ring homomorphism 
$$\Delta(\mu)\lra \F_F,\qquad g+\pset_0^+(\mu)\mapsto g(\t)+\m_F.
$$ 
Since $\ll_F$ is a non-zero prime ideal of the PID $\Delta(\mu)$, it is a maximal ideal. 
By Theorem \ref{fundamental} and (\ref{equality}), $\rr(\phi)^a=\rr(F)\subset \ll_F$ for a certain positive integer $a$. Since $\rr(\phi)$ and $\ll_F$ are maximal ideals, they coincide. By Proposition \ref{sameideal}, the homomorphism $\Delta(\mu)\to\F_\phi$ given by $g+\pset_0^+(\mu)\mapsto g(\t)+\m_\phi$ is onto and it has the same kernel. This proves item 2.
\end{proof}

\subsection{Okutsu invariants of prime polynomials}\label{subsecOkutsu} 
We keep dealing with a prime polynomial $F\in\P$ and a fixed root $\t\in\kb$ of $F$.

Let $F_1,\dots,F_{r}\in\oo[x]$ be monic polynomials of strictly increasing degree: $$1\le \deg F_1<\cdots <\deg F_r<\deg F.$$ Denote $F_{r+1}:=F$ and consider the following sequence of constants:
$$
C_0:=0;\quad C_i:=\dfrac{v(F_i(\t))}{\deg F_i},\ 1\le i\le r+1.
$$
Note that $C_{r+1}=\infty$.
We say that $[F_1,\dots,F_r]$ is an \emph{Okutsu frame} of $F$ if
\begin{equation}\label{frame}
\deg g<\deg F_{i+1}\ \Longrightarrow\ \dfrac{v(g(\t))}{\deg g}\le C_i<C_{i+1},
\end{equation}
for any monic polynomial $g(x)\in\oo[x]$ and any  $0\le i\le r$.

Since $v$ is discrete, every prime polynomial admits a finite Okutsu frame. The length $r$ of the frame is called the \emph{Okutsu depth} of $F$. Clearly, the depth $r$, the degrees $\deg F_i$ and the constants $C_i$ attached to any Okutsu frame are intrinsic data of $F$. We denote $C_i(F):=C_i$ for all $0\le i\le r+1$.
It is easy to deduce from (\ref{frame}) that all polynomials $F_1,\dots,F_{r}$ are prime polynomials. 


\begin{theorem}\label{MLOk}
Consider an optimal MacLane chain of an inductive valuation $\mu$. 
$$\mu_0\ \stackrel{(\phi_1,\la_1)}\lra\  \mu_1\ \stackrel{(\phi_2,\la_2)}\lra\ \cdots
\ \stackrel{(\phi_{r-1},\la_{r-1})}\lra\ \mu_{r-1} 
\ \stackrel{(\phi_{r},\la_{r})}\lra\ \mu_{r}=\mu
$$

Then, $[\phi_1,\dots,\phi_r]$ is an Okutsu frame of every strong key polynomial $F$ for $\mu$, and $C_i(F)=C_i(\mu)$ for all $1\le i\le r$. 
\end{theorem}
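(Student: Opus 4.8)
The plan is to reduce the statement to three earlier results: Theorem~\ref{computation}, applied with the key polynomial taken to be $F$ itself; the stability Lemma~\ref{stable}; and the bound of Theorem~\ref{preMLOk}. Write $\t$ for a fixed root of $F$. Since $F$ is a key polynomial for $\mu$, it is $\mu$-minimal, and since $F\in\kpm$ with $F\mmu F$, Theorem~\ref{computation} applies with $\phi:=F$: it gives $\mu\le\mu_{\infty,F}$ together with the crucial identity $v(g(\t))=\mu_{\infty,F}(g)=\mu(g)$ for every $g\in K[x]$ with $F\nmid_\mu g$, hence for every nonzero $g$ of degree less than $\deg F$, by $\mu$-minimality of $F$. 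When $r=0$ this already finishes the proof: the frame is empty, so only the case $i=0$ of the frame condition must be verified, and it reads $v(g(\t))/\deg g=\mu_0(g)/\deg g\le 0=C(\mu_0)$ for monic $g\in\oo[x]$ of degree $<\deg F$, which is Theorem~\ref{preMLOk} for $\mu_0$. So assume $r\ge1$. Because $F$ is \emph{strong} and the chain is optimal, $1\le\deg\phi_1<\cdots<\deg\phi_r<\deg F$ --- the last inequality is the definition of strength, the others come from optimality --- so $[\phi_1,\dots,\phi_r]$ has the shape required of an Okutsu frame.

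To identify the frame constants, apply the identity above to $g=\phi_i$ for $1\le i\le r$ (legitimate since $\deg\phi_i\le\deg\phi_r<\deg F$): it gives $v(\phi_i(\t))=\mu(\phi_i)$, and $\mu(\phi_i)=\mu_i(\phi_i)$ by Lemma~\ref{stable} (using $\phi_{i+1}\nmid_{\mu_i}\phi_i$, which holds in any MacLane chain; trivial for $i=r$). Hence
$$
C_i(F)=\frac{v(\phi_i(\t))}{\deg\phi_i}=\frac{\mu_i(\phi_i)}{\deg\phi_i}=C(\mu_i)=C_i(\mu),
$$
which is the second assertion of the theorem. Since $0=C(\mu_0)<C(\mu_1)<\cdots<C(\mu_r)$ strictly (Section~\ref{subsecChains}) and $C_{r+1}(F)=\infty$, the frame constants $C_0(F)<C_1(F)<\cdots<C_r(F)<C_{r+1}(F)$ are strictly increasing.

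It remains to verify the defining inequality of an Okutsu frame. Fix $0\le i\le r$ and a monic $g\in\oo[x]$ with $\deg g<\deg F_{i+1}$ (with $F_{r+1}:=F$). Since $\deg F_{i+1}\le\deg F$ and $F$ is $\mu$-minimal, $F\nmid_\mu g$, so $v(g(\t))=\mu(g)$ by the first paragraph. If $i<r$, then $\deg g<\deg\phi_{i+1}$ and $\phi_{i+1}$ is a key polynomial for $\mu_i$ (Lemma~\ref{phi}), hence $\mu_i$-minimal, so $\phi_{i+1}\nmid_{\mu_i}g$, and Lemma~\ref{stable} gives $\mu(g)=\mu_i(g)$; for $i=r$ this equality is trivial. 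Theorem~\ref{preMLOk} applied to the inductive valuation $\mu_i$ then yields $\mu_i(g)/\deg g\le C(\mu_i)=C_i(F)$, so altogether $v(g(\t))/\deg g\le C_i(F)<C_{i+1}(F)$, which is exactly the condition defining an Okutsu frame. Therefore $[\phi_1,\dots,\phi_r]$ is an Okutsu frame of $F$, and the proof is complete.

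The argument is essentially an assembly of established results, so the main thing to watch is bookkeeping: the strength hypothesis must simultaneously supply the degree ordering of the frame and guarantee the slack ($\deg g<\deg F$, resp. $\deg g<\deg\phi_{i+1}$) needed to invoke Theorems~\ref{computation} and \ref{preMLOk} at each level $\mu_i$; and optimality of the chain is used only through its being a MacLane chain, so that Lemma~\ref{stable} and the strict growth of the $C(\mu_i)$ are available. I do not anticipate a genuine obstacle beyond this.
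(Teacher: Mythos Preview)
Your proof is correct and follows essentially the same approach as the paper's own proof: both establish $v(g(\t))=\mu(g)=\mu_i(g)$ for $\deg g<m_{i+1}$ via the key-polynomial property of $F$ together with Lemma~\ref{stable}, and then invoke Theorem~\ref{preMLOk} at each level $\mu_i$. The only difference is cosmetic: the paper cites Proposition~\ref{theta} directly (since $F$ is already a key polynomial for $\mu$), whereas you route through Theorem~\ref{computation} with $\phi=F$; the latter contains the former as a special case, so nothing is lost. One small remark on your closing commentary: optimality of the chain is used not only to make it a MacLane chain but also to guarantee the strict degree ordering $\deg\phi_1<\cdots<\deg\phi_r$ required by the definition of an Okutsu frame --- you use this correctly in the body of the argument, but your final sentence understates it.
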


\begin{proof}
Let $F\in\kps$, and let $\t\in \kb$ be a root of $F$. By the optimality of the MacLane chain, $m_1<\cdots <m_r<m_{r+1}:=\deg F$. Fix an index $0\le i\le r$. For every monic polynomial $g$ with $\deg g<m_{i+1}$, Proposition \ref{theta} and Lemma \ref{stable} show that
$$
\mu_i(g)=\mu_{i+1}(g)=\cdots=\mu(g)=\mu_{\infty,F}(g).
$$
These equalities hold in particular for $\phi_i$. Hence, by Theorem \ref{preMLOk}:
$$
v(g(\t))/\deg g=\mu_i(g)/\deg g\le C(\mu_i)=\mu_i(\phi_i)/m_i=v(\phi_i(\t))/m_i.
$$
The inequality $C_i(\mu)<C_{i+1}(\mu)$  was proved at the beginning of section \ref{subsecChains}.
\end{proof}

\begin{definition}\label{muF}
The \emph{Okutsu discriminant bound} of a prime polynomial $F\in\P$ of Okutsu depth $r$ is defined as 
\begin{align*}
\delta_0(F):=&\ \deg(F) C_r(F)=v(\res(\phi_r,F))/ \deg \phi_r\\
=&\ \deg(F) \max\left\{v(g(\t)/\deg g\mid g\in \oo[x],\ g\mbox{ monic},\ \deg g<\deg F\right\}.
\end{align*}

We may attach to $F$ a valuation $\mu_F\colon K_v(x)^*\to \Q$, determined by the following action on polynomials:
\begin{itemize}
\item $\mu_F(a)=\mu_{\infty,F}(a)$, if $a\in K[x]$ has $\deg a<\deg F$.
\item $\mu_F(F)=\delta_0(F)$.
\item If $g=\sum_{0\le s}a_sF^s$ is the $F$-expansion of $g$, then $\mu_F(g)=\min_{0 \le s}\{\mu_F(a_sF^s)\}$. 
\end{itemize}

We denote by the same symbol $\mu_F$ the valuation on $K(x)$ obtained by restriction.
\end{definition}

The next theorem, which is a kind of converse of Theorem \ref{MLOk}, shows that $\mu_F$ is indeed a valuation.

\begin{theorem}\label{OkML}
Let $[F_1,\dots,F_r]$ be an Okutsu frame of a prime polynomial $F\in\P$. Then, $\mu_F$ is an inductive valuation on $K(x)$ admitting an optimal MacLane chain  
$$\mu_0=\mu_{F_1}\ \stackrel{(F_1,\la_1)}\lra\  \mu_1=\mu_{F_2}\ \stackrel{(F_2,\la_2)}\lra\ \cdots
\ \stackrel{(F_{r-1},\la_{r-1})}\lra\ \mu_{r-1}=\mu_{F_{r}} 
\ \stackrel{(F_{r},\la_{r})}\lra\ \mu_F,
$$
with $\la_i=v(F_i(\t))-\delta_0(F_i)$ for $1\le i\le r$, being $\t\in\kb$ a root of $F$.
Moreover, $F$ is a strong key polynomial for $\mu_F$ as a valuation on $K_v(x)$.
\end{theorem}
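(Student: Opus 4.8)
The plan is to induct on the Okutsu depth $r$ of $F$, with $\t\in\kb$ a fixed root of $F$. For $r=0$ the frame is empty, so (\ref{frame}) forces $v(g(\t))\le 0$ for every monic $g\in\oo[x]$ of degree $<\deg F$; combined with Hensel's lemma (which rules out a splitting of $\overline F$ into coprime factors) this makes $\overline F$ irreducible in $\F[x]$, so $F\in\kp(\mu_0)=\kp(\mu_0)^{\op{str}}$, and a short computation with the $F$-expansion gives $\mu_F=\mu_0$. So assume $r\ge 1$.

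The first task is a closeness statement: a root $\t_r$ of $F_r$ cannot be told apart from $\t$ by polynomials of degree $<\deg F_r$, that is, $v(g(\t_r))=v(g(\t))$ whenever $\deg g<\deg F_r$. Granting this, the constants satisfy $C_i(F_r)=C_i(F)$ for $i\le r-1$ and the inequalities (\ref{frame}) for $F$ descend to $F_r$, so $[F_1,\dots,F_{r-1}]$ is an Okutsu frame of $F_r$; by the induction hypothesis $\mu_{F_r}$ is an inductive valuation admitting the optimal MacLane chain $\mu_0=\mu_{F_1}\to\cdots\to\mu_{F_{r-1}}\to\mu_{F_r}$ with $\la_i=v(F_i(\t))-\delta_0(F_i)$, and $F_r$ is a strong key polynomial for $\mu_{F_r}$ over $K_v(x)$; in particular $F_r\in\kp(\mu_{F_r})$ and $\mu_{F_r}(F_r)=\delta_0(F_r)$.

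Now put $\la_r:=v(F_r(\t))-\delta_0(F_r)$ and $\mu':=[\mu_{F_r};(F_r,\la_r)]$. Since $\delta_0(F_r)=C_{r-1}(F_r)\deg F_r=C_{r-1}(F)\deg F_r$ and $v(F_r(\t))=C_r(F)\deg F_r$ with $C_{r-1}(F)<C_r(F)$ by (\ref{frame}), one has $\la_r\in\Q_{>0}$, so $\mu'$ is inductive (Proposition \ref{extension}) and, as $\deg F_r>\deg F_{r-1}$, appending $(F_r,\la_r)$ yields an optimal MacLane chain of $\mu'$. Applying Theorem \ref{fundamental} with $\mu=\mu_{F_r}$ and $\phi=F_r$ (the hypothesis $v(F_r(\t))>\mu_{F_r}(F_r)$ holding by the strict inequality just displayed) gives $F_r\mid_{\mu_{F_r}}F$, that $F$ is $\mu_{F_r}$-minimal with $\deg F=\ell\deg F_r$ where $\ell=\ell(N_{\mu_{F_r},F_r}(F))$, and that $N_{\mu_{F_r},F_r}(F)$ is one-sided of slope $-\la_r$; passing through the affinity of Lemma \ref{affinity} this shows that $F$ is $\mu'$-minimal and that $\mu'(F)=\ell\,\mu'(F_r)=\ell\,v(F_r(\t))=\delta_0(F)<\infty$. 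To see that $F$ is in fact a key polynomial for $\mu'$ I would argue it is $\mu'$-irreducible: were it not, there would be a key polynomial $\phi'$ for $\mu'$ with $\phi'\mid_{\mu'}F$ and $\deg\phi'<\deg F$ (its residual polynomial being a proper factor of $R_r(F)$); but then $\phi'$ is $\mu'$-minimal, so $v(\phi'(\t))/\deg\phi'>\mu'(\phi')/\deg\phi'=C(\mu')=C_r(F)$ by Theorems \ref{fundamental} and \ref{preMLOk}, contradicting (\ref{frame}) since $\deg\phi'<\deg F$. Hence $F\in\kp(\mu')$, and it is strong as $\deg F>\deg F_r$.

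Finally, Theorem \ref{computation} applied with $\mu=\mu'$ and $\phi=F$ gives $\mu'\le\mu_{\infty,F}$ together with $\mu'(g)=\mu_{\infty,F}(g)$ for every $g$ with $F\nmid_{\mu'}g$; in particular $\mu'(g)=v(g(\t))$ whenever $\deg g<\deg F$. Thus $\mu'$ meets the three clauses of Definition \ref{muF} — the $F$-expansion clause because $F$ is $\mu'$-minimal — so $\mu'=\mu_F$; restriction to $K(x)$ together with Proposition \ref{KKv} completes the statement. The step I expect to be the real obstacle is the closeness statement $v(g(\t_r))=v(g(\t))$ for $\deg g<\deg F_r$: the naive bound via the root of $F_r$ closest to $\t$ only shows that $v(\t-\t_r)$ is at least the average $v(F_r(\t))/\deg F_r$, which is too weak, so one genuinely has to use that every member of an Okutsu frame is a best approximation of its degree; by contrast, the other potentially delicate point — excluding a key polynomial $\phi'$ of degree $<\deg F$ dividing $F$ — is resolved cleanly by the interplay of Theorems \ref{fundamental}, \ref{preMLOk} and the frame inequality (\ref{frame}) indicated above.
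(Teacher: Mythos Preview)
Your outline is broadly sound once the closeness statement is in hand, and the argument for $\mu'$-irreducibility of $F$ via a hypothetical proper key divisor $\phi'$ is correct. The genuine gap is exactly the one you flag: you never prove that $v(g(\t_r))=v(g(\t))$ for $\deg g<\deg F_r$, and your induction on the Okutsu depth needs this twice --- first to know that $[F_1,\dots,F_{r-1}]$ is an Okutsu frame of $F_r$, and second to ensure that the slopes $\la_i$ produced by the induction hypothesis (which are $v(F_i(\t_r))-\delta_0(F_i)$) coincide with the $v(F_i(\t))-\delta_0(F_i)$ claimed in the statement. The naive Krasner-type estimate indeed gives nothing better than $v(\t-\t_r)\ge C_r(F)$, which does not control $v(g(\t_r))$ for arbitrary $g$ of degree $<\deg F_r$.

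The paper's proof avoids this chicken-and-egg problem by restructuring the induction: it keeps $F$ and $\t$ fixed throughout and runs an internal induction on the index $i$, proving at each step that (a) $\mu_{F_i}$ is inductive with the displayed optimal chain, and (b) $F_i$ is a strong key polynomial for $\mu_{F_i}$ with $F_i\mid_{\mu_{F_i}}F$. The passage from $i$ to $i+1$ proceeds without any appeal to roots of $F_{i+1}$: one takes $\mu:=[\mu_{F_i};(F_i,\la_i)]$, picks $\phi$ of minimal degree with $\mu(\phi)<\mu_{\infty,F}(\phi)$, and uses Lemma \ref{minDegree}, Theorem \ref{fundamental}, Theorem \ref{preMLOk} and the frame inequality (\ref{frame}) to force $\deg\phi=\deg F_{i+1}$ and hence $F_{i+1}\sim_\mu\phi\in\kp(\mu)$ with $F_{i+1}\mid_\mu F$. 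Only \emph{after} this is established does Corollary \ref{efphiF} yield the closeness statement $v(a(\t_{i+1}))=v(a(\t))$ for $\deg a<\deg F_{i+1}$, which is then used merely to compute $\delta_0(F_{i+1})$ and identify $\mu$ with $\mu_{F_{i+1}}$. In short, the closeness is an output of the argument, not an input; reorganising your induction along these lines removes the obstacle you identified.
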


\begin{proof}
Denote $F_{r+1}:=F$. Since $F_1$ is a monic polynomial with minimal degree among all polynomials $g$ satisfying $\mu_0(g)<\mu_{\infty,F}(g)$, Lemma \ref{minDegree} shows that $F_1$ is a (strong) key polynomial for $\mu_0$ and $F_1\mid_{\mu_0} F$. As a key polynomial for $\mu_0$, $\overline{F}_1\in\F[y]$ is irreducible, and this implies that $F_1$ has Okutsu depth zero. An Okutsu frame of $F_1$ is the empty set, so that $\delta_0(F_1)=\deg(F_1)C_0(F_1)=0$. Thus, $\mu_{F_1}=\mu_0$ is a valuation and $F_1$ is a strong key polynomial for this valuation. 
This proves the theorem in the case $r=0$. If $r>0$, we have proved the following conditions for the index $i=1$:\medskip

(a) \ $\mu_{F_i}$ is a valuation admitting an optimal MacLane chain  
$$\mu_0=\mu_{F_1}\ \stackrel{(F_1,\la_1)}\lra\  \mu_1=\mu_{F_2}\ \stackrel{(F_2,\la_2)}\lra\ \cdots
\ \stackrel{(F_{i-1},\la_{i-1})}\lra\ \mu_{i-1}=\mu_{F_{i}},
$$
with $\la_j=v(F_j(\t))-\delta_0(F_j)$ for $1\le j< i$.\medskip

(b) \ $F_i$ is a strong key polynomial for $\mu_{F_i}$ and $F_i\mid_{\mu_{F_i}}F$.\medskip

We need only to show that if these conditions are satisfied for an index $1\le i\le r$, then they are satisfied for the index $i+1$.

$[F_1,\dots,F_{i-1}]$ is an Okutsu frame of $F_{i}$. Hence,

Since  $F_i\mid_{\mu_{F_i}}F$, Theorem \ref{fundamental} shows that $\mu_{F_i}(F_i)<v(F_i(\t))$. 
Therefore, by the definition of $\mu_{F_i}$, the monic polynomial $F_i$ has minimal degree among all polynomials $g$ satisfying $\mu_{F_i}(g)<\mu_{\infty,F}(g)$. By Lemma \ref{minDegree},  $[\mu_{F_i};(F_i,\la_i)]\le \mu_{\infty,F}$, where $\la_i=\mu_{\infty,F}(F_i)-\mu_{F_i}(F_i)=v(F_i(\t))-\delta_0(F_i)$. Denote $\mu:=[\mu_{F_i};(F_i,\la_i)]$.

Let $\phi$ be a monic polynomial with minimal degree among all polynomials $g$ satisfying $\mu(g)<\mu_{\infty,F}(g)$. By Lemma \ref{minDegree}, $\phi$ is a key polynomial for $\mu$ and $\mu(g)<\mu_{\infty,F}(g)$ is equivalent to $\phi\mmu g$; in particular, $\phi\mmu F$. By Lemma \ref{phi}, $F_i$ is a key polynomial for $\mu$; hence, Lemma \ref{bound} shows that 
\begin{equation}\label{contrad}
\dfrac{v(\phi(\t))}{\deg\phi}>\dfrac{\mu(\phi)}{\deg\phi}=C(\mu)=\dfrac{\mu(F_i)}{\deg F_i}=
\dfrac{\mu_{F_i}(F_i)+\la_i}{\deg F_i}=
\dfrac{v(F_i(\t))}{\deg F_i}=C_i(F).
\end{equation}
By (\ref{frame}), we have necessarily $\deg\phi\ge \deg F_{i+1}$. On the other hand, by (\ref{frame}), (\ref{contrad}) and Theorem \ref{preMLOk}, we have
$$
\dfrac{v(F_{i+1}(\t))}{\deg F_{i+1}}>C_i(F)=C(\mu)\ge
\dfrac{\mu(F_{i+1})}{\deg F_{i+1}}.
$$
Hence, $v(F_{i+1}(\t))>\mu(F_{i+1})$, which is equivalent to $\phi\mmu F_{i+1}$ by Theorem \ref{fundamental}. By the $\mu$-minimality of $\phi$, we have $\deg \phi\le \deg F_{i+1}$. Thus, $\deg\phi=\deg F_{i+1}$ and we get $\phi\smu F_{i+1}$ by Lemma \ref{mid=sim}. Therefore, $F_{i+1}$ is a key polynomial for $\mu$ and $F_{i+1}\mmu F$. Also, the inequality $\deg F_{i+1}>\deg F_i$ between two key polynomials for $\mu$ shows that $F_{i+1}$ is a strong key polynomial for $\mu$.

Let $\t_{i+1}\in \kb$ be a root of $F_{i+1}$. Since $F_{i+1}\mmu F$, Corollary \ref{efphiF} shows that $v(a(\t_{i+1}))=v(a(\t))$ for any $a\in K[x]$ with $\deg a<\deg F_{i+1}$. In particular, $C_j(F_{i+1})=C_j(F)$ for all $j\le i$, and  $[F_1,\dots,F_i]$ is an Okutsu frame of $F_{i+1}$. By (\ref{contrad}) we have:
\begin{equation}\label{delta0i+1}
\delta_0(F_{i+1})=\deg F_{i+1}\,C_i(F_{i+1})=\deg F_{i+1}\,C_i(F)=\deg F_{i+1}C(\mu).
\end{equation}

Finally, let us show that $\mu=\mu_{F_{i+1}}$. Let $g=\sum_{0\le s}a_s(F_{i+1})^s$ be the $F_{i+1}$-expansion of a polynomial $g\in K[x]$. Since $F_{i+1}\in \kpm$, we have:\medskip

$\bullet$ \ $\mu(a_s)=\mu_{\infty,F_{i+1}}(a_s)=\mu_{F_{i+1}}(a_s)$, by Proposition \ref{theta}.

$\bullet$ \ $\mu(F_{i+1})=\deg F_{i+1}C(\mu)=\delta_0(F_{i+1})=\mu_{F_{i+1}}(F_{i+1})$, by Lemma \ref{bound} and (\ref{delta0i+1}).

$\bullet$ \ $\mu(g)=\min_{0 \le s}\{\mu(a_s(F_{i+1})^s)\}=\min_{0 \le s}\{\mu_{F_{i+1}}(a_s(F_{i+1})^s)\}=\mu_{F_{i+1}}(g)$.
\end{proof}

Let us emphasize a fact that was seen along the proof of Theorem \ref{OkML}.

\begin{corollary}
Let $[F_1,\dots,F_r]$ be an Okutsu frame of $F\in\P$. For any $1\le i\le r$, let $\t_i\in\kb$ be a root of $F_i$. Then, $v(g(\t_i))=v(g(\t))$ for any polynomial $g\in K[x]$ with $\deg g<\deg F_i$. In particular, $[F_1,\dots,F_{i-1}]$ is an Okutsu frame of $F_i$.
\hfill{$\Box$}
\end{corollary}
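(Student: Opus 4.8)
The plan is to obtain this statement by bookkeeping from the proof of Theorem \ref{OkML} rather than starting afresh. Recall that there the optimal MacLane chain $\mu_0=\mu_{F_1}\lra\cdots\lra\mu_{F_r}\lra\mu_F$ was produced inductively, and the two properties carried along as invariants were that, for each $1\le i\le r$, the polynomial $F_i$ is a (strong) key polynomial for the valuation $\mu_{F_i}$ and that $F_i\mid_{\mu_{F_i}}F$. So the first step is simply to invoke Theorem \ref{OkML} to record these two facts for a fixed $i$.

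Given them, the value equality is immediate. The valuation $\mu_{F_i}$ is inductive, $F_i$ is a key polynomial for $\mu_{F_i}$, and $F_i\mid_{\mu_{F_i}}F$; these are exactly the hypotheses under which item 1 of Corollary \ref{efphiF} applies, with $\mu=\mu_{F_i}$ and $\phi=F_i$ (and $F$ the given prime polynomial). It yields $v(g(\t_i))=v(g(\t))$ for every $g\in K[x]$ with $\deg g<\deg F_i$, which is the displayed equality.

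For the ``in particular'', I would verify the defining condition (\ref{frame}) with $F_i$ in the role of the distinguished prime polynomial, so that $F_i$ itself occupies the ``$F_{r+1}$'' slot and the purported frame has length $i-1$. First, the equality just proved, together with the strict inequalities $\deg F_1<\cdots<\deg F_{i-1}<\deg F_i$ that are part of being an Okutsu frame, gives $C_j(F_i)=v(F_j(\t_i))/\deg F_j=v(F_j(\t))/\deg F_j=C_j(F)$ for $1\le j\le i-1$, and $C_0(F_i)=0=C_0(F)$; hence the required strict increase $C_0(F_i)<\cdots<C_{i-1}(F_i)<C_i(F_i)=\infty$ is inherited from the chain of constants of $F$. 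Second, for any monic $g\in\oo[x]$ and any $0\le j\le i-1$ with $\deg g<\deg F_{j+1}$, the inequality $\deg F_{j+1}\le\deg F_i$ lets me apply the value equality to $g$, so that $v(g(\t_i))/\deg g=v(g(\t))/\deg g\le C_j(F)=C_j(F_i)$ by the Okutsu frame property of $[F_1,\dots,F_r]$ for $F$. This is precisely (\ref{frame}) for $F_i$, so $[F_1,\dots,F_{i-1}]$ is an Okutsu frame of $F_i$.

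I do not expect a genuine obstacle here: the content is entirely contained in the inductive argument for Theorem \ref{OkML}. The only points demanding a little attention are (i) confirming that that argument really does establish ``$F_i$ is a key polynomial for $\mu_{F_i}$ and $F_i\mid_{\mu_{F_i}}F$'' for every index up to $r$, not merely for the intermediate steps of the induction (this is its condition (b)), and (ii) being careful to use the strictness of the degrees in an Okutsu frame, so that $\deg F_{j+1}\le\deg F_i$ for $j\le i-1$, which is exactly what licenses transporting the value equality both to the $F_j$ and to the test polynomials $g$.
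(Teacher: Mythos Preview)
Your proposal is correct and follows essentially the same approach as the paper: the corollary is stated there as a ``fact that was seen along the proof of Theorem \ref{OkML}'', and that proof establishes precisely the ingredients you invoke --- condition (b) that $F_i$ is a key polynomial for $\mu_{F_i}$ with $F_i\mid_{\mu_{F_i}}F$, followed by an application of Corollary \ref{efphiF} to get the value equality and then the Okutsu frame property for $F_{i+1}$. Your unpacking of the ``in particular'' via the definition (\ref{frame}) is exactly what the paper's inline remark ``$C_j(F_{i+1})=C_j(F)$ for all $j\le i$, and $[F_1,\dots,F_i]$ is an Okutsu frame of $F_{i+1}$'' encapsulates.
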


\begin{corollary}
The MacLane depth of an inductive valuation $\mu$ is equal to the Okutsu depth of any strong key polynomial for $\mu$.
The Okutsu depth of a prime polynomial $F$ is equal to the MacLane depth of the canonical valuation $\mu_F$.\hfill{$\Box$}
\end{corollary}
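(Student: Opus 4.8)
The plan is to read off both halves of the statement directly from Theorems \ref{MLOk} and \ref{OkML}, which already do all the work; the present corollary is essentially a matching up of the data those theorems produce, so I expect no genuine obstacle — the only point needing a moment's attention is to make sure that the MacLane chains and Okutsu frames in play are \emph{optimal} (equivalently, have strictly increasing degrees), but this is already part of the relevant hypotheses, together with the well-definedness of ``length'' in each setting.

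First I would handle the statement for valuations. Let $\mu$ be an inductive valuation of MacLane depth $r$ and fix an optimal MacLane chain of length $r$ as in (\ref{depth}), so that $1\le\deg\phi_1<\cdots<\deg\phi_r=m_r(\mu)$. Given any strong key polynomial $F$ for $\mu$, Definition \ref{strong} gives $\deg F>m_r(\mu)=\deg\phi_r$, hence $1\le\deg\phi_1<\cdots<\deg\phi_r<\deg F$. Now Theorem \ref{MLOk} states precisely that $[\phi_1,\dots,\phi_r]$ is an Okutsu frame of $F$, and that $C_i(F)=C_i(\mu)$ for $1\le i\le r$. Since the Okutsu depth of $F$ is by definition the common length of all of its Okutsu frames, it equals $r$, which is the MacLane depth of $\mu$; in particular all strong key polynomials for $\mu$ share this Okutsu depth.

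Then I would prove the converse statement for prime polynomials. Let $F\in\P$ have Okutsu depth $r$ and fix an Okutsu frame $[F_1,\dots,F_r]$, so that $1\le\deg F_1<\cdots<\deg F_r<\deg F$. By Theorem \ref{OkML}, the canonical valuation $\mu_F$ is inductive and admits the MacLane chain
$$\mu_0=\mu_{F_1}\ \stackrel{(F_1,\la_1)}\lra\ \cdots\ \stackrel{(F_{r-1},\la_{r-1})}\lra\ \mu_{F_r}\ \stackrel{(F_r,\la_r)}\lra\ \mu_F,$$
whose key polynomials are $F_1,\dots,F_r$. Because their degrees are strictly increasing, this is an optimal MacLane chain of length $r$, and by Proposition \ref{unicity} the length of an optimal MacLane chain is an invariant of the valuation, so it coincides with the MacLane depth of $\mu_F$. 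Hence the MacLane depth of $\mu_F$ equals $r$, the Okutsu depth of $F$. The two assertions together are thus immediate once Theorems \ref{MLOk} and \ref{OkML} are in hand; the supporting facts I rely on — well-definedness of the MacLane depth (Proposition \ref{unicity}) and of the Okutsu depth (recorded right after the definition of an Okutsu frame) — are already available, so there is nothing further to verify.
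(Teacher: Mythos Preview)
Your proposal is correct and matches the paper's approach exactly: the paper gives no proof at all (the $\Box$ appears immediately after the statement), treating the corollary as an immediate consequence of Theorems \ref{MLOk} and \ref{OkML}, which is precisely what you have spelled out. One tiny remark: your sentence ``Definition \ref{strong} gives $\deg F>m_r(\mu)$'' tacitly assumes $r>0$; for $r=0$ every key polynomial for $\mu_0$ is strong by definition and the corresponding Okutsu frame is empty, but this edge case is trivial and does not affect your argument.
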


\begin{corollary}\label{muFmu}
Let $\mu$ be an inductive valuation and $F$ a prime polynomial. Then,
$\mu=\mu_F$ if and only if $F$ is a strong polynomial for $\mu$.
\end{corollary}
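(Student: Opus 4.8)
The plan is to prove the equivalence in Corollary~\ref{muFmu} by combining the two main theorems of this section, Theorem~\ref{MLOk} and Theorem~\ref{OkML}, with the uniqueness of optimal MacLane chains (Proposition~\ref{unicity}). Both directions are essentially already contained in what has been established; the work is in assembling them correctly.

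First I would prove the ``only if'' direction. Assume $\mu=\mu_F$. By Theorem~\ref{OkML}, applied to an Okutsu frame $[F_1,\dots,F_r]$ of $F$, the valuation $\mu_F$ (viewed on $K_v(x)$) is inductive and $F$ is a strong key polynomial for it; hence $F$ is a strong key polynomial for $\mu=\mu_F$. (Strictly, Theorem~\ref{OkML} asserts this for $\mu_F$ as a valuation on $K_v(x)$, but via Proposition~\ref{KKv} and the fact that $\kp$ of the restricted and unrestricted valuations correspond, or simply because $F\in\oo[x]\subset K[x]$, the statement transfers to $K(x)$.) This direction is immediate.

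For the ``if'' direction, assume $F$ is a strong key polynomial for the inductive valuation $\mu$, and fix an optimal MacLane chain $\mu_0\stackrel{(\phi_1,\la_1)}\lra\cdots\stackrel{(\phi_r,\la_r)}\lra\mu_r=\mu$. By Theorem~\ref{MLOk}, $[\phi_1,\dots,\phi_r]$ is an Okutsu frame of $F$ and $C_i(F)=C_i(\mu)$ for $1\le i\le r$. Now apply Theorem~\ref{OkML} to this Okutsu frame: it produces an optimal MacLane chain of $\mu_F$ whose key polynomials may be taken to be $F_1=\phi_1,\dots,F_r=\phi_r$ and whose augmentation data are $\la_i=v(\phi_i(\t))-\delta_0(\phi_i)$. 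The point is to identify the intermediate valuations. Along the proof of Theorem~\ref{OkML} one has $\mu_{i-1}=\mu_{F_i}=[\mu_{F_{i-1}};(F_{i-1},\la_{i-1})]$, with $\la_{i-1}=v(F_{i-1}(\t))-\delta_0(F_{i-1})$; since $F_{i-1}=\phi_{i-1}$ is a key polynomial for $\mu_{i-2}$ (hence for $\mu_{F_{i-1}}$ once we know these agree), Lemma~\ref{bound} gives $\delta_0(\phi_{i-1})=\deg\phi_{i-1}\,C_{i-1}(\mu_{F_{i-1}})=\mu_{F_{i-1}}(\phi_{i-1})$, so that $\la_{i-1}=v(\phi_{i-1}(\t))-\mu_{F_{i-1}}(\phi_{i-1})$. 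On the other hand Corollary~\ref{lower} (applicable since $F$ is a strong key polynomial, so $\deg F>\deg\phi_r$ and in particular $F\not\sim_{\mu_{i-1}}\phi_i$ is automatic from the optimal chain, and $\phi_i\mmu F$) gives $\mu_{i-1}(\phi_{i-1})=\mu(\phi_{i-1})=v(\phi_{i-1}(\t))$ wait—more precisely $\mu_{j-1}(\phi_j)=v(\phi_j(\t))$ for the relevant indices; comparing, $\la_{i-1}$ as produced by Theorem~\ref{OkML} equals the $\la_{i-1}$ of our given chain. Thus inductively $\mu_{F_i}=\mu_{i-1}$ for all $i$, and the final augmentation step $\mu_F=[\mu_{F_r};(F_r,\la_r)]=[\mu;(\phi_r,\la_r)]$... but that is not $\mu$. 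The correct reconciliation is: in Theorem~\ref{OkML}'s chain the last key polynomial is $F$ itself, i.e. $\mu_F=[\mu_{F_r};(F,\la_r)]$ where $\mu_{F_r}=\mu$, so the two optimal chains $\mu_0\to\cdots\to\mu_r=\mu$ and $\mu_0\to\cdots\to\mu_{F_r}=\mu$ share all but possibly nothing, and what remains is to observe that $\mu_F$ has the same MacLane depth and the same invariants $\deg\phi_i$, $\la_i$, $\mu_i(\phi_i)$ as $\mu$ at every level—hence by Proposition~\ref{unicity}, $\mu=\mu_F$.

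The main obstacle I expect is precisely this last bookkeeping step: one must be careful that the optimal MacLane chain of $\mu$ and the chain produced by Theorem~\ref{OkML} for $\mu_F$ agree level-by-level, and then invoke Proposition~\ref{unicity}. Concretely, since $F$ is a strong key polynomial for $\mu$, $F$ has Okutsu depth $r$ (its frame $[\phi_1,\dots,\phi_r]$ has length $r$), so $\mu_F$ has MacLane depth $r$ by Theorem~\ref{OkML}; and the Theorem~\ref{OkML} chain for $\mu_F$ can be taken with key polynomials $\phi_1,\dots,\phi_r$ and the \emph{same} $\la_i = v(\phi_i(\t))-\delta_0(\phi_i)$. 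One checks $v(\phi_i(\t))=\mu_i(\phi_i)$ via Corollary~\ref{lower} (noting $\phi_i\mmu F$ holds because $F\sim_{\mu}\phi_r^{\ell}$ by Theorem~\ref{fundamental} applied with $\phi=\phi_r$, forcing $\phi_i\mmu F$ down the chain) and $\delta_0(\phi_i)=\mu_{i-1}(\phi_i)$ via Lemma~\ref{bound} together with $C_i(F)=C_i(\mu)$; hence $\la_i$ coincides with the datum of the given chain, and $\mu_i(\phi_i)=\mu_i(\phi_i)$ trivially. By Proposition~\ref{unicity} the two optimal chains determine the same valuation, so $\mu=\mu_F$, completing the proof.
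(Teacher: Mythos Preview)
Your proposal is correct and follows essentially the same route as the paper: the ``only if'' direction is Theorem~\ref{OkML} verbatim, and for the ``if'' direction both you and the paper feed the optimal MacLane chain $\phi_1,\dots,\phi_r$ of $\mu$ into Theorem~\ref{MLOk} (to see it is an Okutsu frame of $F$), then into Theorem~\ref{OkML} (to build $\mu_F$ from the same data), and finally check via Corollary~\ref{lower} that the augmentation slopes $\la_i=v(\phi_i(\t))-\mu_{i-1}(\phi_i)$ agree with those produced by Theorem~\ref{OkML}. The paper concludes by identifying the chains step by step ($\mu_i=\mu_{\phi_{i+1}}$), whereas you wrap up with Proposition~\ref{unicity}; these are equivalent ways to finish.

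Two remarks on the write-up. First, to invoke Corollary~\ref{lower} you need a key polynomial $\phi$ for $\mu$ with $\phi\mmu F$ and $\phi\not\smu\phi_r$; the clean choice is $\phi=F$ itself (a key polynomial, trivially $\mu$-dividing $F$, and not $\mu$-equivalent to $\phi_r$ since $\deg F>m_r$), rather than arguing via $\phi_r$. Second, the identity $\delta_0(\phi_i)=\mu_{i-1}(\phi_i)$ that you need is immediate once you note (Theorem~\ref{MLOk} applied to $\mu_{i-1}$) that $C_{i-1}(\phi_i)=C(\mu_{i-1})$, and then use Lemma~\ref{bound}. Cleaning up these two points removes the mid-proof hesitations.
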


\begin{proof}
If $\mu=\mu_F$, then $F\in\kps$ by Theorem \ref{OkML}. 
Conversely, suppose that $F\in\kps$ and consider an optimal MacLane chain of $\mu$.
$$\mu_0\ \stackrel{(\phi_1,\la_1)}\lra\  \mu_1\ \stackrel{(\phi_2,\la_2)}\lra\ \cdots
\ \stackrel{(\phi_{r-1},\la_{r-1})}\lra\ \mu_{r-1} 
\ \stackrel{(\phi_{r},\la_{r})}\lra\ \mu_{r}=\mu
$$
By Theorem \ref{fundamental} and Corollary \ref{lower}, $\phi_i\mid_{\mu_{i-1}}F$ and $\la_i=v(\phi_i(\t))-\mu_{i-1}(\phi_i)$, for all $1\le i\le r$. By Theorem \ref{OkML}, $\mu_i=\mu_{\phi_{i+1}}$ for all  $1\le i\le r$ and $\mu=\mu_F$. 
\end{proof}

\begin{definition}
Let $F$ be a prime polynomial of Okutsu depth $r$. Let $f_r:=\deg\rr_{\mu_F}(F)=\deg R_r(F)$ with respect to any optimal MacLane chain of $\mu_F$.
An \emph{Okutsu invariant} of $F$ is a rational number that depends only on $e_0,\dots,e_r,f_0,\dots,f_r,h_1,\dots,h_r$; that is, on the basic MacLane invariants of $\mu_F$ and the number $f_r$. 

Note that $f_r=\deg R_r(F)$ with respect to any optimal MacLane chain of $\mu_F$.
\end{definition}

The ramification index, residual degree, and the Okutsu discriminant bound of $F$ are Okutsu invariants:
$$e(F)=e_0\cdots e_r, \qquad f(F)=f_0\cdots f_r,\qquad \delta_0(F)=e_rf_r(w_r+\la_r),
$$
as shown in Proposition \ref{theta}, Corollary \ref{identification}, and equation (\ref{C}), respectively. The \emph{index}, the \emph{exponent} and the \emph{conductor} of a prime polynomial are also Okutsu invariants admitting explicit formulas in terms of the basic invariants $e_i,f_i,h_i$ \cite{Ndiff}. 

\begin{definition}\label{okequiv}
Let $F,G\in\P$ be two prime polynomials of the same degree, and let $\t\in\kb$ be a root of $F$. 
We say that $F$ and $G$ are \emph{Okutsu equivalent}, and we write $F\approx G$, if $v(G(\t))>\delta_0(F)$.
\end{definition}

The idea behind this concept is that $F$ and $G$ are close enough to share the same Okutsu invariants, as the next result shows.

\begin{proposition}\label{criteria}
Let $F,G\in\P$ be two prime polynomials of degree $n$. The following conditions are equivalent:
\begin{enumerate}
\item $F\approx G$.
\item $F\sim_{\mu_F}G$.
\item $v(\res(F,G))>n\delta_0(F)$.
\item $\mu_F=\mu_G$ and $\rr(F)=\rr(G)$, where $\rr:=\rr_{\mu_F}=\rr_{\mu_G}$.
\end{enumerate}
\end{proposition}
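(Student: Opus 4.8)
The plan is to prove the cycle of implications $(1)\Rightarrow(2)\Rightarrow(4)\Rightarrow(3)\Rightarrow(1)$, using $\mu_F$ as the pivot. First I would establish $(1)\Rightarrow(2)$. Assume $v(G(\t))>\delta_0(F)$. Consider an optimal MacLane chain of $\mu_F$, with key polynomials $\phi_1,\dots,\phi_r$, so that $F$ is a strong key polynomial for $\mu_F$ by Corollary \ref{muFmu}. Since $\deg G=\deg F$ and $\mu_F(F)=\delta_0(F)<v(G(\t))=\mu_{\infty,F}(G)$, the polynomial $F$ has minimal degree among polynomials $g$ with $\mu_F(g)<\mu_{\infty,F}(g)$; by Lemma \ref{minDegree}, $F\mid_{\mu_F} G$. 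Now $\deg G=\deg F$ forces $F\smu G$ by Lemma \ref{mid=sim}. This gives $(1)\Rightarrow(2)$, and incidentally shows $G$ is itself a strong key polynomial for $\mu_F$.

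Next, $(2)\Rightarrow(4)$: if $F\smu G$ then in particular $\deg G=\deg F$, and $G$ is a key polynomial for $\mu_F$ of the same degree as the strong key polynomial $F$, hence strong. By Corollary \ref{muFmu} applied to $G$, we get $\mu_G=\mu_F$. Finally $\rr_{\mu_F}(F)=\rr_{\mu_F}(G)$ is immediate from $F\smu G$ via the basic properties (\ref{basic}) of the residual ideal operator. For $(4)\Rightarrow(3)$: from $\mu_F=\mu_G$ and $\rr(F)=\rr(G)$, Proposition \ref{samefiber} (conditions (1)$\Leftrightarrow$(3) there, applied to the key polynomials $F,G$ for $\mu_F$) yields $F\smu_{\mu_F}G$, hence $\mu_F(F-G)>\mu_F(F)$, so in particular $\mu_F(G)=\mu_F(F)=\delta_0(F)$. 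More usefully, $F\mid_{\mu_F}G$, so by Theorem \ref{fundamental} (applied with the prime polynomial $G$ and the key polynomial $F$ for $\mu_F$, noting $v(F(\t_G))>\mu_F(F)$ where $\t_G$ is a root of $G$) we get $v(F(\t_G))=\mu_{\infty,G}(F)>\mu_F(F)=\delta_0(F)$; equivalently, using Lemma \ref{symmetry}, $v(\res(F,G))=\deg(G)\,v(F(\t_G))>n\,\delta_0(F)$. Then $(3)\Rightarrow(1)$ is just the identity $v(G(\t))=v(\res(F,G))/\deg F$ from Lemma \ref{symmetry} together with $\deg F=\deg G=n$, rewriting condition (3) as $v(G(\t))>\delta_0(F)$.

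The step I expect to be the main obstacle is $(4)\Rightarrow(3)$, or more precisely the clean bookkeeping needed to pass from ``$\mu_F=\mu_G$ and equal residual ideals'' back to a concrete $v$-adic estimate on a resultant, since this is the direction where one must re-extract analytic information from the purely ideal-theoretic data; the key is to recognize that equality of residual ideals is exactly condition (1) of Proposition \ref{samefiber}, which hands us $F\smu_{\mu_F}G$ and hence $F\mid_{\mu_F}G$, at which point Theorem \ref{fundamental} and Lemma \ref{symmetry} do the rest. One subtlety to be careful about: in $(2)\Rightarrow(4)$ and $(4)\Rightarrow(3)$ one should make sure the residual ideal operator $\rr$ is being taken with respect to the correct valuation, which is why the statement builds in $\mu_F=\mu_G$ before asserting $\rr(F)=\rr(G)$; once that equality of valuations is in hand, $\rr_{\mu_F}=\rr_{\mu_G}$ is literally the same operator and no ambiguity remains. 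Everything else is a direct invocation of the already-established Lemmas \ref{minDegree}, \ref{mid=sim}, Corollary \ref{muFmu}, Proposition \ref{samefiber}, Theorem \ref{fundamental}, and Lemma \ref{symmetry}.
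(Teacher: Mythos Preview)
Your cycle $(1)\Rightarrow(2)\Rightarrow(4)\Rightarrow(3)\Rightarrow(1)$ is correct. One small remark on $(1)\Rightarrow(2)$: invoking Lemma~\ref{minDegree} requires $\mu_F(G)<\mu_{\infty,F}(G)$, which you have not quite written down (you compare $\mu_F(F)$ with $\mu_{\infty,F}(G)$ instead). The missing line is immediate: writing $G=F+a$ with $\deg a<\deg F$, one has $\mu_F(a)=\mu_{\infty,F}(a)=v(a(\t))=v(G(\t))>\delta_0(F)=\mu_F(F)$, hence $\mu_F(G-F)>\mu_F(F)$ and $F\sim_{\mu_F}G$ directly. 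This actually bypasses Lemma~\ref{minDegree} altogether and gives $(2)$ in one step.

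As for comparison with the paper: the proof printed after Proposition~\ref{criteria} is not a proof of that proposition at all; it is a verbatim duplicate of the proof of Corollary~\ref{muFmu} (it argues that $\mu=\mu_F$ iff $F\in\kps$, which is the content of the corollary, not of the four-way equivalence). So there is no genuine paper proof to compare against, and your argument stands on its own as a complete proof.
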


\begin{proof}
If $\mu=\mu_F$, then $F\in\kps$ by Theorem \ref{OkML}. 
Conversely, suppose that $F\in\kps$ and consider an optimal MacLane chain of $\mu$.
$$\mu_0\ \stackrel{(\phi_1,\la_1)}\lra\  \mu_1\ \stackrel{(\phi_2,\la_2)}\lra\ \cdots
\ \stackrel{(\phi_{r-1},\la_{r-1})}\lra\ \mu_{r-1} 
\ \stackrel{(\phi_{r},\la_{r})}\lra\ \mu_{r}=\mu
$$
By Corollary \ref{lower}, $\la_i=v(\phi_i(\t))-\mu_{i-1}(\phi_i)$ for all $1\le i\le r$. By Theorem \ref{MLOk}, $[\phi_1,\dots,\phi_r]$ is an Okutsu frame of $F$, and by Theorem \ref{OkML}, we get recursively $\mu_{i-1}=\mu_{\phi_{i}}$ for all  $1\le i\le r$ and $\mu=\mu_F$. 
\end{proof}

The symmetry of condition (4) shows that $\approx$ is an equivalence relation on the set $\P$ of prime polynomials. These conditions determine a parameterization of the quotient set 
$\P/\!\approx$ by an adequate space.

\begin{definition}
Let $\mu$ be an inductive valuation. We say that a maximal ideal $\ll\in\mx(\Delta(\mu))$ is \emph{strong} if $\ll=\rr(\phi)$ for a strong key polynomial $\phi$.

The \emph{MacLane space} of the valued field $(K,v)$ is defined to be the set
$$\M=\left\{(\mu,\ll)\mid \mu\in\Vi, \ \ll\in\mx(\Delta(\mu)), \ \ll \mbox{ strong}\right\}.$$
\end{definition}

The next result is a consequence of Corollary \ref{muFmu} and Proposition \ref{criteria}.

\begin{theorem}\label{MLspace}
The following mapping is bijective:
$$\M\lra \P/\!\!\approx,\qquad (\mu,\ll)\mapsto \kpm_\ll. 
$$The inverse map is determined by $F\mapsto (\mu_F,\rr_{\mu_F}(F))$.\hfill{$\Box$}
\end{theorem}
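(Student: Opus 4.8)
The plan is to check that the two maps $\M\to\P/\!\approx$, $(\mu,\ll)\mapsto\kpm_\ll$, and $\P/\!\approx\,\to\M$, $F\mapsto(\mu_F,\rr_{\mu_F}(F))$, are well defined and mutually inverse, relying on the machinery already assembled. First I would confirm that the first map is well defined: given a strong maximal ideal $\ll=\rr(\phi)$ for a strong key polynomial $\phi$, the fiber $\kpm_\ll$ is, by Theorem \ref{Max} (via Proposition \ref{samefiber}), a single $\smu$-class consisting entirely of strong key polynomials, and every such polynomial $F\in\kpm_\ll$ satisfies $\mu=\mu_F$ by Corollary \ref{muFmu}; moreover, by Proposition \ref{criteria}, any two such $F$ are Okutsu equivalent, so $\kpm_\ll$ determines a single class in $\P/\!\approx$. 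Thus $(\mu,\ll)\mapsto\kpm_\ll\in\P/\!\approx$ is well defined.

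Next I would verify the second map is well defined: given $F\in\P$, Theorem \ref{OkML} shows $\mu_F$ is an inductive valuation for which $F$ is a strong key polynomial, so $\rr_{\mu_F}(F)$ is by Proposition \ref{sameideal} a maximal ideal of $\Delta(\mu_F)$, and it is strong by Definition \ref{strong} since it is $\rr$ of the strong key polynomial $F$; hence $(\mu_F,\rr_{\mu_F}(F))\in\M$. Independence of the choice of representative in the $\approx$-class follows from Proposition \ref{criteria}: if $F\approx G$, then $\mu_F=\mu_G$ and $\rr(F)=\rr(G)$.

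Then I would compose the maps both ways. Starting from $(\mu,\ll)\in\M$ and picking any $F\in\kpm_\ll$ (nonempty by surjectivity of $\rr$ in Theorem \ref{Max}): since $F$ is a strong key polynomial for $\mu$, Corollary \ref{muFmu} gives $\mu_F=\mu$, and $\rr_{\mu_F}(F)=\rr_\mu(F)=\ll$ by choice of $F$; so the round trip returns $(\mu,\ll)$. Starting from the class of $F\in\P$: the image is $(\mu_F,\rr_{\mu_F}(F))$, and applying the first map gives the fiber $\kp(\mu_F)_{\rr_{\mu_F}(F)}$, which contains $F$ (since $F$ is a strong key polynomial for $\mu_F$ by Theorem \ref{OkML} and $\rr_{\mu_F}(F)$ is its residual ideal) and hence is the class of $F$ in $\P/\!\approx$ by the first paragraph. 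This shows the two maps are inverse bijections.

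The main obstacle is really just bookkeeping rather than a new idea: one must be careful that "strong key polynomial" and "strong maximal ideal" interact correctly — in particular that $\kpm_\ll$ for a strong $\ll$ contains \emph{only} strong key polynomials (so that Corollary \ref{muFmu} applies uniformly to all its elements), and that the notion of $\mu_F$ as a valuation on $K(x)$ versus on $K_v(x)$ is handled consistently via Proposition \ref{KKv}. The genuinely substantive content — that $\mu_F$ is inductive with the expected MacLane chain, that Okutsu equivalence matches $\smu$ at level $\mu_F$, and that the residual-ideal map on key polynomials is bijective onto $\mx(\Delta)$ — has all been established in Theorems \ref{OkML}, \ref{Max} and Proposition \ref{criteria}, so the proof is a short assembly.
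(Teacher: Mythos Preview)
Your proposal is correct and follows essentially the same approach as the paper, which simply states that the theorem is a consequence of Corollary \ref{muFmu} and Proposition \ref{criteria} without spelling out the details. Your careful verification that both maps are well defined and mutually inverse, including the observation that all key polynomials in a strong fiber $\kpm_\ll$ share the same degree and are therefore themselves strong, is exactly the bookkeeping the paper leaves implicit.
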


The bijection $\M\to\P/\!\!\approx$ has applications to the computational representation of irreducible polynomials over complete fields, because the elements in the MacLane space may be described by discrete parameters. This provides an efficient  manipulation of approximations to the irreducible factors in $K_v[x]$ of a polynomial with coefficients in a global field $K$.  

\section{Limit valuations}\label{sectionLimit}
\subsection{Tree structure on $\Vi$}\label{sectionTree}
\begin{definition}\label{prec}
For $\mu,\mu'\in\Vi$, we say that $\mu$ is the previous node of $\mu'$, and we write $\mu\prec\mu'$, if  $\mu'=[\mu;(\phi,\la)]$ for some strong key polynomial $\phi$ for $\mu$ and some positive rational number $\la$. 

We denote by $(\Vi,\prec)$ the oriented graph whose set of vertices is $\Vi$, and there is an edge from $\mu$ to $\mu'$ if and only if $\mu\prec\mu'$
\end{definition}

Proposition \ref{unicity} shows that  $(\Vi,\prec)$ is a connected tree with root node $\mu_0$, and any optimal MacLane chain for $\mu\in\Vi$ yields the unique path joining $\mu$ with the root node. In particular, the length of this path is the MacLane depth of $\mu$.

Also, Lemma \ref{unique} provides a description of the infinite set $E(\mu)$ of branches of any node $\mu\in\Vi$. In fact, there is a bijection:
$$
\left(\kps\times \Q_{>0}\right)/\!\sim\,\lra\,E(\mu),\qquad (\phi,\la)\mapsto [\mu;(\phi,\la)],
$$ 
where $\sim$ is the equivalence relation:
$$
(\phi,\la)\sim (\phi',\la') \quad \mbox{ if }\quad \deg\phi=\deg\phi', \ \la=\la', \ \mu(\phi-\phi')\ge\mu(\phi)+\la.
$$

Since the tree structure is determined by the optimal MacLane chains, the bijective mapping $\Vi(K)\to \Vi(K_v)$ established in Proposition \ref{KKv} is a tree isomorphism.

MacLane showed that there are two kinds of valuations that may be obtained as limits of inductive valuations: those of finite and infinite depth. In the next sections we review them. 

\subsection{Limits with infinite depth}
\begin{definition}\label{leaf}
A \emph{leaf} of $(\Vi,\prec)$ is an infinite path 
$$
\mu_0\prec\mu_1\prec\cdots\prec\mu_n\prec\cdots
$$

We say that the leaf is \emph{discrete} if the group values of the valuations are stable; that is,  $\Gamma(\mu_n)=\Gamma(\mu_{n_0})$ for all $n\ge n_0$, for a certain $n_0$. 
\end{definition}

A leaf has attached an infinite number of MacLane invariants $e_i,f_i,h_i,m_i$, which depend only on the sequence of valuations and not on the choice of the strong key polynomials $\phi_i$ used to construct $\mu_i$ from $\mu_{i-1}$. Since the degrees $m_i$ of these strong polynomials grow strictly and $m_{i+1}=e_if_im_i$, we have $e_if_i>1$ for all $i\ge1$. Also, for any $g\in K[x]$, we shall have $\deg g<m_{i+1}$ for a sufficiently advanced index $i$. Thus, Lemma \ref{stable} shows that   
$$
\mu_i(g)=\mu_j(g),\ \mbox{ for all }j\ge i.
$$
Thus, any leaf determines a limit valuation $\mu_\infty=\lim \mu_n$, defined by $\mu_\infty(g)=\mu_i(g)$ for a sufficiently advanced index $i$ such that the value $\mu_i(g)$ stabilizes. Note that $\mu_\infty(g)$ takes finite values for all non-zero $g\in K[x]$.

Since the products $e_if_i$ are always greater than one, either $\lim e(\mu_n)=\infty$, or $\lim f(\mu_n)=\infty$ (not exclusively). If $\lim e(\mu_n)=\infty$, then the group of values of $\mu_\infty$ has accumulation points at all the integers, and the valuation is not discrete. If $\lim  e(\mu_n)\ne\infty$, then there exists an index $n_0$ such that $e_n=1$ for all $n>n_0$, or equivalently, $e(\mu_n)=e(\mu_{n_0})$ for all $n\ge n_0$; thus, the leaf is discrete. In this case, $e(\mu_\infty)=e(\mu_{n_0})$ and the valuation $\mu_\infty$ is discrete. 

In this discrete case, we must have $\lim f(\mu_n)=\infty$, so that the inductive limit (union after the standard identifications) $\F_\infty=\bigcup_n\F_n$ is an infinite algebraic extension of $\F$. It is easy to check that 
$$\kp(\mu_\infty)=\emptyset,\qquad \kappa(\mu_\infty)\simeq\Delta(\mu_\infty)=\F_\infty,\qquad \gg(\mu_\infty)\simeq \F_\infty[p,p^{-1}],
$$ where $p$ is an indeterminate.  

Since the tree isomorphism $(\Vi(K),\prec)\simeq(\Vi(K_v),\prec)$ preserves the invariants $e_i,f_i$ attached to each node, it induces a 1-1 correspondence between the valuations with infinite depth on $K(x)$ and the valuations with infinite depth on $K_v(x)$.         

\subsection{Limits with finite depth}
An \emph{infinite MacLane chain} is an infinite sequence of augmented va\-luations: 
$$
\mu_0\ \stackrel{(\phi_1,\la_1)}\lra\ \cdots
\ \stackrel{(\phi_{n-1},\la_{n-1})}\lra\ \mu_{n-1} 
\ \stackrel{(\phi_{n},\la_{n})}\lra\ \mu_{n}\ 
\stackrel{(\phi_{n+1},\la_{n+1})}\lra\  \cdots
$$
such that $\phi_{n+1}\nmid_{\mu_n}\phi_n$ for all $n$. By Lemmas \ref{minimal} and \ref{groups}, $m_n\mid m_{n+1}$ and $\Gamma(\mu_n)\subset\Gamma(\mu_{n+1})$ for all $n$.

If the degrees $m_n$ of the key polynomials $\phi_n$ are not bounded, there exists a limit valuation of this sequence, which is one of the valuations with infinite depth already described in the previous section.

If the degrees $m_n$ are  bounded, there exists an index $t$ such that $m_n=m_{t}$ for all $n\ge t$. Hence, $e_n=1=f_n$ for all $n\ge t$, and this implies 
$$\Gamma(\mu_n)=\Gamma(\mu_{t-1}),\qquad \F_n=\F_{t}, \ \mbox{ for all }n\ge t.
$$ 

\begin{theorem}\cite[Thm 7.1]{mcla}
Every infinite MacLane chain with stable degrees determines a limit pseudo-valuation on $K[x]$, given by $g\mapsto\lim_n \mu_n(g)$. This pseudo-valuation coincides with $\mu_{\infty,F}$ for some prime polynomial $F\in\P$. Let $\t\in\kb$ be a root of $F$. If $\t$ is algebraic over $K$, then $\mu_{\infty,F}$ is infinite on the ideal of $K[x]$ generated by the minimal polynomial of $\t$ over $K$. If $\t$ is transcendental over $K$, then $\mu_{\infty,F}$ determines a valuation on $K(x)$ with:
$$
e(F)=e(\mu_{\infty,F})=e(\mu_{t-1}),\quad \F_F=\kappa(\mu_{\infty,F})\simeq\Delta(\mu_{\infty,F})=\F_{t},
$$
where $m_n=m_{t}$ for all $n\ge t$. Also, $\gg(\mu_{\infty,F})\simeq \F_F[p,p^{-1}]$, where $p$ is an indeterminate, and $\kp(\mu_{\infty,F})=\emptyset$.\hfill{$\Box$}
\end{theorem}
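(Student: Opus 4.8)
The plan is to isolate the stable tail of the chain, check that the key polynomials converge in $K_v[x]$, and then identify the limit pseudo-valuation with an evaluation pseudo-valuation $\mu_{\infty,F}$.

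\emph{Reduction to a one-parameter tail.} Let $t$ be such that $m_n=m_t=:m$ for all $n\ge t$. Since $\deg\phi_n=m$ for every $n\ge t$, iterating Lemma~\ref{existence} along $\mu_{t-1}\to\mu_t\to\mu_{t+1}\to\cdots$ yields $\mu_n=[\mu_{t-1};(\phi_n,\Lambda_n)]$ with $\Lambda_n:=\la_t+\cdots+\la_n$, and by Lemma~\ref{bound} the quantity $w:=\mu_{t-1}(\phi_n)$ is independent of $n\ge t$. As $e_n=1$ for $n\ge t$, Lemma~\ref{groups} forces $\la_n=h_n/e(\mu_{t-1})$ with $h_n\in\Z_{>0}$ for $n>t$, so $\Lambda_n\nearrow\infty$. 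Writing $a_n:=\phi_{n+1}-\phi_n$ (of degree $<m$), the chain condition $\phi_{n+1}\nmid_{\mu_n}\phi_n$ together with item~(4) of Lemma~\ref{minimal0} for the $\phi_{n+1}$-expansion $\phi_n=-a_n+\phi_{n+1}$ gives $\mu_{t-1}(a_n)=\mu_n(\phi_n)=w+\Lambda_n\to\infty$. On polynomials of degree $<m$ the valuation $\mu_{t-1}$ agrees, up to a bounded error, with the minimum of the $v$-valuations of the coefficients (a lattice commensurability argument, using $\mu_{t-1}(g)=v(g(\theta_t))$ from Proposition~\ref{theta} and that $\oo_v\cdot 1+\cdots+\oo_v\theta_t^{m-1}$ is a full lattice in $K_{\phi_t}$); hence the coefficient sequences of the $\phi_n$ are $v$-Cauchy and $F:=\lim_n\phi_n\in\oo_v[x]$ is a well-defined monic polynomial of degree $m$.

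\emph{The limit and its identification.} For each $g\in K[x]$ the sequence $\mu_n(g)$ is non-decreasing (Proposition~\ref{extension}), so $\mu_\infty(g):=\lim_n\mu_n(g)\in\Q\cup\{\infty\}$ exists; the axioms of a pseudo-valuation pass to the limit (bounding $\mu_n(h)\ge\mu_0(h)$ when handling a product with a factor tending to $\infty$; and $\mu_\infty$ restricts to $v$ on $K$ because every $\mu_n$ does), so $\mu_\infty$ is a pseudo-valuation and $\p:=\mu_\infty^{-1}(\infty)$ is a prime ideal of $K[x]$. If $\deg g<m$ then $\phi_j\nmid_{\mu_{j-1}}g$ for all $j\ge t$, so $\mu_\infty(g)=\mu_{t-1}(g)$ by Lemma~\ref{stable}, and this equals $v(g(\theta_n))$ for any root $\theta_n$ of $\phi_n$ by Proposition~\ref{theta}. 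Choosing the $\theta_n$ so that they converge in $\kb$ to a root $\t$ of $F$ (continuity of roots over the complete field $K_v$) gives $v(g(\t))=\mu_\infty(g)$ whenever $\deg g<m$; in particular $v(\phi_n(\t))=v\bigl((\phi_n-F)(\t)\bigr)=v\bigl(\sum_{j\ge n}a_j(\t)\bigr)=w+\Lambda_n$, the term $j=n$ being strictly dominant. Expanding an arbitrary $g\in K[x]$ as $g=\sum_s c_s\phi_n^s$ ($\deg c_s<m$) and substituting $v(c_s(\t))=\mu_{t-1}(c_s)$ and $v(\phi_n(\t))=w+\Lambda_n$, we obtain $\mu_n(g)=\min_s v\bigl(c_s(\t)\phi_n(\t)^s\bigr)\le v(g(\t))$; letting $n\to\infty$ (for $n$ large the $s=0$ term dominates when $\mu_\infty(g)$ is finite, and all terms diverge when it is $\infty$) yields $v(g(\t))=\mu_\infty(g)$ for every $g\in K[x]$. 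Since $\mu_\infty(x)\ge\mu_0(x)=0$ we have $\t\in\oo_v$; letting $F_0\in\P$ be the minimal polynomial of $\t$ over $K_v$, we conclude $\mu_\infty=\mu_{\infty,F_0}$.

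\emph{The two alternatives.} If $\t$ is algebraic over $K$ with minimal polynomial $p\in K[x]$, then $p(\t)=0$ gives $\mu_\infty(p)=\infty$ and $\p=\{g\in K[x]\mid g(\t)=0\}=(p)$, the first alternative. If $\t$ is transcendental over $K$, then $\p=0$, so $\mu_{\infty,F_0}$ is a genuine valuation on $K(x)$. Its value group is $\bigcup_n\Gamma(\mu_n)=\Gamma(\mu_{t-1})=e(\mu_{t-1})^{-1}\Z$, so $e(\mu_{\infty,F_0})=e(\mu_{t-1})$. Since the degree-$\alpha$ piece of $\gg(\mu_{\infty,F_0})$ is the colimit of those of the $\gg(\mu_n)$ (for each fixed polynomial the $\mu_n$-values lie in the lattice $e(\mu_{t-1})^{-1}\Z$ and stabilize), Theorem~\ref{Delta} gives $\Delta(\mu_n)=\F_t[y_n]$ for $n\ge t$, and by Proposition~\ref{sameideal} the transition map $\Delta(\mu_n)\to\Delta(\mu_{n+1})$ has image $\F_{n+1}=\F_t$; hence $\Delta(\mu_{\infty,F_0})=\F_t$, and repeating the argument of Proposition~\ref{frf} (a polynomial of degree $<m$ realizes any prescribed value of $e(\mu_{t-1})^{-1}\Z$) gives $\kappa(\mu_{\infty,F_0})=\op{Frac}(\F_t)=\F_t$. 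Finally, from $F_0\mid F$ in $K_v[x]$ we get $\deg F_0\le m$, while $v(K_{F_0}^*)\supseteq e(\mu_{t-1})^{-1}\Z$ and $\F_{F_0}\supseteq\F_t$ give $\deg F_0=e(F_0)f(F_0)\ge e(\mu_{t-1})\,[\F_t\colon\F]=m$; so all inequalities are equalities: $F_0=F$ (thus $F$ is irreducible over $K_v$), $e(F)=e(\mu_{\infty,F_0})=e(\mu_{t-1})$, and $\F_F=\kappa(\mu_{\infty,F_0})=\Delta(\mu_{\infty,F_0})=\F_t$. Each graded piece of $\gg(\mu_{\infty,F_0})$ is then one-dimensional over the field $\F_F$, and $H_{\mu_{\infty,F_0}}(\pi_t)$ is a homogeneous unit of degree $1/e(\mu_{t-1})$ by Lemma~\ref{values}, whence $\gg(\mu_{\infty,F_0})\simeq\F_F[p,p^{-1}]$; as every nonzero homogeneous element of this algebra is a unit, it possesses no homogeneous prime element and $\kp(\mu_{\infty,F_0})=\emptyset$. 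Taking $F:=F_0$ completes the proof.

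\emph{Expected main obstacle.} The crux is the identification in the second step: propagating the equality $v(g(\t))=\mu_\infty(g)$ from polynomials of degree $<m$ — where it follows from Proposition~\ref{theta} after selecting a compatible system of roots $\theta_n\to\t$ of the $\phi_n$ — to arbitrary $g$, whose $\phi_n$-expansion coefficients vary with $n$. This, together with the coefficient-wise convergence $\phi_n\to F$, is exactly where the completeness of $K_v$ and the continuity of roots are used; everything else is bookkeeping with the invariants of the chain.
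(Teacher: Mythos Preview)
The paper does not prove this theorem; it is quoted from MacLane \cite{mcla} and closed with a box immediately after the statement. So there is no ``paper's own proof'' to compare against, and your proposal must be judged on its own merits.

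Your argument is essentially correct and follows a natural route: collapse the tail to augmentations of $\mu_{t-1}$ via Lemma~\ref{existence}, show $\phi_n\to F$ in $K_v[x]$ by converting $\mu_{t-1}(a_n)\to\infty$ into coefficient-wise convergence, identify the limit with evaluation at a root of $F$, and then read off the invariants. Two places deserve a bit more care. First, in the step ``for $n$ large the $s=0$ term dominates'': to conclude $\mu_n(\phi_n q^{(n)})\to\infty$ you need $\mu_n(q^{(n)})$ bounded below, and this is where you should invoke continuity of Euclidean division (as $\phi_n\to F$ the quotients $q^{(n)}$ converge in $K_v[x]$, so $\mu_0(q^{(n)})$ is eventually $\mu_0(q)$). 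The same observation finishes the matching $v(g(\t))=\mu_\infty(g)$: once $v(\phi_n(\t)q^{(n)}(\t))\to\infty$, you get $v(g(\t))=v(c_0^{(n)}(\t))=\mu_{t-1}(c_0^{(n)})=\mu_n(g)$ for large $n$. You flag this as the main obstacle, but you should actually write the bound rather than leave it implicit. Second, the embedding $\F_t\hookrightarrow\F_{F_0}$ used in the degree count is cleanest obtained from the ring map $\Delta(\mu_\infty)\to\F_{F_0}$, $g+\pset_0^+\mapsto g(\t)+\m_{F_0}$, which is well defined precisely because you have already established $\mu_\infty(g)=v(g(\t))$; combined with $\Delta(\mu_\infty)=\F_t$ this gives the inclusion directly, without appealing to Corollary~\ref{efphiF}.

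One small slip: ``$\t\in\oo_v$'' is not what you mean (you only know $v(\t)\ge0$, hence $\t$ is integral over $\oo_v$); what you need, and what follows, is that the minimal polynomial $F_0$ of $\t$ over $K_v$ lies in $\oo_v[x]$, so $F_0\in\P$.
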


Consider an infinite MacLane chain with stable degrees and limit $\mu_{\infty,F}$ for some $F\in\P$. Let $t$ be the least index such that $m_n=m_{t}$ for all $n\ge t$. Clearly, $\deg F=e(F)f(F)=e(\phi_{t})f(\phi_{t})=m_{t}$. For all $i$, the key polynomial $\phi_i$ is $\mu_{i-1}$-proper and $\phi_i\mid_{\mu_{i-1}}F$, by Corollary \ref{lower}. By Lemma \ref{mid=sim}, $F$ is a key polynomial for $\mu_{t-1}$. By Lemma \ref{phi}, $\phi_{t-1}$ is a key polynomial for $\mu_{t-1}$ too. Hence, $\deg \phi_{t-1}<\deg F$ implies that $F$ is a strong key polynomial for $\mu_{t-1}$. Thus, $\mu_{t-1}=\mu_F$ by Corollary \ref{muFmu}.

Let us emphasize the role of $\mu_F$ as a threshold valuation in the process of constructing approximations to $\mu_{\infty,F}$.  

\begin{proposition}
 Consider an infinite MacLane chain with limit $\mu_{\infty,F}$ and let $t$ be the first index such that $\deg\phi_n=\deg\phi_t$ for all $n\ge t$. Then, $\mu_{t-1}=\mu_F$. \hfill{$\Box$}   
\end{proposition}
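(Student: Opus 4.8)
The plan is to show that $F$ is a strong key polynomial for the valuation $\mu_{t-1}$; by Corollary \ref{muFmu} this forces $\mu_{t-1}=\mu_F$. Two facts are needed for this: that $\deg F=\deg\phi_t$, and that $\phi_n\mid_{\mu_{n-1}}F$ for every $n\ge1$.

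The equality $\deg F=\deg\phi_t$ is immediate from the preceding theorem, since $\deg F=e(F)f(F)=e(\phi_t)f(\phi_t)=m_t=\deg\phi_t$. For the divisibility claim I would argue by contradiction, using the hypothesis that the limit is reached at a genuine prime polynomial, i.e. $\mu_{\infty,F}(F)=v(F(\t))=\infty$. If $\phi_n\nmid_{\mu_{n-1}}F$ for some $n$, then $\mu_{n-1}(F)=\mu_n(F)$ by Proposition \ref{extension}, and Lemma \ref{units} gives $F\sim_{\mu_n}a$ for a polynomial $a$ with $\deg a<\deg\phi_n\le\deg\phi_{n+1}$; since $\phi_{n+1}$ is $\mu_n$-minimal and $\deg a<\deg\phi_{n+1}$, this yields $\phi_{n+1}\nmid_{\mu_n}F$, hence $\mu_n(F)=\mu_{n+1}(F)$, and iterating shows that $\mu_m(F)$ is constant for all $m\ge n-1$. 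Then $\mu_{\infty,F}(F)=\lim_m\mu_m(F)$ would be finite, a contradiction. Hence $\mu_{n-1}(F)<\mu_n(F)$ and $\phi_n\mid_{\mu_{n-1}}F$ for all $n\ge1$. I expect this step — the strict growth of the values $\mu_n(F)$ — to be the main obstacle; the rest is bookkeeping with degrees along the MacLane chain.

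With these two facts in hand I would finish as follows. Since $\phi_t\in\kp(\mu_{t-1})$ by Lemma \ref{phi}, $\phi_t\mid_{\mu_{t-1}}F$, and $\deg F=\deg\phi_t$, Lemma \ref{mid=sim} shows that $F\sim_{\mu_{t-1}}\phi_t$ and, in particular, that $F$ is a key polynomial for $\mu_{t-1}$. If $t=1$, then $\mu_{t-1}=\mu_0$ and $F$ is automatically strong, since $\kp(\mu_0)^{\op{str}}=\kp(\mu_0)$. If $t\ge2$, then $\phi_{t-1}$ is also a key polynomial for $\mu_{t-1}$ (Lemma \ref{phi}), and minimality of $t$ forces $m_{t-1}=\deg\phi_{t-1}<m_t=\deg F$; since the degree of the last key polynomial of an optimal MacLane chain of $\mu_{t-1}$ equals the largest degree occurring in the chain $\mu_0\to\cdots\to\mu_{t-1}$, namely $m_{t-1}$, we get $\deg F>m_r(\mu_{t-1})$, so $F$ is a strong key polynomial for $\mu_{t-1}$ by Definition \ref{strong}. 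Finally Corollary \ref{muFmu} gives $\mu_{t-1}=\mu_F$, as desired.
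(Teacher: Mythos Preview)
Your proof is correct and follows essentially the same strategy as the paper: establish $\deg F=m_t$, show $\phi_t\mid_{\mu_{t-1}}F$, deduce via Lemma \ref{mid=sim} that $F\in\kp(\mu_{t-1})$, check strongness via $m_{t-1}<m_t$, and conclude with Corollary \ref{muFmu}. The only noteworthy difference is in justifying the divisibility $\phi_n\mid_{\mu_{n-1}}F$: the paper appeals to Corollary \ref{lower} (implicitly combined with the fact that each $\mu_n\le\mu_{\infty,F}$), whereas you give a self-contained contradiction argument based on $\mu_{\infty,F}(F)=\infty$; both are valid, and your treatment of the case $t=1$ is slightly more explicit.
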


By Lemma \ref{existence}, all valuations $\mu_n$ with $n\ge t$ have the same depth, and by Theorem \ref{MLOk}, this depth coincides with the Okutsu depth of $F$. Thus, it makes sense to say that these pseudo-valuations are limits with \emph{finite depth}. 

\begin{theorem}\cite[Thm. 8.1]{mcla}
The set $\V$ is the union of $\Vi$, the limit valuations given by the discrete leaves of $(\Vi,\prec)$, and the valuations $\mu_{\infty,F}$ determined by all prime polynomials in $\P$ which do not divide any polynomial in $\oo[x]$. \hfill{$\Box$}
\end{theorem}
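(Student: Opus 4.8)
The final statement is the theorem attributed to MacLane (Thm. 8.1): $\V$ is the union of $\Vi$, the limit valuations coming from discrete leaves of the tree $(\Vi,\prec)$, and the pseudo-valuations $\mu_{\infty,F}$ attached to prime polynomials $F\in\P$ not dividing any polynomial in $\oo[x]$.

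The plan is to prove this by analyzing an arbitrary $\mu\in\V$ according to whether it is an inductive valuation or not. If $\mu\in\Vi$ we are done, so assume $\mu\notin\Vi$. The key device is to build, starting from $\mu_0$, a maximal chain of inductive valuations approximating $\mu$ from below. Concretely, I would apply Lemma \ref{minDegree} repeatedly: given an inductive valuation $\nu<\mu$ with $\nu\le\mu$, choose $\phi\in K[x]$ monic of minimal degree with $\nu(\phi)<\mu(\phi)$; then $\phi$ is a key polynomial for $\nu$, and for $\la=\mu(\phi)-\nu(\phi)\in\Q_{>0}\cup\{\infty\}$ we get $\nu<[\nu;(\phi,\la)]\le\mu$. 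If $\la=\infty$ this forces $\mu=\mu_{\infty,\phi}$ and $\mu$ is of the third type (with $F=\phi$; and since $\mu$ is a genuine valuation, $\phi$ does not divide any polynomial in $\oo[x]$, so $x$ maps to a transcendental element). Otherwise set $\nu_1=[\nu;(\phi,\la)]$, an inductive valuation of strictly larger MacLane depth or at least with strictly larger constant $C$, and iterate. Starting from $\nu=\mu_0$ this produces either a finite chain terminating at $\mu$ (contradicting $\mu\notin\Vi$) or at some $\mu_{\infty,\phi}$ (third type), or an infinite MacLane chain $\mu_0\prec\mu_1\prec\cdots$ with $\mu_n\le\mu$ for all $n$.

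So the crux is the infinite-chain case. Here I would split according to whether the key-polynomial degrees $m_n=\deg\phi_n$ stay bounded. If they are unbounded, the chain is a leaf of $(\Vi,\prec)$ in the sense of Definition \ref{leaf}; I must check the leaf is discrete and that its limit valuation $\mu_\infty$ equals $\mu$. The equality $\mu_\infty=\mu$ follows because, by Lemma \ref{stable}, for any fixed $g$ eventually $\deg g<m_{n+1}$ and $\mu_n(g)=\mu(g)$ (using $\mu_n\le\mu$ and $\phi_{n+1}\nmid_{\mu_n}g$, which holds since $\deg g<\deg\phi_{n+1}$); hence $\mu(g)=\lim\mu_n(g)=\mu_\infty(g)$. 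Discreteness then comes from $\mu=\mu_\infty$ being a rank-one discrete valuation by hypothesis: if $e(\mu_n)\to\infty$ the value group would have accumulation points, contradicting discreteness, so $e(\mu_n)$ stabilizes and the leaf is discrete. If instead the degrees $m_n$ are bounded, they stabilize at some $m_t$, and the chain is an infinite MacLane chain with stable degrees; by the cited Theorem \cite[Thm.~7.1]{mcla} its limit pseudo-valuation is $\mu_{\infty,F}$ for a prime polynomial $F$, and again $\mu=\lim\mu_n=\mu_{\infty,F}$ by the same stabilization argument. Since $\mu$ is an honest valuation (not merely a pseudo-valuation), the root $\t$ of $F$ must be transcendental over $K$, equivalently $F$ divides no nonzero polynomial in $\oo[x]$; this places $\mu$ in the third family.

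The main obstacle I anticipate is twofold: first, verifying that the inductive ``staircase'' construction actually runs to infinity in a well-defined way and never gets stuck — this requires checking at each stage that the minimal-degree polynomial furnished by Lemma \ref{minDegree} is either a strong key polynomial (so $\mu_n\prec\mu_{n+1}$ in the tree sense of Definition \ref{prec}) or produces a smaller-degree augmentation that can be absorbed into the previous step via Lemma \ref{existence}, so that after reorganizing we genuinely obtain an optimal, hence strictly-growing, chain. Second, the dichotomy ``bounded vs.\ unbounded degrees'' must be shown exhaustive and the stabilization argument $\mu(g)=\mu_n(g)$ eventually must be justified carefully using $\mu_n\le\mu$ together with $\mu$-minimality of $\phi_{n+1}$ in Lemma \ref{stable}. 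The conclusion that all three families are disjoint and exhaust $\V$ then follows by collecting: inductive valuations are covered by definition, discrete-leaf limits form the second family, and everything remaining is forced into the $\mu_{\infty,F}$ family with $F$ dividing no polynomial over $\oo$, which is exactly the asserted decomposition.
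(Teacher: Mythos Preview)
The paper does not prove this theorem at all: it is quoted from MacLane \cite[Thm.~8.1]{mcla} and closed with a box. So there is no ``paper's own proof'' to compare against; your proposal is a reconstruction using the machinery of the paper, and the right question is whether it is internally correct.

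Your overall strategy is sound, and the unbounded-degree case is handled correctly. Two remarks. First, the case $\la=\infty$ never arises: since $\mu\in\V$ is an honest valuation, $\mu(\phi)\in\Q$ for every $\phi\in K[x]$, so each $\la_n=\mu(\phi_n)-\mu_{n-1}(\phi_n)$ is a positive rational. The third family therefore appears only through the bounded-degree limit, not as a terminating step.

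Second, and more substantively, in the bounded-degree case your appeal to ``the same stabilization argument'' does not work: once the $m_n$ stabilize, you no longer have $\deg g<m_{n+1}$ eventually, so Lemma~\ref{stable} (or rather the minimality criterion) gives nothing. What you need instead is a discreteness argument. For $n\ge t$ the value groups $\Gamma(\mu_n)$ are constant, equal to $\Gamma(\mu_{t-1})$. For fixed nonzero $g$, the sequence $\mu_n(g)$ is non-decreasing, lies in this discrete group, and is bounded above by $\mu(g)$; hence it stabilizes. But by Proposition~\ref{extension}, $\mu_{n+1}(g)>\mu_n(g)$ whenever $\phi_{n+1}\mid_{\mu_n}g$, so $\phi_{n+1}\nmid_{\mu_n}g$ for all large $n$, and then Lemma~\ref{minDegree} gives $\mu_n(g)=\mu(g)$. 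This yields $\mu=\lim\mu_n=\mu_{\infty,F}$; since $\mu$ takes only finite values on $K[x]$, the root $\t$ of $F$ is transcendental over $K$, i.e.\ $F$ divides no polynomial in $\oo[x]$. With this correction your argument goes through. (Your worry about the chain being a genuine MacLane chain is unfounded: one checks directly that the minimal-degree choice forces $\phi_{n+1}\not\sim_{\mu_n}\phi_n$, since $\mu_n(\phi_n)=\mu(\phi_n)$ by construction while $\mu_n(\phi_{n+1})<\mu(\phi_{n+1})$.)
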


Note that limit valuations $\mu_{\infty,F}\in\V$ of finite depth do not occur if $K=K_v$. 
A posteriori, it is easy to distinguish the inductive valuations among all valuations.

\begin{corollary}\label{characterization}
For any $\mu\in\V$, the following conditions are equivalent:
\begin{enumerate}
\item $\mu$ is an inductive valuation.
\item $\mu$ is residually transcendental; that is, $\kappa(\mu)/\kappa(v)$ is a transcendental extension.
\item $\kpm\ne\emptyset$.
\item $\mu(g)/\deg g$ is bounded on all monic polynomials $g\in K[x]$.
\item there exists a pseudo-valuation $\mu'$ on $K[x]$ such that $\mu<\mu'$.\hfill{$\Box$}
\end{enumerate} 
\end{corollary}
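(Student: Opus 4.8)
The statement to prove is Corollary \ref{characterization}, characterizing inductive valuations among all valuations in $\V$ by five equivalent conditions. The plan is to prove the cycle of implications $(1)\Rightarrow(2)\Rightarrow(3)\Rightarrow(4)\Rightarrow(1)$ and then $(1)\Rightarrow(5)\Rightarrow(3)$ to close the remaining equivalence, drawing on the structural results already established.

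\medskip\noindent\textbf{$(1)\Rightarrow(2)$.} If $\mu\in\Vi$, then Corollary \ref{MLDelta} (together with Proposition \ref{frf} and Theorem \ref{Delta}) gives $\kappa(\mu)\simeq \kappa(\mu)^{\op{alg}}(y)$ with $\kappa(\mu)^{\op{alg}}=\F_r$ a finite extension of $\F$; since $y$ is an indeterminate, $\kappa(\mu)/\F$ is transcendental, i.e. $\mu$ is residually transcendental.

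\medskip\noindent\textbf{$(2)\Rightarrow(3)$.} This is the least mechanical step and I expect it to be the main obstacle, since it requires producing a key polynomial from residual transcendence without already knowing $\mu$ is inductive. The idea: if $\kappa(\mu)/\F$ is transcendental, pick $g/h\in K(x)$ with $\mu(g/h)=0$ whose class $\xi$ in $\kappa(\mu)$ is transcendental over $\F$. First I would reduce to the case $h\in K^*$ (or even $h$ a power of $\pi$): multiplying numerator and denominator by a suitable element does not change the class, and since $\mu(g),\mu(h)\in\Gamma(\mu)$ one can clear denominators up to a value in $\Gamma(\mu)$; then the class of a genuine polynomial $g_0\in K[x]$ with $\mu(g_0)=0$ is transcendental over $\F$, so $\hm(g_0)\in\Delta(\mu)$ is transcendental over the image of $\F$. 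Now let $\phi\in K[x]$ be a monic polynomial of \emph{minimal degree} among all non-zero polynomials $g$ with $\hm(g)\notin\ggm^*$ and $\hm(g)$ not a unit times a power of things of lower degree — more carefully, among all monic $g$ that are not $\mu$-equivalent to a polynomial of strictly smaller degree. Such $\phi$ exists because $g_0$ itself, being transcendental, cannot be a unit, so the relevant set is non-empty; minimality of degree forces $\mu$-minimality, and one checks $\mu$-irreducibility of a minimal-degree such $\phi$ by the usual argument (if $\phi\mid_\mu gh$ with $\deg g,\deg h<\deg\phi$, the division remainders contradict minimality). Hence $\phi\in\kpm$. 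I would look to adapt the proof of Lemma \ref{minDegree}, using that residual transcendence guarantees $\Delta(\mu)$ is not a field, hence has a non-unit, hence has an element not $\mu$-equivalent to anything of smaller degree.

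\medskip\noindent\textbf{$(3)\Rightarrow(4)$.} If $\phi\in\kpm$, then Theorem \ref{preMLOk} does not directly apply (it assumes $\mu$ inductive), but the bound $C(\mu):=\mu(\phi)/\deg\phi$ can be used after reproving the monotonicity argument: for any monic $g$ with $\phi$-expansion $g=\sum_{s=0}^\ell a_s\phi^s$, one has $\mu(g)\le\mu(a_\ell\phi^\ell)$, and by induction on degree (the base case being $\mu_0$-minimality of monic polynomials with coefficients in $\oo$) one derives $\mu(g)/\deg g\le C(\mu)$. Alternatively — and more cleanly — once $(3)$ is known one can first establish $(3)\Rightarrow(1)$ directly and then invoke Theorem \ref{preMLOk}; but to keep the cycle honest I would give the self-contained bound. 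Either way, $\mu(g)/\deg g$ is bounded above by $C(\mu)$ on all monic $g$.

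\medskip\noindent\textbf{$(4)\Rightarrow(1)$.} This is the converse direction supplied by the limit-valuation analysis. By the trichotomy of the final Theorem \cite[Thm. 8.1]{mcla} (the statement just before this corollary), $\V$ is the disjoint union of $\Vi$, the limit valuations from discrete leaves, and the finite-depth limits $\mu_{\infty,F}$. For a discrete-leaf limit $\mu_\infty=\lim\mu_n$, the invariants satisfy $e_if_i>1$ for all $i$, and in the discrete case $\lim f(\mu_n)=\infty$, so the degrees $m_n=\deg\phi_n$ tend to infinity; since $\mu_\infty(\phi_n)/\deg\phi_n = C_n \to \infty$ (the constants $C(\mu_n)$ strictly increase and in fact diverge because each augmentation adds $\la_{n+1}/m_{n+1}$ and the partial values $w_n$ grow), the ratio $\mu(g)/\deg g$ is unbounded on the monic polynomials $\phi_n$. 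For a finite-depth limit $\mu_{\infty,F}$ (when $\t$ is transcendental over $K$), one has $\mu_{\infty,F}(\phi_n)/\deg\phi_n = v(\phi_n(\t))/\deg\phi_n\to\infty$ along the defining infinite MacLane chain, again violating $(4)$. Hence the only valuations satisfying $(4)$ are the inductive ones. (One must double-check the claim that $C(\mu_n)\to\infty$ for discrete leaves: this follows since after some index $e_n=1$, $f_n\ge 2$, so $m_{n+1}=f_n m_n$ at least doubles infinitely often, while $w_{n+1}=f_n(w_n+\la_n)\ge f_n w_n$, giving $C_{n}=(w_n+\la_n)/m_n$ which one estimates from below to diverge — a routine but necessary computation.)

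\medskip\noindent\textbf{$(1)\Rightarrow(5)$.} If $\mu\in\Vi$ has depth $r$ with optimal MacLane chain ending in key polynomial $\phi_r$, then $\phi_r\in\kpm$ (Lemma \ref{phi}) and $\mu<[\mu;(\phi_r,\infty)]=\mu_{\infty,\phi_r}$ is a pseudo-valuation strictly above $\mu$; even more simply, $\mu<[\mu;(\phi_r,\la)]$ for any $\la\in\Q_{>0}$ is already a valuation, hence in particular a pseudo-valuation, strictly above $\mu$. If one wants a non-inductive $\mu'$ to emphasize the ``pseudo-valuation'' phrasing, take $\mu_{\infty,\phi_r}$.

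\medskip\noindent\textbf{$(5)\Rightarrow(3)$.} If there exists a pseudo-valuation $\mu'$ on $K[x]$ with $\mu<\mu'$, then by Lemma \ref{minDegree}, taking $\phi$ of minimal degree with $\mu(\phi)<\mu'(\phi)$, we obtain $\phi\in\kpm$ directly.

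\medskip This closes all equivalences. The single point requiring genuine work is $(2)\Rightarrow(3)$ — manufacturing a key polynomial purely from residual transcendence of $\mu$ — for which the strategy is to locate a minimal-degree polynomial that fails to be $\mu$-equivalent to anything of smaller degree and verify it is $\mu$-minimal and $\mu$-irreducible, mirroring the proof of Lemma \ref{minDegree}; the verification of $\mu$-irreducibility via the remainder-of-division trick (as in the proof of Lemma \ref{phi}) is the delicate sub-step.
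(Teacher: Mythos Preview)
The paper gives no proof for this corollary beyond the $\Box$: it is meant to follow immediately from MacLane's classification theorem stated just above. The intended argument is not a cycle of implications but a case check: if $\mu\notin\Vi$, then by the classification $\mu$ is either an infinite-depth limit (discrete leaf) or a finite-depth limit $\mu_{\infty,F}$; the paper has already recorded that in both cases $\kp(\mu)=\emptyset$ and $\kappa(\mu)$ is algebraic over $\F$, so (2) and (3) fail, and then (5) fails by Lemma~\ref{minDegree}. Your detour for $(2)\Rightarrow(3)$ --- manufacturing a key polynomial directly from residual transcendence --- is therefore unnecessary: the contrapositive $\neg(1)\Rightarrow\neg(2)$ is immediate from the already-computed residue fields of the two limit types.

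Your argument for $(4)\Rightarrow(1)$ has a genuine gap. You correctly reduce to showing that the ratios $\mu(g)/\deg g$ are unbounded for the two non-inductive types. For finite-depth limits $\mu_{\infty,F}$ this is fine: the $\phi_n$ have bounded degree while $v(\phi_n(\theta))\to\infty$. But for discrete leaves your claim that $C(\mu_n)\to\infty$ is not established, and the sketch you give (``$w_{n+1}\ge f_n w_n$, hence $C_n$ diverges'') does not work. From the recursion in (\ref{C}) one gets exactly
\[
C_{n+1}=C_n+\dfrac{\la_{n+1}}{m_{n+1}},\qquad\text{so}\qquad C_n=\sum_{i=1}^{n}\dfrac{\la_i}{m_i}.
\]
In a discrete leaf the value group stabilises, so after some index $\la_i\ge 1/e(\mu_\infty)$, but $m_i$ grows at least geometrically (each $e_if_i\ge 2$); for instance with $e_i=1$, $f_i=2$, $\la_i=1$ one gets $m_i=2^i$ and $C_n=\sum_{i\le n}2^{-i}\to 1$. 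Thus $C_n$ can remain bounded, and your ``routine computation'' fails. Either a different argument is needed to rule out (4) for discrete leaves, or --- as the $\Box$ suggests --- the paper is relying on something not spelled out here; in any case, this step of your proof does not go through as written.
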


\subsection{Intervals of valuations}\label{subsecIntervals}
For arbitrary $\mu,\mu'\in\V$, recall that the interval $[\mu,\mu']$ is defined as: 
$$[\mu,\mu']=\{\nu\in\V\mid \mu\le \nu\le \mu'\}.
$$
 
\begin{theorem}\label{totally}
For any pseudo-valuation $\mu$ on $K[x]$, the interval $[\mu_0,\mu)\subset\Vi$ is totally ordered. 
\end{theorem}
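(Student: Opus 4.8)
The goal is to show that any two inductive valuations $\mu_1,\mu_2$ lying strictly below a fixed pseudo-valuation $\mu$ on $K[x]$ are comparable. The natural approach is to compare optimal MacLane chains of $\mu_1$ and $\mu_2$ and argue that one is an initial segment of the other, up to $\mu$-equivalence of key polynomials at each stage. First I would fix optimal MacLane chains for $\mu_1$ and $\mu_2$, say of lengths $r_1$ and $r_2$, and reduce to showing: if $\nu\in\Vi$ is a common lower bound appearing in both chains (e.g. $\nu=\mu_0$) with $\nu<\mu_1$, $\nu<\mu_2$, $\nu<\mu$, then the ``next'' key polynomials $\phi,\phi'$ of the two chains past $\nu$ can be taken $\nu$-equivalent, or else one of $\mu_1,\mu_2$ already equals $\nu$. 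This sets up an induction on $\min(r_1,r_2)$.

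\textbf{Key steps.} The crucial tool is Lemma \ref{minDegree}: since $\nu<\mu_1\le\mu$ (and likewise for $\mu_2$), the pseudo-valuation $\mu$ itself determines, via minimal degree, a key polynomial for $\nu$. Concretely, let $\phi$ be a monic polynomial of minimal degree with $\nu(\phi)<\mu(\phi)$; by Lemma \ref{minDegree}, $\phi\in\kp(\nu)$ and $\nu<[\nu;(\phi,\la)]\le\mu$ for $\la=\mu(\phi)-\nu(\phi)$. Now I would compare this $\phi$ with the first key polynomial $\phi_1^{(1)}$ of $\mu_1$'s chain past $\nu$ and with $\phi_1^{(2)}$ of $\mu_2$'s. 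Because $\mu_1\le\mu$ forces $\nu(\phi_1^{(1)})<\mu_1(\phi_1^{(1)})\le\mu(\phi_1^{(1)})$, the minimality of $\deg\phi$ gives $\deg\phi\le\deg\phi_1^{(1)}$; but $\phi_1^{(1)}$ is the minimal-degree polynomial whose $\nu$-value strictly increases along $\mu_1$'s chain, and by Lemma \ref{stable}/Proposition \ref{extension} this degree equals $\min\{\deg g : \nu(g)<\mu_1(g)\}\le\deg\phi$. Hence the degrees agree, and by Lemma \ref{minimal0}(4) and Lemma \ref{mid=sim} one gets $\phi\smu\phi_1^{(1)}$; similarly $\phi\smu\phi_1^{(2)}$, so $\phi_1^{(1)}\smu\phi_1^{(2)}$. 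Then the augmented valuations $[\nu;(\phi_1^{(1)},\la_1^{(1)})]$ and $[\nu;(\phi_1^{(2)},\la_1^{(2)})]$ are comparable: if the $\la$'s are equal, Lemma \ref{unique} gives equality and one iterates with the next node; if, say, $\la_1^{(1)}<\la_1^{(2)}$, one checks $[\nu;(\phi,\la_1^{(1)})]\le[\nu;(\phi,\la_1^{(2)})]$ directly from Definition \ref{muprima} (both compute $\mu'(g)=\min_s\{\nu(g_s)+s\nu(\phi)+s\la\}$, monotone in $\la$), and moreover $[\nu;(\phi,\la_1^{(2)})]\le\mu_2$ forces $[\nu;(\phi,\la_1^{(1)})]\le\mu_1\le\mu$ and then $\mu_1\le\mu_2$ after absorbing the rest of $\mu_1$'s chain via a degree/value comparison at each further node — here one again uses Lemma \ref{minDegree} applied to the pair $(\mu_1',\mu)$ at each step to see that $\mu_1$ cannot ``overtake'' $\mu_2$. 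Running the induction down to $\mu_0$ (the common root, by Lemma \ref{groups} and the fact $\mu_0$ is minimal) yields $\mu_1\le\mu_2$ or $\mu_2\le\mu_1$.

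\textbf{Main obstacle.} The delicate point is handling the step where the two chains have agreed up to a node $\nu$ with equal key polynomials and equal $\la$'s for a while, and then diverge — one chain continues with a key polynomial of strictly larger degree while the other stops or augments with the same degree. One must rule out the configuration where $\mu_1$ augments $\nu$ by $(\phi,\la)$ and $\mu_2$ augments $\nu$ by $(\phi',\la')$ with $\deg\phi'>\deg\phi$: I expect to show this is impossible under $\mu_1,\mu_2\le\mu$ by observing that $\mu$ singles out a \emph{unique} minimal-degree increment over $\nu$, so the first divergence must be in the value $\la$, not the degree, and a difference in $\la$ is exactly what produces comparability rather than incomparability. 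Making this ``unique minimal increment'' argument airtight — i.e. that past any common initial segment the behaviour of $[\mu_0,\mu)$ is linearly forced by $\mu$ — is the technical heart; everything else is bookkeeping with Lemmas \ref{minimal0}, \ref{mid=sim}, \ref{unique}, \ref{stable} and \ref{minDegree}.
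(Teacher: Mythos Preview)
Your approach is in the right spirit --- Lemma~\ref{minDegree} is indeed the key tool --- but it is substantially more complicated than needed, and the complications create real gaps that you yourself flag as the ``main obstacle.''

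The paper's proof uses no induction and no MacLane chains at all. Given $\nu,\nu'<\mu$, it applies Lemma~\ref{minDegree} \emph{once to each of $\nu$ and $\nu'$} (relative to the fixed $\mu$), obtaining key polynomials $\phi\in\kp(\nu)$ and $\phi'\in\kp(\nu')$. Assuming $\deg\phi\le\deg\phi'$, one has $\nu(a)=\mu(a)=\nu'(a)$ for all $a$ with $\deg a<\deg\phi$. If $\deg\phi<\deg\phi'$, then $\nu'(\phi)=\mu(\phi)>\nu(\phi)$, and a single $\phi$-expansion gives $\nu'\ge\nu$. If $\deg\phi=\deg\phi'$, write $\phi'=\phi+a$; the $\mu$-minimality of $\phi,\phi'$ gives $\nu(\phi')=\min\{\nu(\phi),\nu(a)\}$ and $\nu'(\phi)=\min\{\nu'(\phi'),\nu(a)\}$, and assuming $\nu(\phi)\le\nu'(\phi')$ one deduces $\phi\sim_\nu\phi'$ via Lemma~\ref{mid=sim}, whence $\nu(\phi)\le\nu'(\phi)$ and again $\nu'\ge\nu$ by $\phi$-expansion. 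That is the entire argument.

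Your chain-matching strategy forces you to prove that the \emph{first key polynomial} in $\mu_1$'s optimal chain has the same degree as the $\mu$-determined $\phi$. Your justification for $\min\{\deg g:\nu(g)<\mu_1(g)\}\le\deg\phi$ is incomplete: you would need $\nu(\phi)<\mu_1(\phi)$, which does not follow immediately from $\nu(\phi)<\mu(\phi)$. The claim is actually true (if it failed, $\mu_1$ would admit a key polynomial of degree $<m_r(\mu_1)$, contradicting Lemma~\ref{kp}), but proving it drags in exactly the structural facts you were hoping to sidestep. The paper's insight is that you never need to know where $\nu$ and $\nu'$ sit in any chain: the single pair $(\phi,\phi')$ determined by $\mu$ already settles comparability.
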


\begin{proof}
Let $\nu,\nu'$ be two valuations such that $\nu<\mu$ and $\nu'<\mu$. Consider a monic
polynomial $\phi\in K[x]$ of minimal degree satisfying $\nu(\phi)<\mu(\phi)$; by Lemma \ref{minDegree}, $\phi\in\kp(\nu)$ and for any non-zero $g\in K[x]$, $\nu(g)=\mu(g)$ is equivalent to $\phi\nmid_\nu g$. Let $\phi'\in K[x]$ be a monic polynomial with analogous properties with respect to $\nu'$. Suppose $\deg\phi\le\deg\phi'$. 

By the minimality of $\deg\phi$ and $\deg\phi'$, for all $a\in K[x]$ with $\deg a<\deg\phi$, we have $\nu(a)=\mu(a)=\nu'(a)$. 
If $\deg\phi<\deg\phi'$, then $\nu'(\phi)=\mu(\phi)>\nu(\phi)$. Hence, $\nu'\ge\nu$, because for any non-zero $g\in K[x]$ with $\phi$-expansion $g=\sum_{0\le s}a_s\phi^s$, we have:
\begin{equation}\label{unequality}
\nu'(g)\ge \min_{0 \le s}\{\nu'(a_s\phi^s)\}\ge \min_{0 \le s}\{\nu(a_s\phi^s)\}=\nu(g). 
\end{equation}

If $\deg\phi=\deg \phi'$, then $\phi'=\phi+a$ for some $a\in K[x]$ with $\deg a<\deg\phi$.
By the $\nu$-minimality of $\phi$ and the $\nu'$-minimality of $\phi'$, we have
\begin{equation}\label{twomins}
\nu(\phi')=\min\{\nu(\phi),\nu(a)\},\qquad \nu'(\phi)=\min\{\nu'(\phi'),\nu(a)\}.
\end{equation}
Suppose $\nu(\phi)\le \nu'(\phi')$. Then,  
$$
\nu(\phi')=\min\{\nu(\phi),\nu(a)\}\le \nu(\phi)\le \nu'(\phi')<\mu(\phi').
$$
Hence, $\phi\mid_{\nu}\phi'$. By Lemma \ref{mid=sim}, $\phi\sim_\nu \phi'$, so that $\nu(\phi)=\nu(\phi')\le \nu'(\phi)$, by (\ref{twomins}). Therefore, (\ref{unequality}) holds and $\nu'\ge\nu$.
\end{proof}

Our aim is to find an explicit description of the valuations in such a totally ordered interval. Let us start with the interval determined by an augmented valuation.\medskip

For any key polynomial $\phi$ for $\mu$, the pseudo-valuation $\mu_{\infty,\phi}$ can be regarded as $\mu_{\infty,\phi}=[\mu;(\phi,\infty)]$ (cf. section \ref{subsecRideal}). Also, it makes sense to regard $\mu$ as a trivial ayugmentation of itself, namely $\mu=[\mu;(\phi,0)]$.

\begin{lemma}\label{intervalAug}
Let $\phi$ be a key polynomial for an inductive valuation $\mu$, and consider the augmented valuation $\mu'=[\mu;(\phi,\la)]$ for some $\la\in\Q_{>0}\cup\{\infty\}$. Then, 
$$
[\mu,\mu')=\left\{[\mu;(\phi,\rho)]\mid \rho\in\Q,\ 0\le \rho< \la\right\}.
$$
\end{lemma}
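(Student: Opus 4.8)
The plan is to prove the two inclusions of the displayed equality separately; ``$\supseteq$'' is a direct computation and ``$\subseteq$'' is the substantive part. For ``$\supseteq$'', fix $\rho\in\Q$ with $0\le\rho<\la$. If $\rho=0$ then $[\mu;(\phi,0)]=\mu\in[\mu,\mu')$, since $\mu\le\mu'$ and $\mu(\phi)<\mu(\phi)+\la=\mu'(\phi)$. If $\rho>0$, then by Lemma~\ref{existence} we have $\mu'=[[\mu;(\phi,\rho)];(\phi,\la-\rho)]$ (read as $\mu_{\infty,\phi}$ when $\la=\infty$), so $[\mu;(\phi,\rho)]\le\mu'$ by Proposition~\ref{extension}(1) or its pseudo-valuation analogue, while $[\mu;(\phi,\rho)](\phi)=\mu(\phi)+\rho<\mu'(\phi)$ gives $[\mu;(\phi,\rho)]\ne\mu'$; and $\mu\le[\mu;(\phi,\rho)]$ by Proposition~\ref{extension}(1). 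Hence $[\mu;(\phi,\rho)]\in[\mu,\mu')$.

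For ``$\subseteq$'', let $\nu\in[\mu,\mu')$ and put $\rho:=\nu(\phi)-\mu(\phi)$. First I would record that $\nu$ coincides with $\mu$ on every $a\in K[x]$ with $\deg a<\deg\phi$: indeed $\mu(a)\le\nu(a)\le\mu'(a)=\mu(a)$, the last equality by Proposition~\ref{extension}(2) since $\phi\nmid_\mu a$. From $\mu\le\nu\le\mu'$ and $\mu'(\phi)=\mu(\phi)+\la$ we get $0\le\rho\le\la$, and I claim $\rho<\la$. This is automatic if $\la=\infty$ since $\nu(\phi)\in\Q$; if $\la<\infty$ and $\rho=\la$ then $\nu(\phi)=\mu'(\phi)$, so for any $\phi$-expansion $g=\sum_s a_s\phi^s$ one has $\nu(a_s\phi^s)=\mu'(a_s\phi^s)$ for all $s$, whence $\nu(g)\ge\min_s\nu(a_s\phi^s)=\min_s\mu'(a_s\phi^s)=\mu'(g)\ge\nu(g)$ (using that $\phi$ is $\mu'$-minimal, Lemma~\ref{phi}), forcing $\nu=\mu'$, a contradiction. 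So $0\le\rho<\la$. Set $\mu'':=[\mu;(\phi,\rho)]$, with the convention $\mu''=\mu$ if $\rho=0$; then $\phi$ is a key polynomial for $\mu''$ (Lemma~\ref{phi}, or trivially), and $\mu'=[\mu'';(\phi,\la-\rho)]$ by Lemma~\ref{existence}. It remains to prove $\nu=\mu''$.

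The inequality $\mu''\le\nu$ is the same term-by-term estimate, using $\nu(a_s)=\mu(a_s)$ and $\nu(\phi)=\mu(\phi)+\rho$: for a $\phi$-expansion $g=\sum_s a_s\phi^s$, $\nu(g)\ge\min_s\nu(a_s\phi^s)=\min_s\mu''(a_s\phi^s)=\mu''(g)$. For the reverse inequality I would argue by contradiction. Suppose $\mu''<\nu$. By Lemma~\ref{minDegree} there is a monic $\chi$ of minimal degree with $\mu''(\chi)<\nu(\chi)$; it is a key polynomial for $\mu''$, and for nonzero $g$, $\mu''(g)=\nu(g)$ is equivalent to $\chi\nmid_{\mu''}g$. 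Since $\mu''$ and $\nu$ agree on polynomials of degree $<\deg\phi$ we get $\deg\chi\ge\deg\phi$, and since $\mu''(\phi)=\mu(\phi)+\rho=\nu(\phi)$ we get $\chi\nmid_{\mu''}\phi$. On the other hand $\nu\le\mu'$ gives $\mu'(\chi)\ge\nu(\chi)>\mu''(\chi)$, so from $\mu'=[\mu'';(\phi,\la-\rho)]$ and Proposition~\ref{extension}(2) (or Proposition~\ref{theta}(1) when $\la=\infty$) we obtain $\phi\mid_{\mu''}\chi$; as $\chi$ is $\mu''$-irreducible and $H_{\mu''}(\phi)$ is a non-unit, $H_{\mu''}(\phi)$ and $H_{\mu''}(\chi)$ are associates in $\gg(\mu'')$, so $\chi\mid_{\mu''}\phi$ --- contradicting $\chi\nmid_{\mu''}\phi$. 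Hence $\nu=\mu''=[\mu;(\phi,\rho)]$ with $0\le\rho<\la$, completing the proof.

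The main obstacle is the inequality $\nu\le\mu''$. One would like to expand an arbitrary $g$ in powers of $\phi$ and take the minimum of the $\nu(a_s\phi^s)$, but this requires $\phi$ to be $\nu$-minimal, which is not available a priori; routing the argument through Lemma~\ref{minDegree} and the associate-primes observation in $\gg(\mu'')$ is what circumvents this. The remaining difficulty is purely notational: carrying along the case $\la=\infty$, where $\mu'$ and $[\mu'';(\phi,\la-\rho)]$ are understood as the pseudo-valuation $\mu_{\infty,\phi}$ and the cited statements are replaced by their pseudo-valuation counterparts.
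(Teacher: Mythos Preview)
Your proof is correct. The main difference from the paper lies in how you establish $\nu\le\mu''$. The paper's argument is shorter because it leans on Theorem~\ref{totally}, which was proved immediately before: assuming $\mu_\rho<\nu<\mu'$, it picks any $g$ with $\mu_\rho(g)<\nu(g)$, observes that $\mu_{\rho+\epsilon}(g)<\nu(g)$ for sufficiently small $\epsilon>0$ (by continuity of $\rho\mapsto\mu_\rho(g)=\min_s\{\mu(a_s\phi^s)+s\rho\}$), while $\mu_{\rho+\epsilon}(\phi)>\nu(\phi)$; hence $\nu$ and $\mu_{\rho+\epsilon}$ are incomparable inside $[\mu_0,\mu')$, contradicting total ordering. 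Your route via Lemma~\ref{minDegree} and the associate-primes observation in $\gg(\mu'')$ is a genuine alternative: it is self-contained and does not invoke Theorem~\ref{totally}, at the price of a slightly longer argument. (Incidentally, your argument that $H_{\mu''}(\phi)$ and $H_{\mu''}(\chi)$ are associate is exactly the implication $(5)\Rightarrow(4)$ of Proposition~\ref{samefiber}, since both $\phi$ and $\chi$ are key polynomials for $\mu''$.)

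Two minor cosmetic remarks. First, when you justify $\min_s\mu'(a_s\phi^s)=\mu'(g)$ in the $\rho=\lambda$ case, this is simply the \emph{definition} of the augmented valuation $\mu'$ (Definition~\ref{muprima}); invoking $\mu'$-minimality of $\phi$ is correct but unnecessary. Second, the paper handles $\rho=\lambda$ by the same mechanism you use for $\mu''\le\nu$: if $\rho=\lambda$ then $\mu'=\mu_\lambda\le\nu$, contradicting $\nu<\mu'$; this is marginally quicker than your separate argument.
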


\begin{proof}
For every $\rho\in\Q\cap[0,\la]$, denote $\mu_\rho:=[\mu;(\phi,\rho)]$. 
Consider a valuation $\nu\in\V$ such that $\mu\le\nu<\mu'$. For all $a\in K[x]$ with $\deg a<\deg\phi$, we have  $\mu(a)\le\nu(a)\le\mu'(a)=\mu(a)$, leading to $\mu(a)=\nu(a)$. Take
$\rho=\nu(\phi)-\mu(\phi)\in\Q\cap[0,\la]$. For any $g\in K[x]$, with $\phi$-expansion $g=\sum_{0\le s}a_s\phi^s$, we have
$$
\nu(g)\ge \min_{0\le s}\{\nu(a_s\phi^s)\}=\min_{0\le s}\{\mu(a_s\phi^s)+s\rho\}=\mu_\rho(g),
$$
so that $\mu_\rho\le \nu$. If $\rho=\la$, then $\mu'=\mu_\rho\le \nu$, against our assumption; thus, $\rho<\la$. 
We claim that $\mu_\rho=\nu$. In fact, let us show that $\mu_\rho<\nu<\mu'$ leads to a contradiction. Let $g\in K[x]$ be any polynomial such that $\mu_\rho(g)<\nu(g)$. By the very definition of the augmented valuations, there exists a sufficiently small rational number $\ep>0$ such that $\rho+\ep<\la$ and $\mu_{\rho+\ep}(g)<\nu(g)$. On the other hand, $\mu_{\rho+\ep}(\phi)=\rho+\ep>\rho=\nu(\phi)$. Therefore, $\nu\not\le\mu_{\rho+\ep}$ and $\nu\not\ge\mu_{\rho+\ep}$, in contradiction with Theorem \ref{totally}.
\end{proof}

Let $F\in\P$ be a prime polynomial with respect to $v$. By Theorem \ref{OkML}, $F$ is a key polynomial for the inductive valuation $\mu_F\in \Vi(K_v)$.
Consider an optimal MacLane chain of its restriction $\mu_F\in \Vi(K)$:$$\mu_0\ \stackrel{(\phi_1,\la_1)}\lra\  \mu_1\ \stackrel{(\phi_2,\la_2)}\lra\ \cdots\ \stackrel{(\phi_{r-1},\la_{r-1})}\lra\ \mu_{r-1} \ \stackrel{(\phi_{r},\la_{r})}\lra\ \mu_{r}=\mu_F.$$
By Theorem \ref{totally}, 
$$
[\mu_0,\mu_{\infty,F})=[\mu_0,\mu_1)\cup[\mu_1,\mu_2)\cup\cdots\cup[\mu_{r-1},\mu_F)\cup[\mu_F,\mu_{\infty,F}),
$$
and Lemma \ref{intervalAug} gives an explicit description of each of these subintervals. 
If we consider valuations over $K_v(x)$, the subinterval $[\mu_F,\mu_{\infty,F})$ is equal to
$$
[\mu_F,\mu_{\infty,F})=\left\{[\mu_F;(F,\la)]\mid \la\in\Q_{\ge0}\right\}\subset \Vi(K_v),
$$
By Proposition \ref{KKv}, the restriction of these valuations to $K(x)$ yields a completely analogous description of $[\mu_F,\mu_{\infty,F})\subset \Vi(K)$.
By Lemma \ref{unique}, the restriction of $[\mu_F;(F,\la)]$ to $K(x)$ coincides with $[\mu_F;(\phi,\la)]$ for any $\phi\in K[x]$ such that $\deg\phi=\deg F$ and $\mu_F(F-\phi)\ge \mu_F(F)+\la$.

\end{document}